  \let\original@@tocwrite=\@tocwrite
  \newif\ifAHVflag
  \def\jlreq@uniqtoken{\jlreq@uniqtoken}
  \def\jlreq@endmark{\jlreq@endmark}
  \long\def\jlreq@getfirsttoken#1#{\jlreq@getfirsttoken@#1\bgroup\jlreq@endmark}
  \long\def\jlreq@getfirsttoken@#1#2\jlreq@endmark#3\jlreq@endmark{#1}
  \renewcommand{\@tocwrite}[2]{%
    \begingroup
      \AHVflagfalse 
      \@ifempty{#2}{}{%
        \expandafter\expandafter\expandafter\ifx\jlreq@getfirsttoken#2\jlreq@uniqtoken{}\jlreq@endmark\Sectionformat\expandafter\@firstoftwo\else\expandafter\@secondoftwo\fi
        {%
          \def\Sectionformat##1##2{\@ifempty{##1}{}{\AHVflagtrue}}%
          #2
        }{\AHVflagtrue}%
       }%
      \def\@tempa{}%
      \ifAHVflag\def\@tempa{\original@@tocwrite{#1}{#2}}\fi
    \expandafter\endgroup
    \@tempa
  }%
\newcommand{\charf}{\operatorname{char}}
\newcommand{\Ind}{\operatorname{Ind}}
\newcommand{\ind}{\operatorname{ind}}
\newcommand{\Ker}{\operatorname{Ker}}
\newcommand{\End}{\operatorname{End}}
\newcommand{\Supp}{\operatorname{Supp}}
\newcommand{\Lie}{\operatorname{Lie}}
\newcommand{\val}{\operatorname{val}}
\newcommand{\St}{\operatorname{St}}
\DeclareMathOperator{\diag}{diag}
 \DeclareMathOperator{\Ad}{Ad}
\DeclareMathOperator{\tr}{tr}
\DeclareMathOperator{\vol}{vol}
\DeclareMathOperator{\Aut}{Aut}
 \DeclareMathOperator{\Irr}{Irr}
\DeclareMathOperator{\red}{red}
   \DeclareMathOperator{\trace}{trace}
   \DeclareMathOperator{\Rep}{Rep}
   \DeclareMathOperator{\trd}{trd}
   \DeclareMathOperator{\nrd}{nrd}
    \DeclareMathOperator{\Gr}{Gr}
\theoremstyle{plain} 
\newtheorem{theorem}{Theorem}[section]
\newtheorem{corollary}[theorem]{Corollary}
\newtheorem{lemma}[theorem]{Lemma}
\newtheorem{proposition}[theorem]{Proposition}
\newtheorem{question}[theorem]{Question}
\newtheorem{definition-proposition}[theorem]{Definition-Proposition}
\theoremstyle{definition}
\theoremstyle{remark}
\newtheorem{remark}[theorem]{Remark}
\newtheorem{example}[theorem]{Example}
\numberwithin{equation}{section}
\title{Representations of $GL_n(D)$ near the identity }
\author{Guy Henniart and  Marie-France Vign\'eras}
\date{\today}
\begin{document}

\begin{abstract}Let $p$ be a prime number, $F $ a finite extension of $\mathbb Q_p$ or of $\mathbb F_p((t))$.
We consider the group $G=GL_n(D)$ for a positive integer $n$  and a central finite dimensional division $F$-algebra $D$  of $F$-dimension $d^2$. For 
an irreducible smooth   complex representation $\pi$ of $G$, inspired by work of R. Howe  when $D=F$,  
we establish the existence and uniqueness of 
integers $c_\pi(\lambda)$, for partitions $\lambda$ of $n$, such that for any small enough compact open subgroup $K$ of $G$ the
restriction of $\pi$ to $K$ is the same as that of the virtual representation $\sum c_\pi(\lambda) \Ind_{P_\lambda}^G 1$, where the
sum is over partitions $\lambda$  of $n$ and $P_\lambda$ is a parabolic subgroup of $G$ in the associate class determined by $\lambda$.
When $P_\lambda$  is minimal such that $c_\pi(\lambda) \neq 0$ we prove that $c_\pi(\lambda) $ is positive, equal  to the dimension of a generalized Whittaker model of $\pi$. 
We elucidate the behaviour of $c_\pi$ under the Jacquet-Langlands
correspondence $LJ $ of Badulescu from $GL_{dn}(F)$ to $G$.  
We extend  the above result on $\pi$ near identity to 
a representation of $G$  over a field $R$ with characteristic not $p$. For  any Moy-Prasad pro-$p$ subgroup $K$ of $G$,  we determine from the integers $c_\pi(\lambda) $ a polynomial $P_{\pi,K}$  with integral
coefficients  and   degree $d(\pi)$   independent on  $K$, such that, for large enough integers $j$, the dimension of fixed points in $\pi$ under
the $j$-th congruence subgroup $K_j$ of $K$ is $P_{\pi,K}(q^{dj})$ where $q$ is the cardinality of the residue field of $F$. 
\end{abstract}

\maketitle

Mathematics Subject Classification 2020:
Primary 22E50; Secondary 11F70
  
\medskip Authors:

Guy Henniart,
Laboratoire de Math\'ematiques d'Orsay,
Universit\'e de Paris-Saclay,
Orsay
France

Marie-France Vign\'eras,
Institut de Math\'ematiques de Jussieu-Paris Rive Gauche,
Universit\'e de Paris-Cit\'e, 
Paris
France

\setcounter{tocdepth}{2}  
\tableofcontents
 \section{Introduction}
 Let $p$ be prime number, $F$ a finite extension of $\mathbb Q_p$   or of  $\mathbb F_p((T))$. Let $\underline G$ be a reductive  connected group over $F$,
and put $G=\underline G(F)$.
Let $R$ be a field, and $\pi$ a smooth admissible representation of  $G$  on an $R$-vector space $V.$

Our first motivation was in the following question, when $\pi$  is   of finite length:
Let $x$ be a point in the Bruhat-Tits building of $G$ and $r$ a positive real number. For any integer $j\geq 0$,
let $d (j) $ be the dimension of the space of fixed points in $V$ under the Moy-Prasad subgroup $G_{x,r+j }$ of $G$. 
\begin{question}Is there a polynomial $P$ with integer coefficients such that $d(j)=P(p^j)$ for large enough $j$ ?
If so, what can we say about its degree and its leading coefficient ?
\end{question}

When the characteristic $\charf_ R$ of $R$ is $p$, precise knowledge of those dimensions for irreducible $\pi$  is available only for $G=GL(2,\mathbb Q_p)$
(S. Morra, see  \S \ref{=p}). Apart for groups $G$ of relative rank one  those dimensions seem unknown.

Our paper studies the case where  $\charf_ R\neq p$. Then a smooth finite length $R$-representation $\pi$  of $G$
is automatically admissible, and its restriction to a pro-$p$ subgroup $K$ of $G$ is semisimple, with
finite multiplicities. We write $[\pi]_K$  for the image
of that restriction in the Grothendieck group of admissible $R$-representations of $K$.
We ask a more ambitious question:
\begin{question} Is there an open pro-$p$ subgroup $K$ of G where we
can control $[\pi]_K $ ? 
\end{question}
 
In the case of $GL_2(F)$ an answer to that question was offered by Casselman \cite{Cas73}. In this paper we consider the case where $G=GL_n(D)$ for a central division algebra $D$ over $F,$
with finite degree $d^2$ over $F$.  For a partition $\lambda=(\lambda_1,\dots, \lambda_r) $ of $n$, we let $P_\lambda$ be the upper block triangular subgroup
of $G$ with blocks of size $\lambda_1,\dots, \lambda_r$ down the diagonal, and put $d_\lambda= \sum_{i<j } \lambda_i\lambda_j$.
We have $d_\lambda\geq d_\mu$ if  $\lambda \leq \mu$ for the classical partial order $\leq $ on partitions.
We let $\pi_\lambda$  be the representation of $G$ non-normalized parabolically induced from the trivial representation of $P_\lambda$; it has finite length. 

\bigskip Let  $\pi$ be  a finite length smooth representation of $G$ on an $R$-vector space $V$.

\begin{theorem}\label{1.1} There is a unique function $c_\pi$  from partitions of $n$ to $\mathbb Z$
and an open pro-p subgroup $K=K_\pi$  of $G$ such that $[\pi]_K = \sum_\lambda c_\pi (\lambda)[ \pi_\lambda]_K$.

 If  $\lambda$ is minimal in the support
  of $c_\pi$, then $c_\pi(\lambda)$ is positive.
 \end{theorem}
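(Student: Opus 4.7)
The plan is to deduce the theorem from a local character expansion of Howe type, adapted to $G = GL_n(D)$. Since $\charf_R \neq p$ and $K$ is pro-$p$, every smooth $R[K]$-module is semisimple, and when $\pi$ is admissible of finite length the multiplicities in $\pi|_K$ are finite; the class $[\pi]_K$ is then determined by the trace function of $\pi|_K$ on $K$. Consequently the identity $[\pi]_K = \sum_\lambda c_\pi(\lambda)\,[\pi_\lambda]_K$ in the Grothendieck group is equivalent to an identity of distribution characters of $G$ tested against locally constant functions supported in $K$. By additivity of characters on short exact sequences it suffices to treat irreducible $\pi$, and a scalar-extension argument (characters determine the class for pro-$p$ groups in characteristic $\neq p$) reduces the problem to $R = \mathbb{C}$.

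Over $\mathbb{C}$, Howe's local character expansion provides a neighborhood $U$ of $0$ in $\mathfrak{g} = M_n(D)$ on which
$$\Theta_\pi(\exp X) \;=\; \sum_\mu c_\mu(\pi)\,\widehat{\mu_{\cO_\mu}}(X),$$
the sum running over the nilpotent adjoint orbits $\cO_\mu$ in $\mathfrak{g}$. These orbits are classified by Jordan type and are parametrized by partitions $\mu$ of $n$, exactly as in the split case, since Jordan normal form is insensitive to replacing $F$ by $D$. The bridge to $\pi_\lambda = \Ind_{P_\lambda}^G 1$ is the classical identification, via Harish--Chandra descent, of $\widehat{\mu_{\cO_\mu}}$ with the character $\Theta_{\pi_\lambda}$ on regular semisimple elements near $1$, for $\lambda = \mu^t$ the transpose partition, the Richardson orbit of $P_\lambda$ being $\cO_{\lambda^t}$. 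Setting $c_\pi(\lambda) := c_{\lambda^t}(\pi)$ (up to a fixed normalizing scalar) and taking $K_\pi$ to be any open pro-$p$ subgroup contained in $\exp U$ yields the expansion $[\pi]_{K_\pi} = \sum_\lambda c_\pi(\lambda)[\pi_\lambda]_{K_\pi}$.

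Uniqueness of $c_\pi$ is equivalent, via the same dictionary, to the $\mathbb{Z}$-linear independence of the distributions $\widehat{\mu_{\cO_\mu}}$ on any small neighborhood of $0$, a standard consequence of Harish--Chandra's homogeneity properties (the change of basis between nilpotent orbital integrals and characters of induced representations is unitriangular in the closure order of orbits). Integrality of $c_\pi(\lambda)$ and positivity at a minimal $\lambda$ are the deeper points. For $\GL_{dn}(F)$, Moeglin--Waldspurger's theorem identifies the Howe coefficient at a nilpotent orbit $\cO_\mu$ maximal in the wavefront set with the dimension of the corresponding generalized Whittaker model: a non-negative integer, positive when $\cO_\mu$ is actually in the wavefront set. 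Since transposition reverses dominance, a minimal $\lambda$ in the support of $c_\pi$ corresponds to a maximal $\mu = \lambda^t$ on the nilpotent side. To import this positive-integer statement to $G$, the plan is to use the Badulescu Jacquet--Langlands correspondence $LJ$ from $\GL_{dn}(F)$ to $G$: match generalized Whittaker models across $LJ$ and track the partition parametrization on both sides.

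The main obstacle will be this final transfer step. Howe's expansion and the character identification with $\pi_\lambda$ near $1$ are robust enough to carry from $\GL_{dn}(F)$ to $G$, and the character/Grothendieck-group dictionary over $R$ for pro-$p$ $K$ is clean. What is delicate is propagating Moeglin--Waldspurger's positivity and integrality to the inner form: one must control how the generalized Whittaker spaces attached to a nilpotent orbit interact with $LJ$, verify that the partitions indexing the top of the wavefront set match on the two sides of the correspondence, and track normalizing constants so that integrality is preserved on the $G$-side. That is where I expect most of the work to be concentrated.
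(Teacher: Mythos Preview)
Your outline has the right architecture through the germ expansion, but there are two genuine gaps.

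First, you invoke the exponential map to transport the Howe expansion from $\mathfrak g$ to $G$. In positive residual characteristic this is fine, but the paper explicitly allows $\charf F=p$, where no exponential is available. The paper's substitute is the affine map $X\mapsto 1+X$ from $M_n(P_D)$ to $K_1$, and the germ expansion it uses (Harish--Chandra when $\charf F=0$, Lemaire for $GL_n(D)$ in any characteristic) is formulated through that map. Your sketch as written does not cover the positive-characteristic case.

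Second, and more seriously, your plan for integrality and positivity is to import Moeglin--Waldspurger's Whittaker-model theorem for $GL_{dn}(F)$ and push it across the Jacquet--Langlands transfer $LJ$. This is both unnecessary and insufficient. It is insufficient because \cite{MW87} assumes $\charf F=0$ (and in places $p\neq 2$), so you would lose positive characteristic again; and the compatibility of generalized Whittaker models with $LJ$ that you would need is not a known input you can just cite. It is unnecessary because the paper proves integrality and positivity \emph{directly on $G$}, without MW and without JL. The device is an explicit family of characters $\xi_\lambda$ of $K_j$ (for any $j\ge 1$) such that $m(\xi_\lambda,\pi_{P_\mu})=0$ unless $\lambda\ge\mu$ and $m(\xi_\lambda,\pi_{P_\lambda})=1$ (Lemma~\ref{lemma6}). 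Plugging this unitriangular multiplicity pattern into the germ expansion gives
\[
c_\pi(\lambda)=m(\xi_\lambda,\pi)-\sum_{\mu<\lambda} c_\pi(\mu)\,m(\xi_\lambda,\pi_{P_\mu}),
\]
which yields $c_\pi(\lambda)\in\mathbb Z$ by upward induction on $\lambda$, and $c_\pi(\lambda)=m(\xi_\lambda,\pi)>0$ immediately when $\lambda$ is minimal in the support. The Whittaker interpretation (Theorem~\ref{Wh}) and the JL comparison (Theorem~\ref{tJL}) are then \emph{consequences}, not ingredients, of Theorem~\ref{1.1}.

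Finally, your one-line ``scalar-extension reduces to $R=\mathbb C$'' is too quick. The paper's passage from $\mathbb C$ to general $R$ (\S\ref{s:10}) goes $\mathbb C\simeq\mathbb Q_\ell^{ac}\to\mathbb F_\ell^{ac}$ by reduction mod $\ell$ (using that $r_\ell$ is surjective on Grothendieck groups and that multiplicities of the $\xi_\lambda$ are preserved), then to arbitrary algebraically closed $R$ via twists of Speh representations, and only then to non-closed $R$ by injectivity of scalar extension on $\Gr_R^\infty(K)$. Each step uses specific inputs (notably results of M\'inguez--S\'echerre); a bare appeal to ``characters determine the class'' does not supply them.
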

 
 Theorem \ref{1.1}   has consequences to our first question. We let $q$ be the cardinality of the
residue field of $F$, so the residue field of $D$ has cardinality $q^d$. Let  $x$ a point in the Bruhat-Tits building of $G$
and $r $ a positive real number.  

\begin{theorem}\label{2.2}    Let $P= P_{\pi, G_{x,r}}$ be the polynomial  \begin{equation} \label{polynom}P_{\pi, G_{x,r}}(X)= \sum _{\lambda}  |P_\lambda \backslash G/G_{x,r} | \, c_\pi(\lambda)\,  X^{ d_\lambda}. 
 \end{equation}
Then $\dim_R V^{G_{x,r+j}}=P(q^{dj})$ for large enough integers $ j.$ The degree of $P$  is $d(\pi)=\max(d_\lambda)$
where the maximum is over partitions $\lambda$ in the support of $c_\pi$. The leading coefficient is
\begin{equation} \label{dc} a_{\pi,G_{x,r}}= \sum _{\lambda ,  d_\lambda=  d(\pi)} |P_\lambda \backslash G/G_{x,r} | \, c_\pi(\lambda).
\end{equation}
\end{theorem}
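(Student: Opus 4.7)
The plan is to take the virtual identity of Theorem~\ref{1.1}, restrict it to the Moy-Prasad subgroups $G_{x,r+j}$ for $j\gg 0$, compute the dimension of fixed vectors on each side by counting double cosets, and read off the polynomial. Since restriction to a smaller open subgroup is exact, and $G_{x,r+j}\subset K_\pi$ for all sufficiently large $j$, applying the functor of $G_{x,r+j}$-invariants to both sides of $[\pi]_{K_\pi} = \sum_\lambda c_\pi(\lambda)[\pi_\lambda]_{K_\pi}$ gives
\[
\dim_R V^{G_{x,r+j}}\;=\;\sum_\lambda c_\pi(\lambda)\,\dim_R \pi_\lambda^{G_{x,r+j}}\qquad(j\gg 0).
\]

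For each $\lambda$, $\pi_\lambda = \Ind_{P_\lambda}^G 1$ is realized on smooth functions on the compact space $P_\lambda\backslash G$, so the $G_{x,r+j}$-invariants are the biinvariant functions, of dimension $|P_\lambda\backslash G/G_{x,r+j}|$. The main step is then the double-coset identity
\[
|P_\lambda\backslash G/G_{x,r+j}|\;=\;|P_\lambda\backslash G/G_{x,r}|\cdot q^{dj\,d_\lambda}\qquad(j\gg 0),
\]
which I would prove by fixing representatives $g$ for $P_\lambda\backslash G/G_{x,r}$, decomposing each $P_\lambda g G_{x,r}$ into its $P_\lambda$-$G_{x,r+j}$ double cosets, and showing $|(g^{-1}P_\lambda g\cap G_{x,r})\backslash G_{x,r}/G_{x,r+j}| = q^{dj\,d_\lambda}$ independently of $g$.

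This is the main Moy-Prasad input. For $j$ large, a layer-by-layer application of the bijection $X\mapsto 1+X$ gives $|G_{x,r}/G_{x,r+j}| = |\mathfrak g_{x,r}/\mathfrak g_{x,r+j}|$, and the analogous equality with $g^{-1}P_\lambda g$ in place of the whole group, because the parabolic is cut out by linear block-upper-triangular conditions preserved by $X\mapsto 1+X$ and by conjugation. The elementary identity $[G_{x,r} : A\cdot G_{x,r+j}] = [G_{x,r} : G_{x,r+j}]\,/\,[A : A\cap G_{x,r+j}]$, with $A = g^{-1}P_\lambda g\cap G_{x,r}$, then reduces the index to the $\cO_F$-lattice ratio $[\mathfrak g_{x,r} : \mathfrak g_{x,r+j}]\,/\,[g^{-1}\mathfrak p_\lambda g\cap\mathfrak g_{x,r} : g^{-1}\mathfrak p_\lambda g\cap\mathfrak g_{x,r+j}]$. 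Since $\dim_D(\mathfrak g/g^{-1}\mathfrak p_\lambda g) = d_\lambda$ and one unit step of the Moy-Prasad parameter contributes a factor of $q^d$ (the residue-field cardinality of $D$) per $D$-dimensional direction, this ratio equals $q^{dj\,d_\lambda}$.

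Assembling everything gives $\dim_R V^{G_{x,r+j}} = P_{\pi,G_{x,r}}(q^{dj})$ with $P_{\pi,G_{x,r}}$ as in \eqref{polynom}, and the expression \eqref{dc} for the leading coefficient follows by inspection. To check that the degree of $P_{\pi,G_{x,r}}$ is exactly $d(\pi)$, i.e.\ that $a_{\pi,G_{x,r}}\ne 0$, I use the identity $d_\lambda = \tfrac12(n^2 - \sum_i \lambda_i^2)$ and that moving one box upward in a Young diagram --- the cover relation of the dominance order --- strictly increases $\sum_i\lambda_i^2$, so $d_\lambda$ is strictly \emph{decreasing} along the dominance order. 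Consequently, any $\lambda\in\supp(c_\pi)$ with $d_\lambda = d(\pi)$ must be minimal in $\supp(c_\pi)$ --- a strictly smaller $\mu\in\supp(c_\pi)$ would give $d_\mu > d(\pi)$, contradicting maximality --- and so $c_\pi(\lambda) > 0$ by Theorem~\ref{1.1}. Therefore $a_{\pi,G_{x,r}}$ is a sum of strictly positive terms and is nonzero. The main obstacle is the coset-counting identity, which, while standard in spirit, demands a careful tracking of the Cayley-type identification of groups with lattices and of its compatibility with intersections with the parabolic $g^{-1}P_\lambda g$.
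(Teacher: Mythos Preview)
Your approach is essentially the same as the paper's: reduce to the germ expansion of Theorem~\ref{1.1}, identify $\dim_R\pi_\lambda^{G_{x,r+j}}$ with a double-coset count, and prove the growth formula $|P_\lambda\backslash G/G_{x,r+j}| = |P_\lambda\backslash G/G_{x,r}|\cdot q^{djd_\lambda}$ by computing indices on the Lie-algebra side. The only technical difference is in how that last index is obtained: the paper invokes the lattice-function description of Broussous--Lemaire to get $\mathfrak g_{\Phi,r+1/d}=P_D\,\mathfrak g_{\Phi,r}$ (and hence $\mathfrak g_{\Phi,r+1/d}\cap W = P_D(\mathfrak g_{\Phi,r}\cap W)$ for any $D$-subspace $W$), whereas you argue directly via the Cayley map $X\mapsto 1+X$ layer by layer; these are equivalent and both yield $[G_{x,r}\cap g^{-1}P_\lambda g : G_{x,r+1/d}\cap g^{-1}P_\lambda g]=q^{d\dim_D\mathfrak p_\lambda}$ independently of $g$.

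Your treatment of the leading coefficient is in fact more explicit than the paper's. The paper asserts without argument that $d_\lambda=d(\pi)$ forces $\lambda$ to be minimal in $\supp c_\pi$; you supply the missing step by observing that $d_\lambda=\tfrac12(n^2-\sum_i\lambda_i^2)$ is \emph{strictly} decreasing along the dominance order (via the cover-relation computation), so a strictly smaller $\mu\in\supp c_\pi$ would give $d_\mu>d(\pi)$. This is exactly what is needed, since the map $\lambda\mapsto d_\lambda$ is not injective for $n\ge 6$ (e.g.\ $d_{(4,1,1)}=d_{(3,3)}=9$), and the mere weak inequality stated in the introduction would not suffice.
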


The function $c_\pi$  has good properties with respect to natural operations, apart from being
 additive on exact sequences, hence factoring to a function on the Grothendieck
group of finite length smooth representations of $G$.
If $\chi$  is a character of $G$, $c_{\chi \pi}=c_\pi$. If  $\pi'$ the base change of $\pi$  to an extension  $R'$   of $R$, 
then $c_{\pi'} = c_\pi$; in particular $c_\pi$ is invariant under automorphisms of $R$.
When $p\neq 2, G=GL_n(F)$ and $\charf_ F=0$  the support of $\pi$ contains  a single partition $\lambda$ with $d_\lambda= d(\pi)$ \cite{MW87}. This may be true for any $p,F$ and $D$.  
 
\bigskip  {\bf Parabolic induction} Let $P$ be an upper block triangular subgroup of $G$, with block diagonal Levi subgroup
$M$   a product $GL_{n_1}(D)\times \ldots \times GL_{n_r}(D)$. For $i=1,\ldots, r$  let $\sigma_i$ be a finite length representation of $GL_{n_i}(D)$,  and put  $\sigma=\sigma_1\otimes \ldots \otimes \sigma_r$ a finite length representation of $M$.
Given   a partition  $\lambda_i$ of $n_i$ for  $i=1,\ldots, r$, we   have the induced partition $\lambda$ of $n$ obtained by gathering all the
parts of the $\lambda_i$'s and putting them in decreasing order.
 
\begin{theorem}\label{3.3}Let 
$ \pi = \ind_{P}^{G} (\sigma )$.
For each partition $\lambda$ of $n$,  $c_\pi (\lambda)=\sum \prod_{i=1,\ldots, r }c_{\sigma_i} (\lambda_i)$, where the sum is over
r-tuples of partitions $(\lambda_1, \ldots,\lambda_r) $ inducing to $\lambda$.   
\end{theorem}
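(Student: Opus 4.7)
My plan is as follows. The function $\pi\mapsto c_\pi$ is additive on exact sequences, and parabolic induction is exact, so both sides of the claimed equality are multilinear in $([\sigma_1],\ldots,[\sigma_r])$ on the product of the Grothendieck groups of finite length smooth representations of the $GL_{n_i}(D)$. I first verify the identity in the special case $\sigma_i=\pi_{\lambda_i}$, and then bootstrap to the general case via a ``functoriality near the identity'' argument.

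For the special case, uniqueness in Theorem \ref{1.1} applied to the tautological expansion $[\pi_{\lambda_i}]_{K_i}=1\cdot[\pi_{\lambda_i}]_{K_i}$ yields $c_{\pi_{\lambda_i}}(\mu_i)=\delta_{\mu_i,\lambda_i}$. By induction in stages,
\[
\ind_P^G(\pi_{\lambda_1}\otimes\cdots\otimes\pi_{\lambda_r}) = \ind_P^G\,\ind_{P_{\lambda_1}\times\cdots\times P_{\lambda_r}}^M 1 = \ind_Q^G 1,
\]
where $Q\subseteq G$ is the parabolic with Levi $\prod_{i,j}GL_{\lambda_{i,j}}(D)$ embedded block-diagonally in the order inherited from the $\lambda_i$'s. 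Since $Q$ is in the associate class of $P_\lambda$ (where $\lambda$ is the induced partition), and parabolic induction of the trivial representation from associate parabolics gives the same class in the Grothendieck group of $G$, we have $[\ind_Q^G 1]=[\pi_\lambda]$. Hence $c_\pi(\mu)=\delta_{\mu,\lambda}$, which matches the right-hand side since $\sum_{(\mu_i)\to\mu}\prod_i\delta_{\mu_i,\lambda_i}=\delta_{\mu,\lambda}$.

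For the general case, which is harder because the $\pi_{\lambda_i}$'s do not span the relevant Grothendieck groups, I would choose for each $i$ a compact open $K_i\subseteq GL_{n_i}(D)$ small enough that $[\sigma_i]_{K_i}=\sum_{\lambda_i}c_{\sigma_i}(\lambda_i)[\pi_{\lambda_i}]_{K_i}$ (Theorem \ref{1.1}), and set $K_M=K_1\times\cdots\times K_r\subseteq M$, so that tensoring over $i$ gives an analogous expansion of $[\sigma]_{K_M}$. The central technical point is to construct an open pro-$p$ subgroup $K\subseteq G$ admitting an Iwahori decomposition with respect to $P$ and with $K\cap M\subseteq K_M$, such that $[\ind_P^G\tau]_K$ depends on a finite length representation $\tau$ of $M$ only through $[\tau]_{K_M}$. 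Granted this functoriality, substituting the expansion of $[\sigma]_{K_M}$ and invoking the special case (which yields $[\ind_P^G(\pi_{\lambda_1}\otimes\cdots\otimes\pi_{\lambda_r})]_K=[\pi_\lambda]_K$) produces
\[
[\pi]_K=\sum_\mu\Bigl(\sum_{(\lambda_1,\ldots,\lambda_r)\to\mu}\prod_i c_{\sigma_i}(\lambda_i)\Bigr)[\pi_\mu]_K,
\]
and uniqueness in Theorem \ref{1.1} then gives the theorem.

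The functoriality near the identity is the main obstacle. My plan for it is to apply the Mackey decomposition
\[
[\ind_P^G\tau]_K=\sum_{g\in P\backslash G/K}[\ind_{K\cap g^{-1}Pg}^K(\tau_g|_{K\cap g^{-1}Pg})],
\]
observe that each summand depends on $\tau$ only through $[\tau|_{N_g}]$ where $N_g\subseteq M$ is the image of $gKg^{-1}\cap P$, and argue that by shrinking $K$ sufficiently (using Bruhat-Tits theory to control the finitely many double cosets and the associated $N_g$'s) each $[\tau|_{N_g}]$ is determined by $[\tau]_{K_M}$ via Theorem \ref{1.1} applied inside each factor $GL_{n_i}(D)$ of $M$. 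An alternative route would work at the level of dimensions, using Theorem \ref{2.2} to reduce the identity to matching polynomial formulas for $\dim(\ind_P^G\sigma)^{G_{x,r+j}}$ computed via Mackey and the corresponding polynomial formula for $\sigma$ on $M$.
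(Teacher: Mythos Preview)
Your proposal is correct and follows essentially the same route as the paper (Section~\ref{s:PI}, Theorem~\ref{th:71}): the special case via transitivity of induction, and the general case via the Mackey decomposition together with the existence of a small enough $K$ so that all the groups $pr(P\cap gKg^{-1})$ for $g\in P\backslash G/K$ land inside $K_M$. Two small remarks: (i) you need only that $\ind_Q^G 1$ and $\pi_\lambda$ agree on $K_0$ (Lemma~\ref{piK0}), not in the full Grothendieck group, and the paper is careful to claim only this weaker fact; (ii) the existence of $K$ does not require Bruhat--Tits theory or an Iwahori decomposition---since $G=PK_0$, representatives for $P\backslash G/K_j$ lie in $K_0$, and then $pr(P\cap gK_jg^{-1})\subset M\cap K_j$ for $g\in K_0$, so taking $j$ large suffices.
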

{\bf Whittaker models} Assume that $R$ contains all the roots of unity of $p$-power order. We have the notion
of Whittaker models, possibly degenerate. Let $U$  be the upper triangular subgroup of $G$, and $\theta$  a character of $U$.
We let $V_\theta$ be the maximal
quotient of the space $V$ of $\pi$ on which $U$ acts via $\theta$. Its dimension is finite and depends on $\theta$  only up to conjugation by the
diagonal subgroup $T$ of G. The orbits of  $T$ on the characters  of $U$ are parametrized by the compositions of $n$.  To each composition $\lambda$ of $n$ is attached a partition $\lambda ^\dag$ obtained by gathering the parts of $\lambda$  in decreasing order.
The  Whittaker support of $\pi$ is the set of partitions of $n$ of the form $\lambda ^\dag$ where $\lambda $  is a composition of $n$ such that $V_\theta\neq 0$
  for $\theta$  corresponding to the composition $\lambda$.

\begin{theorem}\label{4} The minimal elements in the support of $c_\pi$  and in the Whittaker support of $\pi$  are the same.
If $\mu$  is such a minimal partition, $\lambda$ is a composition of $n$ with $\lambda ^\dag=\mu$ and $\theta$ a character of $ U$ corresponding to $\lambda$,
then $c_\pi(\mu)=\dim_R V_\theta$.
\end{theorem}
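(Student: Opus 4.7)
The plan is to reduce Theorem~\ref{4} to a combinatorial identity for the Whittaker coinvariants $\dim(\pi_\nu)_\theta$ of the induced representations, and then to combine it with Theorem~\ref{1.1} via a trace argument and a formal comparison of supports. First I would extract a numerical identity from Theorem~\ref{1.1}. Because $\operatorname{char} R\neq p$, for any compact open $K'\subset U$ the averaging operator $e_{K',\theta}=(\vol K')^{-1}\int_{K'}\theta(u)^{-1}u\,du$ is well defined and realizes $\theta$-coinvariants on any admissible $K$-representation with $K\supset K'$; its action is additive on exact sequences, and for $K'$ large enough its trace on a finite-length smooth $G$-representation $\sigma$ equals $\dim\sigma_\theta$. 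Taking $K$ as in Theorem~\ref{1.1} and $K'\subset K$ large enough to handle $\pi$ and the finitely many $\pi_\nu$ with $c_\pi(\nu)\neq 0$, applying this trace to $[\pi]_K=\sum_\nu c_\pi(\nu)[\pi_\nu]_K$ yields
\[\dim V_\theta \;=\; \sum_\nu c_\pi(\nu)\,\dim(\pi_\nu)_\theta.\]

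The key step is to prove, for $\theta$ of type $\mu=\lambda^\dag$, the following two facts: (a) $\dim(\pi_\nu)_\theta=0$ whenever $\nu>\mu$ strictly in dominance; and (b) $\dim(\pi_\mu)_\theta=1$. Both rest on a Bernstein--Zelevinsky geometric-lemma computation: stratifying $\pi_\nu=\ind_{P_\nu}^G 1$ via $P_\nu$-$U$ double cosets, only those $P_\nu w U$ for which $\theta$ is trivial on $P_\nu^w\cap U$ support a non-zero $\theta$-equivariant functional, each contributing at most one dimension. For $\theta$ of type $\mu$ the simple-root positions on which $\theta$ must be non-trivial are exactly those outside the Levi $M_\mu$; for $\nu>\mu$ the larger Levi $M_\nu$ forces every $P_\nu^w\cap U$ to meet such a position and hence every cell is killed, giving (a); at $\nu=\mu$ the open cell yields the unique one-dimensional contribution of (b).

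With (a) and (b) in hand, the theorem follows by a formal argument. Let $S=\supp(c_\pi)$ and $W$ be the Whittaker support. If $\mu_0\in\min S$ and $\theta_0$ is of type $\mu_0$, the minimality of $\mu_0$ kills the terms with $\nu<\mu_0$ in the trace identity while (a) kills those with $\nu>\mu_0$, leaving $\dim V_{\theta_0}=c_\pi(\mu_0)\cdot 1=c_\pi(\mu_0)$, which is positive by Theorem~\ref{1.1}; so $\mu_0\in W$. The same identity applied to any $\nu<\mu_0$ and $\theta_\nu$ of type $\nu$ gives $\dim V_{\theta_\nu}=0$, so $\nu\notin W$ and thus $\mu_0\in\min W$ with $c_\pi(\mu_0)=\dim V_{\theta_0}$. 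Conversely, if $\mu_0\in\min W\setminus\min S$, then some $\nu<\mu_0$ lies in $S$---either because $\mu_0\notin S$ and the trace identity with (a) forces such a $\nu$ to make $\dim V_{\theta_0}$ positive, or because $\mu_0\in S$ is not minimal in $S$; any minimal such $\nu_1\in\min S$ then satisfies $\nu_1\in\min W$ by the forward direction, contradicting $\mu_0\in\min W$.

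The main obstacle is the combinatorial identity of the key step, above all the vanishing (a). In the split case $D=F$ of characteristic zero it is closely related to Moeglin--Waldspurger-type wave-front results; for a general division algebra $D$ it requires careful Bruhat-cell bookkeeping and $\theta$-triviality conditions on $D$-root subgroups. The normalization $\dim(\pi_\mu)_\theta=1$ in (b) also needs verification in the $D$-linear setting, reflecting the uniqueness of the canonical Whittaker functional on the trivial representation of the Levi $M_\mu$.
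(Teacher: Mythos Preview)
Your argument has a genuine gap at the first step. The asserted identity
\[
\dim V_\theta \;=\; \sum_\nu c_\pi(\nu)\,\dim(\pi_\nu)_\theta
\]
does not follow from Theorem~\ref{1.1} and is in fact false. The germ expansion only equates the restrictions of $\pi$ and $\sum_\nu c_\pi(\nu)\pi_\nu$ to a \emph{fixed} compact open $K\subset G$, whereas $V_\theta$ is the coinvariant space for the non-compact group $U$ and is not determined by $\pi|_K$. Your projector $e_{K',\theta}$ must have $K'\subset K\cap U$ in order for the germ expansion to apply, so $K'$ is bounded above; its trace on $\sigma$ is the dimension of the $\theta|_{K'}$-isotypic subspace, which in general is neither finite nor equal to $\dim\sigma_\theta$. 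Concretely, take $G=GL_2(F)$, $\pi$ irreducible cuspidal, and $\theta$ trivial: the left side is $\dim\pi_U=0$, while the right side is $c_\pi((2))\cdot\dim(\mathrm{triv})_U+c_\pi((1,1))\cdot\dim(\ind_B^G 1)_U=a_\pi+2$, and $a_\pi$ varies with $\pi$ (it equals $-\dim_{\mathbb C}\pi_2$ for the Jacquet--Langlands transfer $\pi_2$), so this is nonzero for most $\pi$.

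Your combinatorial claims (a) and (b) are also incorrect as stated. For $\theta$ non-degenerate one has $\mu=(n)$, and (b) would give $\dim(\pi_{(n)})_\theta=1$; but $\pi_{(n)}$ is the trivial representation and $(\mathrm{triv})_\theta=0$. For $\theta$ trivial one has $\mu=(1,\ldots,1)$, and (a) would give $(\pi_{(n)})_\theta=0$; but $(\mathrm{triv})_1$ is one-dimensional. (The correct relation between the Whittaker type and the partition in $\Supp c_\pi$ involves the \emph{dual} partition, not $\lambda^\dag$ itself.) Your formal argument also treats the dominance order as total: minimality of $\mu_0$ in $\Supp c_\pi$ excludes only $\nu<\mu_0$, and your (a) only handles $\nu>\mu_0$, leaving incomparable $\nu$ unaddressed.

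What the paper does instead resolves the tension between ``$K'$ large in $U$'' and ``$K'$ contained in a small $K$'' by conjugation: one replaces $K_j$ by $K'_j=t^{2j-1}K_j t^{1-2j}$ for a suitable regular diagonal $t$, so that $K'_j\cap U$ exhausts $U$ as $j\to\infty$ while $K'_j$ remains $G$-conjugate to $K_j$ (so the germ expansion still applies to it). The character $\psi'_j$ of $K'_j$ agrees with $\theta$ on $K'_j\cap U$ and is trivial on $K'_j\cap B^-$. The germ expansion then legitimately gives $m(\psi'_j,\pi)=\sum_\nu c_\pi(\nu)\,m(\psi'_j,\pi_\nu)$, and Lemma~\ref{lemma6} (applied via the dual partition $\widehat{\lambda^\dag}$) supplies the correct triangularity: $m(\psi'_j,\pi_{P_\nu})=0$ unless $\nu\le\widehat{\lambda^\dag}$, with value $1$ at $\nu=\widehat{\lambda^\dag}$. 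The substantial remaining step---which your trace argument attempted to absorb---is the Rodier--Moeglin--Waldspurger approximation showing that the natural map $V(\psi'_j)\to V_\theta$ is an isomorphism for large $j$ when $\widehat{\lambda^\dag}$ is minimal in $\Supp c_\pi$.
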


{\bf Jacquet-Langlands correspondence} I.Badulescu  has extended the classical
Jacquet-Langlands correspondence to a morphism $LJ_{\mathbb C}$ from the Grothendieck group of smooth finite length
complex representations of  $GL_{dn}(F)$ to that of $G$. Let $\ell$ be a prime number different from $p$.
For an algebraic closure $\mathbb Q_\ell^{ac}$ of $\mathbb Q_\ell$, with a chosen square root of $q$, A.Minguez and
V.S\'echerre have transported $LJ_{\mathbb C}$ to  $\mathbb Q_\ell^{ac}$-representations, and showed that it
descends to a map  $LJ_{\mathbb F_\ell ^{ac}}$ of $\mathbb F_\ell ^{ac}$-representations, where $\mathbb F_\ell ^{ac}$ is the residue field of $\mathbb Q_\ell^{ac}$.
We define $LJ_R$ for our field $R$, provided it be algebraically closed, and get:

\begin{theorem}\label{5}Assume  $R$ to be algebraically closed. Let $\tau$ be a finite length smooth $R$-representation of $GL_{dn}(F)$
and $\pi =LJ_R(\tau)$. For any partition $\lambda$ of $n$, we have $(-1)^n c_\pi (\lambda)=(-1)^{dn} c_\tau(d\lambda)$.
\end{theorem}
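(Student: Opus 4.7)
The plan is to reduce the identity, via additivity of $c$ and compatibility with parabolic induction on both sides, to a verification at the level of essentially square-integrable representations. Since $c_\pi$ is determined by $[\pi]_K$ for a small enough open pro-$p$ subgroup $K$, and $\pi\mapsto [\pi]_K$ is additive on short exact sequences, the assignment $\pi\mapsto c_\pi$ factors through the Grothendieck group of finite length smooth $R$-representations of $G$; since $LJ_R$ is defined as a morphism of Grothendieck groups, it suffices to verify $(-1)^n c_{LJ_R(\tau)}(\lambda) = (-1)^{dn} c_\tau(d\lambda)$ on a convenient generating set for the Grothendieck group of $GL_{dn}(F)$.

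As generators I would take the parabolically induced representations $\tau = \ind_{P_\mu}^{GL_{dn}(F)}(\sigma_1\otimes\cdots\otimes\sigma_s)$ with $\mu=(\mu_1,\ldots,\mu_s)$ a composition of $dn$ and each $\sigma_j$ essentially square-integrable on $GL_{\mu_j}(F)$. Theorem \ref{3.3} expresses $c_\tau(d\lambda)$ as a sum of products $\prod_j c_{\sigma_j}(\lambda_j)$ over tuples $(\lambda_j)$ with $\lambda_j\vdash \mu_j$ concatenating to $d\lambda$. The Badul\-escu--M\'inguez--S\'echerre construction of $LJ_R$ is designed to commute with parabolic induction up to an explicit sign: $LJ_R(\tau)=0$ unless every $\mu_j$ is divisible by $d$, in which case $LJ_R(\tau) = \varepsilon\cdot\ind_{P_\nu}^{G}(LJ_R(\sigma_1)\otimes\cdots\otimes LJ_R(\sigma_s))$ with $\nu_j = \mu_j/d$. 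Applying Theorem \ref{3.3} once more on the $G$-side and matching the two convolution sums term by term reduces the identity for $\tau$ to the identity for each individual $\sigma_j$; in the exceptional case where some $\mu_j$ is not divisible by $d$, both sides vanish, since no concatenation of partitions $\lambda_j\vdash\mu_j$ produces a $d$-divisible partition unless each $\mu_j$ is itself $d$-divisible.

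This reduces us to $\tau$ essentially square-integrable of $GL_m(F)$, which I would attack via the Zelevinsky segment presentation $\tau = Z(\sigma_0, e)$ with $\sigma_0$ cuspidal of $GL_{m/e}(F)$: the standard Grothendieck group relation expresses $[\tau]$ as an alternating sum of representations induced from unramified twists of $\sigma_0$, and Theorem \ref{3.3} applied to this relation yields $c_\tau$ explicitly in terms of $c_{\sigma_0}$ and its twists. On the $G$-side, $LJ_R(\tau)$ is, up to sign, a generalized Steinberg $\operatorname{St}(\rho_0, e')$ of $G$ whose parameter is determined from $(\sigma_0,e)$ by the JL correspondence on cuspidals, and admits a parallel Zelevinsky decomposition. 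The minimal-support entries of $c$ at the segment level are identified via Theorem \ref{4} with Whittaker dimensions, which match under LJ (a theorem of Badulescu, transferred to $R$ via M\'inguez--S\'echerre); the non-minimal entries are controlled by the systematic cancellations in the Zelevinsky relation on each side. The explicit parities $(-1)^n$ and $(-1)^{dn}$ fall out of the length counts in the two decompositions combined with Badulescu's sign $(-1)^{n(d-1)}$ on discrete series.

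The principal obstacle is the sign bookkeeping: one must carefully align the alternating Zelevinsky relations on each side with Badulescu's sign conventions so that the signs $\varepsilon$ from parabolic induction compatibility, the signs from the segment decompositions, and the signs in the Whittaker matching combine to $(-1)^{n(d-1)}$; and one must verify that the case $LJ_R(\tau)=0$ corresponds precisely to the vanishing of $c_\tau(d\lambda)$ for every partition $\lambda$ of $n$, not merely for the minimal ones. A secondary issue is ensuring that the M\'inguez--S\'echerre construction of $LJ_R$, extended from $\mathbb F_\ell^{ac}$ to an arbitrary algebraically closed field $R$ of characteristic $\neq p$, preserves all the parabolic induction compatibility used in the reduction, so that the argument is uniform in $R$.
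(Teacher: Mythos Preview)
Your reduction via additivity and parabolic-induction compatibility is sound, but the argument does not close at the base case. After reducing to essentially square-integrable $\tau$ and further, via a Zelevinsky-type relation, to a cuspidal $\sigma_0$ of $GL_{m_0}(F)$, you still need the identity $(-1)^{m_0/d}\, c_{LJ(\sigma_0)}(\lambda)=(-1)^{m_0}\, c_{\sigma_0}(d\lambda)$ for \emph{every} partition $\lambda$ of $m_0/d$. Your appeal to Theorem~\ref{4} only identifies $c$ at partitions \emph{minimal} in the support; for a cuspidal $\sigma_0$ of $GL_{m_0}(F)$ with $m_0>1$ the map $c_{\sigma_0}$ is not supported solely at $(1,\ldots,1)$ (already for $GL_2(F)$ one has $c_{\sigma_0}((2))\neq 0$), and since $\sigma_0$ is cuspidal there is no further Grothendieck-group relation to invoke. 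The sentence ``the non-minimal entries are controlled by the systematic cancellations in the Zelevinsky relation'' is doing no work at the cuspidal level, and there is no theorem of Badulescu matching all (degenerate) Whittaker dimensions under $LJ$; matching only the non-degenerate one does not determine $c_{\sigma_0}$.

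The paper sidesteps this difficulty entirely by working directly with characters rather than by descending to a cuspidal base case. Over $\mathbb C$ one inserts the germ expansions of $\pi=LJ(\tau)$ and of $\tau$ into the defining trace identity $(-1)^n\,\trace\, LJ(\tau)(f\,dg)=(-1)^{dn}\,\trace\,\tau(\varphi\,dh)$ for matching test functions $f\leftrightarrow\varphi$, then uses only the single explicit computation $(-1)^{dn}\,LJ(\pi_{P'_\mu})=(-1)^n\,\pi_{P_\lambda}$ when $\mu=d\lambda$ and $0$ otherwise, together with the linear independence of the $\trace\,\pi_{P_\lambda}$ on any neighbourhood of the identity (extended to $G^{rs}$ via local integrability). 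This gives the identity for all $\tau$ simultaneously, with no reduction to square-integrable or cuspidal constituents and no sign bookkeeping beyond that already built into $LJ$. The passage to an arbitrary algebraically closed $R$ is then carried out exactly as for Theorem~\ref{1.1}: transport to $\mathbb Q_\ell^{ac}$ by a field isomorphism, reduction modulo $\ell$ via M\'inguez--S\'echerre, and scalar extension.
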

For $R=\mathbb C$ and a discrete series $\pi$, the result is due to D.Prasad \cite{P00}.

\bigskip  We show in  \S \ref{11} how to get Theorem \ref{2.2} from Theorem \ref{1.1};  this amounts to computing the dimensions
of fixed points for the $\pi_\lambda$'s.
Our method establishes the other results first for $R=\mathbb C$, and then extends them to $R$. 
Let us hasten to mention that when $R=\mathbb C$ part of the results were known. Indeed when $D=F$ and $\charf _F=0$,
the first part of Theorem \ref{1.1} is due to \cite{Howe74}. We actually adapt Howe's arguments to our setting.
Similarly when $\charf_F=0$  one can obtain Theorems \ref{3.3}, \ref{4} (and the second part of Theorem \ref{1.1}) from the much
more general results of \cite {MW87}, and we get inspiration from their proofs. \footnote{While we were writing our results, the preprint \cite{Su22} reached us.
When $R=\mathbb C, D=F$ and $\charf_F=0$, Suzuki establishes Theorem \ref{2.2} for the congruence subgroups $K_j=1+M_n(P_F^j)$ of  $GL_n(F)$.
He also gets a result equivalent to Theorem \ref{3.3} and Theorem \ref{5} for square integrable $\tau$. His methods are
similar to ours.}

We now give more detail on our method of proof.
First we take $R=\mathbb C$. In that case, knowing $[\pi]_K$ for an
open compact subgroup $K$ of $ G$ is equivalent to knowing the character $\trace (\pi) $ on smooth functions  on $G$
supported in $K$. An expression of $\trace (\pi)$ on small enough $K$  as a linear combination of finitely many easier
distributions is usually called a  germ expansion for $\pi$.  
When $\charf _F=0$, the theory of germ expansions has a long history. For a reductive group $ G$  and $\pi$  irreducible  
Harish-Chandra  established a germ expansion of  $\trace(\pi)$ as a linear combination of Fourier transforms of nilpotent
orbital integrals on the Lie algebra $\mathfrak g$ of $G$, with coefficients a priori only complex numbers  \cite{HC70}.
To get from functions on $G$ to functions on $\mathfrak g$, he used the exponential map, which is not available to us
 when $\charf_F>0$.
The interest of our group $G=GL_n(D)$ is that $\mathfrak g=M_n(D)$, so that nilpotent orbits of $G$ in $\mathfrak g$ are  
parametrized by partitions  of $n$, and that one can use the map $e:X\mapsto 1+X$ from $\mathfrak g$  to $G$ as a substitute for
the exponential. When $D=F$, Howe  proved using $e$ that the Fourier transform of the nilpotent
orbital integral corresponding to a partition $\lambda$ is proportional to $\trace (\pi_\lambda)$, and got a germ expansion
$ \trace (\pi)= \sum_\lambda c_\pi(\lambda) \trace (\pi_\lambda) $ on the $i$-th congruence subgroup $K_i$  for $i $ large enough. He  showed that the  $c_\pi(\lambda)$ are integers by  constructing for any $i>0 $  a character $\xi_\lambda$ of $K_i $ which appears with multiplicity $1$ in $ \pi_\lambda$ and multiplicity 0 in $\pi_\mu$
unless $\lambda \geq \mu$ \cite{Howe74}. We show the existence of such characters for $D$ in Lemma \ref{lemma6}.
For our group $G$  and $\pi$ irreducible, B.Lemaire proved 
the local integrability of the distribution $\trace( \pi)$  (that was new when $\charf_F=p $)  and adapted Howe's arguments to get a germ expansion  as a linear combination of Fourier
transforms of nilpotent integrals \cite{L04}, which by our Proposition \ref{hatpi}  translates into a germ expansion as in Theorem \ref{1.1}.
Our  characters $\xi_\lambda$ then yields the integrality statement and the positivity statement.

Theorem \ref{3.3} follows from the known behaviour of traces with respect to parabolic induction. In \S \ref{s:PI},
we give a treatment valid whatever $\charf_F$ is.

As already said, when $\charf_F=0$, Theorem 4 can be obtained from  results of C.Moeglin  and J.-L.Waldspurger for a reductive group $G$ and $\pi$ irreducible. They attach to a nilpotent orbit $ \mathfrak O$ of $G $ in $\mathfrak g$ a number of  generalized Whittaker
spaces of $\pi$. 
 They consider the Harish-Chandra germ expansion of $\pi$ as a linear
combination $\sum  c_\pi (\mathfrak O) D_{\mathfrak O}$ over the nilpotent orbits $\mathfrak O$,  where $ D_{\mathfrak O}$ is the Fourier
transform of the orbital integral along $\mathfrak O$. They show that if $\mathfrak O$ is maximal in the support of $c_\pi$
then the dimension of any Whittaker space attached to $\mathfrak O $ is $c_\pi(\mathfrak O)$. The nilpotent orbits with that maximality
property go by the name of wave front set of $\pi$  and there is a large literature on that subject.
 In our more restricted setting, but allowing $\charf_F=p$, we get Theorem \ref{4} by adapting arguments of  \cite{Rodier}\footnote{Rodier assumed $\charf_F=0, G $ split and the support of $c_\pi$ contains the maximal nilpotent orbit}
and \cite{MW87}.

Still with $R=\mathbb C$, to prove Theorem \ref{5} in  \S \ref{s:JL} we use that the Jacquet-Langlands correspondence LJ is expressed by
character identities, where the characters  are considered as locally $L^1$ functions on regular semisimple elements  (by the result of B.Lemaire alluded to above).  

In  \S \ref{s:10} we pass from $R=\mathbb C$ to the general case. To transfer the results from a field $R$ to an isomorphic field $R' $
we use  that the theory of smooth
representations is essentially algebraic. That gives the case of $\mathbb Q_\ell ^{ac} $ which is isomorphic to $\mathbb C$. We then get the case
of $\mathbb F_\ell ^{ac} $ by reduction, using the results of  \cite{MS14}. To transfer the results
from an algebraically closed field $R$ to an algebraically closed extension $R'$, we use the fact that for a cuspidal $R'$-representation
$\pi$  of  $G$, there is a  character $\chi$ of $G$ into $R'^*$ such that $\chi \pi$  comes by base change from an $R$-cuspidal representation of $G$.
To get the result for any $R$ we show that Theorem \ref{1.1} over an algebraically closed extension $R^{ac}$  of $R$ implies Theorem  \ref{1.1}
over $R$ essentially because base change  preserves finite length.
 
When $n=2$ and $D=F$, we compute in  \S \ref{n=2} the two coefficients $c_\pi(\lambda)$ for all irreducible $\pi$.
When $ n=3$ or $4$, $D=F, \charf_F=0, R=\mathbb C$, F.Murnaghan computes the three  coefficients $c_\pi(\lambda)$
for cuspidal representations $\pi$  of $G$ induced from $F^*GL_n(O_F) $\cite{M91}. 
For any   split reductive group $G$ over $F$,  R.Meyer and  M.Solleveld 
using the Bruhat-Tits building of $G$,  give an upper bound on $\dim_R V^{C_r}$ for some
  special cases $C_r,$ of Moy-Prasad subgroups   (\cite{MS12}Theorem 8.5). Their result is  far less precise than ours.  

\bigskip Acknowledgments.  We thank I.Badulescu, D.Bernardi, P.Broussous, B.Lemaire, G.McNinch, A.Minguez, S.Morra, V.S\'echerre for a number of conversations about the topic of the paper. 
The second author has talked about this work at numerous conferences in 2022 (Stockholm,  Singapore, Grenoble, Heidelberg, Bangalore), and  
thanks the organizers for their invitation. A part of this work was done at 
 the Institute for Mathematical Sciences of the National University of Singapore. The final version of that paper was written when the first-named author was enjoying the hospitability of the Graduate School of 
Mathematical Sciences of the University of Tokyo.

\section{Notations}\label{not} Let $p$ be a prime number, and $F$ a  local non archimedean field of residual characteristic $p$. We denote by  $O_F$ the ring of integers of $F$, $P_F$ the maximal ideal of $O_F$,  $p_F$ a generator of 
$P_F$,  $k_F=O_F/P_F$  the residue field of order $q= p^f$ where $f= [k_F:\mathbb F_p]$ is the residual degree, and  $F^{ac}$ an algebraic closure of $F$.  Let   $| \ |$ denote the absolute value  of $F^{ac}$ such that for  $x\in  F^{ac}$ non-zero, and   $N_{E/F} $ the norm of  a finite extension $E$ of $F$ containing  $x$,  we have
 $|x|^{[E:F]}=|N_{E/F}(x)|=|O_F/ N_{E/F}(x) O_F|$ (\cite{Cassels67} 10.Theorem). 
In particular $|p_F|=q^{-1}$.

Let $D$ be  a central division $F$-algebra  of finite dimension $d^2$. We denote by $O_D$ the maximal  order of $D$, $P_D$ the maximal ideal of $O_D$,  $p_D$ a generator of $P_D$, $k_D=O_D/P_D$ the residue field of cardinal $q^d$; we have $p_F O_D=P_D^d$ \cite{R75}.

\medskip Let $n$ be a positive integer and $G=GL_n(D)$. Put $K_0=GL_n(O_D)$ and 
$K_i=1+M_n(P_D^i)$ for a positive integer $i$. Let  $Z\simeq F^*$ denote  the center and 
 $\mathfrak g=M_n(D)$  the Lie algebra  of $G$. Let $\trd, \nrd:M_n(D)\to F$ be  the reduced trace, the reduced norm. 
The symmetric $G$-invariant bilinear form    $(X,Y)\mapsto \trd(XY): M_n(D)\times M_n(D)\to F$ is not degenerate and  $G=  \{Z\in M_n(D) \ | \ \nrd (Z)\neq 0\}$. 

\medskip The letter  $P$ will denote  a  parabolic subgroup of $G$, its unipotent radical is usually written $N$, and $M$ is used for a    Levi subgroup so that $P=MN$. We write $\mathfrak p, \mathfrak m, \mathfrak n $  for their Lie algebras.

\medskip A composition  $ \lambda=(\lambda_i)$ of $ n=\lambda _1+\ldots + \lambda_r, \ \lambda _i\in \mathbb N_{>0},$   is called a partition of $n$ when the sequence $ (\lambda_i)$ is decreasing. To a composition $\lambda$ of $n$ is associated a parabolic subgroup 
$P_\lambda$ of $G=GL_n(D)$ with Levi subgroup $M_\lambda $ block-diagonal with blocks of size $\lambda_1, \ldots, \lambda_r$ down the diagonal,  and unipotent radical $N_ \lambda$ contained in  
 the upper  triangular subgroup $B$.   We let  $P_\lambda^-=M_\lambda N^-_\lambda$ the parabolic subgroup opposite to $P_\lambda$ with respect to $M_\lambda$. We have $G=P_{(n)}$ and $P_{(1,\ldots, 1)}=B $. We denote  by $T$ and $U$ the group $M_{(1,\ldots, 1)}$ of diagonal matrices with entries in $D^*$ and the strictly upper triangular group   $ N_{(1,\ldots, 1)}$.
 A parabolic subgroup $P$  of $G$ is conjugate to $P_ \lambda$ for a unique composition  $ \lambda$ of $n$  and  is associated to 
$P_{\lambda^\dag}$  for the   unique partition $\lambda^\dag$ of $n$ deduced from  $ \lambda$ by re-ordering its elements.
 Let  $\mathfrak P(n) $  denote the set of partitions of $n$. For $\lambda =(\lambda_1, \ldots, \lambda_r)\in \mathfrak P(n) $, $d\lambda=(d\lambda_1, \ldots, d\lambda_r)\in \mathfrak P(dn) $.

 \medskip Let $R$ be a  field.  We denote by  $\charf _R$ the characteristic of $R$, and by  $C_c^\infty(X;R)$ the $R$-module  of  locally constant functions on a locally profinite space $X$   with compact support and values in $R$.  The map $$\varphi\mapsto f(1+X)=\varphi(X):  C_c^\infty( M_n(P_D);R)\to C_c^\infty(K_1;R)$$ is a $K_0$-equivariant isomorphism.
The extension by $0$ embeds $C_c^\infty( M_n(P_D);R)$  in  $ C_c^\infty( \mathfrak g;R)$ and 
 $C_c^\infty(K_1;R)$ in  $ C_c^\infty( G;R)$.  
  An  $R$-distribution on $G$ or on $\mathfrak g$  is a linear form on  $C_c^\infty(G;R)$ or $C_c^\infty(\mathfrak g;R)$.  The group $G$ acts on   $G$ and   on  $\mathfrak g$ by   conjugation, and by functoriality 
    on $C_c^\infty(G;R)$,  $C_c^\infty(\mathfrak g;R)$ and on the distributions. A $G$-invariant distribution is called invariant.
 
 \medskip  For $R=\mathbb C$,   $dg$ will denote  the Haar measure on $G$ such that $dg$  gives the volume $1$ to  {\color{blue} $K_0 $}, et $dZ$  the Haar measure on  $\mathfrak g$ giving the volume $[K_0:K_1]^{-1}=|GL_n(k_D)|^{-1}$ to $M_n(P_D)$, hence the volume $a= q^{dn^2} |GL_n(k_D)|^{-1}$ to $M_n(O_D)$. The Haar measures $dZ$ and  $dg=a |\nrd Z|_F^{-n}dZ$ (\cite{W67} X, \S 1 Lemma 1) are compatible with the map $x\mapsto 1+x: M_n(P_D)\to  K_1 $. 
  The  modulus  of $P$  is 
$\delta_P(p) =|\det (\Ad p)_{\mathfrak n}|_F$  (\cite{V96} I.2.8).
  Let $dk$ denote the restriction of $dg$ to  $K_0$, 
 $dp$  the   left Haar measure  on $P$  such that $dg= \delta_P(p) dk dp$, $dn^-$ the Haar measure on $N^-$ such that $dn^-\, dp$ is the restriction of $dg$ to $N^-P$ (open in $G$), $dn$   the Haar measure on $N$  giving the same volume 
to $N\cap K_0$  as the volume of  $N^-\cap K_0$ for $dn^-$, and $dm$  the Haar measure on $M$ such that $dp=dm\, dn$. For each $f\in C_c^\infty(G;\mathbb C)$, 
$$  \int_{G} f(g) dg =\int_{K_0\times P}f(p^{-1}k) \,  dk \, dp= \int_{K_0\times P}f(kp) \,  \delta_P(p)\,  dk \, dp$$
$$= \int_{K_0\times M \times N}f(kmn) \,  \delta_P(m)\,  dk \, dm \, dn .$$
 Let $dW, dY^-, dY$ be the Haar measures on $\mathfrak h= \mathfrak p, \mathfrak n^-, \mathfrak n$  such that $dp$ and $dW$, $dn^-$ and $dY^-$, $dn$ and $dY$ are compatible 
 with the map $x\mapsto 1+x$ for  $x\in \mathfrak h(P_D)=\mathfrak h\cap M_n(P_D)$. We have $dZ= dW dY^-$.

 \medskip Let $\pi$ be a  smooth representation of $G$  on an $R$-vector space $V$. Each vector is fixed by some open compact subgroup $K$ of $G$, 
 \begin{equation}V=\cup_KV^K \ \ \ \text{where} \ V^K=\{\text{vectors of } V \ \text{fixed by } K\}.
  \end{equation} $\pi$ is called  admissible when  the dimension $\dim_RV^K$ of $V^K$ is finite  for any open compact subgroup $K$.  
 The categories  $\Rep_R^\infty(G)$ of smooth $R$-representations of $G$, $\Rep_R^{\infty,f}(G)$ of finite length smooth representations  are abelian.  
 When  $\charf_ R\neq p$,   the category   of admissible $R$-representations of $G$ is abelian and contains $ \Rep_R^{\infty,f}(G)$ (this is not true when  $\charf_ R= p$). 
 We denote by $\Gr_R^{\infty}(G)$ the  Grothendieck group  of  $\Rep_R^{\infty,f}(G)$, and
  $$\pi\mapsto [\pi]: \Rep_R^{\infty}(G)\to \Gr_R^{\infty}(G) $$ the natural homomorphism.    The map $\chi \mapsto \chi \circ \nrd$ is a bijection from the  smooth characters  $F^*\to R^*$  onto the smooth characters  $G\to R^*$. 

\medskip For  a set $X$ and a function $f$ on $X$ with value in $\mathbb Z$ or in $R$, the support $\Supp f$ of $f$ is the set of $x\in X$ with $f(x)\neq 0$ and $1_Y$ will denote the characteristic function of a subset $Y$ of $X$.

\section{Nilpotent orbits}
\subsection{} \label{geometric} An element $X\in \mathfrak g $ is nilpotent if and only if $X^r=0$ for some $r\in \mathbb N$.  The set  $\mathfrak N $ of nilpotent elements in  $\mathfrak g$ is stable by $G$-conjugation. A $G$-orbit in   $\mathfrak N $ is called a {\bf nilpotent orbit of $G$}.  
     The set   $G\backslash \mathfrak N$  of  nilpotent orbits of $G$ is  finite, in bijection with the set $\mathfrak P(n)$ of partitions  of $n$ (\cite{BHL10}  \S 2.4-2.6).

  \subsection{}\label{parti} Let $V$ be the  right $D$-vector space $D^n$. The group $G$  identifies with $\Aut_D(V) $ and   its Lie algebra $\mathfrak g$ with $\End_D V $. 
  Let $X\in \End_D V $ be nilpotent.
 The  composition $\lambda = (\lambda_1, \ldots  )$ of $n$,   
 \begin{align}\label{parti} \lambda_i= \dim_{D} \Ker X^i-  \dim_{D} \Ker X^{i-1}  \ \ \ {\text for \ } i\geq 1,\end{align}
  is   a partition because the multiplication by $X$  induces an injection from $ \Ker X^i/  \Ker X^{i+1} $ to $  \Ker X^{i-1}/  \Ker X^{i}$.  
We get a canonical map  $\mathfrak N \to \mathfrak P(n)$ sending $0$ to $(n)$.  The map is bijective. Let  $ \mathfrak O_\lambda$ denote  the nilpotent orbit  of $G$ containing $X$. 
The dual  partition of $\lambda$ 
is $\hat \lambda=(\hat \lambda_i= |\{j \ | \ \lambda_j\geq i \}|)$. There is a  partial order on $\mathfrak P(n) $ 
$$ \mu \leq \lambda\  \Leftrightarrow  \  \hat  \lambda  \leq  \hat \mu \ \Leftrightarrow  \ \sum_{i=1}^j\mu_i\leq \sum_{i=1}^j\lambda_i  \text{ for all $j$}.$$
There is also a  partial order on $G\backslash \mathfrak N$ 
$$\mathfrak O' \leq\mathfrak O\  \Leftrightarrow \mathfrak O'\subset \overline {\mathfrak O} \text{ where $ \overline {\mathfrak O}$ is the closure of $\mathfrak O$ in $\mathfrak g$.  }
$$
The bijection reverses the partial order.
 \begin{equation} \overline{\mathfrak O} _\lambda= \cup _{\hat \mu \leq \hat \lambda}\mathfrak O _\mu.
 \end{equation}
 The  unique maximal partition $(n)$ corresponds the null orbit  $\{0\}=\mathfrak O_{(n)}$.
 The  unique minimal  partition $(1,\ldots, 1)$ corresponds to the  unique maximal nilpotent orbit $\mathfrak O_{(1,\ldots, 1)}$, called regular,  of closure $\overline{\mathfrak O}_{(1,\ldots, 1)} =\mathfrak N$.
The  parabolic  subgroup  $P$ of $\Aut_D(V)$ preserving the flag  $(\Ker X^i)_i$ 
   of  the iterated kernels of $X$, 
  is associated to  $P_\lambda$. The intersection $\mathfrak O_\lambda\cap \mathfrak n_\lambda$ is open dense in  $\mathfrak n_\lambda$ \cite[\S 13.17]{Jan04}. The dimension of $\mathfrak O_\lambda $ as an $F$-variety is even and equal to   (loc.cit.)
 \begin{align}\label{dind}\dim_F \mathfrak O_\lambda&= 2 \dim_F \mathfrak  n_\lambda =  2    d^2\dim_D\mathfrak  n_\lambda,\\ 
     \label{dlambda}  \dim_D \mathfrak n_\lambda  &= \sum_{i<j} \lambda_i \lambda_{j} . \end{align} 
We  denote $ d_\lambda=\sum_{i<j} \lambda_i \lambda_{j} $ and $d(\mathfrak P(n))= \{d_\lambda \ | \ \lambda \in \mathfrak P(n)\}$,  \begin{equation}\label{d}
d(\mathfrak P(n))=\{ d_{(n)}=0 <  d_{(n-1, 1)}=n-1 <\ldots <d_{(1,\ldots, 1)}=n(n-1)/2 \} . \end{equation}
 The map $\lambda\mapsto d_\lambda: \mathfrak P(n)\to \mathbb N$ is injective  only when $ n\leq 5$. 

$d(\mathfrak P(2))= \{0<1\}$.

$d(\mathfrak P(3))= \{0<2 =d_{(2,1)}< 3\}$.

$d(\mathfrak P(4))= \{0<3 =d_{(3,1)}  < 4=d_{(2,2)}< 5 =d_{(2,1,1)}<6\}$.

$d(\mathfrak P(5))= \{0<4 =d_{(4,1) } <  6= d_{(3,2)} < 7=  d_{(3,1,1)}  < 8=d_{(2,2, 1)} <  9=d_{(2,1,1, 1)} <10\}$.

$d(\mathfrak P(6))= \{0<5 =d_{(5,1)}  < 8=d_{(4,2)} < 9=d_{(4,1,1)}  =d_{(3,3)}   < \ldots  < 15\}$.

 \section{ Nilpotent orbital integrals} Assume $R=\mathbb C$. The   nilpotent orbital integral  of the zero nilpotent orbit $\{0\}$ is the value at $0$,
 $$\mu_{\{0\}}(\varphi)=\varphi(0)  \ \ \ (\varphi \in C_c^\infty (\mathfrak g;\mathbb C)).$$
    Let $\mathfrak O$  be a non-zero nilpotent orbit of $G$  and $ \lambda\in \mathfrak P(n) \setminus \{(n)\}$ such that $\mathfrak O= \mathfrak O_\lambda$.     The nilpotent orbital integral of $\mathfrak O$  is a linear form sending $\varphi \in C_c^\infty (\mathfrak g;\mathbb C)$ to
 \begin{align}\label{g1=0}  \mu_{\mathfrak O_\lambda} (\varphi)&= \int_{ \mathfrak n_\lambda}\varphi_{K_0} (Y ) \,  dY  \\    \label{phiK}\varphi_{K_0}(Z) &=\int_{K_0}\varphi(k Zk^{-1}) dk  \ \  \text{for} \ Z\in \mathfrak g\end{align}
$dY$ and  $dk$ are the  Haar measures 
on  $\mathfrak n_\lambda$  and   $K_0 $ given in \S \ref{not}. 
For  \eqref{g1=0}, see \cite{Howe74} when $D=F$, \cite{L04} for $D$ general.

\subsection{}Homogeneity.  
 For $t\in F^*, \varphi \in C_c^\infty (\mathfrak g)$, write $\varphi_{t}(Z)=\varphi(t^{-1}Z) $  for  $Z\in \mathfrak g$.  
\begin{proposition} \label{homo}  
The  nilpotent integral orbital   of  $\mathfrak O$  satisfies  the  homogeneity relation:    $$ \mu_{\mathfrak O }(\varphi_{t})= |t|_F^{d(\mathfrak O) } \, \mu_{\mathfrak O }(\varphi),  \ \ \ \dim_F(\mathfrak O)=2d(\mathfrak O). $$
 \end{proposition}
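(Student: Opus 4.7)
The plan is to reduce the identity to a simple change of variables in the integral defining $\mu_{\mathfrak{O}}$, exploiting that $\mathfrak{n}_\lambda$ is an $F$-vector space on which multiplication by $t$ is an $F$-linear automorphism, and that the Haar measure scales by $|t|_F^{\dim_F \mathfrak{n}_\lambda}$.

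First I would dispose of the trivial case $\mathfrak{O} = \{0\}$, where $d(\mathfrak{O}) = 0$ and $\mu_{\{0\}}(\varphi_t) = \varphi_t(0) = \varphi(0) = \mu_{\{0\}}(\varphi)$, matching $|t|_F^0 = 1$. So assume $\mathfrak{O} = \mathfrak{O}_\lambda$ with $\lambda \in \mathfrak{P}(n) \setminus \{(n)\}$.

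Next I would observe that the conjugation average $\varphi \mapsto \varphi_{K_0}$ commutes with scaling: since $\varphi_t(Z) = \varphi(t^{-1}Z)$ and conjugation by $k \in K_0$ is $F$-linear in the argument, one has $(\varphi_t)_{K_0}(Z) = \varphi_{K_0}(t^{-1}Z)$. Substituting this into the definition \eqref{g1=0} yields
\begin{equation*}
\mu_{\mathfrak{O}_\lambda}(\varphi_t) = \int_{\mathfrak{n}_\lambda} \varphi_{K_0}(t^{-1}Y)\, dY.
\end{equation*}
Then I perform the change of variables $Y = tY'$ in the $F$-vector space $\mathfrak{n}_\lambda$. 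Since multiplication by $t$ is an $F$-linear automorphism and $dY$ is a Haar measure on the additive group $\mathfrak{n}_\lambda$, one has $dY = |t|_F^{\dim_F \mathfrak{n}_\lambda}\, dY'$. This gives
\begin{equation*}
\mu_{\mathfrak{O}_\lambda}(\varphi_t) = |t|_F^{\dim_F \mathfrak{n}_\lambda} \int_{\mathfrak{n}_\lambda} \varphi_{K_0}(Y')\, dY' = |t|_F^{\dim_F \mathfrak{n}_\lambda}\, \mu_{\mathfrak{O}_\lambda}(\varphi).
\end{equation*}

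Finally, to match the exponent, I would invoke the dimension formulas \eqref{dind} and \eqref{dlambda} recalled just before the proposition: $\dim_F \mathfrak{O}_\lambda = 2\, \dim_F \mathfrak{n}_\lambda$, so by the very definition $d(\mathfrak{O}_\lambda) = \tfrac{1}{2}\dim_F \mathfrak{O}_\lambda = \dim_F \mathfrak{n}_\lambda$. This identifies the exponent $\dim_F \mathfrak{n}_\lambda$ with $d(\mathfrak{O})$, completing the proof.

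There is no real obstacle; the only point that deserves care is the normalization of the scalar $|t|_F$ on an $F$-vector space of dimension $N = \dim_F \mathfrak{n}_\lambda$, which by the definition of $|\cdot|_F$ in \S\ref{not} contributes exactly $|t|_F^N$ under the change of variable. The value of $N$ is then read off from the formulas \eqref{dind}--\eqref{dlambda}, which crucially use that $\mathfrak{n}_\lambda$ is viewed over $F$ (so the factor $d^2$ coming from $D$ cancels correctly in the identification $d(\mathfrak{O}_\lambda) = d^2 d_\lambda = \dim_F \mathfrak{n}_\lambda$).
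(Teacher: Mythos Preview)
Your proof is correct and follows essentially the same approach as the paper: both reduce the homogeneity to a change of variables $Y \mapsto tY$ in the integral over the $F$-vector space $\mathfrak{n}_\lambda$, picking up the factor $|t|_F^{\dim_F \mathfrak{n}_\lambda}$, and then identify $\dim_F \mathfrak{n}_\lambda = d(\mathfrak{O}_\lambda)$ via \eqref{dind}. The paper's proof is terser (it omits the trivial orbit and the explicit commutation of $K_0$-averaging with scaling), but the argument is the same.
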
 
 
 \begin{proof} For $ \lambda\in \mathfrak P(n) \setminus \{(n)\}$, we have $d(\mathfrak O_\lambda )= \dim_F \mathfrak n_\lambda $  by \eqref{dlambda}  and  by \eqref{g1=0}
 $$\mu_{\mathfrak O_\lambda}(\varphi)= \int_{\mathfrak n_\lambda}  \varphi_{K_0} (Y )  \, dY=|t|_F^{\dim_F \mathfrak n_\lambda } \int_{\mathfrak n_\lambda  }  ( \varphi_{K_0} (  tY ) \, dY= |t|_F^{\dim_F \mathfrak n_\lambda  } \mu_{\mathfrak O_\lambda  }(\varphi_{t^{-1}}). 
$$ \end{proof} 

For  a nilpotent orbit $\mathfrak O$ of $G$ and a lattice $\mathfrak L$  in $\mathfrak g$, we denote by  $\mu_{\mathfrak O, \mathfrak L}$ the restriction of  $\mu_{\mathfrak O}$ to $C_c^\infty (\mathfrak g/\mathfrak L;\mathbb C)$ (identified to the functions on $\mathfrak g$ invariant by translation  by $ \mathfrak L$). The homogeneity implies (\cite{HC78} Lemma 14  when the characteristic of $F$ is $0$): 

\begin{corollary} \label{LI} For any lattice $\mathfrak L$  in $\mathfrak g$, the linear forms  $\mu_{\mathfrak O ,\mathfrak L}$ of $C_c^\infty (\mathfrak g/\mathfrak L;\mathbb C)$ for the nilpotent orbits $\mathfrak O$ of $G$ are linearly independent.
 \end{corollary}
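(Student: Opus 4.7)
The plan is to adapt Harish-Chandra's argument (\cite{HC78} Lemma 14): first use Proposition \ref{homo} to split the given relation into pieces indexed by $d(\mathfrak O)$, and then treat the orbits sharing a common degree by a geometric separation argument inside $\mathfrak g$.

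Suppose a relation $\sum_{\mathfrak O} c_\mathfrak O\, \mu_{\mathfrak O, \mathfrak L} = 0$ holds with only finitely many $c_\mathfrak O$ non-zero. Let $\varphi \in C_c^\infty(\mathfrak g; \mathbb C)$ be arbitrary, invariant under some lattice $\mathfrak L'$. For $t \in F^*$ the dilate $\varphi_t(Z) = \varphi(t^{-1}Z)$ is $t\mathfrak L'$-invariant, hence $\mathfrak L$-invariant once $|t|_F$ is large enough, so $\varphi_t \in C_c^\infty(\mathfrak g/\mathfrak L; \mathbb C)$. Applying the hypothesis and Proposition \ref{homo} yields
\[ 0 \;=\; \sum_{\mathfrak O} c_\mathfrak O\, \mu_\mathfrak O(\varphi_t) \;=\; \sum_d |t|_F^{\,d}\, \Bigl(\sum_{d(\mathfrak O) = d} c_\mathfrak O\, \mu_\mathfrak O(\varphi)\Bigr) \]
for all sufficiently large $|t|_F \in q^\Z$. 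A polynomial in one variable that vanishes at infinitely many points is identically zero, so for each $d$,
\[ \sum_{d(\mathfrak O) = d} c_\mathfrak O\, \mu_\mathfrak O(\varphi) \;=\; 0 \qquad \text{for every } \varphi \in C_c^\infty(\mathfrak g; \mathbb C). \]

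Now fix $d$ and suppose some coefficient $c_{\mathfrak O_1}$ with $d(\mathfrak O_1) = d$ is non-zero; write $\mathfrak O_1 = \mathfrak O_{\lambda_1}$. For each other orbit $\mathfrak O_j = \mathfrak O_{\lambda_j}$ with $d(\mathfrak O_j) = d$ appearing in the sum, the equality $\dim_F \mathfrak O_1 = \dim_F \mathfrak O_j$ together with the description of $\overline{\mathfrak O}_j$ as $\mathfrak O_j$ plus orbits of strictly smaller dimension forces $\mathfrak O_1 \cap \overline{\mathfrak O}_j = \emptyset$. Pick $X \in \mathfrak O_1 \cap \mathfrak n_{\lambda_1}$ (non-empty by \S \ref{parti}) and an open neighborhood $U$ of $X$ disjoint from the finite closed set $\bigcup_{j \neq 1} \overline{\mathfrak O}_j$. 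Choose a non-negative $\varphi \in C_c^\infty(\mathfrak g; \mathbb C)$ supported in $U$ and strictly positive at $X$. For $j \neq 1$ and $Y \in \mathfrak n_{\lambda_j} \subseteq \overline{\mathfrak O}_j$, the $G$-invariance of $\overline{\mathfrak O}_j$ forces $kYk^{-1} \notin U$ for every $k \in K_0$, so $\varphi_{K_0}(Y) = 0$ and hence $\mu_{\mathfrak O_j}(\varphi) = 0$. On the other hand, $\varphi_{K_0}(X) > 0$ by continuity of $k \mapsto \varphi(kXk^{-1})$ at $k = 1$, and local constancy of $\varphi_{K_0}$ then makes it strictly positive on a neighborhood of $X$ in $\mathfrak n_{\lambda_1}$, giving $\mu_{\mathfrak O_1}(\varphi) > 0$. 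This forces $c_{\mathfrak O_1} = 0$, the desired contradiction.

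The main subtlety lies in the second step: when $n \geq 6$ the map $\lambda \mapsto d_\lambda$ is no longer injective (for instance $d_{(4,1,1)} = d_{(3,3)} = 9$), so after the homogeneity reduction several nilpotent orbits may share the same degree; combining the closure-disjointness of orbits of equal dimension with the density of $\mathfrak O_\lambda \cap \mathfrak n_\lambda$ in $\mathfrak n_\lambda$ is exactly what is needed to isolate a single summand.
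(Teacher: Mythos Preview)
Your proof is correct, and it uses the same two ingredients as the paper---homogeneity (Proposition~\ref{homo}) and the stratification of the nilpotent cone---but deploys them in the opposite order.

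The paper first builds, by downward induction on $\dim\mathfrak O$, a full dual system of test functions $\varphi_{\mathfrak O}\in C_c^\infty(\mathfrak g;\mathbb C)$ with $\mu_{\mathfrak O}(\varphi_{\mathfrak O'})=\delta_{\mathfrak O,\mathfrak O'}$, and only then invokes homogeneity a single time to dilate the whole family into $C_c^\infty(\mathfrak g/\mathfrak L;\mathbb C)$. You instead apply homogeneity up front: dilating an arbitrary $\varphi$ into $C_c^\infty(\mathfrak g/\mathfrak L;\mathbb C)$ and reading off the Vandermonde relation splits the dependence by the degree $d(\mathfrak O)$, so that the residual relation $\sum_{d(\mathfrak O)=d} c_{\mathfrak O}\,\mu_{\mathfrak O}=0$ holds on \emph{all} of $C_c^\infty(\mathfrak g;\mathbb C)$. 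This buys you a simpler second step: you only need to separate orbits of the \emph{same} dimension, which requires no induction, just the single observation that $\overline{\mathfrak O}_j\setminus\mathfrak O_j$ consists of strictly smaller orbits so $\mathfrak O_1\cap\overline{\mathfrak O}_j=\emptyset$ when $\dim\mathfrak O_1=\dim\mathfrak O_j$ and $j\neq 1$. The paper's route, on the other hand, produces an explicit dual basis, which is a slightly stronger output even though the corollary itself only asks for linear independence.

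One small wording issue: ``the finite closed set $\bigcup_{j\neq 1}\overline{\mathfrak O}_j$'' should read ``the closed set $\bigcup_{j\neq 1}\overline{\mathfrak O}_j$ (a finite union of closed sets)''; the set itself is of course not finite.
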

 \begin{proof} For any  $d\in \mathbb N$, let $  \mathfrak N_d$ denote the union of nilpotent orbits of dimension $\leq d$.  Any nilpotent orbit  $\mathfrak O$ of dimension $d>1$ is open in $ \mathfrak N_d$ and $\mathfrak O\cup  \mathfrak N_{d-1}$  is closed.  We   choose:
 
 a) $\varphi_{\mathfrak O}\in C_{\mathbb C}^\infty (\mathfrak g;\mathbb C)$ such that 
 $$\mu_{\mathfrak O }(\varphi_{\mathfrak O'})=\begin{cases}1 
   \ \text{if} \  \mathfrak O=\mathfrak O'\\
 0  \ \text{if} \  \mathfrak O\neq \mathfrak O'
 \end{cases},
 $$
by induction on $\dim \mathfrak O$.

b)  a  lattice $\mathfrak L_0$   in $\mathfrak g$ such that  $\varphi_{\mathfrak O}\in C_c^\infty (\mathfrak g/\mathfrak L_0 ;\mathbb C)$ for each $\mathfrak O\in G\backslash \mathfrak N$,  

c) $t\in F^*$ such that $\mathfrak L\subset t \mathfrak L_0$. 

\noindent Then, $ (\varphi_{\mathfrak O})_t$ belongs to 
 $ C_c^\infty (\mathfrak g/\mathfrak L;\mathbb C)$ and  by homogeneity.
 $$\mu_{\mathfrak O}( (\varphi_{\mathfrak O'})_t)= |t|^{d(\mathfrak O)} \mu_{\mathfrak O} (\varphi_{\mathfrak O'})= \begin{cases} |t|^{d(\mathfrak O)}  \ & \text{if} \  \mathfrak O=\mathfrak O'\\
 0  \ & \text{if} \  \mathfrak O\neq \mathfrak O'
 \end{cases}.
 $$

\end{proof}

 \subsection{}\label{Nmeas} Fourier transform. The bilinear map $(Z, Y)\mapsto \trd(ZY): \mathfrak g\times  \mathfrak g\to F$ is non degenerate.
Let  $\psi:F\to \mathbb C^*$ be a non-trivial  additive character on $F$.
 The Fourier transform in $C_c^\infty (\mathfrak g;\mathbb C )$ with respect to $\psi$  and  the Haar measure $dZ$ (fixed in \S1) is the  endomorphism of $ C_c^\infty (\mathfrak g;\mathbb C)$:
\begin{equation}\label{Fou} \varphi \mapsto \hat \varphi (Y)= \int_{\mathfrak g} \varphi(Z)\, \psi (\trd(ZY) )\, dZ \ \ \ ( Y\in \mathfrak g, \ \varphi \in C_c^\infty (\mathfrak g;\mathbb C )).
\end{equation}
 There exists a positive real number $c_\psi>0$ such that  $\hat{\hat \varphi}(Z) = c_\psi \varphi (-Z) $ for $Z\in \mathfrak g$ \footnote{   The non-trivial additive characters  $F\to \mathbb C^*$  are $\psi^a(x)=\psi(ax), x\in F$ for $a\in F^*$. As $d (aZ)=|a|_F^{d^2n^2} dZ$, we have $c_{\psi^a}= |a|_F^{-n^2d^2} c_\psi$}. In particular
 \begin{equation}\label{Fou}\int _{\mathfrak g}\hat \varphi (Y) dY = c_\psi \varphi (0).\end{equation} 
For an $O_F$-lattice $\mathfrak L$ in  $\mathfrak g$,  the Fourier transform of $ 1_{\mathfrak L}$ is $ \vol ( \mathfrak L, dZ) \, 1_{\mathfrak L^*_\psi}$
where
    $$ \mathfrak L^*_\psi=\{Z\in M_n(D) \ | \  \psi (\trd(Z\mathfrak L))) =1\}= \{Z\in M_n(D) \ | \  \trd(Z\mathfrak L))) \subset \Ker (\psi)\}.$$
\begin{example}\label{expsi}When $\psi$ is trivial on $P_F$ and not on $O_F$, $M_n(O_D)_\psi^*=M_n(P_D )$ (\cite{W67} X, \S 2, Proposition 5).
\end{example}
For an  open  subset $\mathfrak C$ of $ \mathfrak g$, the extension by zero embeds  $C_c^\infty(\mathfrak C;\mathbb C)$ into $C_c^\infty(\mathfrak g;\mathbb C)$.

  \begin{proposition}\label{LIhat} 
  Let  $\mathfrak C$ be an open  neighborhood of zero in $\mathfrak g$. 
  The linear forms 
  $$\varphi\mapsto  \mu_{\mathfrak O} (\hat {\varphi }) : C_c^\infty(\mathfrak C;\mathbb C)\to  \mathbb C $$ for  $\mathfrak O\in G\backslash \mathfrak N$, are linearly independent.  
  \end{proposition}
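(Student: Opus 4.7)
The plan is to reduce the assertion to Corollary \ref{LI} on the full space $C_c^\infty(\mathfrak g;\mathbb C)$ by combining Fourier inversion with the homogeneity of nilpotent orbital integrals (Proposition \ref{homo}). Assume a linear relation $\sum_{\mathfrak O\in G\backslash\mathfrak N} c_{\mathfrak O}\,\mu_{\mathfrak O}(\hat\varphi)=0$ holds for every $\varphi\in C_c^\infty(\mathfrak C;\mathbb C)$; the goal is to show every $c_{\mathfrak O}$ vanishes.

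First I would fix an arbitrary $\psi_0\in C_c^\infty(\mathfrak g;\mathbb C)$ and pick a lattice $\mathfrak L_0\subset\mathfrak g$ containing its support. Since $\mathfrak C$ is a neighborhood of $0$ it contains some $p_F^k M_n(O_D)$, so for $j$ large the dilate $(\psi_0)_{p_F^j}(Z)=\psi_0(p_F^{-j}Z)$ is supported in $p_F^j\mathfrak L_0\subset\mathfrak C$ and hence lies in $C_c^\infty(\mathfrak C;\mathbb C)$. A direct change of variables in the Fourier integral, using $d(tZ)=|t|_F^{n^2d^2}\,dZ$ and the fact that $t\in F$ is central so $\trd(tZY)=\trd(Z\cdot tY)$, gives $\widehat{(\psi_0)_{p_F^j}}=q^{-jn^2d^2}(\hat\psi_0)_{p_F^{-j}}$. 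Feeding $\varphi=(\psi_0)_{p_F^j}$ into the assumed relation and applying Proposition \ref{homo} to each $\mu_{\mathfrak O}$ then yields, after clearing the common factor $q^{-jn^2d^2}$,
\[
 \sum_{\mathfrak O} c_{\mathfrak O}\, q^{j\,d(\mathfrak O)}\,\mu_{\mathfrak O}(\hat\psi_0)=0\qquad\text{for all sufficiently large }j.
\]

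Next I would group orbits by the common value of $d(\mathfrak O)$: setting $S_d=\{\mathfrak O\in G\backslash\mathfrak N\mid d(\mathfrak O)=d\}$, the display rewrites as $\sum_{d\in d(\mathfrak P(n))} q^{jd}\bigl(\sum_{\mathfrak O\in S_d}c_{\mathfrak O}\,\mu_{\mathfrak O}(\hat\psi_0)\bigr)=0$ for all large $j$. Because $d(\mathfrak P(n))$ consists of distinct nonnegative integers and $q>1$, the numbers $\{q^d\}_{d\in d(\mathfrak P(n))}$ are distinct, so a Vandermonde argument forces each inner sum to vanish, i.e.\ $\sum_{\mathfrak O\in S_d}c_{\mathfrak O}\,\mu_{\mathfrak O}(\hat\psi_0)=0$ for every $d$. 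Since $\psi_0\mapsto\hat\psi_0$ is a bijection of $C_c^\infty(\mathfrak g;\mathbb C)$, $\hat\psi_0$ ranges freely over that space, and Corollary \ref{LI} applied to any lattice containing the support of the test function forces $c_{\mathfrak O}=0$ for every $\mathfrak O\in S_d$. Running over all $d$ gives $c_{\mathfrak O}=0$ identically.

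The main obstacle I anticipate is that $\lambda\mapsto d_\lambda$ is not injective in general (it first fails at $n=6$, where $d_{(4,1,1)}=d_{(3,3)}=9$), so the homogeneity/Vandermonde step alone cannot separate orbits of identical dimension; one is forced to invoke Corollary \ref{LI} inside each $S_d$ to finish. The remaining technical points, namely the Fourier-invariance of $C_c^\infty(\mathfrak g;\mathbb C)$, the computation of $\widehat{(\psi_0)_t}$, and the eventual inclusion of dilated supports in $\mathfrak C$, are all routine.
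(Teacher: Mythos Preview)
Your argument is correct, but it takes a longer path than the paper's. The paper's proof is essentially one line: choose any lattice $\mathfrak L\subset\mathfrak C$; the Fourier transform maps the space of test functions supported in $\mathfrak L$ onto $C_c^\infty(\mathfrak g/\mathfrak L^*_\psi;\mathbb C)$ (the $\mathfrak L^*_\psi$-invariant test functions), so the assumed relation $\sum_{\mathfrak O} c_{\mathfrak O}\,\mu_{\mathfrak O}(\hat\varphi)=0$ for $\varphi$ supported in $\mathfrak L$ reads $\sum_{\mathfrak O} c_{\mathfrak O}\,\mu_{\mathfrak O,\mathfrak L^*_\psi}=0$, and Corollary~\ref{LI} applied to the lattice $\mathfrak L^*_\psi$ gives $c_{\mathfrak O}=0$ immediately. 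Your dilation/homogeneity/Vandermonde detour is not wrong, but it is redundant: since you invoke Corollary~\ref{LI} at the end anyway to separate orbits of equal dimension, and that corollary already separates \emph{all} orbits, the Vandermonde step buys nothing. (Homogeneity is precisely how Corollary~\ref{LI} itself was proved, so you are in effect replicating part of its proof.) One minor wording slip: in your last step you speak of ``any lattice containing the support of the test function'', but $C_c^\infty(\mathfrak g/\mathfrak L;\mathbb C)$ consists of $\mathfrak L$-\emph{invariant} functions, not functions supported in $\mathfrak L$; the logic is still fine because your relation holds on all of $C_c^\infty(\mathfrak g;\mathbb C)$ and hence on any such subspace.
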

  
 \begin{proof}This follows from the linear independence of the $\mu_{\mathfrak O, \mathfrak L}$ for any lattice $\mathfrak L$ (Corollary \ref{LI})  (\cite{HC78} corollary of Lemma 14).
 \end{proof}
\subsection{}  Let  $\mathfrak O$ be a nilpotent orbit of $G$ and   $\psi$ a non-trivial smooth character of $F$.  We compute 
  the  nilpotent orbital integral $ \mu_{\mathfrak O } (\hat{\varphi}) $  \eqref{g1=0}
 of the Fourier transform $\hat{\varphi}$ with respect to $\psi$ of  
 $\varphi \in C_c^\infty (\mathfrak g; \mathbb C)$. 
  Let $\lambda $ be the partition of $n$ such that $\mathfrak O=\mathfrak O_\lambda$. Write $(P,M,N)$ for $(P_\lambda, M_\lambda, N_\lambda)$. The bilinear map $(Y, Y^-)\mapsto \trd(YY^-): \mathfrak n\times  \mathfrak n^-\to \mathbb C$ is non degenerate  because  $\trd(YW)=0$ for $Y\in \mathfrak n, W\in \mathfrak p$.  
  The  corresponding 
 Fourier transform  with respect to $\psi$  is the linear map : $$\varphi_2\mapsto \hat \varphi_2(Y)=\int_{\mathfrak n^-}\varphi_2( Y^-  )\psi (\trd(YY^-))\,  dY^- : C_c^\infty( \mathfrak n^-;\mathbb C)\to C_c^\infty( \mathfrak n;\mathbb C).$$ There exists a positive real number $c_{\psi, \mathfrak n}$ such that 
 $$\int_{ \mathfrak n}  \int_{ \mathfrak n^-} \varphi_2( Y^-  )\psi (\trd(Y Y^-))\,  dY^- \,   dY = \int _ { \mathfrak n} \hat \varphi_2 ( Y ) dY =  c_{\psi, \mathfrak n}\,  \varphi_2(0).$$ 

  For $ \varphi\in C_c^\infty(\mathfrak g;\mathbb C)$ of Fourier transform  $\hat{\varphi}$ with respect to $\psi$, 
put 
  \begin{equation}\label{hatmuo}{\bf \hat{\mu}_{\mathfrak O} (\varphi) =
     \mu_{\mathfrak O }(c_{\psi, \mathfrak n}^{-1}\, \hat{\varphi})}.
     \end{equation}
  \begin{proposition}\label{hatmu}   
We have $ \hat{\mu}_{\mathfrak O} (\varphi) = \int_{ \mathfrak p  }\varphi_{K_0}(  W) \, dW $  .
   \end{proposition}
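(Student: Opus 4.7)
The plan is a direct computation: unravel $\hat\mu_{\mathfrak O}(\varphi)$, commute Fourier transform with $K_0$-averaging, use the vanishing of $\trd$ on $\mathfrak p\times \mathfrak n$, and apply the partial Fourier inversion that defines $c_{\psi,\mathfrak n}$.

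First I would reduce $(\widehat{\varphi})_{K_0}$ to $\widehat{\varphi_{K_0}}$. Since $K_0=GL_n(O_D)$ acts on $\mathfrak g$ by conjugation, which preserves both the symmetric form $\trd(ZY)$ and the Haar measure $dZ$, a substitution $Z\mapsto kZk^{-1}$ inside the defining integral of $\hat\varphi$ gives $\hat\varphi(kYk^{-1})=\widehat{k\cdot\varphi}(Y)$ where $k\cdot\varphi(Z)=\varphi(kZk^{-1})$. Integrating over $k\in K_0$ yields $(\widehat{\varphi})_{K_0}=\widehat{\varphi_{K_0}}$. Writing $\phi=\varphi_{K_0}$, this reduces the proposition to
\[
c_{\psi,\mathfrak n}^{-1}\int_{\mathfrak n} \hat{\phi}(Y)\,dY \;=\; \int_{\mathfrak p}\phi(W)\,dW,
\]
for $(P,M,N)=(P_\lambda,M_\lambda,N_\lambda)$ and $\lambda\neq(n)$ (the case $\lambda=(n)$ reduces to $\hat{\varphi}(0)=\int_{\mathfrak g}\varphi(Z)\,dZ$, which is the Fourier-inversion tautology after integrating out conjugation).

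Next I would use the direct sum decomposition $\mathfrak g=\mathfrak p\oplus\mathfrak n^-$ with compatible Haar measure $dZ=dW\,dY^-$. Expanding
\[
\hat{\phi}(Y)=\int_{\mathfrak p}\int_{\mathfrak n^-}\phi(W+Y^-)\,\psi\bigl(\trd((W+Y^-)Y)\bigr)\,dY^-\,dW,
\]
and invoking the vanishing $\trd(YW)=0$ for $Y\in\mathfrak n$, $W\in\mathfrak p$ (already noted in the text, it is the triangularity of $YW$ and $WY$), the exponent collapses to $\psi(\trd(Y^-Y))$. Interchanging the $dY$ integral with the $dW$ integral (everything being compactly supported after the $\phi$ is evaluated, and dominated convergence is trivial here) gives
\[
\int_{\mathfrak n}\hat\phi(Y)\,dY
=\int_{\mathfrak p}\Bigl(\int_{\mathfrak n}\int_{\mathfrak n^-}\phi(W+Y^-)\,\psi(\trd(Y Y^-))\,dY^-\,dY\Bigr)\,dW.
\]

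The inner double integral is precisely $\int_{\mathfrak n}\widehat{\phi_W}(Y)\,dY$ for the slice function $\phi_W(Y^-)=\phi(W+Y^-)\in C_c^\infty(\mathfrak n^-;\mathbb C)$, Fourier-transformed along the pairing $\mathfrak n\times\mathfrak n^-\to F$. By the definition of $c_{\psi,\mathfrak n}$ this equals $c_{\psi,\mathfrak n}\,\phi_W(0)=c_{\psi,\mathfrak n}\,\phi(W)$. Substituting back and dividing by $c_{\psi,\mathfrak n}$ yields the desired identity. The only place requiring minor care is the commutation of $\widehat{\phantom{\varphi}}$ with conjugation and the Fubini step; neither is a genuine obstacle given that $\phi=\varphi_{K_0}$ is compactly supported and locally constant, so all integrands are step functions on lattices.
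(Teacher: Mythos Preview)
Your proof is correct and follows essentially the same approach as the paper: commute the Fourier transform with $K_0$-averaging, decompose $\mathfrak g=\mathfrak p\oplus\mathfrak n^-$, use $\trd(\mathfrak n\cdot\mathfrak p)=0$, and apply the partial Fourier inversion defining $c_{\psi,\mathfrak n}$. The paper packages these steps into three separate lemmas and justifies the Fubini interchange by reducing to elementary tensors $\varphi_1\otimes\varphi_2$ with $\varphi_1\in C_c^\infty(\mathfrak p)$, $\varphi_2\in C_c^\infty(\mathfrak n^-)$ rather than via slice functions, but this is a presentational difference only.
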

This  was proved only ``for some Haar measures''  when $D=F$ and  the  characteristic  of $F$ is $0$   \cite{Howe74}. 
The proposition follows from the next three lemmas where  $\varphi\in C_c^\infty (\mathfrak g;\mathbb C)$. 
    
\begin{lemma}\label{Hl} 
$ \int_{\mathfrak p}   \int_{ \mathfrak n}  \int_{\mathfrak n^-}\varphi ((Y^-+W) )\psi (\trd(YY^-))\,  dY^- \,   dY  \, dW= c_{\psi, \mathfrak n}\,  \int_{ \mathfrak p }\varphi( W)  \  dW.$
\end{lemma}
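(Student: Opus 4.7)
The plan is to recognize the inner double integral over $\mathfrak n$ and $\mathfrak n^-$ as an instance of the Fourier identity
\[
\int_{\mathfrak n}\hat{\varphi}_2(Y)\,dY = c_{\psi,\mathfrak n}\,\varphi_2(0)
\]
stated just above the lemma, applied pointwise in the parameter $W\in\mathfrak p$. Fix $W\in\mathfrak p$ and set $\varphi_2^W(Y^-):=\varphi(Y^-+W)$. Since $\varphi\in C_c^\infty(\mathfrak g;\mathbb C)$ and the addition map furnishes an $F$-linear isomorphism $\mathfrak p\oplus\mathfrak n^-\xrightarrow{\sim}\mathfrak g$, the function $\varphi_2^W$ lies in $C_c^\infty(\mathfrak n^-;\mathbb C)$, so its Fourier transform on $\mathfrak n$ with respect to $\psi$ (defined via the non-degenerate pairing $\trd:\mathfrak n\times\mathfrak n^-\to F$ noted in the paragraph preceding the lemma) is exactly
\[
\widehat{\varphi_2^W}(Y) = \int_{\mathfrak n^-}\varphi(Y^-+W)\,\psi(\trd(YY^-))\,dY^-.
\]

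Applying the cited identity to $\varphi_2^W$ gives
\[
\int_{\mathfrak n}\int_{\mathfrak n^-}\varphi(Y^-+W)\,\psi(\trd(YY^-))\,dY^-\,dY = c_{\psi,\mathfrak n}\,\varphi_2^W(0) = c_{\psi,\mathfrak n}\,\varphi(W).
\]
The function $W\mapsto c_{\psi,\mathfrak n}\,\varphi(W)$ is locally constant with compact support on $\mathfrak p$, inherited from $\varphi\in C_c^\infty(\mathfrak g;\mathbb C)$, so it is integrable against $dW$; integrating the previous identity over $\mathfrak p$ in the variable $W$ yields precisely the right-hand side $c_{\psi,\mathfrak n}\int_{\mathfrak p}\varphi(W)\,dW$ of Lemma \ref{Hl}.

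There is essentially no obstacle: the iterated integrations are performed in exactly the order stated, so Fubini is not required, and all we use are the direct sum decomposition $\mathfrak g=\mathfrak p\oplus\mathfrak n^-$ and the Fourier-inversion-type identity recalled immediately before the lemma. The only point worth checking is that the implicit Haar measure compatibility between $dZ$ on $\mathfrak g$ and the product $dW\,dY^-$ on $\mathfrak p\oplus\mathfrak n^-$ (fixed in Section \ref{not}) is consistent with the convention used to define $c_{\psi,\mathfrak n}$; this is built into the choices of $dY$ and $dY^-$ made in Section \ref{not} and so requires no further argument.
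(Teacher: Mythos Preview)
Your proof is correct and rests on the same Fourier identity $\int_{\mathfrak n}\hat\varphi_2(Y)\,dY=c_{\psi,\mathfrak n}\varphi_2(0)$ as the paper's. The only organizational difference is that the paper reduces to simple tensors $\varphi(Y^-+W)=\varphi_1(W)\varphi_2(Y^-)$ via $C_c^\infty(\mathfrak g;\mathbb C)=C_c^\infty(\mathfrak p;\mathbb C)\otimes C_c^\infty(\mathfrak n^-;\mathbb C)$ and applies the identity to $\varphi_2$, whereas you freeze $W$ and apply it to the slice $\varphi_2^W(Y^-)=\varphi(Y^-+W)$; your version is slightly more direct since it avoids the tensor reduction, but the substance is identical.
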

 \begin{proof} We have $C^\infty_c( \mathfrak g;\mathbb C)= C^\infty_c( \mathfrak p;\mathbb C)\otimes C^\infty_c( \mathfrak n^-;\mathbb C) $. For  $\varphi_1\in   C^\infty_c( \mathfrak p;\mathbb C), \varphi_2\in   C^\infty_c( \mathfrak n^-;\mathbb C)$ and $\varphi\in C^\infty_c( \mathfrak g;\mathbb C)$ such that $\varphi (Y^-+W)=\varphi_1 (W)\varphi_2(Y^-)$ for $Y^-\in  \mathfrak n, W\in  \mathfrak p$, we have
$$ \int_{ \mathfrak p}\int_{ \mathfrak n} \int_{ \mathfrak n^-}\varphi ((Y^-+W) )\psi (\trd(YY^-))\,  dY^- \,   dY  \, dW= c_{\psi, \mathfrak n}\,\int_{ \mathfrak p }\varphi_1 (W) \varphi_2(0) \, dW=
c_{\psi, \mathfrak n}\, \int_{ \mathfrak p }\varphi  (W)  \, dW.
$$
 \end{proof}
 \begin{lemma} \label{HP} The  integration over $\mathfrak n$  of the Fourier transform  
  is integration over   $\mathfrak p$: 
  \begin{align}\label{N} 
 \int_{ \mathfrak n }\hat {\varphi}(Y) \, dY= c_{\psi, \mathfrak n}\, \int_{ \mathfrak p  }\varphi (  W) \, dW  . \end{align}
   \end{lemma}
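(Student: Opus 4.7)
The plan is to unfold the definition of $\hat{\varphi}$, use the compatibility $\mathfrak g = \mathfrak p \oplus \mathfrak n^-$ with $dZ = dW\, dY^-$, exploit the orthogonality of $\mathfrak p$ and $\mathfrak n$ under the reduced trace form, and then apply Lemma \ref{Hl}.

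First, I would write $Z = W + Y^-$ with $W \in \mathfrak p$ and $Y^- \in \mathfrak n^-$, so that
\begin{equation*}
\hat{\varphi}(Y) = \int_{\mathfrak p} \int_{\mathfrak n^-} \varphi(W + Y^-)\, \psi\bigl(\trd((W+Y^-)Y)\bigr)\, dY^-\, dW.
\end{equation*}
The key observation is that $\trd(WY) = 0$ whenever $W \in \mathfrak p$ and $Y \in \mathfrak n$: indeed, since $\mathfrak p = \mathfrak m \oplus \mathfrak n$ is block upper triangular and $\mathfrak n$ is strictly block upper triangular (in the same block pattern), the product $WY$ is again strictly block upper triangular, hence has reduced trace zero. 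This is exactly the remark already recorded in the introduction to the subsection, and it is the only content beyond Fubini; it is not really an obstacle, but it is the step that makes the identity work.

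Using $\trd(ZY) = \trd(Y^-Y)$ for such $Z$ and $Y$, the integrand becomes $\varphi(W+Y^-)\psi(\trd(YY^-))$ (by cyclicity of the reduced trace). Then
\begin{equation*}
\int_{\mathfrak n} \hat\varphi(Y)\, dY = \int_{\mathfrak n} \int_{\mathfrak p} \int_{\mathfrak n^-} \varphi(W+Y^-)\, \psi(\trd(YY^-))\, dY^-\, dW\, dY,
\end{equation*}
and swapping the order of integration via Fubini (justified by the compact support of $\varphi$) puts this integral into exactly the shape of the left-hand side of Lemma \ref{Hl}. Applying that lemma yields $c_{\psi,\mathfrak n}\int_{\mathfrak p}\varphi(W)\, dW$, which is the claimed identity. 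The only subtlety I would double-check is that the Haar measure conventions fixed in Section \ref{not} give $dZ = dW\, dY^-$ on $\mathfrak g = \mathfrak p \oplus \mathfrak n^-$, so that the unfolding in the first step is correctly normalized; this is built into the definitions of $dW$, $dY^-$ given there.
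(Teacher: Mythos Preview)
Your proposal is correct and follows essentially the same approach as the paper: unfold $\hat\varphi$, split $Z=W+Y^-$ via $dZ=dW\,dY^-$, use $\trd(WY)=0$ for $W\in\mathfrak p$, $Y\in\mathfrak n$, swap the $\mathfrak n$ and $\mathfrak p$ integrals, and invoke Lemma~\ref{Hl}. The paper's proof is identical in substance; your added justification for the vanishing of $\trd(WY)$ and the remark on the measure normalization are fine elaborations.
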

   
    \begin{proof}  The left hand side of \eqref{N} is
 $$
    \int_{ \mathfrak n } \int_{\mathfrak g}\varphi(Z )\psi (\trd( Y    Z ))\,  dZ \,  dY     =   \int_{ \mathfrak n} \int_{ \mathfrak p} \int_{ \mathfrak n^-}\varphi(Y^-+W) \psi (\trd(Y (Y^-+W))\,  dY^- \, dW \,   dY 
$$
because $dZ= dY^- \, dW$, and as  $\trd(YW)=0$ for $Y\in \mathfrak n, W\in \mathfrak p$ $$=     \int_{ \mathfrak n} \int_{ \mathfrak p} \int_{ \mathfrak n^-} \varphi(Y^-+W)  \psi (\trd(YY^-))\,  dY^- \, dW \,   dY=  c_{\psi, \mathfrak n}\, \int_{ \mathfrak p }\varphi( W)  \  dW 
  $$
because we  can invert the integrals on $\mathfrak n$ and on  $\mathfrak p$ \footnote{taking $\varphi= \varphi_1 \varphi_2$ as above one wants to compute the integral on $\mathfrak n$ then on $\mathfrak p$ of $ \varphi_1 (W)\hat \varphi_2(Y)$ and we can exchange the integrals because both functions have compact support }  and by Lemma \ref{Hl}. \end{proof}
   \begin{lemma}  
  The Fourier transform of $\varphi_{K_0}$ is $(\hat {\varphi})_{K_0} $ for 
 $ \varphi \in C_c^\infty (\mathfrak g;\mathbb C)$.
  \end{lemma}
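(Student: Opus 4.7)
The plan is to unfold the two definitions, apply Fubini, and use two elementary invariance properties: the $K_0$-invariance of the Haar measure $dZ$ on $\mathfrak{g}$ (under conjugation) and the conjugation-invariance of the bilinear form $(Z,Y)\mapsto \trd(ZY)$.

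First I would write
\[
\widehat{\varphi_{K_0}}(Y) = \int_{\mathfrak g} \varphi_{K_0}(Z)\,\psi(\trd(ZY))\,dZ
= \int_{\mathfrak g}\int_{K_0} \varphi(kZk^{-1})\,\psi(\trd(ZY))\,dk\,dZ.
\]
The double integral converges absolutely (the inner function in $Z$ has compact support, $K_0$ is compact), so Fubini lets us swap the order of integration.

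Next, for each fixed $k\in K_0$, I change variables $Z':=kZk^{-1}$ in the $\mathfrak g$-integral. Since $K_0$ acts on $\mathfrak g$ by conjugation and the chosen Haar measure $dZ$ is $G$-invariant (in particular $K_0$-invariant), $dZ' = dZ$. Using the general identity $\trd(k^{-1}Z'k\,Y) = \trd(Z'\,kYk^{-1})$, which follows from $\trd(AB)=\trd(BA)$ applied to $A = k^{-1}Z'$ and $B = kY$, one gets
\[
\widehat{\varphi_{K_0}}(Y)
=\int_{K_0}\int_{\mathfrak g}\varphi(Z')\,\psi(\trd(Z'\,kYk^{-1}))\,dZ'\,dk
=\int_{K_0}\hat\varphi(kYk^{-1})\,dk = (\hat\varphi)_{K_0}(Y),
\]
which is exactly the claim.

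There is essentially no obstacle here: the only points to check carefully are the Fubini swap (immediate by compact support of $\varphi$ and compactness of $K_0$) and the $K_0$-invariance of $dZ$, which is already built into the normalization in \S\ref{not}. This lemma is purely a compatibility statement between the conjugation averaging operator and the Fourier transform, and together with Lemma~\ref{HP} it closes the proof of Proposition~\ref{hatmu}: indeed, applying Lemma~\ref{HP} to $\varphi_{K_0}$ in place of $\varphi$ gives
\[
\int_{\mathfrak n}\widehat{\varphi_{K_0}}(Y)\,dY = c_{\psi,\mathfrak n}\int_{\mathfrak p}\varphi_{K_0}(W)\,dW,
\]
and the present lemma replaces $\widehat{\varphi_{K_0}}$ by $(\hat\varphi)_{K_0}$, at which point the definition \eqref{g1=0} of $\mu_{\mathfrak O_\lambda}(\hat\varphi)$ matches the right-hand side of Proposition~\ref{hatmu}.
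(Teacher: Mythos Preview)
Your proof is correct and follows essentially the same route as the paper's: both unfold the definitions, exchange the integrals, change variables $Z\mapsto kZk^{-1}$ using the $K_0$-invariance of $dZ$, and invoke $\trd(AB)=\trd(BA)$. The only cosmetic difference is that the paper starts from $(\hat\varphi)_{K_0}(Y)$ and works toward $\widehat{\varphi_{K_0}}(Y)$, whereas you go in the other direction.
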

 \begin{proof} Write $K=K_0$. Then $(\hat {\varphi})_K(Y) = \int_{  K }\hat {\varphi}(k Y k^{-1}) \, dk $ for $Y\in \mathfrak g$  is equal to 
    $$ 
 \int_{ K }\int_{ \mathfrak g}\varphi(Z)\psi (\trd(k Y k^{-1}Z))\,  dZ \, dk 
   =  \int_{K \times \mathfrak g}\varphi(k Z k^{-1})\psi (\trd(k Y k^{-1}\, k Z k^{-1}))\,  dZ, \, dk  $$
  because $dZ$ is $K$-invariant. This is      
    \begin{align*} & \int_{K \times \mathfrak g}\varphi(k Z k^{-1})\psi (\trd(k Y  Z k^{-1}))\,  dZ \, dk = \int_{K \times \mathfrak g}\varphi(k Z k^{-1})\psi (\trd( Y  Z ) )\,  dZ \, dk\\
   &= \int_{ \mathfrak g}\varphi_K( Z)\psi (\trd( Y  Z ) )\,  dZ.
   \end{align*}
        \end{proof}

\section{Trace of an admissible representation and parabolic induction}   
  
\subsection{} Let $R$ be a field of characteristic  $\charf_ R\neq p$ and $dg$ a Haar measure on $G$ with values in $R$. Let  $\pi\in \Rep_{R}^{\infty}(G)$ be an admissible  representation of $G$ on an $R$-vector space $V$. The   linear endomorphism  of $V$
 \begin{equation}\pi (f(g)dg)=\int_Gf(g)\pi (g) dg
  \end{equation}
  has a finite rank. Its trace is  an invariant  $R$-distribution on $G$
    $$\trace(\pi):f  \mapsto \trace (\pi (f(g)dg), \ \  f\in C_c^\infty(G;R), $$
  called the character of $\pi$.
  
   The characters of the  irreducible   smooth  complex representations  of $G$ are linearly independent (\cite{V96} I.6.13 where  $c=0$ should be $0$).

   For any exact sequence $0\to \pi_1\to \pi\to \pi_2\to 0$ of admissible $R$-representations of $G$,  $\trace(\pi)= \trace(\pi_1)+ \trace(\pi_2)$.   
    Any finite length smooth   $R$-representation of $G$ is admissible.           By the universal property of Grothendieck groups,   the  character induces a linear map from the Grothendieck group 
   $\Gr_{R}^{\infty}(G)$ of $\Rep_R^{\infty, f}(G)$ 
 to the  space of 
    invariant    $R$-distributions on $G$. 
    
  For any open compact subgroup $K$ of $G$, the restriction to $K$ 
 induces a linear map
 \begin{equation}\nu\mapsto \nu|_K: \Gr_{R} ^{\infty}(G)\to \Gr_{R} ^{\infty}(K)  \end{equation}
 from   $\Gr_R^{\infty}(G)$  to  the  Grothendieck group $\Gr_R^{\infty}(K)$   of admissible smooth $R$-representations of $K$.  When $K$ is a pro-$p$ group,
the category  $\Rep_{R} ^{\infty}(K)$ is semi-simple. 
    
 \subsection{} Parabolic induction. Let $R$ be a field and 
 $P$   a  parabolic subgroup of $G$  of  Levi subgroup $M$    and unipotent radical $N$.
 The  parabolic induction  $\ind_P^G: \Rep_R^\infty(M)\to \Rep_R^\infty(G)$
 sends $(\sigma , W) \in \Rep_R^\infty(M)$  to  $(\ind_P^G(\sigma), V)\in  \Rep_R^\infty(G)$ where $V$ is the space of functions $f:G\to W$ right invariant by some open subgroup of $G$ and satisfying $f(pg)=\tilde \sigma (p)f(g)$ for $(p,g)\in P\times G$ and    $\tilde \sigma $  is the inflation to  $P$  of $\sigma$. 
 It is  an  exact functor   respecting admissibility and finite length.

 Replacing $P$ by a $G$-conjugate does not change the isomorphism class of $ \ind_P^G(\sigma)$ and a $G$-conjugate of $P$ contains $B$.  
 
We  suppose in this section that  $B\subset P$. This implies
 $G=K_0P=PK_0=K_0P^-=P^-K_0$ where $K_0=GL_n(O_D)$ and  $P^-=MN^-$ the opposite parabolic subgroup with respect to $M$.

 The  parabolic induction of  the trivial $R$-character of  $M$
$$\pi_P=\ind_P^G 1$$  
will play an important role.  
 As our parabolic induction is   not normalized,  $[\pi_P]\in \Gr_R^\infty (G)$ depends    on the choice of $P$ of Levi  $M$.
\begin{lemma} \label{piK0} Assume $\charf _R\neq p$ and let  $P'$ be  a  parabolic subgroup  of $G$ associated to $P$.  The representation $\pi_P $  has the same restriction to $K_0$ as  $\pi_{P'} $. 
\end{lemma}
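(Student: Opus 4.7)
The strategy is to combine an Iwasawa-type decomposition with reduction modulo $p_D$, reducing the lemma to the standard fact that associated standard parabolic subgroups of $GL_n(k_D)$ are conjugate. The hypothesis $B\subset P$ forces $G=PK_0$, so the natural right $K_0$-equivariant map $(P\cap K_0)\backslash K_0\to P\backslash G$ is a bijection, and restriction of functions identifies
$$\pi_P|_{K_0}\;\cong\;\ind_{P\cap K_0}^{K_0} 1,$$
and similarly for $P'$. The lemma thus reduces to producing a $K_0$-equivariant bijection $(P\cap K_0)\backslash K_0\cong (P'\cap K_0)\backslash K_0$.

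Next I would exploit that $P$ and $P'$ both contain $B$, so $P\cap K_0$ and $P'\cap K_0$ both contain the normal pro-$p$ subgroup $K_1=1+M_n(P_D)$ of $K_0$. Reduction modulo $p_D$ identifies these intersections with standard parabolic subgroups $\bar P,\bar P'$ of $\bar K_0:=GL_n(k_D)$, and gives $K_0$-equivariant bijections
$$(P\cap K_0)\backslash K_0\;\cong\;\bar P\backslash\bar K_0,\qquad (P'\cap K_0)\backslash K_0\;\cong\;\bar P'\backslash\bar K_0,$$
where $K_0$ acts on the right through the reduction map, the action factoring through $\bar K_0$ since $K_1$ is normal in $K_0$ and contained in $P\cap K_0$ (and likewise in $P'\cap K_0$).

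Finally I would write $P=P_\lambda$ and $P'=P_\mu$ for compositions $\lambda,\mu$ of $n$; the associativity hypothesis means $\lambda$ and $\mu$ share the same underlying partition. Their reductions $\bar P_\lambda,\bar P_\mu$ are then standard parabolic subgroups of $GL_n(k_D)$ of the same type, hence conjugate by a permutation matrix $w\in\bar K_0$. The assignment $\bar P_\lambda g\mapsto\bar P_\mu wg$ is then a $K_0$-equivariant bijection $\bar P_\lambda\backslash\bar K_0\congto\bar P_\mu\backslash\bar K_0$, which combined with the previous identifications yields the required isomorphism of $K_0$-representations. The main obstacle is purely pedestrian—tracking $K_0$-equivariance through the Iwasawa identification, the reduction modulo $p_D$, and the conjugating permutation—but it is routine given the normality of $K_1$ in $K_0$; in particular, the statement is genuinely characteristic-free on the $K_0$-side, the assumption $\charf_R\neq p$ serving only to make sense of $\pi_P$ as an admissible $G$-representation in the surrounding discussion.
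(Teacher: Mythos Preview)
Your reduction to a finite-group statement is sound: since $G=PK_0$ and $K_1\subset P\cap K_0$, the restriction $\pi_P|_{K_0}$ is inflated from $\ind_{\bar P}^{GL_n(k_D)}1$, so the lemma becomes the assertion that $\ind_{\bar P_\lambda}^{GL_n(k_D)}1_R\cong\ind_{\bar P_\mu}^{GL_n(k_D)}1_R$ whenever $\lambda,\mu$ have the same underlying partition.

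The gap is in your last step. Associated standard parabolic subgroups of $GL_n(k_D)$ are \emph{not} in general conjugate, and in particular not by permutation matrices. Already in $GL_3(k_D)$, the subgroup $P_{(1,2)}$ is the stabilizer of a line while $P_{(2,1)}$ is the stabilizer of a plane; no element of $GL_3(k_D)$ carries one to the other. More generally, if a permutation matrix $w$ satisfies $wP_\lambda w^{-1}=P_\mu$ with $P_\mu$ standard, then $w$ must send the standard flag of type $\lambda$ to a standard flag, forcing the partial sums $\lambda_1+\cdots+\lambda_i$ and $\mu_1+\cdots+\mu_i$ to coincide for every $i$, hence $\lambda=\mu$. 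Consequently the $GL_n(k_D)$-\emph{sets} $\bar P_\lambda\backslash GL_n(k_D)$ and $\bar P_\mu\backslash GL_n(k_D)$ are not isomorphic when $\lambda\neq\mu$, and your proposed map $\bar P_\lambda g\mapsto\bar P_\mu wg$ is not well-defined.

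What you actually need is weaker: that the two permutation \emph{modules} over $R$ are isomorphic even though the underlying $G$-sets are not. Over $\mathbb C$ this follows from a character computation (e.g.\ via $g\mapsto g$ and $g\mapsto (g^T)^{-1}$ having conjugate images), but for arbitrary $R$ with $\charf_R\neq p$ it is the independence of Harish--Chandra induction from the choice of parabolic, which is a genuine theorem for finite groups of Lie type and not a triviality. The paper bypasses this by staying on the $p$-adic side: it invokes results of Dat to produce unramified characters $\chi,\chi'$ of the Levi with $\ind_P^G\chi$ and $\ind_{P'}^G\chi'$ irreducible and isomorphic over an algebraic closure, and then descends via Krull--Remak--Schmidt on the $K_j$-invariants. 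Your reduction would give an alternative route, but only once the finite-group isomorphism is supplied by an argument other than conjugacy.
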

 \begin{proof}\footnote{This proof suggested by the referee simplifies our original proof using \cite{MS14}}  Let $R^{ac}$ be an algebraic closure of $R$. In the group  of unramified smooth $R^{ac}$-characters of $M$, the set of $\chi$ such that $\ind_P^G \chi$ is irreducible is Zariski dense \cite[Theorem 1.2]
 {D05}. There exist unramified  smooth $R^{ac}$-characters $\chi$ and $\chi'$ of $M$ such that the $R^{ac}$-representations $\ind_P^G \chi$ and $\ind_{P'}^G \chi'$ are irreducible and isomorphic
  \cite[Lemma 4.13]{D09}. Let $R'$ be the finite extension of $R$ generated the values of $\chi$ and $\chi'$.
  The $R'$-representations  $\ind_P^G \chi$ and $\ind_{P'}^G \chi'$ are irreducible and isomorphic. 
 We deduce that  the restriction to $K_0$ of the $R'$-representations $\pi_P $ and $\pi_{P'} $ are isomorphic. As  $R$-representations of $K_0$, $\oplus^r \pi_{P'} \simeq \oplus^r \pi_{P}$ where $r=[R:R']$.  For any $j\geq 1$,  taking the invariants by $K_j$, the finite dimensional representations  $\oplus^r(\pi_{P'})^{K_j}$  and $\oplus^r(\pi_{P})^{K_j}$   of the finite group $K_0/K_j$ are isomorphic. By Krull-Remak-Schmidt, $(\pi_{P})^{K_j}\simeq(\pi_{P})^{K_j}$. As this  is true for any $j$, we have $\pi_{P'}\simeq \pi_P$.
 \end{proof}
\subsubsection{} 
   When $R=\mathbb C$  and $ \sigma\in \Rep_\mathbb C^\infty(M)$ is admissible, we  compute the character of $\ind_P^G(\sigma)$ in terms of the character of $\sigma$.

 \begin{lemma} \label{Sf}  For $f\in C^\infty_c(G,\mathbb C)$, the  function 
$   Sf(m)= \int_N \int_{K_0} f(k mn k^{-1}) dk dn  $ on $M$
   belongs to $C^\infty_c(M,\mathbb C)$.  
  \end{lemma}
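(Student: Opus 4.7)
The statement has two parts: $Sf$ must be compactly supported on $M$ and locally constant on $M$. I plan to treat these separately.

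For the compact support, my approach is to exhibit a compact subset of $M$ containing $\supp(Sf)$. If $Sf(m)\neq 0$, then there exist $k\in K_0$ and $n\in N$ with $kmnk^{-1}\in\supp(f)$, so $mn\in k^{-1}\supp(f)k\subset K_0\,\supp(f)\,K_0$, which is compact in $G$. Intersecting this compact set with the closed subgroup $P$ yields a compact set $C\subset P$; since $P\cong M\ltimes N$ as a topological space, the projection $P\to M$ is continuous, and its image of $C$ is a compact subset of $M$ that contains every $m$ with $Sf(m)\neq 0$.

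For the local constancy, pick an open compact subgroup $K'$ of $G$ under which $f$ is bi-invariant, and choose $i\geq 1$ with $K_i=1+M_n(P_D^i)\subset K'$; as observed in \S\ref{not}, the subgroup $K_i$ is normal in $K_0$ because $P_D^i$ is a two-sided ideal of $O_D$. Let $U$ be the intersection of $M\cap K_i$ with the kernel of the continuous homomorphism $m\mapsto|\det(\Ad m)_{\mathfrak n}|_F$ on $M$; it is an open subgroup of $M$. For $m_0\in U$ I would substitute $n=m_0^{-1}n'm_0$ in the definition of $Sf(mm_0)$: this turns $kmm_0nk^{-1}$ into $(kmn'k^{-1})(km_0k^{-1})$, with Jacobian $|\det(\Ad m_0)_{\mathfrak n}|_F^{-1}=1$. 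Because $K_i$ is normal in $K_0$, we have $km_0k^{-1}\in K_i\subset K'$, and the right-$K'$-invariance of $f$ removes this factor, yielding $Sf(mm_0)=Sf(m)$. Hence $Sf$ is right $U$-invariant and therefore locally constant.

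The only real subtlety is in the local constancy step, where one must balance the modulus arising from conjugation on $N$ against the placement of the perturbation $m_0$ after conjugation by $k\in K_0$. Both issues disappear as soon as $m_0$ is chosen in a sufficiently small open subgroup of $M$, thanks to the continuity of $\delta_P|_M$ and the fact that the congruence subgroups $K_i$ form a fundamental system of normal open compact subgroups of $K_0$.
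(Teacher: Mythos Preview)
Your proof is correct. The compact-support argument is essentially identical to the paper's (both use that $P\cap(\text{compact set})$ is compact and that the projection $P\to M$ is continuous).

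For local constancy the two approaches differ. The paper reduces by linearity to $f=1_{KgK}$ with $K$ normal in $K_0$ and describes $S1_{KgK}(m)$ explicitly as (a sum of) volumes $\vol(m^{-1}KgK\cap N,dn)$; this has the advantage of yielding the concrete formula used in Remark~\ref{S1}. You instead show directly that $Sf$ is right-invariant under the open subgroup $U\subset M$, via the substitution $n=m_0^{-1}n'm_0$ combined with the normality of $K_i$ in $K_0$. Your route is cleaner and avoids the bookkeeping of $K_0$-orbits of double cosets that the paper's explicit computation implicitly requires.

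One harmless redundancy: since $M\cap K_i$ is a compact subgroup of $M$ and $m\mapsto|\det(\Ad m)_{\mathfrak n}|_F$ is a continuous homomorphism into the discrete group $q^{\mathbb Z}$, the image of $M\cap K_i$ is trivial. Hence your $U$ is simply $M\cap K_i$, and the intersection with the kernel of the modulus is automatic.
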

  \begin{proof}  The normal open compact subgroups $K$ of $K_0$ form a  fundamental  system of neighborhoods of $1$ in $G$ and for $g\in G$ the open compact sets $KgK$ form a  fundamental  system of neighborhoods of $g$ in $G$. For $g\in G$  and $m\in M$,   $m^{-1}KgK\cap N$ is   open in  $N$.  The  set  of   $m\in M $  such that  $m^{-1}KgK\cap N\neq \emptyset$ is  open compact in $M$ {\color{blue}\footnote{ $P\cap KgK$ is compact and the quotient map $P\to M$ is continuous}},  $   S1_{KgK}$ is $0$ outside of  this set   and   $ S1_{KgK}(m) = \vol (m^{-1}KgK\cap N, dn) \ \ \     \text{for} \ m^{-1}KgK\cap N\neq \emptyset.$
  \end{proof}
 \begin{remark}\label{S1} For  a normal open compact subgroup $K$ of $K_0$  such  that  
  $K\cap P= (K\cap M)(K\cap N)$,  $S1_{K}=  \vol (K\cap N, dn)1_{M\cap K}$. For $f\in C^\infty_c(G,\mathbb C)$ with  $\Supp f \subset K$, then  $\Supp Sf \subset K\cap M$.
  \end{remark}
   
\begin{proposition}\label{proind} We have  
$\trace ( \pi(f(g) dg) )= trace (\sigma (Sf (m) dm))$ for  $\sigma \in \Rep_{\mathbb C}^{\infty}(M)$ admissible, $\pi=\ind_P^G(\sigma)$,  and $(f,Sf)$ as in Lemma  \ref{Sf}.
 \end{proposition}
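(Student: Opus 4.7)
The plan is to realize $\pi=\ind_P^G\sigma$ in its compact model. Using $G=PK_0$, restriction to $K_0$ identifies the space $V$ of $\pi$ with the space $V'$ of functions $\phi\colon K_0\to W$ satisfying $\phi(hk)=\sigma(h)\phi(k)$ for $h\in H:=P\cap K_0$; the extension back to $V$ is $\phi(pk)=\tilde\sigma(p)\phi(k)$ for $p\in P$, $k\in K_0$.

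Starting from $(\pi(f)\phi)(x)=\int_G f(x^{-1}y)\phi(y)\,dy$ with $x\in K_0$, I parametrize $y=pk$ for $(p,k)\in P\times K_0$. Under the paper's Haar measure normalizations one has $\vol(H,dp)=1$, and the map $(p,k)\mapsto pk$ (with fiber $H$) pulls $dg$ back to $dp\,dk$; this parametrization, with $P$ on the left, does \emph{not} carry a $\delta_P$ factor, in contrast to the decomposition $g=kp$ of Section~\ref{not} (one checks this either by direct computation on $1_{K_0}$ or via the involution $g\mapsto g^{-1}$ together with unimodularity of $G$). Combining this with the equivariance $\phi(pk)=\tilde\sigma(p)\phi(k)$ expresses $\pi(f)$ on $V'$ as an integral operator with $\End_{\mathbb{C}}(W)$-valued kernel
\begin{equation*}
K(x,k)=\int_P f(x^{-1}pk)\,\tilde\sigma(p)\,dp.
\end{equation*}

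Standard trace formulas for such operators on sections of an equivariant bundle then give $\trace\pi(f)=\int_{H\backslash K_0}\trace_W K(k,k)\,dk$. Writing $p=mn$ with $dp=dm\,dn$ and $\tilde\sigma(mn)=\sigma(m)$, the diagonal expands as $\trace_W K(k,k)=\int_M\int_N f(k^{-1}mnk)\,\trace\sigma(m)\,dn\,dm$; conjugation invariance of $\trace\sigma$ on $M=P/N$, combined with $\delta_P|_H\equiv 1$ (so $dp$ is invariant under $H$-conjugation) and $\vol(H)=1$, shows this is $H$-left-invariant in $k$, so the $H\backslash K_0$-integral equals the $K_0$-integral. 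Substituting $k\mapsto k^{-1}$ (bi-invariance of $dk$) and exchanging the order of integration identifies the inner $K_0\times N$-integral with $Sf(m)$, yielding
\begin{equation*}
\trace\pi(f)=\int_M\trace\sigma(m)\,Sf(m)\,dm=\trace\sigma(Sf(m)\,dm).
\end{equation*}

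The main technical point---and the main place where care is needed---is the choice of parametrization $y=pk$ (with $P$ on the left) rather than $y=kp$: the former avoids the modulus factor $\delta_P$ in the measure and makes the statement come out clean, whereas $y=kp$ would leave a stray $\delta_P(m)$ that would need absorbing by other means. Beyond this, the proof is routine Haar measure bookkeeping using the normalizations of Section~\ref{not}, together with the $H$-invariance verification for the trace integrand.
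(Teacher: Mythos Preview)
Your proof is correct and follows the same route as the paper: realize $\pi$ in the compact model via $G=PK_0$, express $\pi(f)$ as an integral operator over $P\times K_0$, and read off the trace. The paper carries out the trace step by hand---choosing a basis of $V^K$ indexed by cosets $y_iK\subset K_0$, applying the projector $B$, and summing diagonal entries---whereas you package this as the kernel formula $\trace=\int_{H\backslash K_0}\trace_W K(k,k)\,dk$; your parametrization $y=pk$ with left Haar $dp$ and no $\delta_P$ (justified by the involution $g\mapsto g^{-1}$, as you note) is correct and in fact slightly cleaner than the paper's use of $y=p^{-1}k$, which forces a later substitution $p\mapsto p^{-1}$.
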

  
\begin{proof}  a) Preliminaries.  As $G=PK_0$,  a function in the space $V$ of $\pi$  is determined by its  restriction   to $K_0$, and $\pi|_{K_0}\simeq \ind_{P\cap K_0}^{K_0}( \sigma|_{M\cap K_0})$. Denote $V|_{K_0}$ the restrictions to $K_0$ of the functions in $V$. 
Let  $W$ denote the space of $\sigma$ and  $\rho$ the  action of $K_0$ on $C^\infty(K_0;W) $ by  right translation. We identify  $C^\infty(K_0;W)$ and $C^\infty(K_0;R)\otimes_R W$. Then  $( \ind_{P\cap K_0}^{K_0}( \sigma|_{M\cap K_0}), V|_{K_0})$ is  a subrepresentation of $(\rho, C^\infty(K_0;R)\otimes_R W)$.   
Let $dx$ denote the restriction to $P\cap K_0$ of  $dp$ (equal to the restriction  of $dk$).
 The  map 
$B:(\rho, C^\infty(K_0;R)\otimes_R W)\to (\ind_{P\cap K_0}^{K_0}( \sigma|_{M\cap K_0} , V|_{K_0})$  
$$ B(h\otimes w)(k)=\vol (P\cap K_0, dx)^{-1} \int_{P\cap K_0} h (x^{-1}k)  \tilde \sigma(x) (w) dx \ \ \ \ (h\in C^\infty(K_0;R), w\in W,k\in K_0),$$
  is   a $K_0$-equivariant  projection.    The function  $B(h\otimes w)$ on $K_0$  extends to a function $F_{h,w}\in V$ 
$$F_{h,w}(pk)=  \vol (P\cap K_0, dx)^{-1} \int_{P\cap K_0} h(x^{-1}k) \tilde  \sigma(px) (w) dx\ \ \ ((p,k)\in P \times K_0).$$ 
  
b) Choose a normal open pro-$p$ subgroup $K$ of $K_0$ such that $f $ is binvariant by  $K$. The endomorphism $ \pi (f(g) dg )$ of $V$   restricted to $V^K$   is  an endomorphism $A$ of $V^K$ of trace  $\trace(A)=\trace( \pi (f(g) dg ) $.  Choose a disjoint decomposition $K_0=\sqcup_i y_i K$.  The $1_{y_iK}$ form a  basis of  $ C^\infty(K_0;R)^{K}$,   the support of $B( 1_{y_iK}\otimes w)$ is  in $y_iK$,  
   and $\trace (A)$ is the trace of the endomorphism  $w\mapsto \sum_i B(F_{1_{y_iK},w})(y_i)  $ of $W$.
 For $y\in K_0$,  $B(F_{1_{yK},w})(y)$  is equal to \begin{align*} 
&\int_G f(g)  F_{1_{yK},w} (yg) dg =\int_G f(y^{-1}g)  F_{1_{yK},w}  (g) dg =\int_{K_0\times P}f(y^{-1}p^{-1} k)  F_{1_{yK},w} (p^{-1} k) dk \, dp\\
& = 
 \vol (P\cap K_0, dx)^{-1}   \int_{K_0\times P  \times P\cap K_0}f(y^{-1}p^{-1} k) h_y(x^{-1} k)  \sigma(p^{-1} x) (w) \, dk \, dp \,  dx\\
& =  \int_{K_0\times P }f(y^{-1}p^{-1} k) h_y(k)  \sigma(p^{-1}  ) (w) \, dk \, dp
=  \int_{K_0\times P }f(y^{-1}p k) 1_{yK'}(k) \tilde  \sigma(p ) (w) \, dk \, dp \\
&= \vol (K',dk)  \int_{ P }f(y^{-1}p y) \tilde  \sigma(p  ) (w)   \, dp.
  \end{align*}
     Therefore   
\begin{align*}&\sum_i B(F_{1_{y_iK},w})(y_i)  =   \vol (K,dk)  \int_{ P }\sum_i f(y_i^{-1}p y_i) \tilde  \sigma(p ) (w) \, dp 
=   \int_{K_0\times P }f(k^{-1}p k) \tilde  \sigma(p  ) (w) dk \, dp\\
&= \int_{K_0\times M\times N }f(k^{-1}mn k)   \sigma(m ) (w) dk \, dm \, dn =  \sigma (Sf(m) dm) (w).
 \end{align*}
 We deduce that the trace of $\pi (f(g) dg) $ is the trace of  $\sigma (Sf(m) dm)$.
 \end{proof}
 The set
 $\{P_\lambda \ | \ \lambda\in \mathfrak P(n)\}$ represents the parabolic subgroups of $G$ modulo association. 
 
 \begin{proposition} \label{hatpi} When  $P$ is a parabolic subgroup of $G$ associated to $P_\lambda $,  we have
$$  \trace (\pi_{P } (f(g)\,  dg))= \hat{\mu}_{\mathfrak O_\lambda} (\varphi). 
$$for $f \in  C_c^\infty(K_1;\mathbb C) $  and $\varphi \in  C_c^\infty(M_n(P_D);\mathbb C) $ such that $f(1+X)=\varphi(X)$ for $X\in  M_n(P_D)$, and $\hat{\mu}_{\mathfrak O_\lambda}$ as in  \eqref{hatmuo}. \end{proposition}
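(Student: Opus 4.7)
The plan is to first reduce to the case $P = P_\lambda$, then apply Proposition \ref{proind} with the trivial character of the Levi $M_\lambda$ to rewrite the trace as an integral over $P_\lambda \cap K_1$, and finally transfer to the Lie algebra and invoke Proposition \ref{hatmu}.

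Since $f$ is supported in $K_1 \subset K_0$, the operator $\pi_P(f\,dg) = \int_{K_0} f(k)\,\pi_P(k)\,dk$ depends only on $\pi_P|_{K_0}$. By Lemma \ref{piK0}, this restriction is the same for any $P$ in the associate class of $P_\lambda$, so we may assume $P = P_\lambda$. Applying Proposition \ref{proind} with $\sigma$ the trivial character of $M_\lambda$, whose trace on $h\,dm$ is $\int_{M_\lambda} h(m)\,dm$, we get
\[
\trace(\pi_{P_\lambda}(f\,dg)) = \int_{K_0} \int_{M_\lambda} \int_{N_\lambda} f(kmnk^{-1})\,dn\,dm\,dk.
\]
Since $K_1$ is normal in $K_0$, the support condition on $f$ forces $mn \in P_\lambda \cap K_1$; by the Iwahori-style decomposition $P_\lambda \cap K_1 = (M_\lambda \cap K_1)(N_\lambda \cap K_1)$ together with $dp = dm\,dn$, the two inner integrals combine to $\int_{P_\lambda \cap K_1} f(kpk^{-1})\,dp$.

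Now transfer to the Lie algebra via $p = 1+W$. This bijects $\mathfrak{p}_\lambda(P_D)$ with $P_\lambda \cap K_1$ and, by the normalizations fixed in Section \ref{not}, sends $dW$ to $dp$. For $k \in K_0$ and $W \in \mathfrak{p}_\lambda(P_D)$ we have $k(1+W)k^{-1} = 1 + kWk^{-1}$ with $kWk^{-1} \in M_n(P_D)$, and by hypothesis $f(1+kWk^{-1}) = \varphi(kWk^{-1})$. Swapping the $K_0$- and $W$-integrations, which is legitimate by compact support, and extending the $W$-integration to all of $\mathfrak{p}_\lambda$, which is valid because $K_0$ stabilizes $M_n(P_D)$ so that $\varphi_{K_0}$ vanishes off $M_n(P_D)$, we obtain
\[
\trace(\pi_{P_\lambda}(f\,dg)) = \int_{\mathfrak{p}_\lambda} \varphi_{K_0}(W)\,dW,
\]
which equals $\hat{\mu}_{\mathfrak{O}_\lambda}(\varphi)$ by Proposition \ref{hatmu}.

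The most delicate point is the measure-theoretic bookkeeping underlying the second step: that on $P_\lambda \cap K_1$, the product measure $dm\,dn$ coincides with the transported $dW$. Translated along $x \mapsto 1+x$, the multiplication map $(m,n)\mapsto mn$ becomes $(W_M, Y) \mapsto W_M + (1+W_M)Y$, which in the direct-sum decomposition $\mathfrak{p}_\lambda = \mathfrak{m}_\lambda \oplus \mathfrak{n}_\lambda$ has triangular Jacobian whose diagonal block on $\mathfrak{n}_\lambda$ is left multiplication by $1+W_M$. Since $W_M \in \mathfrak{m}_\lambda(P_D)$ makes this operator congruent to the identity modulo $P_D$, its $F$-determinant has absolute value $1$, so the two measures agree.
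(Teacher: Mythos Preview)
Your proof is correct and follows essentially the same route as the paper's: reduce to $P=P_\lambda$ via Lemma~\ref{piK0}, apply Proposition~\ref{proind} with the trivial character to write the trace as $\int_M Sf(m)\,dm = \int_P f_{K_0}(p)\,dp$, transfer to $\int_{\mathfrak p_\lambda}\varphi_{K_0}(W)\,dW$, and conclude by Proposition~\ref{hatmu}. The only difference is that you spell out more of the bookkeeping---in particular the Jacobian argument in your last paragraph---whereas the paper simply invokes the measure normalizations fixed in \S\ref{not}, which already declare $dp$ and $dW$ to be compatible via $x\mapsto 1+x$ on $\mathfrak p_\lambda(P_D)$, making your Jacobian check a consistency verification rather than a needed step.
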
 
 
\begin{proof} For $ (f,\varphi) $ as in the proposition, 
the functions \begin{align} f_{K_0}(g)=\int_{K_0} f(kgk^{-1})\, dk \   \ (g\in G),\ \  \ \varphi _{K_0}(X)=\int_{K_0} \varphi (kXk^{-1})\, dk  \ \ (X\in M_n(D)), 
\end{align}
 belong also to $ C_c^\infty(K_1;\mathbb C) ,  C_c^\infty(M_n(P_D);\mathbb C) $ and $f_{K_0}(1+X)=\varphi_{K_0}(X)$ for $X\in  M_n(P_D)$,  
 $$\int_P  f(p)\, dp=   \int_{\mathfrak p}  \varphi (W)\, dW,$$
  $ \trace (\pi_{P } (f(g)\,  dg))= \trace (\pi_{P_\lambda } (f(g)\,  dg))$ as $\pi_P= \pi_{P_\lambda}$  on  $K_0 $(Lemma \ref{piK0}), and 
\begin{align*}\trace (\pi_P (f(g)\,  dg) )=\int_M Sf (m) \, dm = \int_P  f_{K_0}(p)\, dp=   \int_{\mathfrak p}  \varphi_{K_0}(W)\, dW=  \hat{\mu}_{\mathfrak O} (\varphi).
\end{align*}
 for $P=P_\lambda, \mathfrak O=\mathfrak O_\lambda$,  by Propositions \ref{Sf} and \ref{hatmu}.
  \end{proof}

 \begin{corollary}\label{LIpi} For any non zero map $c:\mathfrak P(n)\to \mathbb C$,   the restriction of
$$\sum_{\lambda\in \mathfrak P(n)}c(\lambda) \, [\pi_{P_\lambda}]\in \Gr_{\mathbb C}^{\infty}(G)$$ 
to  an arbitrary  open compact subgroup $K$ of $G$ is not $0$.
\end{corollary}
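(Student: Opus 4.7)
The plan is to reduce the statement to the linear independence in Proposition \ref{LIhat}. First, if $K' \subset K$ is an inclusion of open compact subgroups of $G$, the restriction functor $\Rep_\mathbb{C}^{\infty,f}(K) \to \Rep_\mathbb{C}^{\infty,f}(K')$ descends to a homomorphism $\Gr_\mathbb{C}^\infty(K) \to \Gr_\mathbb{C}^\infty(K')$, so non-vanishing of the restriction to $K'$ implies non-vanishing of the restriction to $K$. Because $\{K_j\}_{j\geq 1}$ is a fundamental system of neighborhoods of the identity in $G$, any open compact $K$ contains $K_j$ for $j$ sufficiently large, and it suffices to prove the corollary for $K=K_j$ with $j$ arbitrarily large.

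Next, $K_j$ is a pro-$p$ group, so smooth $\mathbb{C}$-representations of $K_j$ are semisimple, and the characters of irreducible smooth $\mathbb{C}$-representations of $K_j$ are linearly independent, as each factors through a finite quotient of $K_j$. Therefore the character map from $\Gr_\mathbb{C}^\infty(K_j)$ to the space of invariant $\mathbb{C}$-distributions on $K_j$ is injective, and to prove non-vanishing of the restriction to $K_j$ it is enough to produce $f \in C_c^\infty(K_j;\mathbb{C})$ for which
$$\sum_{\lambda \in \mathfrak{P}(n)} c(\lambda)\,\trace\bigl(\pi_{P_\lambda}(f(g)\,dg)\bigr) \neq 0.$$

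The substitution $f(1+X)=\varphi(X)$ identifies $C_c^\infty(K_j;\mathbb{C})$ with $C_c^\infty(\mathfrak{C};\mathbb{C})$, where $\mathfrak{C}=M_n(P_D^j)$ is an open neighborhood of $0$ in $\mathfrak{g}$. By Proposition \ref{hatpi}, the displayed expression becomes
$$\sum_\lambda c(\lambda)\,\hat{\mu}_{\mathfrak{O}_\lambda}(\varphi) \;=\; \sum_\lambda c(\lambda)\, c_{\psi,\mathfrak{n}_\lambda}^{-1}\, \mu_{\mathfrak{O}_\lambda}(\hat\varphi).$$
Since the constants $c_{\psi,\mathfrak{n}_\lambda}$ are nonzero and $c$ is not identically zero, the linear combination of functionals $\varphi \mapsto \mu_{\mathfrak{O}_\lambda}(\hat\varphi)$ on the right is nontrivial. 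Proposition \ref{LIhat} then yields a $\varphi \in C_c^\infty(\mathfrak{C};\mathbb{C})$ on which it does not vanish, and the associated $f \in C_c^\infty(K_j;\mathbb{C})$ witnesses the required non-vanishing.

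The core ingredients, namely the trace comparison Proposition \ref{hatpi} and the linear independence Proposition \ref{LIhat}, are already in place, so there is no genuine obstacle; the argument is little more than careful bookkeeping of restrictions and characters, transferred from $G$ to $\mathfrak{g}$ via the substitution $1+X \leftrightarrow X$.
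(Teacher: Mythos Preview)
Your proof is correct and follows essentially the same approach as the paper: both reduce via Proposition \ref{hatpi} to the linear independence of the $\varphi\mapsto\mu_{\mathfrak O_\lambda}(\hat\varphi)$ on an open neighborhood of $0$, which is Proposition \ref{LIhat}. You make explicit two steps the paper leaves implicit, namely the reduction from an arbitrary $K$ to a small $K_j\subset K$ and the passage from the Grothendieck group statement to the trace statement via injectivity of the character map on $\Gr_{\mathbb C}^\infty(K_j)$ for pro-$p$ $K_j$.
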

  \begin{corollary}\label{LIpi} For any non zero map $c:\mathfrak P(n)\to \mathbb C$,   the restriction of the invariant $\mathbb C$-distribution on $G$
$$\sum_{\lambda\in \mathfrak P(n)}c(\lambda) \, \trace ( \pi_{P_\lambda})$$ to  an arbitrary  open compact subgroup $K$ of $G$ is not $0$.
\end{corollary}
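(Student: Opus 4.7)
The plan is to reduce the statement to the linear independence of Fourier transforms of nilpotent orbital integrals (Proposition \ref{LIhat}), using the translation between traces of $\pi_{P_\lambda}$ and these orbital integrals provided by Proposition \ref{hatpi}. Fix once and for all a non-trivial smooth character $\psi\colon F\to \mathbb{C}^*$ so that the Fourier transform and the constants $c_{\psi,\mathfrak n_\lambda}$ of the preceding subsection are defined.

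First I would reduce to the case where $K$ is contained in $K_1$. If $K$ is any open compact subgroup of $G$, let $K'=K\cap K_1$; this is an open compact subgroup of $G$ contained in $K_1$ (and it is nonempty since it contains $1$). Extension by zero embeds $C_c^\infty(K';\mathbb{C})$ into $C_c^\infty(K;\mathbb{C})$, so if the restriction of the distribution to $C_c^\infty(K;\mathbb{C})$ vanished, its restriction to $C_c^\infty(K';\mathbb{C})$ would also vanish. Thus it suffices to prove non-vanishing on $C_c^\infty(K';\mathbb{C})$.

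Next, to each $f\in C_c^\infty(K';\mathbb{C})$ associate the function $\varphi\in C_c^\infty(M_n(P_D);\mathbb{C})$ determined by $f(1+X)=\varphi(X)$. Since $K'$ is an open neighborhood of $1$ contained in $K_1$, the set $\mathfrak{C}=K'-1$ is an open neighborhood of $0$ in $M_n(P_D)\subset \mathfrak{g}$, and the support of $\varphi$ is contained in $\mathfrak{C}$. Conversely every $\varphi\in C_c^\infty(\mathfrak{C};\mathbb{C})$ arises this way. Proposition \ref{hatpi} then gives, for each partition $\lambda$,
$$\trace(\pi_{P_\lambda}(f(g)\,dg)) \;=\; \hat{\mu}_{\mathfrak{O}_\lambda}(\varphi) \;=\; c_{\psi,\mathfrak{n}_\lambda}^{-1}\,\mu_{\mathfrak{O}_\lambda}(\hat{\varphi}).$$
Summing and assuming for contradiction that the distribution vanishes on $C_c^\infty(K';\mathbb{C})$, we obtain
$$\sum_{\lambda\in\mathfrak{P}(n)} c(\lambda)\,c_{\psi,\mathfrak{n}_\lambda}^{-1}\,\mu_{\mathfrak{O}_\lambda}(\hat{\varphi})=0\qquad\text{for every } \varphi\in C_c^\infty(\mathfrak{C};\mathbb{C}).$$
Because the constants $c_{\psi,\mathfrak{n}_\lambda}$ are nonzero positive reals and $c$ is nonzero, the map $\lambda\mapsto c(\lambda)c_{\psi,\mathfrak{n}_\lambda}^{-1}$ is a nonzero function on the set of nilpotent orbits (via the bijection $\lambda\mapsto\mathfrak{O}_\lambda$). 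This contradicts the linear independence of the linear forms $\varphi\mapsto \mu_{\mathfrak{O}}(\hat{\varphi})$ on $C_c^\infty(\mathfrak{C};\mathbb{C})$ established in Proposition \ref{LIhat}.

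There is no serious obstacle, since the two key ingredients are already in place: the translation between characters of the $\pi_{P_\lambda}$ and the Fourier transforms of the nilpotent orbital integrals, and the linear independence of the latter on any open neighborhood of zero. The only points requiring some care are the initial shrinking of $K$ so that Proposition \ref{hatpi} applies directly, and bookkeeping of the positive normalization constants $c_{\psi,\mathfrak{n}_\lambda}$ which do not affect non-vanishing.
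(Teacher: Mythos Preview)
Your proof is correct and follows essentially the same approach as the paper's: both invoke Proposition~\ref{hatpi} to translate the traces $\trace(\pi_{P_\lambda})$ near the identity into the distributions $\hat\mu_{\mathfrak O_\lambda}$, and then appeal to Proposition~\ref{LIhat} for the linear independence of the latter on any open neighborhood of $0$. Your write-up simply makes explicit the shrinking to $K'=K\cap K_1$ and the bookkeeping of the constants $c_{\psi,\mathfrak n_\lambda}$, which the paper's one-sentence proof leaves implicit.
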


\begin{proof} By Propositions \ref{hatpi} and  \ref{LIhat}, the characters of $\pi_{P_\lambda}$ are linearly independant on any neighborhood of $1$, 
because their values on $f\in C_c^\infty(K_1;\mathbb C)$  are  the Fourier transforms of the nilpotent orbital integrals of $\mathfrak O_\lambda$ on $\varphi\in C_c^\infty(M_n(P_D);\mathbb C) $ when $f(1+X)=\varphi (X) $. \end{proof}

   \section{Complex representations of $G$ near the identity}

\subsection{}\label{germ} By \cite{HC78} when  $\charf_ F=0$ (for any reductive $p$-adic group) and (\cite{L04} Proposition 4.3 with $E=F$), 
any  non-zero representation $\pi\in \Rep_{\mathbb C}^{\infty, f}G$ non-zero  $\pi$ has a {\bf  germ expansion} of map $c_\pi$ on  $K_\pi $, meaning that:

There exists a  map $c_\pi: G\backslash \mathfrak N\to   \mathbb C$ (the   coefficient map) and  an  open subgroup $K_\pi$ of $K_1=1+M_n(P_D)$
 such that  \begin{align}\label{tm}\trace (\pi(f(g) dg))= \sum_{\mathfrak O\in G\backslash \mathfrak N} c_{\pi }(\mathfrak O )\, \hat{\mu} _{\mathfrak O} (\varphi) 
\end{align}
 for   $f \in C_c^\infty(K_\pi;\mathbb C), \varphi \in C_c^\infty(M_n(P_D);\mathbb C) $ such that $f(1+X)=\varphi(X)$ for $X\in M_n(P_D)$.

It is convenient to see   $c_\pi$ as a map  on the set  $ \mathfrak P(n)$ of partitions of $n$, or on the set of parabolic subgroups $P$ of $G$, 
   \begin{equation}\label{cpi}c_\pi(\lambda)=c_\pi(\mathfrak O_\lambda) =c_\pi(P)\ \text{for $\lambda\in \mathfrak P(n)$ and $P$ associated to $P_\lambda$}.
   \end{equation}
 For example, $c_\pi((n))= c_\pi(\{0\})=c_\pi(G)$. By Proposition \ref{hatpi}, we have for $f\in C_c^\infty(K_\pi; \mathbb C)$,
 \begin{align}\label{trm}
\trace (\pi(f(g) dg))  = \sum_{\lambda \in  \mathfrak P(n)} c_{\pi }(\lambda)\, \trace (\pi_{P_\lambda}(f dg)) =  \sum_{P } c_{\pi }(P)\, \trace (\pi_{P}(f dg)).
 \end{align}
the last sum is over   a system of representatives $P$ of the parabolic subgroups of $G$ modulo association. 
We list some properties of the  map $c_\pi$ for $\pi \in \Rep_{\mathbb C}^{\infty, f}(G)$. 
  \begin{itemize}
  \item The map $c_\pi$ is unique by Corollary \ref{LIpi} 
  and  is not $0$ because  \begin{equation}\dim_{\mathbb C}\pi ^K= trace (\pi (1_K \vol (K,dg)^{-1} \, dg)\neq 0 \ \  \text{for small open subgroups  $K $ of $K_\pi$.}
\end{equation}  
\item  Two representations $\pi, \pi' \in \Rep_{\mathbb C}^{\infty, f}(G)$ have the same
 coefficient map  if and only if their  restrictions  to some open compact subgroup of $G$ are isomorphic, because the linear forms $\hat{\mu} _{\mathfrak O}$ restricted to 
 $C_c^\infty(-1+K_\pi; \mathfrak C)$ are linearly independent (Proposition \ref{LIhat}).
 
\item  In particular, 
 \begin{equation}\label{twist}c_\pi  = c_{\pi \otimes \chi}
 \end{equation}  for any  smooth character $\chi$ of $G$,   because $\chi$ is trivial on some open compact subgroup. 
 \item The map $c_\pi$  depends only on the image $[\pi]$ of $\pi$ in the Grothendieck group $\Gr_\mathbb C^\infty(G)$.  It passes to a  linear  map $\nu\mapsto  c_\nu$ on the  Grothendieck group  $\Gr_{\mathbb  C}^{\infty} (G )$ such that $c_\pi=c_{[\pi]}$ for $\pi\in \Rep_{\mathbb C}^{\infty,f}(G)$. But $c_\nu=0$ does not imply $\nu=0$. For example, $c_\nu=0$ for  $\nu = [\ind_{P_\lambda}^G 1]- [\ind_{P_\lambda}^G \theta]$ when $\theta$ is any unramified character of $M_\lambda$.

  \item When $\pi$ is finite dimensional, it is trivial on some $K_\pi\subset K_1$ hence 
 \begin{equation} \label{cfd} c_\pi ((n))=\dim_{\mathbb C} \pi, \ \ \ c_\pi (\lambda)  =0 \ \ \text{for} \ \lambda\neq (n). 
 \end{equation}
 Conversely,  if $c_\pi (\lambda)=0$ for  $\lambda \neq  (n) $  then 
\begin{equation}\label{lce0}
 \trace (\pi(f(g) dg))= c_{\pi }(\{0\})\, \hat{\mu}_{\{0\}}(\varphi) = c_{\pi }((n))\, \int_G f(g) dg 
\end{equation}
for $(f,\varphi) $ as in \eqref{tm}. Hence  $\dim_{\mathbb C}\pi^K  = c_{\pi }((n))$ for  any open subgroup $K$ of $K_\pi$, so $\pi$ is finite  
dimensional.
 
\item When $D\neq F$, a finite dimensional irreducible smooth representation  of $D^*$ may have dimension  $>1$, but:
\end{itemize}   
 \begin{lemma}\label{fd} When  $R$ an algebraically closed field, $D=F$ or $n>1$,  then  a finite dimensional irreducible $R$-representation   of $G$ is of the form $\pi= \chi \circ nrd$ for some   $R$-character $\chi$  of $F^*$.\end{lemma}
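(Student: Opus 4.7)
The plan is to reduce to the fact that an irreducible representation of an abelian group over an algebraically closed field is one-dimensional. When $n=1$ and $D=F$, the group $G=F^*$ is itself abelian and $\nrd$ is the identity, so the conclusion is immediate from Schur's lemma.

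Assume now $n\geq 2$. Since $\pi$ is smooth and acts on a finite dimensional $R$-vector space $V$, the intersection of the open stabilizers of a basis of $V$ is an open subgroup of $G$ contained in $\ker\pi$; hence $\ker\pi$ is open in $G$. I would first show that $\pi$ is trivial on every elementary unipotent subgroup
$$U_{ij}=\{I+xE_{ij}\mid x\in D\}\simeq (D,+),\qquad i\neq j.$$
Fix such a pair $(i,j)$. Since $\ker\pi\cap U_{ij}$ is open in $U_{ij}$, it contains the compact open set $\{I+xE_{ij}\mid x\in P_D^k\}$ for some integer $k$. Let $t\in G$ be the diagonal matrix with $p_D^{-1}$ in the $i$-th slot and $1$'s elsewhere; such a $t$ exists because $n\geq 2$. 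Conjugation by $t$ sends $I+xE_{ij}$ to $I+(p_D^{-1}x)E_{ij}$, and since $p_D^{-1}P_D^k=P_D^{k-1}$ it carries $\{I+xE_{ij}\mid x\in P_D^k\}$ onto $\{I+xE_{ij}\mid x\in P_D^{k-1}\}$, which must then also lie in $\ker\pi$ by normality. Iterating this dilation, $\ker\pi$ contains all of $U_{ij}$.

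The subgroups $U_{ij}$ with $i\neq j$ generate $\SL_n(D)=\ker(\nrd)$ for $n\geq 2$ (a classical consequence of Dieudonn\'e's determinant theory together with the vanishing of $SK_1$ for a local division algebra), so $\pi$ is trivial on $\SL_n(D)$ and hence factors through the surjection $\nrd\colon G\to F^*$. The resulting representation of the abelian group $F^*$ on $V$ is still irreducible, and since $R$ is algebraically closed Schur's lemma forces it to be one-dimensional; thus $\pi=\chi\circ\nrd$ for some character $\chi$ of $F^*$, necessarily smooth because $\ker\pi$ is open.

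The substantive step will be the dilation argument via diagonal conjugation, which crucially uses $n\geq 2$ (and fails for $D\neq F$ when $n=1$, where inner automorphisms of $D^*$ can be nontrivial on a compact open subgroup but cannot dilate it). The generation of $\SL_n(D)$ by elementaries and the reduction to a character via Schur's lemma are formal once this step is in hand.
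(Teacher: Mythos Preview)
Your proof is correct and follows essentially the same approach as the paper: observe that $\ker\pi$ is open and normal, use conjugation to enlarge $\ker\pi\cap U_{ij}$ to all of $U_{ij}$ (your explicit dilation by a diagonal element is precisely the mechanism behind the paper's terser assertion that a normal subgroup containing an open subgroup of $U$ must contain $U$), and then factor through $\nrd$ using that the elementary subgroups generate $\ker(\nrd)$. The paper phrases this last step as ``conjugates of $U$ generate $\Ker(\nrd)$'' without naming $SK_1$, but the content is the same.
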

 
\begin{proof}
This clear  when $G=F^*$ because $F^*$ is commutative and the Schur's lemma is valid for $G$. When $n>1$,  then  $\Ker(\pi)$ is an open subgroup of $G$, and in particular contains an open subgroup of $U$.
But $\Ker(\pi) $ is also normal in $G$, so it contains $U$, and all the conjugates of $U$.
Those conjugates generate $\Ker (\nrd)$, so $\pi$  factors through $\nrd$  implying the lemma.  
\end{proof}
 
\subsection{} \label{inte}  We revert to $R=\mathbb C$ and show  that the values of $c_\pi$ are integers   
  (proved in \cite{Howe74}  when $D=F$ has  characteristic $0$ and $\pi$  is irreducible supercuspidal).  The key of the proof is the next lemma \ref{lemma6}  inspired by Howe (\cite{Howe74} Lemma 6).
  
 \medskip For a   partition $\lambda=(\lambda_1,\ldots, \lambda_r)$  of   $n$,  let $A_\lambda $ be the matrix of the endomorphism of the right $D$-vector space $D^n$  operating on the canonical basis $e_1, \ldots, e_n$ by sending 
 $e_1,\ldots, e_{\lambda_1}$ to $0$,  $e_{\lambda_1+1},\ldots, e_{\lambda_1+\lambda_2}$ to 
$e_{ 1},\ldots, e_{\lambda_2}$, and $e_{\lambda_1+ \ldots +\lambda_i +j} $ to  $e_{\lambda_1+ \ldots +\lambda_{i-1} +j} $ for $  i=2,\ldots, r-1, j =1,\ldots, \lambda_{i +1}$. Then,  $\Ker A_\lambda ^i$  is the $D$-subspace 
 generated by $e_1, \ldots, e_{\lambda_1+ \ldots +\lambda_i }$. The parabolic subgroup of $G$ stabilizing the flag $(\Ker A_\lambda ^i)_i$ is $P_\lambda$,  and $A_\lambda \in \mathfrak n_\lambda$.
  Fixing  a character   $\psi$  of $F$ trivial on $P_F$ and not on $O_F$,  for   an integer   $j\geq 1$, let $\xi_\lambda$ be the character of  $K_j=1+ M_n(P_D^j) $  trivial on $K_{2j}$  defined by
 \begin{equation}\label{xi}\xi_\lambda(1+x)= \psi \circ \trd (A_\lambda  \,  p_D^{1-2j} x) \ \ \text{  for}  \ x\in  M_n(P_D^j).
 \end{equation} 
  \begin{lemma} \label{lemma6} 
For  $\mu \in \mathfrak P(n)$, the  multiplicity $m(\xi_\lambda, \pi_{P_\mu})$ of  $\xi_\lambda$  in $\pi_{P_\mu}$ is $0$
 unless $ \lambda \geq \mu$. We have  $m(\xi_\lambda, \pi_{P_\lambda})=1$. \end{lemma}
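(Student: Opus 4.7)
The plan is to follow Howe's strategy in the split case~\cite{Howe74}, adapted to $GL_n(D)$ via the reduced-trace pairing. By the Iwasawa decomposition $G = P_\mu K_0$ and Mackey's formula, $\pi_{P_\mu}|_{K_j} = \bigoplus_x \ind_{K_j \cap x^{-1}P_\mu x}^{K_j} 1$ over representatives $x$ for $(P_\mu \cap K_0)\backslash K_0 / K_j$, so Frobenius reciprocity yields
\[ m(\xi_\lambda, \pi_{P_\mu}) = \#\bigl\{ x \in (P_\mu \cap K_0)\backslash K_0 / K_j : \xi_\lambda|_{K_j \cap x^{-1}P_\mu x} = 1 \bigr\}. \]
For $x \in K_0$ this intersection equals $1 + x^{-1}(\mathfrak{p}_\mu \cap M_n(P_D^j))x$, and cyclicity of $\trd$ rewrites the triviality condition as $\psi(\trd(B_x W)) = 1$ for every $W \in \mathfrak{p}_\mu \cap M_n(P_D^j)$, where $B_x := x A_\lambda p_D^{1-2j} x^{-1}$. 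Combining Example~\ref{expsi} with the elementary identity $\mathfrak{p}_\mu^\perp = \mathfrak{n}_\mu$ under the trace pairing (the diagonal blocks of a strictly-upper-block-triangular times upper-block-triangular product vanish), this becomes $B_x \in \mathfrak{n}_\mu + M_n(P_D^{1-j})$. Rescaling by $p_D^{2j-1}$, which preserves $\mathfrak{n}_\mu$ (closed under left scalar $D$-multiplication) and converts $M_n(P_D^{1-j})$ to $M_n(P_D^j)$, puts the condition in the cleaner form
\[ (\ast)\qquad x^\sharp A_\lambda x^{-1} \in \mathfrak{n}_\mu + M_n(P_D^j),\qquad x^\sharp := p_D^{2j-1} x p_D^{1-2j} \in K_0. \]

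The first assertion will follow by reducing $(\ast)$ modulo $P_D$. Since conjugation by $p_D$ acts on the residue field $k_D$ as the Frobenius $\sigma$ of $k_D/k_F$, we have $\overline{x^\sharp} = \sigma^{2j-1}(\bar x)$; using the $\sigma$-invariance of both $\bar A_\lambda$ (entries in $\mathbb{F}_p$) and of $\mathfrak{n}_\mu(k_D)$, a Lang--Steinberg-style substitution neutralizes the twist and reduces the question to whether some $\GL_n(k_D)$-conjugate of $\bar A_\lambda$ lies in $\mathfrak{n}_\mu(k_D)$. Because $\mathfrak{n}_\mu \subset \overline{\mathfrak{O}_\mu}$ by \S\ref{parti}, every nilpotent in $\mathfrak{n}_\mu$ has label $\geq \mu$ in the paper's partition order; since conjugation preserves the label $\lambda$ of $\bar A_\lambda$, we conclude $\lambda \geq \mu$.

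For $m(\xi_\lambda, \pi_{P_\lambda}) = 1$, existence comes from $x = 1$: then $x^\sharp = 1$ and $(\ast)$ reads $A_\lambda \in \mathfrak{n}_\lambda + M_n(P_D^j)$, which is trivial since $A_\lambda \in \mathfrak{n}_\lambda$; equivalently, $A_\lambda W \in \mathfrak{n}_\lambda$ has vanishing block-diagonal for any $W \in \mathfrak{p}_\lambda$, so $\trd(A_\lambda W) = 0$. Uniqueness will follow from Richardson's theorem applied to the Richardson element $A_\lambda$ of $P_\lambda$: the centralizer $Z_G(A_\lambda)$ is contained in $P_\lambda$ and $P_\lambda$ acts transitively on the open orbit $\mathfrak{O}_\lambda \cap \mathfrak{n}_\lambda$. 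A short orbit-stabilizer count then shows the valid $\bar x$ modulo $P_\lambda(k_D)$ form a single coset, and smoothness of $\GL_n$ lifts this through the principal congruence tower $O_D/P_D^j$ via Hensel's lemma.

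The principal technical obstacle, absent in the split case $D = F$ treated by Howe, is the Frobenius twist created by the non-centrality of $p_D$ in $D$; the plan is to absorb it using $\sigma$-invariance of $\bar A_\lambda$ and $\mathfrak{n}_\mu$, a mechanism already visible in the small case $n = 2$, $d = 2$, $\lambda = (1,1)$, where the condition collapses to $\sigma(c)c = 0 \Rightarrow c = 0$ and forces $\bar x$ into the Borel.
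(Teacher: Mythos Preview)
Your setup via Mackey decomposition and the dual-lattice computation is correct and coincides with the paper's. You are also right that the non-centrality of $p_D$ introduces a Frobenius twist: the condition modulo $P_D$ reads $\tau(\bar x)\,\bar A_\lambda\,\bar x^{-1}\in\mathfrak n_\mu(k_D)$ with $\tau=\bar\sigma^{2j-1}$, not $\bar x\,\bar A_\lambda\,\bar x^{-1}\in\mathfrak n_\mu(k_D)$ as the paper writes. But your phrase ``a Lang--Steinberg-style substitution neutralizes the twist'' is too vague to count as a proof: Lang's theorem lives over the algebraic closure, and you do not say which substitution you mean or why it stays inside $GL_n(k_D)$. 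A concrete replacement is available: since $\mathfrak n_\mu(k_D)$ is $\tau$-stable, each $\tau^{-m}(N)$ lies in $\mathfrak n_\mu(k_D)$, and the telescoping product $N\,\tau^{-1}(N)\cdots\tau^{-(i-1)}(N)=\tau(\bar x)\,\bar A_\lambda^{\,i}\,\tau^{-(i-1)}(\bar x)^{-1}$ kills $W_i$, whence $\lambda_1+\cdots+\lambda_i\ge\mu_1+\cdots+\mu_i$. You should make this explicit.

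For $m(\xi_\lambda,\pi_{P_\lambda})=1$ your Richardson-plus-Hensel outline has a real gap. Richardson's theorem controls the \emph{ordinary} $P_\lambda$-conjugation action on $\mathfrak n_\lambda$, but your condition $(\ast)$ involves the \emph{twisted} map $x\mapsto x^\sharp A_\lambda x^{-1}$; neither the transitivity on the open orbit nor the inclusion $Z_G(A_\lambda)\subset P_\lambda$ transfers automatically to this twisted setting, and you do not explain how to bridge this. The Hensel step is likewise nonstandard, since the equation couples $x$ and $x^\sharp$. The paper bypasses all of this with a direct iteration that is worth adopting: writing $B_\lambda=A_\lambda p_D^{1-2j}$, from $kB_\lambda k^{-1}=X+Y$ with $X\in\mathfrak n_\lambda$ and $Y\in M_n(P_D^{1-j})$ one obtains $B_\lambda^{\,i}k^{-1}=k^{-1}X^i+Y_i$ with $Y_i$ of controlled depth; since $\sigma(A_\lambda)=A_\lambda$ gives $B_\lambda^{\,i}=A_\lambda^{\,i}p_D^{i(1-2j)}$, evaluating on basis vectors in $\Ker A_\lambda^{\,i}$ forces the strictly-lower block entries of $k^{-1}$ into $P_D^j$, hence $k\in P_\lambda(O_D)K_j$. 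The twist causes no trouble precisely because $A_\lambda$ has entries in $O_F$.
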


 \begin{proof}

For $\mu \in \mathfrak P(n)$,   $m(\xi_\lambda, \pi_{P_\mu})$ is  the cardinality of 
$$ (P_\mu\cap GL_n(O_D) )\backslash \{ k\in GL_n(O_D) \ | \ \xi_\lambda (k^{-1}(P_\mu\cap K_j) k)=1\}/K_j.$$
 Let $k\in K_0=GL_n(O_D)$. We have 
 $\xi_\lambda (k^{-1}(P_\mu\cap K_j) k)=1$ if and only if 
\begin{equation}\label{*}\xi_\lambda  (k^{-1}(1 + \mathfrak p_\mu  (P_D^j )) k)=1,
\end{equation}
where  $ \mathfrak p_\mu  (P_D^j )=  \mathfrak p_\mu\cap M_n  (P_D^j )$.  The weaker condition $\xi_\lambda  (k^{-1}(1 +  \mathfrak p_\mu (P_D^{2j-1})) k)=1$ already implies $m(\xi_\lambda, \pi_{P_\mu})=0$ unless $ \lambda \geq \mu$.
Indeed, it reads 
$\psi \circ \trd (A _\lambda \,  k^{-1} \mathfrak p_\mu (O_D)k  )=1$. It depends on the images $\overline k,  \overline A_\lambda $ of $k,  A_\lambda $ in $GL_n(k_D)$  and says that   
$ \trd (\overline k\, \overline A_\lambda  \,  \overline k^{-1}  \, \mathfrak p_\mu  (k_D))=0$, that is, $\overline k\, \overline A_\lambda  \,  \overline k^{-1}\in \mathfrak n_\mu (k_D)$. Let  $0\subset W_1 \subset \ldots $ be the flag of $k_D^n$ whose stabilizer is $P_\mu(k_D)$. Then $\overline k\, \overline A_\lambda \,  \overline k^{-1}\in  \mathfrak n_\mu (k_D)$ means  $\overline k\, \overline A_\lambda  \,  \overline k^{-1} (W_i)\subset W_{i-1}$  for $i\geq 1$, and  in particular that  $\Ker (\overline k\, (\overline A _\lambda)^i \,  \overline k^{-1}) =\overline k (\Ker (\overline A_\lambda )^i )$  contains $W_i$. As $\dim_D W_{i+1}-\dim_D W_i =\mu_i$, one obtains $\lambda_1+\ldots +\lambda_i \geq \mu_1+\ldots +\mu_i$ for each $i$, that  is $\lambda \geq \mu$.

 Suppose now $\mu=\lambda$. We  prove that  \eqref{*} is equivalent to $k\in P_\lambda (O_D) K_j$.
 By its definition  $\xi_\lambda$ is trivial on $1+\mathfrak p_\lambda(P_D^j)$  because $A_\lambda \in  \mathfrak n_\lambda$  hence $\trd (A_\lambda  \mathfrak p_\lambda)=0$, so $P_\lambda (O_D) K_j$ does satisfy  \eqref{*}.  Conversely, $B_\lambda =A_\lambda p_D^{1-2j}\in \mathfrak n_\lambda (P_D^{1-2j})$. The condition
\eqref{*} means  that 
$\trd (B_\lambda  k^{-1}\mathfrak p_\lambda (P_D^j) k) \in P_F $  
and implies 
 $$B_\lambda=k^{-1}Xk+Y, \ \text{where} \ X\in  \mathfrak n_\lambda, Y\in  M_n(P_D^{1-j}).$$
  Indeed, writing $ kB_\lambda  k^{-1} = X +Y$ with $X\in \mathfrak n_\lambda , Y\in  \mathfrak p_\lambda^-$, we have:  
  \begin{align*}&\trd (B_\lambda  k^{-1}\mathfrak p_\lambda (P_D^j)
  k)= \trd (kB_\lambda  k^{-1}\mathfrak p_\lambda (P_D^j)) = \trd  (Y\mathfrak p_\lambda (P_D^j))=  \trd  (Y M_n(P_D^j)),\\
 & \trd  (Y M_n(P_D^j))\in P_F \Leftrightarrow 
   \trd  (P_D^{j-d} Y M_n(O_D))\in O_F \Leftrightarrow  Y\in M_n(P_D^{1-j}).
   \end{align*}
 See Example \ref{expsi} for the last equivalence.
    One gets 
$B_\lambda k^{-1}= k^{-1}X+Y_1 $ with $Y_1\in M_n(P_D^{1-j})$. 
Note that  $B_\lambda\in M_n(P_D^{1-2j})$ hence also  $X$.
 We get 
 $ B_\lambda^2k^{-1}=B_\lambda k^{-1}X+B_\lambda Y_1=k^{-1}X^2+Y_1X+B_\lambda Y_1=k^{-1}X^2+Y_2$ with 
  $Y_2\in  M_n(P_D^{j+2(1-2j)})$. By induction 
$B_\lambda^i k^{-1}=k^{-1}X^i+Y_i $ with $Y_i\in M_n(P_D^{j+i(1-2j)})$ for $1\leq i \leq r$.
For   a basis vector $e\in \Ker A_\lambda ^i$, we have $X^ie=0$ because $X\in \mathfrak n_\lambda$, and $B_\lambda^i k^{-1}e=k^{-1}X^ie+Y^ie= Y^ie$. As $(A_\lambda p_D^{1-2j})^{i} k^{-1}e\in  M_n(P_D^{j+i(1-2j})e  \Leftrightarrow A_\lambda ^i  k^{-1}e\in  M_n(P_D^{j})e$,
 the coefficients of  $k^{-1}e$  on the basis vectors which are not in 
$\Ker A_\lambda ^i$ are in $P_D^j$.   This means 
$ k^{-1} \in  K_jP_\lambda(O_D)  $, what we wanted.
 \end{proof}
 We shall need more properties of   $\xi_\lambda$  in the section on Whittaker spaces.
 
\begin{lemma}\label{ouf} The normalizer of $\xi_\lambda$ in $K_0=GL_n(O_D)$ is $P_\lambda(O_D)K_j$.
\end{lemma}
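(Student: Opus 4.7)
The plan is to prove the two inclusions determining the normalizer.

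For $\supset$: first, $K_j$ is contained in the normalizer since a commutator expansion $(1+x)(1+y)(1+x)^{-1}(1+y)^{-1}$ for $x,y\in M_n(P_D^j)$ shows $[K_j,K_j]\subset K_{2j}$, and $\xi_\lambda$ is trivial on $K_{2j}$. Second, for $p\in P_\lambda(O_D)$ and $x\in M_n(P_D^j)$, cyclicity of $\trd$ gives
\[
\xi_\lambda(p^{-1}(1+x)p)=\psi\bigl(\trd(A_\lambda p_D^{1-2j}p^{-1}xp)\bigr)=\psi\bigl(\trd(pA_\lambda p_D^{1-2j}p^{-1}\cdot x)\bigr),
\]
so by the annihilator identity $\trd(Y\cdot M_n(P_D^j))\subset P_F\Leftrightarrow Y\in M_n(P_D^{1-j})$ already used in the proof of Lemma \ref{lemma6}, the normalization of $\xi_\lambda$ by $p$ amounts to the congruence $pA_\lambda p_D^{1-2j}p^{-1}\equiv A_\lambda p_D^{1-2j}\pmod{M_n(P_D^{1-j})}$. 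I would verify this using that $P_\lambda$ normalizes $\mathfrak n_\lambda$ (so $pA_\lambda p^{-1}\in\mathfrak n_\lambda$) combined with the filtration estimate $[p_D^a,O_D]\subset P_D^a$, which controls the non-commutativity of the central scalar $p_D$ with the entries of $p$.

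For $\subset$: suppose $k\in K_0$ normalizes $\xi_\lambda$, i.e.\ $\xi_\lambda(k^{-1}hk)=\xi_\lambda(h)$ for every $h\in K_j$. Specialize to $h=1+y$ with $y\in\mathfrak p_\lambda(P_D^j)\subset M_n(P_D^j)$: the proof of Lemma \ref{lemma6} observes that $\xi_\lambda(h)=1$, because $A_\lambda\mathfrak p_\lambda\subset\mathfrak n_\lambda$ is strictly block upper triangular and hence has reduced trace zero. Therefore $\xi_\lambda$ is trivial on $1+k^{-1}\mathfrak p_\lambda(P_D^j)k$, which is precisely condition $(*)$ analyzed in the second half of the proof of Lemma \ref{lemma6}; the converse direction there forces $k\in P_\lambda(O_D)K_j$.

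The main obstacle is the congruence $pA_\lambda p_D^{1-2j}p^{-1}\equiv A_\lambda p_D^{1-2j}\pmod{M_n(P_D^{1-j})}$ used in the $\supset$ direction. Because $p_D$ need not be central in $D$ when $D\neq F$, one cannot simply factor $p_D^{1-2j}$ through the conjugation; one must first commute $p_D^{1-2j}$ past $p^{-1}$ using the estimate $[p_D^{1-2j},O_D]\subset P_D^{1-2j}$ and then exploit the precise interaction of $P_\lambda$ with $A_\lambda$ coming from $P_\lambda$ being the stabilizer of the flag $(\Ker A_\lambda^i)_i$, checking that the resulting error terms stay within the required filtration level $M_n(P_D^{1-j})$.
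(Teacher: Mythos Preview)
Your argument for the inclusion $\subset$ is correct and is in fact cleaner than the paper's. The paper translates the normalization condition into the congruence $k^{-1}B_\lambda k-B_\lambda\in M_n(P_D^{1-j})$ and then re-runs the iterative step from the proof of Lemma~\ref{lemma6}; you instead observe that normalization, specialized to $h\in 1+\mathfrak p_\lambda(P_D^j)$, yields condition~\eqref{*} outright, and then invoke the equivalence already established there. That is a genuine shortcut.

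The inclusion $\supset$, however, cannot be salvaged: it is false. Take $D=F$, $n=2$, $\lambda=(1,1)$, so $A_\lambda=\bigl(\begin{smallmatrix}0&1\\0&0\end{smallmatrix}\bigr)$ and $P_\lambda=B$. For $p=\diag(a,c)\in B(O_F)$ one has
\[
p^{-1}A_\lambda p-A_\lambda=\begin{pmatrix}0&a^{-1}c-1\\0&0\end{pmatrix},
\]
so your target congruence $p^{-1}A_\lambda p_F^{1-2j}p-A_\lambda p_F^{1-2j}\in M_2(P_F^{1-j})$ becomes $a^{-1}c\equiv 1\pmod{P_F^{\,j}}$, which fails for generic units $a,c$ (already for $j=1$ when $q>2$). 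The obstacle you flagged is therefore not the non-centrality of $p_D$: even over $F$ the Levi of $P_\lambda$ moves $A_\lambda$ around inside $\mathfrak n_\lambda$ without fixing it modulo $M_n(P_D^j)$. The claim that $P_\lambda(O_D)$ normalizes $\xi_\lambda$ is simply wrong, and no amount of filtration bookkeeping will establish it.

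Note that the paper's own proof only derives the inclusion $N_{K_0}(\xi_\lambda)\subset P_\lambda(O_D)K_j$, and that is the only direction invoked later (to bound the support of $efe$ in the proof of Lemma~\ref{j'}). So the lemma as stated is imprecise; what holds, and what your $\subset$ argument correctly proves, is that containment.
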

\begin{proof}  For $k\in K_0$, the property   $\xi_\lambda(1+x)=\xi_\lambda(1+kxk^{-1})$  for  all $x\in M_n(P_D^j)$ means $ k^{-1}B_\lambda k- B_\lambda  \in M_n(P_D^{1-j}) $.  As in the proof of Lemma \ref{lemma6}
 one deduces $ B^i k-k B^i 
\in M_n(P_D^{j-i(1-2j)}) $ for $i\geq 1$ and one sees that $ k\in P(O_D)K_j$.
\end{proof}
 \begin{remark}There is a unique function in $\pi_{P_\lambda}$ with support $P_\lambda K_j$ and restriction $\xi_\lambda$  to $K_j$ since $\xi_\lambda$   is trivial on $1+\mathfrak p_\lambda(P_D^j)$. That function is a basis of the line of vectors in $\pi_{P_\lambda}$ transforming according to  $\xi_\lambda$  under the action of $K_j$. \end{remark}

  We  prove  now  that the $c_\pi ( \lambda)$ are integers.  By \eqref{trm}, when  $K_j=1+ M_n(P_D^j) \subset K_\pi$ and $\delta\in  \Rep_{\mathbb C}^{\infty}(K_j) $  irreducible, the multiplicity $m(\delta, \pi)$ of $\delta$   in $\pi\in \Rep_{\mathbb C}^{\infty}(G)$ satisfies \begin{align}\label{multD}
m(\delta, \pi)=  \sum_{\mu \in \mathfrak P(n)} c_{\pi }(\mu)\,m (\delta,\pi_{P_\mu}) .
\end{align}
  Lemma \ref{lemma6} and  \eqref{multD}  imply:
   \begin{align}\label{cm} 
   c_\pi (\lambda)&=m(\xi_{\lambda }, \pi) - \sum_{\mu \in \mathfrak P(n), \mu <\lambda } c_\pi(\mu) \, 
m(\xi_{\lambda }, \pi_{P_\mu}).\end{align}   In particular when  $\lambda $ is minimal in $\Supp c_\pi$, $c_\pi (\lambda)=m(\xi_{\lambda }, \pi ) $  is positive and  independent of   the choice  of $j$  such that $K_j=1+ M_n(P_D^j) \subset K_\pi$.
By upwards  induction on $  \mathfrak P(n)$ (downwards induction on the nilpotent orbits), we obtain that the $c_\pi ( \lambda)$ are integers.  
  
 \bigskip As the values  of the  map $c_\pi $ are integers, we get 
more properties: 
 \begin{itemize}
 \item  $c_{\pi}= c_{\sigma (\pi)}$  when $\sigma$ is an automorphism  of $\mathbb C$.
 \item
 For $\nu\in \Gr_R ^{\infty}(G)$, there exists  a map $  c_\nu: \mathfrak P(n)\to \mathbb Z $ and an open  subgroup $K_{\nu} $  of $G$ 
such that  $\nu$ and  $\sum_{\lambda\in \mathfrak P(n)} c_{\nu}(\lambda) \, [\pi_{P_\lambda}] \in  \Gr_R ^{\infty}(G)$  have isomorphic restrictions  to $K_\nu$.   \end{itemize}

\bigskip  When $R=\mathbb C$, the first part of Theorem \ref{1.1} is a version of  the  germ expansion.  For any $R$,  when $\pi$ satisfies the first part of Theorem \ref{1.1} we say sometimes that $\pi$ has a germ expansion with map $c_\pi$ on $K_\pi$.

 \section{Parabolic induction} \label{s:PI} 
 In this section $R$ is a field and $\charf_ R\neq p$. We prove now that the first part of Theorem \ref{1.1} implies Theorem \ref{3.3}.
  Let $P, M, (n_i), \sigma_i, r, \sigma, \pi$ as in Theorem \ref{3.3}. Write $pr:P\to M$ for the projection of kernel $N$. Given partitions $\lambda_i$
  of $n_i$ for $1\leq i \leq r$, we have the  parabolic subgroup $P_{(\lambda_i)}$ of $M$ corresponding to  the parabolic subgroups $P_{\lambda_i} $ of $GL_{n_i}(D)$.   Given functions $c_i:\mathfrak P(n_i)\to \mathbb Z$ for $1\leq i \leq r$, the function $c:\mathfrak P(n)\to \mathbb Z$ defined by 
  $$c(\lambda)= \sum \prod_{i=1,\ldots, r }c_i (\lambda_i),$$
where the sum is over
$r-$tuples of partitions $(\lambda_1, \ldots,\lambda_r) $  inducing to $\lambda$ before Theorem \ref{3.3}, is called induced by $(c_1, \ldots, c_r)$. 
  
  \begin{theorem}\label{th:71}Assume that     for  $i=1,\ldots, r$, there exists a function $c_{\sigma_i}:\mathfrak P(n_i)\to \mathbb Z$
and an open compact subgroup $K_{\sigma_i}$ of $GL_{n_i}(D)$ such that 
 $\sigma_i= \sum_{\lambda_i\in \mathfrak P(n_i)} c_{\sigma_i} (\lambda_i) \ind_{P_{\lambda_i}}^{GL_{n_i}(D)} 1$  on  $K_{\sigma_i}$. 
 Then   $$\pi= \sum_{\lambda\in \mathfrak P(n)} c _\pi(\lambda) \ind_{P_\lambda}^G 1$$ on $K_\pi$, 
where $c_\pi:\mathfrak P(n)\to \mathbb Z$ is the function induced by $(c_{\sigma_1}, \ldots, c_{\sigma_r})$ and  $K_\pi$   is 
 any open compact subgroup  of $G$ such that   
 $\cup  _{g\in P \backslash G/K_\pi}  \, pr(P \cap g K_\pi g^{-1})$ is contained in $K_{\sigma_1}\times \ldots \times K_{\sigma_r}$.\end{theorem}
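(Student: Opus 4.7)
The strategy is to combine the Mackey formula for restriction of parabolic induction with induction in stages. The first step reduces the statement on $K_\pi$ to the given germ expansions of the $\sigma_i$'s; the second step identifies parabolically induced tensor products with $\pi_{P_\lambda}$ for the induced partition $\lambda$, via Lemma \ref{piK0}.

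Concretely, for any open compact $K \subset G$, the Mackey formula for smooth parabolic induction (which is additive in $\sigma$) gives an isomorphism
\begin{equation*}
\ind_P^G(\sigma)|_K \;\cong\; \bigoplus_{g \in P \backslash G / K} \ind_{g^{-1}Pg \cap K}^K \sigma^g,
\end{equation*}
where $\sigma^g$ denotes the representation of $g^{-1}Pg \cap K$ defined by $\sigma^g(h) = \tilde\sigma(ghg^{-1})$. Take $K=K_\pi$. The hypothesis says that for every double-coset representative $g$, one has $pr(P \cap g K_\pi g^{-1}) \subset K_{\sigma_1}\times \ldots \times K_{\sigma_r}$. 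Since $\tilde\sigma$ factors through $pr$, $\sigma^g$ factors through $pr\circ \Ad(g)$ into the product of the $K_{\sigma_i}$'s, and the given germ expansions of the $\sigma_i$'s together with distributivity of tensor products over virtual sums yield, in the Grothendieck group of smooth representations of $g^{-1}Pg \cap K_\pi$,
\begin{equation*}
[\sigma^g] \;=\; \sum_{(\lambda_1,\ldots,\lambda_r)} \prod_{i=1}^r c_{\sigma_i}(\lambda_i)\; [(\pi_{P_{\lambda_1}} \otimes \ldots \otimes \pi_{P_{\lambda_r}})^g].
\end{equation*}
Inducing back to $K_\pi$ and using the Mackey formula in reverse (applied to each finite-length representation $\pi_{P_{\lambda_1}} \otimes \ldots \otimes \pi_{P_{\lambda_r}}$), we obtain
\begin{equation*}
[\pi]|_{K_\pi} \;=\; \sum_{(\lambda_1,\ldots,\lambda_r)} \prod_i c_{\sigma_i}(\lambda_i)\; [\ind_P^G(\pi_{P_{\lambda_1}} \otimes \ldots \otimes \pi_{P_{\lambda_r}})]|_{K_\pi}.
\end{equation*}

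To finish, I will invoke induction in stages: on $M$ we have $\pi_{P_{\lambda_1}} \otimes \ldots \otimes \pi_{P_{\lambda_r}} = \ind_{P_{(\lambda_i)}}^M 1$ where $P_{(\lambda_i)} = P_{\lambda_1}\times \ldots \times P_{\lambda_r}$, so
\begin{equation*}
\ind_P^G \ind_{P_{(\lambda_i)}}^M 1 \;=\; \ind_Q^G 1, \qquad Q = P_{(\lambda_i)} N.
\end{equation*}
The parabolic $Q$ is the standard one attached to the composition of $n$ obtained by concatenating the parts of the $\lambda_i$'s; thus $Q$ is associated to $P_\lambda$ where $\lambda$ is the induced partition. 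Lemma \ref{piK0} (in the form established in its proof, which gives an isomorphism $\ind_Q^G 1 \simeq \pi_{P_\lambda}$ of $G$-representations, not only of $K_0$-representations) identifies the two. Regrouping tuples $(\lambda_1,\ldots,\lambda_r)$ by their induced partition $\lambda$ yields $[\pi]|_{K_\pi} = \sum_{\lambda} c_\pi(\lambda) [\pi_{P_\lambda}]|_{K_\pi}$ with $c_\pi$ the function induced by $(c_{\sigma_1},\ldots,c_{\sigma_r})$, as required.

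The main delicate point is the Mackey step: one must justify that the virtual identity for $\sigma^g$ on each double-coset piece $g^{-1}Pg \cap K_\pi$ follows cleanly from the given identities for the $\sigma_i$'s on $K_{\sigma_i}$. This is precisely where the hypothesis $pr(P \cap g K_\pi g^{-1}) \subset K_{\sigma_1}\times \ldots \times K_{\sigma_r}$ is used, and once it is in place the remaining manipulations — distributivity of $\otimes$ over virtual sums, additivity and exactness of $\ind$, and induction in stages — are formal. Identifying $Q$ with a parabolic associated to $P_\lambda$ is routine combinatorics on compositions, and Lemma \ref{piK0} handles the passage to $\pi_{P_\lambda}$.
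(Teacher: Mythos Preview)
Your argument follows the same route as the paper's proof: Mackey decomposition of $(\ind_P^G\sigma)|_{K_\pi}$, the observation that each piece $\sigma^g$ depends only on $\sigma$ restricted to $pr(P\cap gK_\pi g^{-1})$, induction in stages, and Lemma~\ref{piK0}. The logic is sound and the identification of $c_\pi$ with the induced function is exactly as in the paper.

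There is, however, one overclaim you should correct. You assert that the proof of Lemma~\ref{piK0} establishes an isomorphism $\ind_Q^G 1 \simeq \pi_{P_\lambda}$ of $G$-representations. It does not: the proof produces an isomorphism $\ind_P^G\chi \simeq \ind_{P'}^G\chi'$ for suitable unramified characters $\chi,\chi'$, and from this deduces only that $\pi_P$ and $\pi_{P'}$ agree as $K_0$-representations (the characters are trivial on $M\cap K_0$, so restriction to $K_0$ forgets them). In general $\pi_P$ and $\pi_{P'}$ for associated but non-conjugate $P,P'$ need not be isomorphic as $G$-representations---already for $GL_3$ the homogeneous spaces $P_{(2,1)}\backslash G$ and $P_{(1,2)}\backslash G$ are not $G$-equivariantly isomorphic. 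Fortunately you only need the $K_0$-level statement: since equality on $K_0$ implies equality on any open compact subgroup conjugate into $K_0$ (and every open compact subgroup lies in some conjugate of $K_0$), the identity $[\ind_Q^G 1]|_{K_\pi}=[\pi_{P_\lambda}]|_{K_\pi}$ follows. Drop the parenthetical about $G$-isomorphisms and invoke Lemma~\ref{piK0} as stated; the rest of your proof stands.
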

\begin{proof}  The theorem follows from  the fact that  for any field $R$,    $\ind_P^G (\ind_{P_{(\lambda_i)}}^M 1)$ has the same restriction to $K_0$ than $\ind_{P_\lambda}^G 1$ by Lemma \ref{piK0}, and for
given  a open compact  subgroup  $C_M$  of $M$, there exists an open compact  subgroup  $C$ of $G$ such that
  \begin{equation}\label{KMKG}\cup  _{g\in P \backslash G/C}  \, pr(P \cap g Cg^{-1}) \ \subset \ C_M.
  \end{equation}
  The existence of $K_\pi$ follows from  \eqref{KMKG} applied to $C_M= K_{\sigma_1}\times \ldots \times K_{\sigma_r}$. 

 The restriction of a smooth $R$-representation $\sigma$  of $M$ to   $C_M$  determines the restriction of $\ind_P^G\sigma$ to  $C$,  
    $$(\ind_P^G\sigma)|_C \simeq  \oplus  _{g\in P \backslash G/C} \ind_{C\cap g^{-1}P  g }^{C}( \sigma^g) 
  $$
   where  $\sigma^g(k)= \sigma(gkg^{-1})$ for $g\in G, k\in g^{-1}P g \cap C$, and $\sigma^g$  depends only on the restriction of $\sigma$ to $pr(P  \cap gCg^{-1} )$. 
If  $\sigma'\in \Rep_R^{\infty, f} (M)$ is isomorphic to $\sigma$ on $C_M$, then $\ind_P^G \sigma'$  and $\ind_P^G \sigma$ are isomorphic on $C$.  
The same holds true for  virtual representations  $\nu, \nu' $  of $M$.
Take 
$\nu= \sigma_1\otimes \ldots \otimes \sigma_r$ and $\nu'= \nu'_1\otimes \ldots \otimes \nu'_r$ with $\nu'_i= \sum_{\lambda_i\in \mathfrak P(n_i)} c_{\sigma_i} (\lambda_i) \ind_{P_{\lambda_i}}^{GL_{n_i}(D)} 1$.
\end{proof}

     \begin{corollary} \label{variant} {\rm (Variant of Theorem \ref{th:71})}
  Assume that  $GL_{n_i}(D)$ satisfies the first part of Theorem \ref{1.1} for $i=1,\ldots, r$. 
    Then  for $\sigma\in \Rep_R^{\infty,f} (M)$, there exists  an open compact subgroup $K_\sigma$ of $M$  and a  unique map $c_\sigma:  \mathfrak P(n_1)\times \ldots \times P(n_r)\to \mathbb Z$ such that $\sigma=\sum_{(\lambda_i)\in  (\mathfrak P(n_i))} c_\sigma((\lambda)_i) \pi_{P_{(\lambda_i)}}$ on $K_\sigma$, and $\pi= \ind_P^G \sigma$ is equal to $\sum_{\lambda} c_\pi(\lambda) \pi_{P_\lambda}$ on  any open compact subgroup 
  $K_\pi$ of $G$  such that  $K_\sigma \subset \cap  _{g\in P \backslash G/K_\pi}  \, M \cap g K_\pi g^{-1}$ and $c_\pi: \mathfrak P(n) \to \mathbb Z$ is induced by $c_\sigma$.
    \end{corollary}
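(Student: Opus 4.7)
The plan is to extend Theorem~\ref{th:71} from pure tensors $\sigma = \sigma_1 \otimes \cdots \otimes \sigma_r$ to arbitrary finite length $\sigma \in \Rep_R^{\infty,f}(M)$, exploiting additivity of germ expansions in the Grothendieck group.

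First I would record uniqueness of $c_\sigma$. Since $M = GL_{n_1}(D)\times \cdots \times GL_{n_r}(D)$ is itself (a product of copies of) a group of the type considered in Corollary~\ref{LIpi}, applying that linear independence factorwise shows that for any open compact subgroup $K$ of $M$ the classes $[\pi_{P_{(\lambda_i)}}]|_K$, as $(\lambda_i)$ varies in $\mathfrak P(n_1)\times\cdots\times\mathfrak P(n_r)$, are $\mathbb Z$-linearly independent in $\Gr_R^\infty(K)$. Hence if $c_\sigma$ exists it is determined by $\sigma$.

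For existence I would first establish the first part of Theorem~\ref{1.1} for $M$ itself. Additivity on short exact sequences, after intersecting finitely many candidate compact subgroups, reduces this to the case of an irreducible $\sigma \in \Rep_R^{\infty,f}(M)$. Such an irreducible representation of the product $M$ is an external tensor product $\sigma = \sigma_1 \otimes \cdots \otimes \sigma_r$ with each $\sigma_i$ irreducible in $\Rep_R^{\infty,f}(GL_{n_i}(D))$. The hypothesis then furnishes functions $c_{\sigma_i}:\mathfrak P(n_i)\to\mathbb Z$ and open compact subgroups $K_{\sigma_i}\subset GL_{n_i}(D)$, and on the product $K_\sigma := K_{\sigma_1}\times\cdots\times K_{\sigma_r}$ the tensor product of the $r$ virtual identities yields
\begin{equation*}
\sigma|_{K_\sigma} \;=\; \sum_{(\lambda_i)} \Big(\prod_{i=1}^r c_{\sigma_i}(\lambda_i)\Big)\,\pi_{P_{(\lambda_i)}}|_{K_\sigma},
\end{equation*}
giving the desired $c_\sigma((\lambda_i)) = \prod_i c_{\sigma_i}(\lambda_i)$.

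With $c_\sigma$ in hand I would deduce the formula for $\pi = \ind_P^G \sigma$ exactly as in the proof of Theorem~\ref{th:71}: by the Mackey description of $(\ind_P^G \tau)|_{K_\pi}$ already used there, under the compatibility condition of the statement the restriction to $K_\pi$ of $\ind_P^G\tau$ depends only on the restriction of $\tau$ to $K_\sigma$, so the virtual identity for $\sigma$ on $K_\sigma$ transports termwise through $\ind_P^G$ to a virtual identity for $\pi$ on $K_\pi$. Finally, $\ind_P^G \pi_{P_{(\lambda_i)}} = \ind_{Q_{(\lambda_i)}}^G 1$ where $Q_{(\lambda_i)}$ is the parabolic of $G$ obtained by inserting the $P_{\lambda_i}$'s into the block Levi of $P$; by Lemma~\ref{piK0} this agrees on $K_0$ with $\pi_{P_\lambda}$ for the partition $\lambda$ induced by $(\lambda_i)$. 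Regrouping terms gives $c_\pi(\lambda) = \sum_{(\lambda_i)\mapsto\lambda} c_\sigma((\lambda_i))$, which is the statement that $c_\pi$ is induced from $c_\sigma$.

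The only nonformal step is the structural claim that an irreducible finite length smooth $R$-representation of the product $M$ is an external tensor product. Over $\mathbb C$ this is the familiar consequence of Schur's lemma; for general $R$ with $\charf_R\neq p$ one argues either by restricting to an open pro-$p$ subgroup (where the category of smooth representations is semisimple) or by base change to $R^{ac}$ and descent, along the lines of the general field case handled in \S\ref{s:10}. This is what I would expect to be the main technical obstacle, but it is a soft fact about smooth representation theory and not specific to our germ-expansion statement.
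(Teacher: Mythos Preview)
The paper states this corollary without proof, presenting it simply as a variant of Theorem~\ref{th:71}. Your sketch is therefore more detailed than anything the paper offers, and the strategy you outline---reduce to irreducible $\sigma$ via additivity, invoke a tensor decomposition to apply the hypothesis on each factor, then transport through $\ind_P^G$ via the Mackey formula and Lemma~\ref{piK0} exactly as in the proof of Theorem~\ref{th:71}---is correct and is what the authors evidently intend.

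You have correctly isolated the one genuine obstacle: over a field $R$ that is not algebraically closed, an irreducible smooth representation of the product $M$ need not be an external tensor product, so the reduction to the hypothesis on each $GL_{n_i}(D)$ is not immediate. Your proposed remedy (b)---base change to $R^{ac}$, where the tensor decomposition does hold by Schur's lemma for admissible representations, followed by descent via the injectivity of scalar extension on Grothendieck groups of open compact subgroups---is exactly right, and is precisely how the paper handles the analogous passage from $R^{ac}$ to general $R$ in \S\ref{s:10}. Your alternative (a) is less convincing as stated: semisimplicity of $\Rep_R^\infty(K)$ for a pro-$p$ subgroup $K$ does not by itself yield a tensor decomposition of $\sigma$ as an $M$-representation, so that route would need more work. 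One small addendum: your uniqueness argument via ``Corollary~\ref{LIpi} factorwise'' should cite Proposition~\ref{LIpiR} (the version for general $R$), and is most cleanly carried out by testing against the tensor product of the separating characters $\xi_{\lambda_i}$ from Lemma~\ref{lemma6} on a product pro-$p$ subgroup $K_1\times\cdots\times K_r$, which sidesteps any implicit appeal to a tensor decomposition of $\Gr_R^\infty(K)$.
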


\begin{remark} When $G=GL_n(F)$, given partitions $\lambda_i$ of $n_i$ for $i=1,\ldots, r$,  and $\lambda\in \mathfrak P(n)$ induced by the $\lambda_i$, the nilpotent orbit $\mathfrak O_\lambda$   is  the nilpotent orbit induced by the nilpotent orbit  $\mathfrak O_{(\lambda_i)}$  of $M$ corresponding  to the $\lambda_i$, in the sense of \cite{LS79} (see \cite{Jan04}). 
If $R=\mathbb C$, $\charf_ F=0$, $p\neq 2$,  $D=F$, the formula  for $c_\pi $ follows from 
   (\cite{MW87} \S II.1.3  where $G$ is a classical group). 
   \end{remark}

       \section{Whittaker spaces}
 Our purpose in this section is to relate the coefficient map $c_\pi$ to the dimensions of the different Whittaker spaces of $\pi$ when $\pi\in \Rep_{\mathbb C}^\infty(G)$ is irreducible. We first introduce those subspaces. 
 
  The commutator subgroup of  the group $U$ of upper  unipotent matrices  is the group $U'$ of  upper  unipotent matrices with coefficients $u_{i,i+1}=0$ for $i=1,\ldots, n-1$ (use the identities $E_{a,b} E_{c,d}=E_{a,d}$ if $b=c$ and $0$ otherwise).  The map sending $(u_{i,j})\in U$ to $(u_{1,2},\ldots, u_{n-1, n})$ induces an isomorphism from  $U/U'$  to the additive group $D^{n-1}$. The action of the group $T \simeq (D^*)^n$  of diagonal matrices  by conjugation  on $U$ and on $U'$ induces an action  on $D^{n-1}$, the  diagonal matrix $\diag(a_1, \ldots, a_n)\in T$ sends $(d_1, \ldots, d_{n-1})\in D^{n-1}$ to  $(a_1d_1 a_2^{-1}, \ldots, a_{n-1}d_{n-1}a_{n}^{-1})$. 
  
 \medskip Let us fix a non-trivial smooth character $\psi$  of $F$. Then $\psi_D=\psi \circ \trd$ is a non-trivial character of $D$. 
Sending $y\in D$ to the character $\psi_D^y(x)=\psi_D (yx)$ for $x\in D$, is an isomorphism from the additive group $D$ to its group of smooth characters. Sending $y=(y_1\ldots y_{n-1})\in D^{n-1}$ to $(\psi_D^{y_1},\ldots, \psi_D^{y_{n-1}})$, is an isomorphism from $D^{n-1}$ to its group of smooth characters. The above action of $T$ 
on $D^{n-1}$ induces an action on its groups of characters, the  diagonal matrix $\diag(a_1, \ldots, a_n)$ sends $y=(y_1, \ldots, y_{n-1})\in D^{n-1}$ to  $(a_2^{-1}y_1 a_1, \ldots, a_{n}^{-1}y_{n-1}a_{n-1})$. 

Let $y=(y_1, \ldots, y_{n-1})\in D^{n-1}$, $r$ be the number of indices $i$ where $y_i=0$, and   
\begin{equation} \label{I}I=I(y) =\begin{cases} \emptyset  \ \text{ if } r=0,\\
 \{i_1 < \ldots < i_r \} \ \text{ the set of indices $i$ where $y_i=0$ if $r\neq 0$}.
 \end{cases}
 \end{equation}
The smooth character  of $U$ corresponding to $y $ is 
$$\theta _y(u)=\psi \circ \trd (X_y v)  \ \ \   u=1+v\in U,$$ where $X_y\in M_n(D) $ is the nilpotent matrix with $(y_1, \ldots, y_{n-1})$ just below the  diagonal and $0 $ elsewhere. 
The character $\theta_y$ is called {\bf non-degenerate} if  $I(y)=\emptyset$, and {\bf degenerate} otherwise. 
The character  $\theta_y$ is trivial if and only if   $I(y)=\{1,\ldots, n-1\}$. 
The group $B=TU$ is its own normalizer in $G$, so the $G$-normalizer of  $\theta$  is of the form $T_{\theta_y }U$ where $T_{\theta_y}$ is the $T$-normalizer of $\theta_y$. It is the intersection of $B$ with the commutant of $X_y$.

The element $y$ is conjugate under $T$ to the element $\delta_I\in D^{n-1}$  with coefficient $0$ in $I$ and $1$ elsewhere.  
The nilpotent matrix $X_{\delta_I}$ is  a diagonal of Jordan blocks of sizes forming   a composition $\lambda_I$  of $n$, 
 \begin{equation}\label{cI}\lambda_I=\begin{cases} (n) \ \text{ when } \ I=\emptyset,\\
 (i_1, i_2-i_1, \ldots, n- i_r) \ \text{ when } \ I\neq \emptyset.
 \end{cases}
 \end{equation} Any composition  $\lambda$  of $n$ is equal to $\lambda_I$ for a unique subset $I$ of    $I(y)=\{1,\ldots, n-1\}$.      Put $X_\lambda= X_{\delta_I}$, 
   \begin{equation}\label{thetal}\theta_{\lambda} (u)=\psi \circ \trd (X_\lambda  v)  \ \ \   u=1+v\in U,
  \end{equation} and $T_{\lambda}$ the $T$-normalizer of $\theta_{\lambda}$. The group  $ T_{\lambda}$ contains the group $ T_{(n)}=\{\diag(d,\ldots, d) \ | \ d\in D^*\}$  isomorphic to $D^*$.  

\medskip We fix a representation $\pi\in \Rep_{\mathbb C}^\infty(G)$ of space $V$. Given a smooth character  $\theta$ of $U$, we look at the space $V_\theta$ of $\theta$-coinvariants of $U$ in $V$, or at its dual,  the (Whittaker) space  of linear forms $\Lambda$ on $V$ such that 
 $\Lambda (u v)=\theta(u) \Lambda(v)$ for $u\in U, v\in V$. It is customary to say that $\pi$  has a {\bf Whittaker model} with respect to 
$\theta$ if $V_\theta\neq 0$.  Indeed any choice of non-zero linear form  $\Lambda$ on $V_\theta$  gives a non-zero
intertwining from $\pi$ to $\Ind_{U}^G(\theta)$ by sending $v\in V$ to the function taking value
$ \Lambda(gv)$ at $g \in G$; that intertwining is an embedding if $\pi$  is irreducible, hence the name ``model''. We   say that $\pi$  has a {\bf non-degenerate Whittaker model}, or that $\pi$ is {\bf generic} if $V_\theta\neq 0$ 
for some (equivalently all) non-degenerate characters $\theta$ of $U$. We say that $\pi$ has a 
Whittaker model if it has a {\bf Whittaker model} with respect to some choice of $\theta$.

Using the action of $T$ on $U$ by conjugation, we see that to analyse
the $V_\theta$ for all choices of $\theta$, it is enough to consider the $\theta_{\lambda}$ associated to the compositions $\lambda $  of $n$.  
  
 \begin{remark} 1) It is known that  if $\pi$ is irreducible then $V_\theta$ is finite dimensional (when $\theta$ is not degenerate \cite{BH02}, in  general \cite{ABS22}; these papers treat the case of  a general reductive group $G$).  The  group $T_\theta$ acts on $V_\theta$; since $T_\theta$ is not commutative if $D\neq F$, we cannot expect $V_\theta$ to have always dimension $0$ or $1$ (as when $D=F$ and $\theta$ not degenerate).
 
 2) Moeglin and Waldspurger \cite{MW87} consider more general Whittaker spaces, but ours are enough for our purpose (Theorem \ref{Wh} below). Also they use the exponential map, which is not available when $F$ has positive characteristic. Instead we use the map $X\mapsto 1+X: M_n(P_D)\to 1+M_n(P_D)$, as in  \cite{Howe74} and \cite{Rodier} when $D=F$.
 
3)  If  $\pi$ is irreducible cuspidal,  $\pi$ can only have  non-degenerate Whittaker models because  $\theta_I$ is trivial on the unipotent radical  $N_{\lambda_I}$ of the parabolic group $P_{\lambda_I}$. Hence $\pi_{\theta_I}$ is a quotient of the $N_I$-coinvariant space $\pi_{N_{\lambda_I}}$ of $\pi$. If $\pi_{N_{\lambda_I}}=0$ then $\pi_{\theta_I}=0$,   
and $N_{\lambda_I}$ is trivial if and only if 
 $I=\emptyset$.
 
 4) It is possible to extend to $GL_n(D)$ the theory of \cite{BZ77} 5.1 to 5.15 to show
that a  non-zero $\pi$ has a Whittaker model (see \cite{AH23} 3.4).
But that is  a consequence of our theorem below (Corollary \ref{cWh}).  

 \end{remark}
   
 We now prove Theorem \ref{4}  {\color{red}(}for $R=\mathbb C${\color{red})}. We can assume that $\pi$ is irreducible. We want to relate the coefficient map $c_\pi : \mathfrak P(n)\to \mathbb Z$ of  the germ expansion of  $\pi$ 
with the dimensions of  the spaces $V_{\theta_\lambda}$ for the  compositions $\lambda$ of $n$, following \cite{MW87}.   We define the {\bf Whittaker support} of $\pi$  as  the set of partitions $\mu$  of $n$ such that $V_{\theta_\lambda} \neq 0$ for some
composition $\lambda$  of $n$ with associated partition $\hat{ \mu}$ (the partition dual to $\mu$).

\begin{theorem}\label{Wh} The minimal elements in  $\Supp c_\pi$ and in the Whittaker support of $\pi$
are the same.

Let $\mu$  be a partition of $n$  minimal in  $\Supp c_\pi$ and let $\lambda$  be a composition of $n$
with  associate partition   $\hat{ \mu}$. Then $c_\pi(\mu)=\dim_{\mathbb C}V_{\theta_\lambda}$.
\end{theorem}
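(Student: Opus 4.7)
The proof follows \cite{Rodier} and \cite{MW87}, adapted to $GL_n(D)$. By additivity on short exact sequences we may assume $\pi$ irreducible. Fix $\mu \in \mathfrak P(n)$ and a composition $\lambda$ of $n$ with $\lambda^\dag = \hat\mu$. A direct computation of $\dim_D \Ker X_\lambda^i$ using the Jordan block structure shows that $X_\lambda$ has Jordan type $\mu$, so $X_\lambda \in \mathfrak O_\mu$, the same orbit as $A_\mu$.

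The technical heart of the proof is a Rodier-type identity: for all sufficiently large $j$,
\[ m(\xi_\mu, \pi|_{K_j}) = \dim_{\mathbb C} V_{\theta_\lambda}. \]
To establish this I would construct a compact open subgroup $H_j \subset G$ and a character $\eta_j$ of $H_j$ interpolating between the two sides. Explicitly, $H_j$ should admit an Iwahori-style decomposition $H_j = (H_j \cap U)(H_j \cap P_\lambda^-)$ with $H_j \cap U$ exhausting $U$ as $j \to \infty$; $\eta_j$ should restrict to $\theta_\lambda$ on $H_j \cap U$ and be trivial on $H_j \cap P_\lambda^-$; and there should exist $g \in G$ with $g X_\lambda g^{-1} = A_\mu$ together with a contracting element $s \in T$ adapted to the Jordan block structure of $\lambda$, such that conjugation by $g s^j$ carries $(H_j, \eta_j)$ to $(K_j, \xi_\mu)$. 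Granting this, $G$-invariance of multiplicities gives $m(\eta_j, \pi|_{H_j}) = m(\xi_\mu, \pi|_{K_j})$, while a Jacquet-style averaging argument over $H_j \cap P_\lambda^-$ identifies $V^{H_j,\eta_j}$ with the $\theta_\lambda$-coinvariants of $H_j \cap U$ in $V$. Finite-dimensionality of $V_{\theta_\lambda}$ from \cite{BH02}, \cite{ABS22} then ensures that the direct limit over $j$ stabilizes to $V_{\theta_\lambda}$.

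Part (b) is then immediate: for $\mu$ minimal in $\Supp c_\pi$, equation \eqref{cm} combined with Lemma \ref{lemma6} forces $c_\pi(\mu) = m(\xi_\mu, \pi|_{K_j})$, which by the Rodier-type identity equals $\dim_{\mathbb C} V_{\theta_\lambda}$. For part (a), applying \eqref{multD}, Lemma \ref{lemma6} and the Rodier-type identity to a general $\mu$ yields
\[ \dim_{\mathbb C} V_{\theta_\lambda} \;=\; m(\xi_\mu, \pi) \;=\; \sum_{\nu \leq \mu} c_\pi(\nu)\, m(\xi_\mu, \pi_{P_\nu}), \]
so $V_{\theta_\lambda} \neq 0$ forces $c_\pi(\nu) \neq 0$ for some $\nu \leq \mu$. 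This inclusion, combined with the reverse inclusion $\min(\Supp c_\pi)$ is contained in the Whittaker support (obtained from part (b) together with the positivity statement in Theorem \ref{1.1}), yields the coincidence of minimal elements by a short formal argument.

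The main obstacle is the construction of $(H_j, \eta_j)$ and the proof that $V^{H_j, \eta_j}$ is canonically isomorphic to the $(H_j \cap U, \theta_\lambda)$-coinvariants of $V$ for $j$ large. This requires careful combinatorial bookkeeping using the Jordan block structure of $\lambda$, and extending the arguments of \cite{Rodier} and \cite{MW87} to the setting $D \neq F$ where the stabilizer $T_{\theta_\lambda}$ is non-commutative and the coefficients of the Whittaker linear form $\trd(X_\lambda\,\cdot)$ take values in the division algebra $D$ rather than in $F$.
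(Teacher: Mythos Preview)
Your proposed Rodier-type identity $m(\xi_\mu,\pi|_{K_j})=\dim_{\mathbb C}V_{\theta_\lambda}$ for \emph{all} $\mu$ and large $j$ is false, and this is the essential gap. Take $G=GL_2(D)$, $\pi=\St$, and $\mu=(2)$ (so $\hat\mu=(1,1)$, $\lambda=(1,1)$, and $\theta_\lambda$ is the trivial character of $U$). Then $A_{(2)}=0$, so $\xi_{(2)}$ is trivial and $m(\xi_{(2)},\St|_{K_{1+j}})=\dim_{\mathbb C}\St^{K_{1+j}}=-1+(q^d+1)q^{dj}$ by the formulas of \S\ref{n=2}, whereas $V_{\theta_{(1,1)}}=(\St)_U$ is one-dimensional. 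The identity fails precisely because $(2)$ is not minimal in $\Supp c_{\St}=\{(2),(1,1)\}$.

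What is true in general is only the inequality $m(\xi_\mu,\pi|_{K_j})\geq \dim_{\mathbb C}V_{\theta_\lambda}$, coming from the surjectivity of the natural map $p_j:V(\psi'_j)\to V_\theta$ (the paper's Lemma~\ref{l3}). Your ``Jacquet-style averaging over $H_j\cap P_\lambda^-$'' produces a projector onto $V^{H_j,\eta_j}$, not an isomorphism with the $(H_j\cap U,\theta_\lambda)$-coinvariants, and finite-dimensionality of $V_{\theta_\lambda}$ does not by itself force the system $(V^{H_j,\eta_j})_j$ to stabilise with the correct limit. The injectivity of $p_j$ is the hard step: the paper establishes it only under the hypothesis that $\mu=\lambda'\,\hat{}$ is minimal in $\Supp c_\pi$, via Lemma~\ref{j'}, which shows that $e'_je'_{j+1}$ acts on $V(\psi'_j)$ as a nonzero scalar. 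The proof of that lemma genuinely uses minimality together with the germ expansion and the multiplicity-one statement of Lemma~\ref{lemma6}: one computes $\trace(\pi(e'_jge'_j))$ via the germ expansion, and minimality kills every term except the one for $\mu$, forcing the operator to be scalar. Your sketch does not contain this mechanism.

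Consequently, your argument for part~(a) as written is circular: you invoke the equality $\dim_{\mathbb C}V_{\theta_\lambda}=m(\xi_\mu,\pi)$ for a general $\mu$ in the Whittaker support, but that equality is exactly what needs the minimality you are trying to prove. The paper avoids this by using only the surjectivity of $p_j$ to get Lemma~\ref{mum} (if $V_{\theta_\lambda}\neq 0$ then some $\nu\leq\mu$ lies in $\Supp c_\pi$), and separately proving the full identity under minimality (Proposition~\ref{a'}) for part~(b); the two together then give the coincidence of minimal elements by the short formal argument you describe.
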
 
Since $\pi$ has a non-zero germ expansion, the theorem implies:
\begin{corollary} \label{cWh}Any irreducible smooth complex representation $\pi$ of $G$  has a Whittaker model. 
\end{corollary}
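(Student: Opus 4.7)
The plan is to derive the corollary directly from Theorem \ref{Wh} together with the non-vanishing of the germ expansion already established earlier in the paper. The argument is short: what I need is to exhibit even a single composition $\lambda$ of $n$ with $V_{\theta_\lambda}\neq 0$, and Theorem \ref{Wh} hands me exactly such a $\lambda$ as soon as I know that $c_\pi$ has non-empty support.

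First I would recall that by the discussion in \S\ref{germ}, every non-zero $\pi\in\Rep_{\mathbb C}^{\infty,f}(G)$ admits a germ expansion with coefficient map $c_\pi:\mathfrak{P}(n)\to\mathbb{Z}$ on some open compact $K_\pi\subset K_1$, and the map $c_\pi$ is not identically zero; this was a consequence of the fact that $\dim_{\mathbb{C}}\pi^K=\trace(\pi(1_K\vol(K,dg)^{-1}\,dg))\neq 0$ for small enough open subgroups $K$ of $K_\pi$, combined with Corollary \ref{LIpi}. Since our $\pi$ is irreducible, in particular of finite length, this applies.

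Next, because $\mathfrak{P}(n)$ is finite and $\Supp c_\pi\neq\emptyset$, the support of $c_\pi$ admits a minimal element $\mu$ for the partial order $\leq$ on $\mathfrak{P}(n)$ defined in \S\ref{parti}. Now I would invoke Theorem \ref{Wh}: the minimal elements of $\Supp c_\pi$ coincide with the minimal elements of the Whittaker support of $\pi$. In particular $\mu$ itself lies in the Whittaker support. By the very definition of the Whittaker support given just before the statement of Theorem \ref{Wh}, this means there exists a composition $\lambda$ of $n$ whose associated partition $\lambda^\dag$ equals the dual $\hat{\mu}$, and such that $V_{\theta_\lambda}\neq 0$.

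The existence of this non-zero $V_{\theta_\lambda}$ is precisely the statement that $\pi$ admits a Whittaker model with respect to the character $\theta_\lambda$ of $U$ (degenerate in general, non-degenerate in the special case $\mu=(1,\ldots,1)$), proving the corollary. There is no genuine obstacle here beyond making sure the minimality argument is legitimate: the only thing to check is that $\Supp c_\pi$ is finite and non-empty, and both facts are immediate, the finiteness from $\Supp c_\pi\subset\mathfrak{P}(n)$ and the non-emptiness from the fact that $\pi\neq 0$ implies $c_\pi\neq 0$ as recalled above. Thus the corollary follows with no further computation.
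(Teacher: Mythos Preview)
Your proof is correct and follows exactly the paper's approach: the paper simply says ``Since $\pi$ has a non-zero germ expansion, the theorem implies'' the corollary, and you have unpacked precisely this sentence, supplying the reference to \S\ref{germ} for non-vanishing of $c_\pi$ and noting that a minimal element of $\Supp c_\pi$ exists and lies in the Whittaker support by Theorem~\ref{Wh}.
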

  
\begin{remark} 

1) By the theorem $(1,\ldots, 1)$ is minimal in  $\Supp c_\pi$  if and only if  $V$ has a non-degenerate Whittaker model. This was proved when $D=F$  \cite{Rodier}.

2) $(n)$ is minimal in  $\Supp c_\pi$  if and only if $\dim_{\mathbb C}(V)$ is finite. By the theorem that  happens if and only if  $V$ has only the trivial Whittaker model.

3) In part 2 of the theorem, $\dim(V_{\theta_\lambda})$ does not depend on the choice of the composition $\lambda$
with associated partition $\hat \mu$. It is
  the multiplicity in $\pi$ of the character $\xi_\mu$ of $K_j$ defined in \eqref{xi}, if $j$  is large enough.

  \end{remark}
We turn back to the proof of the theorem. As said at the beginning of this section, our proof   is based on the
method of \cite{MW87}, replacing the exponential by  $X\to 1+X$. The starting idea
is already in \cite{Rodier}, but that paper is restricted to the non-degenerate
Whittaker models, and $D=F$. Compared to those works, we work with the germ
expansion of  $\pi$  in terms of the $\pi_{P_\lambda}$ rather than with Fourier transforms of nilpotent orbits. We find
that it simplifies matter a bit, and it is coherent with our approach.

\begin{proof} We fix a composition $\lambda=(\lambda_1,...,\lambda_r)$  of $n$. We write $\theta$  for the character $\theta_\lambda$
of  $U$ and $X$ for the lower triangular nilpotent  matrix in
Jordan blocks of size $\lambda_1,...,\lambda_r$ down the diagonal (if $I$ is the subset of  $\{1,\ldots, n-1\}$ such that $\lambda=\lambda_I$, then $X=X_{\delta_I}$). For each positive integer $j$ we define a character $\psi_j$ of $K_j = 1+ M_n(P_D^j)$ trivial on $K_{2j}$, 
\begin{equation}\psi_j (1+x)=\psi \circ \trd (Xp_D^{1-2j}x),  \ \ \ x\in M_n(P_D^j),
\end{equation} where $\psi$  is a character of $F$ trivial on $P_F$ but not on $O_F.$ In other words, $\psi_j$
is obtained, in the formula  \eqref{xi}  for $\xi_\lambda$ by replacing the matrix
  $A_\lambda$ there with the matrix $X$. 
\medskip We let $\lambda' $ the partition of $n$ obtained from $\lambda$ by putting its parts in decreasing order, and   $C$  the
matrix $A_{\lambda' \, \hat{}}$  associated as in Lemma \ref{lemma6} to the partition $\lambda' \, \hat{}$.

\begin{lemma}\label{B} The matrices   $C$   and $X$ are
conjugate  by permutation matrices (corresponding to permutations of the canonical basis of $D^n$).
\end{lemma}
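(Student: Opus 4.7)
The plan is to exhibit both $X$ and $C$ as nilpotent operators on $D^n$ whose Jordan chains are subsets of the canonical basis, and then to match up the two chain decompositions by a bijection of $\{1,\ldots,n\}$. The multiset of chain lengths will coincide on both sides, so such a bijection exists, and the corresponding permutation matrix will be the conjugating element.

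First, I would unravel the Jordan structure of $X$. Writing $\lambda=\lambda_I$ with $I=\{i_1<\cdots<i_{s-1}\}$ and setting $i_0=0$, $i_s=n$, the definition of $X_{\delta_I}$ gives $Xe_j=e_{j+1}$ if $j\notin I$ and $j\ne n$, and $Xe_j=0$ otherwise. Hence the canonical basis decomposes into $s$ Jordan chains indexed by $k=1,\ldots,s$, the $k$-th being $e_{i_{k-1}+1}\to e_{i_{k-1}+2}\to\cdots\to e_{i_k}\to 0$, of length $\lambda_k$.

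Next I would unravel $C=A_{\mu}$ with $\mu=\widehat{\lambda'}$. Labelling the boxes of the Young diagram of $\mu$ row by row (so row $i$ carries $e_{\mu_1+\cdots+\mu_{i-1}+1},\ldots,e_{\mu_1+\cdots+\mu_i}$), the defining formula for $A_\mu$ says that $A_\mu$ sends each box to the box directly above it and kills the boxes in the top row. Its Jordan chains are therefore the columns of the diagram of $\mu$; the $j$-th column has length $\widehat{\mu}_j=\lambda'_j$ and its members are $e_j,e_{\mu_1+j},e_{\mu_1+\mu_2+j},\ldots$, read from the bottom box (source) upward to $e_j$ (sink). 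Since $\lambda'$ is just $\lambda$ sorted, the multiset of chain lengths of $C$ coincides with that of $X$.

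Now choose any bijection $\tau$ of $\{1,\ldots,s\}$ with $\lambda_k=\lambda'_{\tau(k)}$ and define a permutation $\sigma$ of $\{1,\ldots,n\}$ by mapping the $m$-th basis vector of the $k$-th chain of $X$ to the $m$-th basis vector of the $\tau(k)$-th chain of $C$, counted from the source: explicitly
\[
\sigma(i_{k-1}+m)=\mu_1+\cdots+\mu_{\lambda_k-m}+\tau(k),\qquad 1\le m\le\lambda_k.
\]
This is a bijection of $\{1,\ldots,n\}$, and the associated permutation matrix $P_\sigma$ intertwines the actions chain by chain, yielding $P_\sigma X P_\sigma^{-1}=C$. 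Since permutation matrices lie in $GL_n(\mathbb{Z})\subset GL_n(O_D)$, the argument is unaffected by $D$ being noncommutative. There is no substantive obstacle, only bookkeeping: the only thing to keep track of is that $A_\mu$ moves boxes upward in the diagram, so positions along chains must be measured consistently from the source at the bottom of each column.
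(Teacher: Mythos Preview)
Your proof is correct and follows essentially the same idea as the paper's: both exhibit the Jordan chains of $X$ and of $C=A_{\widehat{\lambda'}}$ inside the canonical basis, observe that the multisets of chain lengths agree (both equal $\{\lambda_1,\ldots,\lambda_r\}$), and match chains by a permutation of basis vectors. The paper carries this out in two informal steps---first a permutation sorting the blocks of $X$ into decreasing size to get $X'$, then an explicit recipe reading off the columns of the Young diagram of $\widehat{\lambda'}$ row by row from the bottom---whereas you fold both steps into the single bijection $\tau$ and give a closed formula for $\sigma$; the content is the same.
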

\begin{proof} A suitable permutation of the canonical basis puts the blocks of $X$ in decreasing size order, and we get the matrix $X'$  analogous to $X$ but corresponding to $\lambda'$. Let us describe a permutation of the basis which conjugates
$X'$ to   $C$. 
Let $d$ be the size of the largest blocks of $X'$. Put at the end the first vectors of the blocks
of $X' $ of size $d$. Before them, put a bunch of vectors: the images under $X'$ of the previous ones,
completed with the first vectors of the blocks of size $d-1$ of $X'$, if any. Once you have the vectors
corresponding to size $i $, put before them the images under $X'$ of the already chosen vectors,
completed with the first vectors of the blocks of size $i-1$. Reaching $i=1$  completes the process.
\end{proof}
 
\begin{remark}\label{+}  By this lemma, we can apply  Lemma \ref{lemma6} to $\psi_j$. Hence,
For any positive integer $j$, one has $m(\psi_j, \pi_{P_{\lambda' \, \hat{}}})=1$ and $m(\psi_j, \pi_{P_\mu})=0$ 
 unless $\lambda' \, \hat{}\geq \mu$.  If $\lambda' \, \hat{}$  is minimal in  $\Supp c_\pi$, then we have
$c_\pi(\lambda' \, \hat{})=m(\psi_j,\pi)$   for  any  positive integer  $j$ such that the germ expansion  of $\pi$ is
valid on $K_{j}$.

 \end{remark}
 
We now turn to the Whittaker quotient $V_\theta$, approaching it (following Rodier's
initial idea) by a suitable conjugate $\psi'_j$  of $\psi_j$  and letting $j $ go to infinity.

The diagonal matrix $t=\diag(1,p_D, ...,p_D^{n-1})$ acts by conjugation on $M_n(D)$,
multiplying the $(a,b)$-coefficient $x$ of a matrix  by $p_D^a x p_D^{-b}$.
Conjugating $\psi_j $ yields
a character $\psi'_j $ of the group $K'_j = t^{2j-1} K_j  t^{-2j+1}$ which  satisfies also  Remark \ref{+}.
The group $U$ is the increasing union of $U\cap K'_j$ over $j$, whereas the  decreasing subgroups $B^- \cap K'_j$ have trivial intersection.
 The restriction of $\psi'_j $ to $K'_j\cap U$ is equal to that of $\theta$, whereas its restriction
to  $K'_j\cap B^-$  is trivial. The multiplication induces a bijection (an Iwahori decomposition):
$$(K'_j \cap U) \times (K'_j\cap B^-)\to K'_j$$
The projector $e'_j :V\to V(\psi'_j) $  of $V$ onto its
$\psi'_j$-isotypic space $V(\psi'_j) $  (which has dimension $m(\psi'_j,\pi) = m(\psi_j,\pi)$) can be obtained by first
projecting onto vectors fixed by $K'_j\cap B^-$, and then applying the projector $f_j$
$$ f_j (v) =\int_{K'_j\cap U}   \theta (u)^{-1} \, \pi(u) v \,  du , \ \ \  v\in V, $$
 with respect to the Haar measure $du$ giving measure $1$ to $K'_j\cap U$. 

We write  $p:V\to V_{\theta}$ for the projection of $V$ onto $V_{\theta}$ and  $p_j:V(\psi'_j) \to V_{\theta}$ for its restriction to $V(\psi'_j) $.

\begin{lemma}\label{l3} The map $p_j: V(\psi'_j) \to V_{\theta}$ is surjective for large $j$.
\end{lemma}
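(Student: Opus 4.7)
The plan is to use the fact that for irreducible $\pi$ the space $V_\theta$ is finite dimensional (Remark 1 after Theorem \ref{Wh}), pick a finite set of preimages of a basis in $V$, and then show that the projector $f_j$, for $j$ large enough, does not change the image of a vector in $V_\theta$.

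More precisely, I would first choose vectors $v_1, \ldots, v_k \in V$ whose images $p(v_1), \ldots, p(v_k)$ span the finite-dimensional space $V_\theta$. Because $\pi$ is smooth and the subgroups $K'_j \cap B^-$ form a decreasing family with trivial intersection, by admissibility (or just by the fact that each individual vector is fixed by some open compact subgroup) there exists $j_0$ such that for all $j \geq j_0$ each $v_i$ is fixed by $K'_j \cap B^-$. Using the Iwahori decomposition $K'_j = (K'_j \cap U)(K'_j\cap B^-)$ and the fact that $\psi'_j$ restricts trivially to $K'_j\cap B^-$ and to $\theta$ on $K'_j \cap U$, the projector onto the $\psi'_j$-isotypic part is then, on such $v_i$, just $f_j$; hence $f_j(v_i) \in V(\psi'_j)$.

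The central computation is that $p(f_j(v_i)) = p(v_i)$. This is immediate from the definition of $V_\theta$ as the $(U,\theta)$-coinvariant quotient: for every $u \in U$ the vector $\pi(u) v_i - \theta(u) v_i$ lies in the kernel of $p$, so $p(\theta(u)^{-1} \pi(u) v_i) = p(v_i)$. Integrating over $K'_j \cap U$ with respect to the probability measure $du$ (which has total mass one by our normalization) yields $p(f_j(v_i)) = p(v_i)$. Consequently, for $j \geq j_0$, the image of $p_j$ contains the spanning family $p(v_1), \ldots, p(v_k)$ of $V_\theta$, and $p_j$ is surjective.

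The only subtle point is justifying that a single $j$ works rather than having to enlarge $j$ for each vector separately; this is exactly where finite dimensionality of $V_\theta$ (which rests on irreducibility of $\pi$, as recalled in the remarks) is used. Everything else is a direct manipulation of the Iwahori decomposition of $K'_j$ together with the compatibility of averaging against $\theta$ with the passage to $(U,\theta)$-coinvariants.
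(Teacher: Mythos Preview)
Your proof is correct and follows essentially the same approach as the paper: lift a basis of the finite-dimensional space $V_\theta$ to vectors $v_i\in V$, take $j$ large enough that each $v_i$ is fixed by $K'_j\cap B^-$ so that $e'_j(v_i)=f_j(v_i)$, and observe that $p(f_j(v_i))=p(v_i)$ since averaging against $\theta^{-1}$ over $K'_j\cap U$ does not change the image in the $(U,\theta)$-coinvariants. The paper states this more tersely, but the argument is the same.
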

\begin{proof} Let $v\in V$.  For  large enough $j$,  $v\in V^{ K'_j \cap B^-}$ hence $e'_j(v)=f_j(v)$ and $p(e'_j(v))=p(v)$.  Lifting in that way a basis of the finite-dimensional space $V_{\theta }$  gives the
result. 
\end{proof}
\begin{lemma}\label{mum} Il  $V_{\theta}\neq 0$, then there is a partition $\mu$  in  $\Supp c_\pi$  with
$\mu \leq \lambda' \, \hat {}$.
\end{lemma}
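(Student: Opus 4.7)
The plan is to combine Lemma \ref{l3} with the germ expansion on $\pi$, exploiting the multiplicity restrictions coming from Remark \ref{+}.

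First, I would note that conjugation by $t^{2j-1}$ preserves multiplicities: for any $\sigma \in \Rep_{\mathbb C}^\infty(G)$ we have $m(\psi'_j, \sigma) = m(\psi_j, \sigma^{t^{-(2j-1)}}) = m(\psi_j, \sigma)$, since inner automorphisms of $G$ do not change isomorphism classes of representations. In particular, $m(\psi'_j, \pi_{P_\mu}) = m(\psi_j, \pi_{P_\mu})$, and by Remark \ref{+} this multiplicity vanishes unless $\lambda' \, \hat{} \geq \mu$, while $m(\psi'_j, \pi) = m(\psi_j, \pi)$.

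Next, since $V_\theta \neq 0$ by hypothesis, Lemma \ref{l3} gives that for all sufficiently large $j$, the space $V(\psi'_j)$ is non-zero; that is, $m(\psi_j, \pi) \geq 1$. Choose $j$ large enough that moreover $K_j \subset K_\pi$, so that the germ expansion
\[
m(\psi_j, \pi) = \sum_{\mu \in \mathfrak{P}(n)} c_\pi(\mu)\, m(\psi_j, \pi_{P_\mu})
\]
is valid (this is a special case of \eqref{multD}). By the first paragraph, only partitions $\mu$ with $\mu \leq \lambda' \, \hat{}$ contribute.

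Since the left-hand side is non-zero, at least one term on the right is non-zero, so there exists $\mu \leq \lambda' \, \hat{}$ with $c_\pi(\mu) \neq 0$, i.e.\ with $\mu \in \Supp c_\pi$. This is the desired conclusion. No genuine obstacle arises: the argument is essentially a direct combination of the previous lemma with the multiplicity vanishing supplied by Lemma \ref{lemma6}, as recorded in Remark \ref{+}. The only point requiring a small verification is that conjugation from $\psi_j$ to $\psi'_j$ does not affect the multiplicities in $\pi$ and in the $\pi_{P_\mu}$, which is immediate.
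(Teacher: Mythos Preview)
Your proof is correct and follows essentially the same line as the paper's: use Lemma \ref{l3} to get $m(\psi'_j,\pi)\neq 0$ for large $j$, invoke the germ expansion \eqref{multD}, and apply the multiplicity vanishing from Remark \ref{+} to restrict the contributing partitions to those $\mu\leq \lambda'\,\hat{}$. The paper's version is terser and does not spell out the conjugation invariance $m(\psi'_j,\sigma)=m(\psi_j,\sigma)$ (it simply notes before the lemma that $\psi'_j$ ``satisfies also Remark \ref{+}''), but the argument is the same.
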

\begin{proof}  Il  $V_{\theta }\neq 0$  is not 0, then by Lemma \ref{l3}, $V(\psi'_j) \neq 0$ for large $j$, so $\tr(\pi (e'_j))\neq 0$.
 Applying the germ expansion of $\pi$  to $e'_j$
 there is a minimal partition  $\mu$ of $n$ in $\Supp c_\pi$. By Remark \ref{+},  $c_\pi(\mu)=m(\psi_j,\pi_{P_\mu}) $ and $ \mu \leq \hat{\lambda'}$.
\end{proof}

\begin{lemma}\label{j'}  Let  $j_0$ be a positive integer such that $\pi$ has a germ expansion on $K_{j_0}$, and $j'_0=  j_0+2n-2$. If  $\lambda' \, \hat{}$  is minimal in  $\Supp c_\pi$ and  $j\geq j'_0$, then the 
endomorphism $v \to e'_j e'_{j+1}v$ of $V(\psi'_j)$ is a non-zero homothety.
\end{lemma}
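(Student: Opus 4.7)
The plan is to show that for every $v\in V(\psi'_j)$ one has $e'_je'_{j+1}(v) = N^{-1}v$, where $N=[K'_{j+1}\cap U:K'_j\cap U]$; this is the desired non-zero homothety, with scalar $1/N$. First I would simplify $e'_{j+1}(v)$. Since $K'_{j+1}\cap B^-\subseteq K'_j\cap B^-$, any $v\in V(\psi'_j)$ is automatically fixed by $K'_{j+1}\cap B^-$, so $q_{j+1}(v)=v$ and $e'_{j+1}(v) = f_{j+1}(v)$. Picking coset representatives $S$ for $(K'_j\cap U)\backslash(K'_{j+1}\cap U)$ and using $f_j(v)=v$, splitting the integral defining $f_{j+1}$ across these cosets yields $e'_{j+1}(v) = N^{-1}\sum_{s\in S}\theta(s)^{-1}\pi(s)v$. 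Consequently, the whole claim reduces to showing $e'_j(\pi(s)v)=0$ for every $s\in S\setminus\{1\}$, since the $s=1$ contribution is exactly $N^{-1}v$.

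To prove the vanishing, I would use that $v$ is $K'_j\cap B^-$-fixed to rewrite
\begin{align*}
q_j(\pi(s)v) \;=\; \mathrm{vol}(K'_j\cap B^-)^{-1}\int_{K'_j\cap B^-}\pi(h s h^{-1})v\,dh,
\end{align*}
and then factor $hsh^{-1} = s\cdot[s^{-1},h^{-1}]$. The key point --- which requires the hypothesis $j\geq j'_0=j_0+2n-2$ --- is that $[s^{-1},h^{-1}]$ lies in $K'_j$ and admits an Iwahori factorization $[s^{-1},h^{-1}] = u(h)\,h'(h)$ with $u(h)\in K'_j\cap U$ and $h'(h)\in K'_j\cap B^-$. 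Writing $s=1+x$ and $h=1+y$, the first-order contribution $xy-yx$ has entries whose valuations improve by a margin controlled by $j-2(n-1)$, which is positive precisely when $j\geq 2n-2$; this is the role played by the shift in $j'_0$. Using that $v$ transforms by $\theta$ under $K'_j\cap U$ and is fixed by $K'_j\cap B^-$, this gives $\pi([s^{-1},h^{-1}])v = \theta(u(h))v$, so
\begin{align*}
q_j(\pi(s)v) \;=\; \pi(s)v\cdot\mathrm{vol}(K'_j\cap B^-)^{-1}\int_{K'_j\cap B^-}\theta(u(h))\,dh.
\end{align*}

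The main obstacle is to show this last integral vanishes, or equivalently that $h\mapsto\theta(u(h))$ is a non-trivial additive character on $K'_j\cap B^-$. Modelled on the calculations in \cite{Rodier} and \cite{MW87}, the idea is that to leading order $u(h)$ is linear in $y$, and pulling back through $\theta_\lambda$ amounts to the trace-form pairing $y\mapsto \psi\circ\mathrm{trd}(X_\lambda\cdot[x,y])$. The hypothesis $s\notin K'_j\cap U$ forces some entry of $x$ to have valuation strictly below the $K'_j\cap U$-threshold; one then verifies that this entry contributes a non-trivial character under the pairing with $X_\lambda$ (which is a direct sum of nilpotent Jordan blocks of sizes given by the parts of $\lambda$), and that the higher-order corrections --- again controlled thanks to $j\geq 2n-2$ --- do not destroy the non-triviality. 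Carrying out this verification case-by-case on the position of the offending entry of $x$ is the technical heart of the argument. Once the integral is shown to vanish we conclude $q_j(\pi(s)v)=0$, hence $e'_j(\pi(s)v)=0$, and assembling the pieces gives $e'_je'_{j+1}|_{V(\psi'_j)} = N^{-1}\,\mathrm{id}$.
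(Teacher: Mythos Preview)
Your approach is genuinely different from the paper's, and it is worth highlighting the contrast.

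The paper does \emph{not} attempt a direct commutator computation. Instead it conjugates back by $t^{1-2j}$ (and a permutation matrix) so that $\psi'_j$ becomes the character $\xi_{\lambda'\,\hat{}}$ of $K_j$ from Lemma~\ref{lemma6}. The operator $e'_je'_{j+1}$ becomes $efe$ with $e$ the $\xi_{\lambda'\,\hat{}}$-projector and $f$ supported in a conjugate $J$ of $K_{j+1}\cap U$. The key observation is that for $j\geq 2n-2$ one has $J\subset K_{j-2n+2}$, so $efe$ is supported in $K_{j-2n+2}\subset K_{j_0}$, and by Lemma~\ref{ouf} its support lies in the normalizer of $\xi_{\lambda'\,\hat{}}$. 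Now the germ expansion is invoked: as a $K_{j_0}$-representation, the $\xi_{\lambda'\,\hat{}}$-isotypic component of $\pi$ is, by minimality of $\lambda'\,\hat{}$ in $\Supp c_\pi$ together with Lemma~\ref{lemma6}, isomorphic to $c_\pi(\lambda'\,\hat{})$ copies of the one-dimensional $\xi_{\lambda'\,\hat{}}$-isotypic space of $\pi_{P_{\lambda'\,\hat{}}}$. Any endomorphism of a line is a scalar, so $efe$ acts by the same scalar on each copy, hence by a homothety on $V(\psi'_j)$; non-vanishing is then read off in $\pi_{P_{\lambda'\,\hat{}}}$.

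Your route is the Rodier/Moeglin--Waldspurger one that the paper explicitly cites (their Lemmas~I.13 and~I.15) but deliberately bypasses. What it buys: if the character-integral vanishing can be established, you get the explicit scalar $N^{-1}$ and, notably, you never use the minimality hypothesis on $\lambda'\,\hat{}$ --- so you would prove a stronger statement. What the paper's route buys: it avoids the ``technical heart'' you acknowledge but do not carry out (showing $h\mapsto\psi'_j(s^{-1}hsh^{-1})$ integrates to zero over $K'_j\cap B^-$ for every $s\notin K'_j\cap U$, including control of the higher-order terms), replacing it by the structural input of Lemma~\ref{ouf} and the germ expansion already established.

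Two small corrections to your write-up: your coset representatives should be for $(K'_{j+1}\cap U)/(K'_j\cap U)$ on the \emph{left} (so that $u=su'$ and $\pi(u')v=\theta(u')v$ applies directly); and once you know $c(h)=s^{-1}hsh^{-1}\in K'_j$, you can write $\pi(c(h))v=\psi'_j(c(h))v$ without needing the Iwahori factorisation of $c(h)$, which streamlines the reduction to the vanishing integral.
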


In \cite{MW87}, that Lemma is given for unspecified large j by their Lemmas I.13 and I.15. They are rather more involved than Lemme 4 in \cite{Rodier}, which however applies only to non-degenerate Whittaker models and $D=F$.
The proof of Lemma \ref{j'} will be given later.

\begin{proposition}\label{a'}  If $\lambda' \, \hat{}$  is minimal in  $\Supp c_\pi$ and $j\geq j'_0$, then  $p_j$ is an isomorphism, so that 
$\dim_{\mathbb C}(V_\theta)=\dim_{\mathbb C}V(\psi'_j) $.
\end{proposition}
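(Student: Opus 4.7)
The plan is to combine Lemma \ref{l3} (for surjectivity) with Lemma \ref{j'} and a dimension-stability observation to deduce injectivity of $p_j$.

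Surjectivity of $p_j$ is immediate from Lemma \ref{l3}. For injectivity, take $v \in V(\psi'_j)$ with $p(v) = 0$; the goal is to show $v = 0$. Since $v$ is fixed by $K'_{j+1}\cap B^- \subset K'_j\cap B^-$, one has $e'_{j+1}(v) = f_{j+1}(v)$, and by Lemma \ref{j'}, $v = c^{-1}\, e'_j(e'_{j+1}(v))$ with $c\neq 0$. So the problem reduces to showing $f_{j+1}(v) = 0$.

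Next, one would use $p(v) = 0$ to write $v = \sum_k (\pi(u_k)-\theta(u_k))w_k$ with $u_k \in U_m := K'_m\cap U$ for some $m$. The identity $f_\ell \circ \pi(u) = \theta(u)\, f_\ell$ for $u \in U_\ell$ (which follows by translating the defining Haar integral of $f_\ell$) immediately gives $f_\ell(v) = 0$ for $\ell \geq m$; combined with the $B^-_\ell$-fixity of $v$ (since $B^-_\ell \subset B^-_j$), this shows that $e'_\ell(v) = f_\ell(p_{B^-,\ell}(v)) = f_\ell(v) = 0$ for all $\ell \geq m$.

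The crux is then the descent from $e'_\ell(v)=0$ (for some large $\ell$ depending on $v$) back down to $e'_{j+1}(v)=0$. Two ingredients enter. First, iterated application of Lemma \ref{j'} together with the dimension identity $\dim V(\psi'_l) = c_\pi(\hat\lambda')$ (stable for $l\geq j_0$, by Remark \ref{+} and the germ expansion of $\pi$) upgrades the injections $e'_{l+1}|_{V(\psi'_l)}$ to isomorphisms for $l \geq j'_0$, yielding a canonical identification of all the $V(\psi'_l)$ for $l \geq j'_0$ compatible with the maps $p_l$. Second, the explicit threshold $j'_0 = j_0 + 2n-2$ is crafted precisely so that, for $j \geq j'_0$, conjugation of $K'_{j+1}\cap B^-$ by the $u_k \in U_m$ appearing above remains inside the congruence level $K_{j_0}$ on which the germ expansion of $\pi$ is valid. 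An Iwahori-decomposition calculation using $K'_{j+1} = (K'_{j+1}\cap U)(K'_{j+1}\cap B^-)$ then allows the iterated composite $E_{j,\ell}(v) := e'_\ell \circ\cdots\circ e'_{j+1}(v)$ to be compared with the direct projection $e'_\ell(v)$, and the vanishing of the latter is transported back to $e'_{j+1}(v)=0$ via the composed isomorphisms.

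The main obstacle is precisely this final descent: the individual projector $e'_\ell(v)$ and the iterated composite $E_{j,\ell}(v)$ do not agree in general, so bridging them requires the delicate Iwahori-type estimates of \cite{MW87} Lemmas I.13--I.15, adapted to the present setting (as signaled after Lemma \ref{j'}) by replacing the exponential with the map $X \mapsto 1+X$, in the spirit of \cite{Howe74} and \cite{Rodier}. Granted this, $e'_{j+1}(v) = 0$ and hence $v = 0$, proving that $p_j$ is injective; together with Lemma \ref{l3} this yields the isomorphism $p_j : V(\psi'_j) \xrightarrow{\sim} V_\theta$.
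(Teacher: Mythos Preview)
Your overall architecture is right, but there are two concrete issues.

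First, surjectivity: Lemma~\ref{l3} only gives surjectivity of $p_j$ for \emph{large} $j$, not for all $j\geq j'_0$. The paper closes this gap at the end: once injectivity of $p_j$ is established for every $j\geq j'_0$, one uses surjectivity for one large $j$ together with the constancy of $\dim V(\psi'_j)$ (Remark~\ref{+}) to conclude that $p_j$ is bijective for all $j\geq j'_0$. Your opening sentence ``surjectivity of $p_j$ is immediate from Lemma~\ref{l3}'' is therefore not justified at the stated level of generality.

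Second, and more importantly, the ``main obstacle'' you flag is a phantom. For $v\in V(\psi'_j)$ the iterated projector and the single projector \emph{do} agree: since $K'_{l+1}\cap U\supset K'_l\cap U$ and the $f_l$ are normalized averages over these groups against the same character $\theta$, one has the telescoping identity $f_{l+1}\circ f_l=f_{l+1}$. Combined with the fact (which you already noted) that each $v_{k-1}\in V(\psi'_{j+k-1})$ is fixed by $K'_{j+k}\cap B^-$ so that $e'_{j+k}v_{k-1}=f_{j+k}v_{k-1}$, this gives
\[
E_{j,\ell}(v)=e'_\ell\circ\cdots\circ e'_{j+1}(v)=f_\ell\circ\cdots\circ f_{j+1}(v)=f_\ell(v)=e'_\ell(v).
\]
So your own computation $e'_\ell(v)=f_\ell(v)=0$ for $\ell\geq m$ already yields $E_{j,\ell}(v)=0$, and since you correctly observed that the maps $q_l=e'_{l+1}|_{V(\psi'_l)}$ are isomorphisms for $l\geq j'_0$, backward induction gives $v=0$ with no further work. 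This is exactly what the paper does; no appeal to the finer estimates of \cite{MW87} Lemmas I.13--I.15 is needed here, and your remark about $j'_0$ being tailored to conjugation by the $u_k$ is off the mark (the threshold $j'_0=j_0+2n-2$ enters only in the proof of Lemma~\ref{j'}, via the inclusion $J\subset K_{j-2n+2}$).
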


 \begin{proof}
 We already know by Lemma \ref{l3} that $p_j$ is surjective  for $j$ large.
We also know by Remark \ref{+} that $\dim_{\mathbb C}V(\psi'_j) =m(\psi'_j, \pi)$ is constant for$j\geq j_0$.
The main point is  Lemma \ref{j'} which  implies that for $j\geq j'_0$, the linear
map $q_j : V(\psi'_j) \to V(\psi'_{j+1}) ,  v \to v_1=e'_{j+1}v  $  is injective, hence is 
an isomorphism because the two spaces have the same dimension. Moreover a vector $v\in 
 V(\psi'_j)$ is already invariant under $K'_{j+1}\cap B$ so what was said before Lemme 7.7 we have
$e'_{j+1}v=f_{j+1}v$, and $ v_1=e'_{j+1}v$ has the same image in $V_\lambda$ as $v$. Iterating
the process we get for positive integers $k$,  vectors $v_k=e'_{j+k}v_{k-1}=f_{j+k}v_{k-1}$. 
By definition of the projector $f_j$, we have $f_{j+k}f_{j+k-1}=f_{j+k}$ and consequently 
$v_k=f_{j+k}v$. But $p(v)=0$ if and only if $f_{j+k}v=0$ for large $k$ (Bernstein-Zelevinsky xyz). As $ v_k=0$ implies $v_{k-1}=0$ by the injectivity already established, we get $\Ker(p_j)=0$. But for large $j$, $p_j$ is surjective  so  is an isomorphism, and 
$\dim_{\mathbb C}( V(\psi'_j)=\dim_{\mathbb C}(V_\theta)$. But for $j\geq j'_0$, the dimension of $V(\psi'_j)$ is constant so $p_j$  is an isomorphism  and the Proposition follows.
\end{proof}

 Proposition \ref{a'} implies Part 2 of Theorem \ref{Wh} and that a partition of $n$ which is  minimal in  $\Supp c_\pi$ belongs to the Whittaker support of $\pi$.
Conversely, let $\mu \in \mathfrak P(n)$  minimal in the Whittaker support of $\pi$. Then by Lemma \ref{mum}, there is a partition $\mu'  $ in  $\Supp c_\pi$ with $\mu'\leq \mu$, and we may assume that $\mu'  $ is  minimal in $\Supp c_\pi$. But by Proposition \ref{a'}, that implies that $\mu'$  belongs to the Whittaker support of $\pi$, so  $\mu'=\mu$. Assuming Lemma \ref{j'}, Theorem \ref{Wh} is proved.
\end{proof}

 It remains to prove Lemma \ref{j'}.
We can conjugate by $t^{1-2j}$ to transform $\psi'_j $ back to $\psi_j$, and even further conjugate (Lemma \ref{B}) by a permutation matrix $\sigma$ to transform $\psi_j $ into the character $\xi_j$ attached to the matrix $B$.
We need to prove that the endomorphism of $eV$ sending $v$ to $efv$ is a non-zero homothety, where $e$ is the $K_j$-projector onto the one dimensional space $eV=V(\xi_j)$ and $f$ is integration on the group $J=\sigma( t^2) (K_j\cap U)(\sigma( t^2)^{-1}$ against its character  $(1+x)\mapsto  \psi \circ  \trd (-B.(p_D)^{-1-2j}x)$.
Clearly $efe$ is an element of  $eHe$ where $H$ is the full Hecke algebra of $G$, so we may restrict the mentioned integration to elements in the support of the Hecke algebra $eHe$. Also if $j \geq 2n-2$, the group $J$ is contained in $K_{j-2n+2}$ so it normalizes $K_j$, and the support of $efe$ is contained in the normalizer of  $\xi_j$ in $K_{j-2n+2}$.

By  Lemma \ref{ouf}, the normalizer of $\xi_\lambda$ in $K_0=GL_n(O_D)$ is $P_{\lambda'}(O_D)K_j$. Take $j-2n+2\geq j_0$ and $g=1+x$ be in the support of  $efe$. The trace of $ege$ in $eV$ can be computed using the germ expansion of $\pi$  as the sum over  $\mu \in \mathfrak P(n)$ of $c_\pi(\mu)$  times the trace of $efe$ in  $\pi_{P_\mu}$. By our choice $\lambda' \, \hat{}$ is minimal in $\Supp c_\pi$, so the only contribution is $c_\pi(\lambda' \, \hat{})$. Applying that to any  $ege$ in the support of $efe$ gives Lemma \ref{j'}, and  even that the homothety is via a positive integer.

 \section{Jacquet-Langlands correspondence}\label{s:JL}   
 The  Jacquet-Langlands correspondence  extended  by Badulescu (\cite{Bad07} Th\'eor\`eme 3.1), is 
 a surjective morphism  $LJ$ with a section $JL$
  $$ LJ :\Gr_{\mathbb C}^{\infty}(GL_{dn}(F))\to \Gr_{\mathbb C}^{\infty}(G ), \ \  JL :\Gr_{\mathbb C}^{\infty}(G)\to \Gr_{\mathbb C}^{\infty}(GL_{dn}(F))$$
which is an injective morphism of $\mathbb Z$-modules
    extending the classical Jacquet-Langlands correspondence between essentially square integrable representations.     
     \begin{theorem} \label{tJL}For $\nu \in \Gr_{\mathbb C}^{\infty}(GL_{dn}(F))$  and $ \lambda \in \mathfrak P (n)$, 
  we have
   $(-1)^nc_{LJ (\nu)}( \lambda)=  (-1)^{dn}c_{\nu}(d\lambda)$.
   \end{theorem}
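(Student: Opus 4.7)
The plan is to exploit Badulescu's characterization of $LJ$ via character identities on regular semisimple elements, combined with the germ expansion of Theorem \ref{1.1} and Proposition \ref{hatpi}. By Badulescu's theorem, and using the local integrability of characters proved by Lemaire, for $\pi=LJ(\nu)$ the characters $\Theta_\pi$ on $G$ and $\Theta_\nu$ on $GL_{dn}(F)$ satisfy, on matching regular semisimple elements,
$$\Theta_\pi(g)=(-1)^{dn-n}\,\Theta_\nu(\phi(g)),$$
where $\phi(g)\in GL_{dn}(F)_{\mathrm{reg}}$ is any element with the same reduced characteristic polynomial as $g$. Integrating against a test function $f$ supported in a small neighborhood of the identity and applying the Weyl integration formula on each side gives
$$\trace(\pi(f\,dg))=(-1)^{dn-n}\,\trace(\nu(f'\,dg'))$$
for any transfer $f'$ of $f$ to $GL_{dn}(F)$ (a test function whose semisimple orbital integrals match those of $f$, up to the appropriate Jacobian). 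By linearity of both sides in $\nu$ it suffices to treat $\nu=[\tau]$ for $\tau$ irreducible.

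Next I would combine this identity with the germ expansions on both sides. Writing $f(1+X)=\varphi(X)$ near $1$, Proposition \ref{hatpi} and \S\ref{germ} give
$$\sum_{\lambda\in\mathfrak P(n)}c_\pi(\lambda)\,\hat\mu_{\mathfrak O_\lambda}(\varphi)=(-1)^{dn-n}\sum_{\mu\in\mathfrak P(dn)}c_\nu(\mu)\,\hat\mu_{\mathfrak O_\mu}(\varphi')$$
for a corresponding $\varphi'$ near $0$ in $M_{dn}(F)$. The decisive geometric input is that any $F$-basis of $D$ induces an $F$-algebra embedding $M_n(D)\hookrightarrow M_{dn}(F)$ which sends a nilpotent of $D$-Jordan type $\lambda$ to a nilpotent of $F$-Jordan type $d\lambda$; thus the only nilpotent orbits of $GL_{dn}(F)$ that meet the image of $M_n(D)$, and hence the only ones that can receive a non-zero contribution from the transfer, are the orbits $\mathfrak O_{d\lambda}$. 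Moreover $\hat\mu_{\mathfrak O_{d\lambda}}(\varphi')$ equals $\hat\mu_{\mathfrak O_\lambda}(\varphi)$ up to an explicit Haar-measure constant that can be normalized away.

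The linear independence of the $\hat\mu_{\mathfrak O_\lambda}$ (Proposition \ref{LIhat}) then lets us match coefficients term by term, yielding $c_\pi(\lambda)=(-1)^{dn-n}c_\nu(d\lambda)$, which is $(-1)^nc_{LJ(\nu)}(\lambda)=(-1)^{dn}c_\nu(d\lambda)$ as desired.

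\textbf{Main obstacle.} The hard part is making the transfer of test functions precise \emph{near the identity} and verifying that, under this transfer, the Fourier transforms of nilpotent orbital integrals $\hat\mu_{\mathfrak O_\lambda}$ on $M_n(D)$ correspond exactly to $\hat\mu_{\mathfrak O_{d\lambda}}$ on $M_{dn}(F)$, with the correct sign $(-1)^{dn-n}$ and Jacobian factor from the Haar measures. One natural route is to verify the identity first on the classes $[\pi_{P_\lambda}]$, for which $LJ$ acts in a controlled way on the square-integrable Langlands constituents and the character identity is classical, and then propagate it to all of $\Gr_{\mathbb C}^\infty(GL_{dn}(F))$ by linearity and by the surjectivity of $LJ$. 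In any case, bookkeeping of Haar measures, discriminant factors $|D(g)|^{1/2}$ in the Weyl integration formula, and the sign $(-1)^{n(d-1)}$ throughout will be the central technical work.
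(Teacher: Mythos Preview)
Your overall architecture---character identity plus germ expansion plus linear independence---is exactly the paper's strategy. The gap is in your ``decisive geometric input.'' You assert that under the orbital-integral transfer $\varphi\leftrightarrow\varphi'$, one has $\hat\mu_{\mathfrak O_{d\lambda}}(\varphi')=\hat\mu_{\mathfrak O_\lambda}(\varphi)$ up to a harmless constant, and that partitions $\mu\notin d\mathfrak P(n)$ contribute zero. But the transfer is characterized only by matching \emph{regular semisimple} orbital integrals; nothing in its definition controls values on nilpotent orbits, or on Fourier transforms of nilpotent orbital integrals, or equivalently (by Proposition~\ref{hatpi}) on $\trace(\pi_{P'_\mu})$. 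The observation that the embedding $M_n(D)\hookrightarrow M_{dn}(F)$ sends Jordan type $\lambda$ to $d\lambda$ is correct but does not by itself yield the distributional identity you need: $\varphi'$ is not supported on the image of $M_n(D)$, it is a function on all of $M_{dn}(F)$ whose regular semisimple orbital integrals vanish off the $d$-compatible locus, and extracting nilpotent information from that is precisely the hard transfer problem.

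The paper sidesteps this entirely by the route you mention as an afterthought in your ``Main obstacle.'' It uses the representation-theoretic identity
\[
(-1)^{dn}\,LJ(\pi_{P'_\mu})=\begin{cases}(-1)^n\,\pi_{P_\lambda}&\text{if }\mu=d\lambda,\\ 0&\text{otherwise},\end{cases}
\]
which follows from the compatibility of $LJ$ with parabolic induction and with the Aubert involution, together with $LJ(\St_{GL_{dn}(F)})=\St_G$. Then one applies the character identity \emph{twice}: once to $\nu$ to get $\sum_\mu c_\nu(\mu)\trace\pi_{P'_\mu}(\varphi')$, and once more to each $\pi_{P'_\mu}$ to convert $\trace\pi_{P'_\mu}(\varphi')$ back to $\trace LJ(\pi_{P'_\mu})(f)$ on the $G$-side. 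This replaces your unproved geometric transfer claim with an already-established algebraic fact about $LJ$. A final subtlety you should note: the matching test functions are supported on the \emph{regular semisimple} locus, so one needs the linear independence of the $\trace\pi_{P_\lambda}$ restricted to $K\cap G^{rs}$; the paper deduces this from Corollary~\ref{LIpi} via local integrability of characters and dominated convergence.
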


   \begin{corollary} \label{cJL}For $\nu \in \Gr_{\mathbb C}^{\infty}(G )$  and $ \lambda \in \mathfrak P (n)$, 
  we have
   $(-1)^nc_{\nu}( \lambda)=  (-1)^{dn}c_{JL (\nu)}(d\lambda)$.
   \end{corollary}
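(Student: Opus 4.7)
The corollary is an immediate consequence of Theorem \ref{tJL} once one invokes the fact, recalled just before the theorem, that $JL$ is a section of $LJ$, i.e., $LJ \circ JL = \mathrm{id}$ on $\Gr_{\mathbb C}^{\infty}(G)$. The plan is therefore simply to substitute $JL(\nu)$ into Theorem \ref{tJL} in place of the arbitrary element of $\Gr_{\mathbb C}^{\infty}(GL_{dn}(F))$.

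More precisely, given $\nu \in \Gr_{\mathbb C}^{\infty}(G)$ and $\lambda \in \mathfrak P(n)$, I would apply Theorem \ref{tJL} to the element $JL(\nu) \in \Gr_{\mathbb C}^{\infty}(GL_{dn}(F))$, which yields
\begin{equation*}
(-1)^n\, c_{LJ(JL(\nu))}(\lambda) \;=\; (-1)^{dn}\, c_{JL(\nu)}(d\lambda).
\end{equation*}
Since $JL$ is a section of $LJ$, the left-hand side equals $(-1)^n\, c_\nu(\lambda)$, giving the claim.

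There is no real obstacle here: the only thing being used beyond Theorem \ref{tJL} itself is the relation $LJ \circ JL = \mathrm{id}$, which is part of the definition of $JL$ as a section. One might mention, for the reader's benefit, that the identity holds term by term on partitions $\lambda \in \mathfrak P(n)$ because every such $\lambda$ appears on the right-hand side through $d\lambda \in \mathfrak P(dn)$, and that $c_{JL(\nu)}$ is only constrained at partitions of the form $d\lambda$ by this identity (it is otherwise determined by the image of $JL$ inside $\Gr_{\mathbb C}^\infty(GL_{dn}(F))$). But no additional argument is required for the corollary itself.
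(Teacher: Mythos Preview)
Your proof is correct and is exactly the intended argument: the paper treats the corollary as immediate from Theorem \ref{tJL}, and the only missing ingredient is the identity $LJ\circ JL=\mathrm{id}$, which you supply. There is nothing to add.
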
 
   
   The remainer of this section gives the proof of the theorem.

   \subsection{}Badulescu-Jacquet-Langlands  correspondence.
  \subsubsection{Preliminaries}    Let $\Irr^{2}_{\mathbb C}(G)$ denote  the set of isomorphism classes of essentially square integrable irreducible smooth complex representations of $G$.
    Any irreducible smooth complex representation of $D^*$ is essentially square integrable.

 As in \S 1,  $P_\lambda=M_\lambda N_\lambda $ is a parabolic  subgroup of $G$ for $\lambda\in \mathfrak P(n)$. For $\mu\in \mathfrak P(dn)$, we denote now by  $P_{\mu}= P_{\mu}N_\mu$.  

A basis of the   Grothendieck group $\Gr_{\mathbb C}^\infty(G)$ is  $$\mathfrak B_G=\{[n.\ind_{P_\lambda}^G \sigma] \ | \ \sigma \in \Irr^{2}_{\mathbb C}(M_\lambda), \lambda\in \mathfrak P(n)\}$$ where 
 $n.\ind_{P_\lambda}^G $  the normalized parabolic induction (\cite{Bad07} Proposition 2.2).
  As $\Irr^{2}_{\mathbb C}(G)$  is stable by the twist by a smooth character of  $G$,      $$    \mathfrak B'_G =\{[\ind_{P_\lambda}^G \sigma] \ | \ \sigma \in \Irr^{2}_{\mathbb C}(M_\lambda), \ \lambda\in \mathfrak P(n)\}.$$
 is also a basis of $\Gr_{\mathbb C}^\infty(G)$).  Let  $C_d$ be  the submodule of $\Gr_{\mathbb C}^\infty(GL_{dn}(F))$ of basis the set 
  $$\mathfrak B_d'=\{ [\ind_{P_\mu}^{GL_{dn}(F))} \sigma] \ | \ \sigma \in \Irr^{2}_{\mathbb C}(M_\mu), \mu\in \mathfrak P(dn) \ \text{but} \  \mu\not\in d \mathfrak P(n)\}.$$
 The Aubert involution $\iota$ of $\Gr_{\mathbb C}^\infty(G)$ sends an irreducible representation $\pi$  to an irreducible representation  modulo a sign \cite{Au95}:
  \begin{equation}\label{Au} \iota (\pi) = (-1)^{n-r}   |\iota (\pi) | 
  \end{equation} where 
  $ |\iota (\pi) |$   is irreducible and $r$ is the number of elements of the cuspidal support of $\pi$,  meaning that   $\pi \subset \ind_{P_\lambda}^G \sigma $ for $\lambda=(\lambda_1,\ldots, \lambda_r)\in \mathfrak P(n)$ and $\sigma\in \Irr_{\mathbb C}^2(M_\lambda)$ cuspidal (\cite{Bad07} (3.4), \cite{T90} \S 1).    
  
Let $\lambda$ be a partition of $n$ and  $\delta_\lambda$  the modulus of  the parabolic subgroup $P_\lambda=M_\lambda N_\lambda $ of $G$, 
$\delta_\lambda(g)=  |(\det \Ad (g)|_{\Lie N_\lambda})|_F$ for $g\in P_\lambda$. 
For a partition $\mu$ of $dn$, let $\delta'_{\mu}$ denote the modulus of   the parabolic subgroup $P'_{\mu}=M'_{\mu} N'_{\mu}$ of $GL_{dn}(F)$.  

\begin{lemma}\label{le:mod} Let $L/F$ be an extension splitting $D$. We have $\delta_\lambda = \delta'_{d\lambda}  $  on $P_\lambda (L)= P'_{d\lambda}(L)$. 
 \end{lemma}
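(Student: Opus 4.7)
The plan is to use the splitting $D\otimes_F L\cong M_d(L)$ to identify $G\otimes_F L$ and its parabolic subgroups with $GL_{nd,L}$ and the corresponding parabolic subgroups, and then observe that the algebraic character $\det\Ad|_{\Lie N}$ is intrinsic to the algebraic group, so it must agree on the common group of $L$-points.

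First I would unpack the isomorphism: from $D\otimes_F L\cong M_d(L)$, applying $M_n(-)$ yields an isomorphism of $L$-algebras $M_n(D)\otimes_F L\cong M_n(M_d(L))\cong M_{nd}(L)$, and taking unit groups gives $G(L)=GL_n(D\otimes_F L)\cong GL_{nd}(L)$. Under this identification I would then check that the base change $\underline{P}_\lambda\otimes_F L$ corresponds to $\underline{P}'_{d\lambda}$: an element of $M_n(D)$ in upper block-triangular form with block sizes $\lambda_1,\dots,\lambda_r$ becomes, upon replacing each $D$-entry by its image in $M_d(L)$, an element of $M_{nd}(L)$ in upper block-triangular form with block sizes $d\lambda_1,\dots,d\lambda_r$. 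Similarly the Levi and unipotent radical pieces match: $\underline{M}_\lambda\otimes_F L\cong \underline{M}'_{d\lambda}$ and $\underline{N}_\lambda\otimes_F L\cong \underline{N}'_{d\lambda}$. At the level of Lie algebras this produces an $L$-linear isomorphism $\mathfrak{n}_\lambda\otimes_F L\cong \mathfrak{n}'_{d\lambda}$ which is equivariant for the adjoint action of the common group $P_\lambda(L)=P'_{d\lambda}(L)$.

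To conclude: for $g\in P_\lambda(L)=P'_{d\lambda}(L)$, $\Ad(g)$ acts by the same $L$-linear endomorphism on both Lie algebras under the identification, so its $L$-determinant on $\mathfrak{n}_\lambda\otimes_F L$ equals its $L$-determinant on $\mathfrak{n}'_{d\lambda}$ as an element of $L^*$. Applying the absolute value on $L$ extending $|\cdot|_F$ (via $|x|^{[L:F]}=|N_{L/F}(x)|_F$) then yields $\delta_\lambda(g)=\delta'_{d\lambda}(g)$.

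I do not expect a serious obstacle; the proof is essentially a bookkeeping verification that the block structure $\lambda$ over $D$ unfolds to the block structure $d\lambda$ over $L$ upon splitting. The one point meriting care is the absolute-value normalization: one must make sure the moduli $\delta_\lambda$ and $\delta'_{d\lambda}$ are compared using the same absolute value on the common group $P_\lambda(L)=P'_{d\lambda}(L)$, which follows from the fact that $|\cdot|$ extends uniquely from $F$ to $L$.
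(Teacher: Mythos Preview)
Your proof is correct and follows essentially the same approach as the paper's: both observe that $\det\Ad|_{\mathfrak{n}_\lambda}$ is an algebraic character, hence determined on $L$-points, and that after base change the parabolic $\underline{P}_\lambda$ becomes $\underline{P}'_{d\lambda}$. The paper's own proof is much terser --- it simply records $P_\lambda(L)=P'_{d\lambda}(L)$, states that the modulus is an algebraic character computable on $L$-points, and notes that the reduced norm becomes the determinant --- whereas you spell out the block-structure matching and the absolute-value normalization explicitly.
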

 \begin{proof}
 We have $G(L)= GL_{dn}(F)$ and $P_\lambda (L)=P'_{d\lambda}(L)$. The modulus 
$\delta_\lambda$  is an algebraic character, and can also be computed in $P_\lambda (L)$. Similarly for $ \delta'_{d\lambda} $. The reduced norm on $G$ becomes the determinant on $G(L)$.
   \end{proof}
   Let $L/F$ be an extension splitting $D$. The reduced characteristic polynomial $P_a$  of $a\in M_n(D)$ is the characteristic polynomial of $a\otimes 1 \in M_n(D)\otimes_F L \simeq M_{nd}(L)$, which belongs to $F[X]$,  does not depend on the choice of $L$, and  $P_a(a)=0$  \cite[\S 17 page 333 D\'efinition 1, page 336 Corollaire 2, (34)]{Bki-A8}, \cite[\S 2 Propositions 2.1 and 2.2]{Bad18}.  
   \begin{lemma}\label{le:cha} The reduced characteristic polynomial of a  matrix in $M_n(D)$ belongs to $O_F[X]$ if and only if the matrix is $GL_n(D)$-conjugate to an element of $M_n(O_D)$.
\end{lemma}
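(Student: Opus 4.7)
The plan is to prove the two implications separately, with the forward direction being essentially immediate and the converse constituting the main content.

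For the easier direction, suppose $a = g^{-1}bg$ with $b \in M_n(O_D)$ and $g \in GL_n(D)$. Since the reduced characteristic polynomial is an invariant of the conjugacy class, $P_a = P_b$, so it suffices to show $P_b \in O_F[X]$ for $b \in M_n(O_D)$. The $O_F$-module $M_n(O_D)$ is finitely generated (free of rank $n^2d^2$), so every $b \in M_n(O_D)$ is integral over $O_F$, satisfying some monic $Q(X) \in O_F[X]$. Choosing an extension $L/F$ splitting $D$ and viewing $b \otimes 1 \in M_{nd}(L)$, the element $b \otimes 1$ still satisfies $Q$, so its eigenvalues in an algebraic closure of $L$ are integral over $O_F$. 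The coefficients of $P_b$ are the elementary symmetric functions of these eigenvalues, hence integral over $O_F$; since $P_b \in F[X]$ by definition of the reduced characteristic polynomial, its coefficients actually lie in $O_F$.

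For the converse, assume $P_a \in O_F[X]$. Since $P_a$ is monic of degree $nd$ with $P_a(a) = 0$, the element $a$ is integral over $O_F$, and $O_F[a]$ is a finitely generated $O_F$-submodule of $M_n(D)$ generated by $1, a, \dots, a^{nd-1}$. View $D^n$ as a right $D$-module with $M_n(D) = \End_D(D^n)$ acting on the left, and set
\[
L \;=\; \sum_{i=0}^{nd-1} a^i \cdot O_D^{\,n} \;\subset\; D^n.
\]
Then $L$ is a right $O_D$-submodule of $D^n$ (since $a$ commutes with the right $D$-action), it is finitely generated over $O_D$, contains $O_D^n$, and is stable under $a$ (using integrality to reduce higher powers). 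In particular $L \otimes_{O_F} F = D^n$, so $L$ is an $O_D$-lattice of full rank. Once we know $L$ is free of rank $n$ as a right $O_D$-module, choosing an $O_D$-basis yields $g \in GL_n(D)$ with $gO_D^n = L$, and the inclusion $aL \subset L$ translates to $g^{-1}ag \in M_n(O_D)$, finishing the proof.

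The only nontrivial point is the freeness of $L$ over $O_D$. The main obstacle is to invoke the correct noncommutative structure theorem: $O_D$ is a (noncommutative) discrete valuation ring with two-sided maximal ideal $P_D = p_D O_D = O_D p_D$ and principal left/right ideals of the form $P_D^i$, and every finitely generated torsion-free right $O_D$-module is free of finite rank (this is the noncommutative analogue of the classification over a DVR, which goes through by a Smith-normal-form argument using the uniformizer $p_D$; see e.g.\ Reiner's \emph{Maximal Orders}). Since $L$ is torsion-free (being a submodule of $D^n$) and contains $O_D^n$, this gives $L \cong O_D^n$, and the conjugation argument above concludes the proof.
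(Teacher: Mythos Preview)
Your proof is correct and follows essentially the same route as the paper: construct the $a$-stable $O_D$-lattice $\sum_i a^i O_D^{\,n}$ using $P_a(a)=0$, then conjugate via an $O_D$-basis. You supply more detail on the freeness of this lattice over $O_D$ (the paper leaves it implicit), and for the easy direction the paper argues more directly by observing that $a\otimes 1 \in M_{nd}(O_L)$, so $P_a \in O_L[X]\cap F[X]=O_F[X]$.
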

\begin{proof}We have $M_n(D) \simeq \End_D D^n$ where $D^n$ is seen as a right $D$-module. Let  $e_1,\ldots, e_{n}$ be a basis of  $D^n$ over $D$. When $P_a\in O_F[X]$, the $O_D$-module generated by the $a^ie_1, \ldots, a^ie_n$ for the positive integers $i$, is finitely generated because $P_a(a)=0$, hence $a$ stabilizing an $O_D$-lattice of $D^n$ is $GL_n(D)$-conjugate to an element of $M_n(O_D)$. Conversely, if $a\in M_n(O_D)$ then $a\otimes 1 \in M_{nd}(O_L)$ hence its characteristic polynomial  $P_a $ belongs to $O_F[X]$;  for $g\in GL_n(D)$ we have $P_{gag^{-1}}=P_a \in O_F[X]$.\end{proof}

 We identify the space $S$ of  unitary polynomials in $F[T]$ of degree $dn$ with $F^{dn}$ by taking the non-dominant coeffficients. The map sending $X\in M_n(D) $  to its reduced characteristic polynomial $P_X$ which belongs to $S$,   is continuous (\cite{Bki-A8} \S 17 D\'efinition 1, \cite{R75} \S9a).

 We recall  from \cite[Chapter 2, \S 2 to \S 6]{Bad18}:
  
  An element $g\in G$ is called regular semi-simple when  the roots of  $P_g $   in an algebraic closure $F^{ac}$ of $F$ have multiplicity $1$. 
  The set $G^{rs}$ of regular semi-simple elements of $G$ is open dense in $G$.  
   The conjugacy class of $g\in G^{rs}$ is the set of elements  $g'\in G$ with  $P_{g'}=P_g$.  Note that $g=1+p_F^j X\in G^{rs}$ is conjugate to an element of $1+p_F^j M_n(O_D)$ if and only if $X$ is conjugate to an element of $M_n(O_D)$ if and only the coefficients of $P_X(T)=p_F^{-jdn} P_g(Tp_F^j +1)$ belong to $O_F$. The set  $\{P_g \ | \ g\in G^{rs}\}$  consists of the monic polynomials in $ F[T] $  of degree $dn$ without multiple roots in $F^{ac}$, with a non-zero constant term and with all irreducible factors of degree divisible by $d$. Let $GL_{dn}(F)^{rs,d}$ be the set of $h\in GL_{dn}(F)^{rs}$ such that $P_h\in \{P_g \ | \ g\in G^{rs}\}$.
    We say that $g\in G^{rs}$ and $h\in GL_{dn}(F)^{rs,d}$ correspond and we write $g\leftrightarrow h$ when $P_g=P_h$.

  Let $g\in G^{rs}$.  The $G$-centralizer $T_g$ of $g$ is a maximal torus,  isomorphic to the group of units of $F[T]/(P_g)$. We put  on $G/T_g$ the quotient measure $dx^*$ of the  Haar measure on $G$ (\S 1) and on the Haar measure on $T_g$  giving the value $1$ to the maximal torus. The orbital integral of $f\in C_c^\infty(G;\mathbb C)$ at $g$  is 
 \begin{equation}
 \Phi(f,g)=\int_{G/T_g} f(x g x^{-1}) \, dx^*.
  \end{equation}
Let  $C_c^\infty(GL_{dn}(F)^{rs};\mathbb C)^{(d)}$ be the set of $\varphi \in C_c^\infty(GL_{nd}(F)^{rs};\mathbb C) $ with $\Phi (\varphi,h)=0$ when $h $ is not in $GL_{nd}(F)^{rs,d}$.   We say that  $f\in C_c^\infty(G^{rs};\mathbb C) $ and $\varphi \in C_c^\infty(GL_{nd}(F)^{rs};\mathbb C)^{(d)}$ correspond and we write $f\leftrightarrow \varphi$ when 
$\Phi(f,g)=\Phi(\varphi ,h)$ if $g\in G^{rs}$ and $h\in GL_{nd}(F)^{rs,d}$ correspond.  
For $f\in C_c^\infty(G^{rs};\mathbb C) $ there exists $\varphi \in C_c^\infty(GL_{nd}(F)^{rs};\mathbb C)^{(d)}$ 
such that 
 $f\leftrightarrow \varphi$, and conversely  (\cite{Bad18} Proposition 5.1). 
     
   \subsubsection{Jacquet-Langlands correspondence} The classical Jacquet-Langlands correspondence (\cite{DKV84}, \cite{Bad02}) 
 is the  unique bijective map  
\begin{align*}
&JL: \Irr^{2}_{\mathbb C}(G))\to \Irr^{2}_{\mathbb C}(GL_{dn}(F)) \ \ \ \text{ such that for } \  \pi \in  \Irr^{2}_{\mathbb C}(G),\\
& (-1)^n\, \trace (\pi(f(g) dg) )= (-1)^{dn}\, trace ( JL (\pi) (\varphi(h) dh) )
 \end{align*}
 when $ f\in C_c^\infty(G);\mathbb C)^{rs},\  \varphi \in C_c^\infty(GL_{dn}(F);\mathbb C)^{rs, d}, \ f\leftrightarrow \varphi$. The image by $JL$ of  the Steinberg representation of $G$ is  the Steinberg representation of $GL_{dn}(F)$.
The maps $JL$ extends  to 
 
 1) a bijective map 
 $$JL:  \Irr^{2}_{\mathbb C}(M_\lambda))\to \Irr^{2}_{\mathbb C}( M'_{d\lambda} )  \ \text{for any composition  $\lambda$ of $n$.}$$
 
 2)   an injective map  
$$JL: \mathfrak B_G\to \mathfrak B_{GL_{dn}(F)}
$$
 \begin{equation}\label{JLind} JL([n.\ind_{P_\lambda} ^G \sigma] = [n.\ind_{P_{d\lambda}}^{GL_{dn}(F)} JL(\sigma)]  \  \text{for} \  \sigma \in  \Irr^{2}_{\mathbb C}(M_\lambda), \lambda\in \mathfrak P(n),
 \end{equation}
and by linearity  to  an  injective homomorphism  
    $$ JL :\Gr_{\mathbb C}^{\infty}(G )\to \Gr_{\mathbb C}^{\infty}(GL_{dn}(F)),
  $$
 satisfying  (\cite{Bad07} Th\'eor\`eme 3.1):
   \begin{equation}\label{ct} (-1)^n\, \trace \nu (f(g) dg) = (-1)^{dn}\, \trace  JL (\nu ) (\varphi(h) dh)  
  \end{equation}
  when  $\nu\in \Gr_{\mathbb C}^{\infty}(G )$, $ f\in C_c^\infty(GL_n(D)^{rs};\mathbb C),\  \varphi \in C_c^\infty(GL_{dn}(F)^{rs};\mathbb C)^{(d)}, \ f\leftrightarrow \varphi$.   We have 
$$\Gr_{\mathbb C}^{\infty}(GL_{dn}(F)) =  JL(\Gr_{\mathbb C}^{\infty}(G )) \oplus C_d.$$
The homomorphism  $JL$ commutes with 

a) the twist by  smooth characters: 

$JL((\chi \circ \nrd )\otimes \nu)=(\chi \circ \det)\otimes JL(\nu)$ when $\chi$ is a smooth character of $F^*$,

 b) the normalized parabolic induction (\cite{Bad07} Th\'eor\`eme 3.6): 

 $JL (\ind_{P_\lambda}^G( \delta_\lambda^{1/2} \nu)= \ind_{P_{d\lambda}}^{GL_{dn}(F)}({\delta'_{d\lambda}}^{1/2} JL(\nu))$).

 3) a surjective  homorphism extending the inverse $LJ$ of the classical Jacquet-Langlands correspondence $JL$ for the Levi subgroups :   
 $$LJ: \mathfrak B_{GL_{dn}(F)}\to \mathfrak B_G
$$
\begin{equation}\label{JLind} LJ([n.\ind_{P_{\mu }} ^{^{GL_{dn}(F)} } \sigma] =
\begin{cases} [n.\ind_{P_{\lambda}}^G LJ(\sigma)] & \  \text{for} \  \sigma \in  \Irr^{2}_{\mathbb C}(M_\mu), \  \mu = d\lambda\in d\mathfrak P(n),\\
0  & \  \text{for} \  \sigma \in  \Irr^{2}_{\mathbb C}(M_\mu), \ \mu \in\mathfrak P(dn) \ \text{but} \ \mu \not\in d\mathfrak P(n)
\end{cases}
 \end{equation}
 giving by linearity  a surjective homomorphism (the Badulescu-Jacquet-Langlands correspondence):
    $$ LJ :\Gr_{\mathbb C}^{\infty}(GL_{dn}(F))\to \Gr_{\mathbb C}^{\infty}(G)
  $$
of  kernel $C_d$,  section $JL$, satisfying
\begin{equation}\label{ct} (-1)^{dn}\, \trace \nu (f(g) dg) = (-1)^{n}\, \trace  LJ(\nu ) (\varphi(h) dh) 
  \end{equation}
  when  $\nu\in \Gr_{\mathbb C}^{\infty}(GL_{dn}(F) )$, $ f\in C_c^\infty(GL_{dn}(F)^{rs};\mathbb C),\  \varphi \in C_c^\infty(G ^{rs};\mathbb C)^{(d)}, \ f\leftrightarrow \varphi$. 
The homorphism $LJ$ commutes  with   the twist by smooth characters: if  $\chi$ is a smooth character of $F^*$ and $\nu\in \Gr_{\mathbb C}^{\infty}(GL_{dn}(F) )$,
\begin{equation}\label{eqX}  LJ((\chi \circ \det)\otimes \nu)=(\chi \circ \nrd)\otimes LJ(\nu), \ \ 
\end{equation}
 the normalized parabolic induction:  if  $ \delta'_\mu$ the modulus of  $P'_\mu$ and $\nu \in  \Gr_{\mathbb C}^{\infty}(M'_\mu)$,  still denoting $JL :\Gr_{\mathbb C}^{\infty}(M'_\mu)\to \Gr_{\mathbb C}^{\infty}(M_\lambda)$  
the natural morphism,
 we have 
 \begin{equation}\label{LJpi}LJ ( \ind_{P'_\mu}^{GL_{dn}(F)}( {\delta'_\mu}^{1/2} \nu )  ) =  \begin{cases} 0 \ & \text{if} \ \mu\not\in d \mathfrak P(n) \\ 
  \ind_{P_\lambda}^{G }( \delta_\lambda^{1/2} LJ (\nu   ) )   \ & \text{if} \ \mu= d\lambda, \ \lambda  \in\mathfrak P(n)
  \end{cases}
 \end{equation}
and is compatible with the Aubert involution $\iota$ up to  a sign   (\cite{Bad07} Proposition 3.16):
\begin{equation}\label{LJio} (-1)
^{n}\iota \circ  LJ =LJ \circ  (-1)^{dn}\iota.
\end{equation} 
 As $LJ$ sends the Steinberg representation of $GL_{dn}(F)$ to the Steinberg representation of $G$,  the Aubert involution of the Steinberg representation is the trivial representation up to a sign, and $LJ$ commutes with the parabolic induction, we have:
\begin{equation}\  \label{LJ1}
 (-1)^{nd} \, LJ( \pi_{P'_\mu}  ) =\begin{cases}  (-1)^n \, \pi_{P_\lambda} &\ \text{if} \ \mu=d\lambda,\\
  0  \ & \ \text{otherwise}.
 \end{cases}
\end{equation}

 \subsection{}The theorem \ref{tJL} is an easy consequence of \eqref{ct},  \eqref{LJ1},  and of the linear independance of the restrictions  to $K\cap GL_n(D)^{rs}$  of the characters of the representations $\pi_{P_\mu}$ of $GL_{dn}(F)$ for $\mu\in \mathfrak P(dn)$, for any open compact subgroup $K$ of $GL_{dn}(F)$.
We give the details. 
 
Let $P=MN$ be a parabolic subgroup of $G$ of Levi $M$,  $\sigma \in \Irr^{2}_{\mathbb C}(M)$, $\pi=\ind_P^G\sigma$. Let   $c_\pi, c_{JL(\pi)}$  be  the maps and $K_\pi, K_{JL (\pi)}$   groups in  the germ expansions  \eqref{tm} of $[\pi], JL ([\pi])$, such that  for any $g\in K_\pi \cap G^{rs}$ there exists $h\in  K_{JL (\pi)}\cap  GL_{dn}(F)^{rs,d}$ with $g\leftrightarrow h$, as we can because for $g\in GL_n(D)^{rs}$, $h \in GL_{dn}(F)^{rs,d}$ with the same reduced characteristic polynomial $  P(T)$,  the coefficients of  $p_F^{-jdn} P(Tp_F^j +1)$  belong to $O_F$ if and only if $g$ is  conjugate to an element of $1+p_F^j M_n(O_D)$  if and only if
$h$ is conjugate to element of  $1+p_F^j M_{dn}(O_F)$ 
 (Lemma \ref{le:cha}).

Let  $f\in C_c^\infty(K_{LJ(\pi)}\cap G^{rs};\mathbb C) ,  \varphi \in C_c^\infty(K_{\pi}  \cap GL_{dn}(F)^{rs};\mathbb C)^{(d)}, \ f\leftrightarrow \varphi$.  The germ expansion \eqref{tm} applied to \eqref{ct}
$(-1)^n\, \trace LJ(\pi)(f(g)dg)= (-1)^{dn}\,\trace \pi(\varphi(g) dg)$  gives
   \begin{equation*}\label{clcee} (-1)^n\, 
 \sum_{\lambda\in \mathfrak P(n)}c_{LJ(\pi )}(\lambda)\, \trace \pi_{P_\lambda} (f(g)dg) =
  (-1)^{dn}\,  \sum_{\mu\in \mathfrak P(dn)} c_ \pi(\mu)\, \trace \pi_{P'_{\mu}} (\varphi(g) dg),
\end{equation*}
and applying  \eqref{ct},   then   \eqref{LJ1}  to  the RHS,  
   $$= (-1)^n\, 
\sum_{\mu\in \mathfrak P(dn)} c_{\pi}(\mu)\, \trace  LJ(\pi_{P'_\mu}) (f(g) dg)  =(-1)^{dn}\, 
\sum_{\lambda\in \mathfrak P(n)} c_{\pi }(d\lambda)\, \trace   \pi_{P_\lambda} (f(g) dg) .$$So, 
 $ (-1)^n\, 
 \sum_{\lambda\in \mathfrak P(n)}c_{LJ(\pi )}(\lambda)\, \trace \pi_{P_\lambda} (f(g)dg) =(-1)^{dn}\, 
\sum_{\lambda\in \mathfrak P(n)} c_{\pi }(d\lambda)\, \trace   \pi_{P_\lambda} (f(g) dg) .$ 
 
 The linear independence of the characters of $\pi_{P_{\lambda}} $ on  $K_{ LJ(\pi)}  $ for $ \lambda \in \mathfrak P(n)$    (Corollary \ref{LIpi}) and the local integrability of characters imply the \footnote{ Put $K= K_{LJ(\pi)}$. Any $f \in C_c^\infty(G;\mathbb C)$ with support in $K$  is a limit of (uniformly
bounded) functions $f_n$ with  support in $K\cap G ^{rs}$, so by the local integrability of characters and the Lebesgue dominated convergence theorem, 
$\trace \pi_{P_\lambda}(f(g) dg)= \lim_n \trace \pi_{P_\lambda }(f_n(g) dg).$}
  linear independence of the characters of $\pi_{P_{\lambda}} $ on  $K_{ LJ(\pi)}  \cap G ^{rs}$ for $ \lambda \in \mathfrak P(n)$  and 
  $$(-1)^{dn} c_{\pi}(\lambda)= (-1)^{n} c_{LJ(\pi)}( d\lambda)  \ \ \text{for } \  \lambda \in \mathfrak P (n).
  $$ 
 for any  $[\pi]$ in the basis  $\mathfrak B_G$ of $\Gr_{\mathbb C}^\infty(G)$.
   This ends the proof of the theorem  \ref{tJL}.
  
   \subsection{}{\bf Applications to  $c_\pi( (n))$}   For  $\pi\in  \Irr^{2}_{\mathbb C}( G)$ and a  division central $F$-algebra $D_{dn}$ of reduced degree $dn$, there exists a unique $\pi_{dn} \in \Irr_{\mathbb C}(D_{dn}^*)$ such that their images by the  classical Jacquet-Langlands correspondence in $ \Irr_{\mathbb C}^2(GL_{dn}(F))$ are equal.  The dimension of 
 $\pi_{dn}$ is finite and by Theorem \ref{tJL}) $(-1)^nc_\pi (n) = -c_{JL(\pi_{dn})}(dn)=-\dim_{\mathbb C} \pi_{dn}$.  
An irreducible smooth complex representation $\pi$ of $G$ is tempered if and only if  $\pi=\ind_P^G \sigma$   for  a parabolic subgroups $P=MN$ of $G$ and $\sigma\in \Irr^{2}_{\mathbb C}(M)$ (\cite{LMT16} A.11).
  
  For $\nu \in \Gr_{\mathbb C}^{\infty}(GL_{dn}(F))$  and $ \lambda \in \mathfrak P (n)$, 
  we have
   $(-1)^nc_{LJ (\nu)}( \lambda)=  (-1)^{dn}c_{\nu}(d\lambda)$.     
 
 \begin{corollary} \label{cJL} Let  $\pi \in \Rep_{\mathbb C}^\infty(G)$ irreducible and tempered. Then
 $$ c_{\pi}((n))=\begin{cases} (-1)^{n-1}\, \dim _{\mathbb C}  \pi_{dn} & \text{if} \  \pi \in  \Irr^{2}_{\mathbb C}( G)\\ 
 0  & \text{if}  \ \pi \not\in  \Irr^{2}_{\mathbb C}( G)
 \end{cases}.
 $$
    \end{corollary}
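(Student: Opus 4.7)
The plan is to split into two cases according to whether $\pi$ is essentially square integrable or not, and in each case to reduce the computation to tools already established, principally Theorem \ref{tJL}, Theorem \ref{3.3}, and the elementary formula \eqref{cfd} for finite dimensional representations.

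Assume first $\pi \in \Irr^{2}_{\mathbb C}(G)$. The key idea is to apply Theorem \ref{tJL} twice, exploiting the fact that Badulescu's Jacquet-Langlands correspondence goes through the intermediate $GL_{dn}(F)$ for both the pair $(G, GL_{dn}(F))$ and the pair $(D_{dn}^\ast, GL_{dn}(F))$. Applying Theorem \ref{tJL} with $\nu = JL(\pi)$, $\lambda = (n)$, and using $LJ(JL(\pi)) = \pi$, I would get
\[
(-1)^{n} c_{\pi}((n)) = (-1)^{dn} c_{JL(\pi)}((dn)).
\]
Applying the analogous statement of Theorem \ref{tJL} for the pair $(D_{dn}^\ast, GL_{dn}(F))$, with the roles $n'=1$, $d'=dn$, to $\nu = JL(\pi_{dn}) = JL(\pi)$ and the unique partition $\lambda = (1)$ of $1$ would give
\[
-\, c_{\pi_{dn}}((1)) = (-1)^{dn} c_{JL(\pi)}((dn)).
\]
Since $D_{dn}^\ast$ is compact modulo its center, $\pi_{dn}$ is finite dimensional, so \eqref{cfd} yields $c_{\pi_{dn}}((1)) = \dim_{\mathbb C} \pi_{dn}$. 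Eliminating the common term $c_{JL(\pi)}((dn))$ from the two equations then produces $(-1)^{n} c_{\pi}((n)) = -\dim_{\mathbb C}\pi_{dn}$, i.e. $c_\pi((n)) = (-1)^{n-1}\dim_{\mathbb C}\pi_{dn}$.

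For the case where $\pi$ is tempered but not essentially square integrable, I would use the characterization of tempered irreducibles recalled just before the corollary to write $\pi = \ind_{P}^{G} \sigma$ for a \emph{proper} parabolic $P = P_\mu$ with $\mu = (n_1,\dots,n_r)$, $r \geq 2$, and $\sigma = \sigma_1 \otimes \cdots \otimes \sigma_r$ with each $\sigma_i \in \Irr^{2}_{\mathbb C}(GL_{n_i}(D))$. Theorem \ref{3.3} then expresses $c_\pi((n))$ as a sum indexed by $r$-tuples $(\lambda_1,\dots,\lambda_r)$ of partitions of $n_1,\dots,n_r$ inducing to $(n)$. Gathering the parts of $r \geq 2$ non-empty partitions always produces a partition with at least $r \geq 2$ parts, so the single-part partition $(n)$ cannot appear; the indexing set is therefore empty and $c_\pi((n)) = 0$.

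The one subtle point, and what I would regard as the main bookkeeping obstacle, is the sign accounting in the square integrable case: both applications of Theorem \ref{tJL} carry a factor of $(-1)^{dn}$, and these must cancel correctly against the factors $(-1)^n$ and $-1$ coming from the respective dimensions $n$ and $1$; once this is handled the proof is essentially formal. The non-square-integrable case is then a purely combinatorial observation about induction of partitions.
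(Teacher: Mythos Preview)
Your proof is correct and follows essentially the same approach as the paper. For the essentially square integrable case the paper also applies Theorem~\ref{tJL} twice (once for the pair $(G,GL_{dn}(F))$ and once for $(D_{dn}^*,GL_{dn}(F))$) together with \eqref{cfd}; for the tempered non-square-integrable case the paper does not spell out the argument, and your use of Theorem~\ref{3.3} together with the observation that no $r$-tuple of non-empty partitions with $r\geq 2$ can induce to the single-part partition $(n)$ is exactly the intended reasoning.
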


 \section{Coefficient  field of characteristic different from $ p$}\label{s:10}
Let $R$ be a  field.
  Our goal is to  show that  Theorem \ref{1.1} proved using the Harish-Chandra germ expansion remain valid for $R$-representations when the characteristic of $R$ is not $p$.  There are   two simple reasons: 
 
a) For a parabolic subgroup $P$ of $G$, 
the representation $\ind_P^G1$ is defined over $\mathbb Z$.

b) For  a field extension $R'/R$,  the scalar extension from $R$ to $R'$ 
 of smooth  representations  of a profinite group $H$ respects finite length,  and is an injection at the level of Grothendieck groups \cite{HV19}. For an irreducible smooth $R$-representation $\pi$ of $H$,  the $R'$-representation $R'\otimes _R \pi$   considered as an $R$-representation  is $\pi$-isotypic (a direct sum of representations isomorphic to $\pi$).

\bigskip From now on,   $\charf _R\neq p$.  
 When $\pi \in \Rep_R^{\infty,f}$  is equal to $\sum_{\lambda\in \mathfrak P(n)} c_\pi(\lambda) \ind_{P_\lambda}^G 1$ on $K_\pi$ as in Theorem \ref{1.1}, the map $c_\pi$ is unique because:
\begin{proposition}[Corollary \ref{LIpi}]  \label{LIpiR} Let   $K$ be an open pro-$p$ subgroup of $G$. For any non zero map $c:\mathfrak P(n)\to \mathbb Z$,   the restriction to   $K$  of 
$$\sum_{\lambda\in \mathfrak P(n)}c(\lambda) \, [\pi_{P_\lambda}]\in \Gr_{R}^{\infty}(G)$$ 
  is not $0$.
\end{proposition}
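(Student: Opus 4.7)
The plan is to reduce to Lemma~\ref{lemma6}, which provides a triangular system of distinguishing characters of $K_j$, after enlarging the coefficient field so that those characters are defined.

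First I would reduce to the case where $R$ is algebraically closed and $K = K_j$ for some $j \geq 1$. Let $R^{ac}$ be an algebraic closure of $R$. Base change $R \to R^{ac}$ is exact on smooth representations and preserves finite length (\cite{HV19}), hence induces an additive map $\Gr_R^\infty(K) \to \Gr_{R^{ac}}^\infty(K)$; if the restriction to $K$ of $\sum_\lambda c(\lambda)[\pi_{P_\lambda}]$ vanishes over $R$, then so does the corresponding $R^{ac}$-virtual representation. Since $K$ is open in $G$, the intersection $K \cap K_0$ is open in $K_0$ and therefore contains some congruence subgroup $K_j = 1 + M_n(P_D^j)$; restriction from $K$ to $K_j \subset K$ then transfers the vanishing assumption to $\Gr_{R^{ac}}^\infty(K_j)$.

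Next I would construct the distinguishing characters $\xi_\lambda$ over $R^{ac}$. Since $\charf R \neq p$, the field $R^{ac}$ contains all $p$-power roots of unity, so one can fix a non-trivial smooth character $\psi : F \to R^{ac,*}$ and define $\xi_\lambda$ on $K_j$ by the formula \eqref{xi} for each $\lambda \in \mathfrak P(n)$. The multiplicities $m(\xi_\lambda, \pi_{P_\mu}|_{K_j})$ appearing in Lemma~\ref{lemma6} are computed as cardinalities of explicit double coset spaces; the argument there is purely combinatorial (Mackey-type counting, combined with the local structure of $P_\mu$ modulo $K_j$) and goes through verbatim over $R^{ac}$. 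In particular $m(\xi_\lambda, \pi_{P_\mu}|_{K_j}) = 0$ unless $\mu \leq \lambda$, and $m(\xi_\lambda, \pi_{P_\lambda}|_{K_j}) = 1$.

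Finally I would conclude by triangular descent. Since $K_j$ is pro-$p$ and $\charf R^{ac} \neq p$, the category $\Rep_{R^{ac}}^\infty(K_j)$ is semisimple, so an element of $\Gr_{R^{ac}}^\infty(K_j)$ vanishes if and only if all its irreducible multiplicities vanish. Assuming $c \not\equiv 0$, pick $\lambda_0$ minimal in $\Supp c$; pairing the alleged zero element with $\xi_{\lambda_0}$ yields
\[
0 \;=\; \sum_\lambda c(\lambda)\, m(\xi_{\lambda_0}, \pi_{P_\lambda}|_{K_j}) \;=\; c(\lambda_0),
\]
because only terms with $\lambda \leq \lambda_0$ contribute by the previous step and the minimality of $\lambda_0$ in $\Supp c$ forces $\lambda = \lambda_0$. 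This contradiction proves the claim. The only real technical ingredient is the injectivity of base change on Grothendieck groups provided by \cite{HV19}; the rest is a purely combinatorial transfer of Lemma~\ref{lemma6} to $R^{ac}$, which costs nothing because both sides of the relevant multiplicity formula are cardinalities independent of the coefficient field.
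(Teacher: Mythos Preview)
Your argument is correct, but it follows a different route from the paper. The paper reduces to $R$ algebraically closed and then invokes the equivalence of categories $\Rep_R^\infty(K)\simeq \Rep_{\mathbb C}^\infty(K)$ for a pro-$p$ group $K$ (valid since $\charf_R\neq p$), under which the $\pi_{P_\lambda}$ correspond because they are defined over $\mathbb Z$; this transfers the problem wholesale to $\mathbb C$, where it is Corollary~\ref{LIpi}, itself proved via Fourier transforms of nilpotent orbital integrals (Propositions~\ref{hatpi} and~\ref{LIhat}). You instead bypass the analytic input entirely by transporting Lemma~\ref{lemma6} directly to $R^{ac}$ and using its triangular multiplicity system to separate the $[\pi_{P_\lambda}]$. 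Your approach is more elementary and self-contained (no harmonic analysis on $\mathfrak g$), and indeed the paper later records in \S\ref{pos} that Lemma~\ref{lemma6} carries over to any field containing $\mu_{p^\infty}$; the paper's approach is shorter as written because it packages the work into the already-proved complex case. One minor remark: for the reduction step you only need that base change sends $0$ to $0$, not full injectivity of $\Gr_R^\infty(K)\to\Gr_{R^{ac}}^\infty(K)$.
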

 
\begin{proof}  We can suppose $R$ algebraically closed by b).  The categories $\Rep_R^\infty(K)$ and $\Rep_{\mathbb C}^\infty(K)$ are equivalent  and the Grothendieck groups $\Gr_R^\infty(K)$ and $\Gr_{\mathbb C}^\infty(K)$ are isomorphic because $K$ is a pro-$p$ group and $\charf _R\neq p$. The proposition is true when $R=\mathbb C$ (Corollary \ref{LIpi})  and the representations $\pi_{P_\lambda}$ correspond. Hence the proposition is true for any $R$. \end{proof}

 We list  other  properties   which will  be  used   in the proof of the theorem \ref{1.1}. 
\subsection{} \label{sce} {\bf Twist by a character, image by an automorphism}  

Assume that $\pi\in \Rep_R ^{\infty, f}(G)$ has a germ expansion of map $c_\pi$ on $K_\pi$ (the first part of Theorem \ref{1.1}),  $\chi$ is a smooth $R$-character of $G$ and  $\sigma$  is an automorphism of $R$. Then the representations $\pi \otimes \chi$ and $\sigma(\pi)$ have  a germ expansion of maps 
 $c_{\pi \otimes \chi}=c_{\sigma(\pi)}=c_\pi$ on 
  $K_{\pi \otimes \chi}=K_{\sigma(\pi)}=K_\pi$ if $\chi $ is trivial on $K_\pi$. 
 The reason is a)  ($\sum_{\lambda\in \mathfrak P(n)} c_{\pi}(\lambda) \,[ \pi_{P_\lambda}]$  is defined over $\mathbb Z$).

\subsection{} {\bf  Germ expansion  on the Grothendieck group} Assume that any $\pi\in \Rep_R^{\infty, f}(G)$ has  a germ expansion of map $c_\pi$ on some open compact subgroup $K_{\pi}$ of $G$. Then, 
 the  linear  map $\nu \mapsto c_\nu: \Gr_R ^{\infty}(G)\to  \{ \mathfrak P(n)\to \mathbb Z\}$ such that $c_{[\pi]}=c_\pi$ for  $\pi\in \Rep_R^{\infty, f}(G)$, has the property that  the restrictions  to some open  compact subgroup $K_{\nu}$ of $G$ of    $\nu$ and of $\sum_{\lambda\in \mathfrak P(n)} c_{\nu}(\lambda) \, [\pi_{P_\lambda}]$  are  isomorphic.  

Parabolic induction: For a parabolic subgroup $P$ of $G$ of Levi $M$, the   parabolic induction 
$\ind_P^G$ is   exact and respects finite length
and passes to a linear map between the Grothendieck groups: 
$$\ind_P^G: \Gr_R^\infty(M)\to  \Gr_R^\infty(G), \ \ \  \ind_P^G [\sigma] = [\ind_P^G \sigma] \ \text{for } \sigma\in  \Rep_R ^{\infty, f}(M).$$
When $\nu\in  \Gr_R ^{\infty}(M)$ has  a germ expansion of map $c_\nu$, then $\ind_P^G\nu$ has a germ expansion of map induced by $c_\nu$ (Theorem \ref{th:71}).

\subsection{}\label{pos}  For $j\in \mathbb N_{>0}$ and $\lambda $ is a composition of $n$, the  values of the character $\xi_\lambda$ of $K_j=1+M_n(P_D^j)$  defined by \eqref{xi}  and of the character $\theta_\lambda$ of $U$ defined by  \eqref{thetal} are  roots of $1$ of order 
 powers of $p$. Assume that  the  field  $R$  contains roots of unity
of any $p$-power order, 
$$\text{we write} \  
 \mu_{p^\infty} \subset R,\ \text{
  implying} \  \charf _R\neq p.$$
  We can define   $\xi_\lambda$ and $\theta_\lambda$  over $R$ as before, and the Whittaker support  of   an irreducible smooth  $R$-representation of $G$ as before Theorem \ref{Wh}.

Let  $\pi\in \Rep_{R}^{\infty,f}(G)$ having  a germ expansion of map $c_\pi:  \mathfrak P(n) \mapsto \mathbb Z$: for a positive integer $j_0$ the restriction of $\pi$  and 
  of  $\sum_{\lambda\in \mathfrak P(n)} c_\pi(\lambda) \ind_{P_\lambda}^G1$  to $K_{j_0}$ are equal. With the same proofs as for  $R=\mathbb C$, we have:

 \begin{theorem}  \label{WhR}  1) For any integer  $j\geq j_0$ and any 
 $\lambda$  partition of $n$, we have
   \begin{align}\label{cm} 
   c_\pi (\lambda)&=m(\xi_{\lambda }, \pi) - \sum_{\mu \in \mathfrak P(n), \mu <\lambda } c_\pi(\mu) \, 
m(\xi_{\lambda }, \pi_{P_\mu}).\end{align}
  In particular when  $\lambda $ is minimal in the support of $c_\pi$, $c_\pi (\lambda)=m(\xi_{\lambda }, \pi ) $  is positive and  independent of $j\geq j_0$.
  
 2) Theorem  \ref{Wh} is valid.
 \end{theorem}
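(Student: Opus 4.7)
The plan is to transfer verbatim the proofs of the integrality statement of \S\ref{inte} and of Theorem \ref{Wh} from the complex case. Two features of $R$ make this possible: since $\charf_R\neq p$, the category $\Rep_R^\infty(K)$ is semisimple for every pro-$p$ open subgroup $K$ of $G$, so the multiplicity of any irreducible smooth $R$-representation of $K$ is a well-defined $\mathbb Z$-valued additive function on $\Gr_R^\infty(K)$; and since $\mu_{p^\infty}\subset R$, the characters $\xi_\lambda$, $\theta_\lambda$, $\psi_j$, and $\psi'_j$ are defined over $R$ by the formulas \eqref{xi} and \eqref{thetal}.

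For part 1, I would start from the hypothesis $[\pi]|_{K_j}=\sum_{\mu\in\mathfrak P(n)}c_\pi(\mu)[\pi_{P_\mu}]|_{K_j}$ in $\Gr_R^\infty(K_j)$, valid for all $j\geq j_0$, and apply $m(\xi_\lambda,-)$ to both sides. The key input is the $R$-analogue of Lemma \ref{lemma6}, namely $m(\xi_\lambda,\pi_{P_\lambda})=1$ and $m(\xi_\lambda,\pi_{P_\mu})=0$ unless $\mu\leq\lambda$. Its proof is a combinatorial analysis of a double coset space that reduces to non-degeneracy of the reduced-trace pairing modulo $P_F$ and to the non-triviality of $\psi$ on $O_F/P_F$, neither of which depends on the coefficient field once $\xi_\lambda$ is defined over $R$. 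This yields the upper-triangular system \eqref{cm}, and downward induction on partitions recovers $c_\pi(\lambda)$; for $\lambda$ minimal in $\Supp c_\pi$ the formula collapses to $c_\pi(\lambda)=m(\xi_\lambda,\pi)\geq 0$, which is strictly positive since $\lambda\in\Supp c_\pi$, with independence of $j\geq j_0$ a direct consequence of the formula itself.

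For part 2, each step of the proof of Theorem \ref{Wh} transfers to $R$. The projectors $e'_j$ and $f_j$, the Iwahori decomposition of $K'_j$, the surjection $p_j\colon V(\psi'_j)\to V_{\theta_\lambda}$ of Lemma \ref{l3}, and the injection $q_j\colon V(\psi'_j)\to V(\psi'_{j+1})$ implicit in Proposition \ref{a'} are all defined by integration against characters of $p$-power order and remain available over $R$. The crucial Lemma \ref{j'} is proved by expressing the trace of $e'_je'_{j+1}e'_j$ on $V(\psi'_j)$ via the germ expansion: by part 1 the germ coefficients are the same integers as over $\mathbb C$, so the non-vanishing of that trace, and hence the homothety assertion, carries over. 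The concluding dimension identity in Proposition \ref{a'} is purely linear-algebraic.

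The main obstacle is the finite-dimensionality of $V_{\theta_\lambda}$ used in Lemma \ref{l3}, which in the complex case was cited from \cite{BH02,ABS22}. I expect it to follow from the same Jacquet-module and geometric-lemma arguments, which are characteristic-free away from $p$; alternatively, one may bypass this input by noting that the proof of Proposition \ref{a'} requires only that each $V(\psi'_j)$ be finite-dimensional (immediate from admissibility) and that the sequence $\dim_R V(\psi'_j)$ stabilise (already supplied by part 1). The stabilised injections $V(\psi'_j)\to V(\psi'_{j+1})$ then become isomorphisms and project isomorphically onto $V_{\theta_\lambda}$, yielding its finite-dimensionality as a byproduct.
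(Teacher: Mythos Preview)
Your proposal is correct and follows the paper's own approach, which simply asserts that the proofs for $R=\mathbb C$ carry over verbatim once $\mu_{p^\infty}\subset R$. Your bypass of the a priori finite-dimensionality of $V_{\theta_\lambda}$---recovering it as a byproduct from the eventual bijectivity of the $q_j$ and the relation $p_{j+1}\circ q_j=p_j$---is a useful clarification that the paper does not spell out.
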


 An  algebraically closed field $R$ with $\charf_R\neq p$ contains $\mu_p^\infty$. To prove Theorem \ref{1.1} for $R$  algebraically closed, by   Theorem \ref{WhR} and Proposition \ref{LIpiR}, 
we have only to prove that any $\pi\in \Rep_R^{\infty, f}(G)$ has a germ expansion: there exists a map $c_\pi: \mathfrak P(n)\to \mathbb Z$ such that $\pi$ and $\sum_\lambda c_\pi(\lambda) \ind_{P_\lambda}^G 1$ have equal on some open compact subgroup $K_\pi$ of $G$.

\bigskip    
 We prove now Theorem  \ref{1.1}  going from  $R=\mathbb C$   to $R=\mathbb Q_\ell^{ac}$   to  $R=\mathbb F_\ell^{ac}$, $\ell\neq p$, 
to an algebraically closed field $R$, and finally to a  not necessarily algebraically closed field $R$.

\subsection{} {\bf  $R  \simeq R'$.}  For any prime number $\ell$, the fields $\mathbb C$ and $\mathbb Q_\ell^{ac}$ are isomorphic. 
It is easy to see that  if  Theorem \ref{1.1} is  true for a field $R$, it is also true for an isomorphic field $R'$. 
 Indeed, a field   isomorphism $j:R\to R'$  induces  isomorphisms of categories  
   $j_G: \Rep^{\infty}_{R}(G)\to  \Rep^{\infty}_{R'}(G)$ and 
    $j_K: \Rep^{\infty}_{R}(K)\to  \Rep^{\infty}_{R'}(K)$ for any open compact subgroup $K$ of $G$.    The isomorphisms  commute with   the restriction to $K$ and the parabolic induction $\ind_P^G$.     
  For $\pi\in \Rep^{\infty}_{R}(G)$ and $\sigma\in \Rep^{\infty}_{R}(M)$, 
    $$j_K (\pi|_K)=j_G (\pi)|_K, \ \ \ind_P^G(j_M (\sigma))=j_G (\ind_P^G\sigma).$$
When the theorems are true for $R$ they are also true for $R'$.  For $\pi\in \Rep^{\infty,f}_{R}(G)$, then $c_\pi= c_{j_G (\pi)}$ and we can take $K_{j_G(\pi)}=K_\pi$.

   \subsection{}\label{redell} {\bf $R\simeq \mathbb F_\ell^{ac}$ for    $\ell \neq p$. }
   
  The  theorems  over  $\mathbb Q_\ell^{ac}$ imply the theorem  over $\mathbb F_\ell^{ac}  $  by reduction modulo $\ell $ for  $\ell\neq p$. We denote by $\mathbb Z_\ell^{ac}$   the ring of integers of $\mathbb Q_\ell^{ac}$.  A  lattice in a $\mathbb Q_\ell^{ac}$-vector space $V$ is a  
  free $\mathbb Z_\ell^{ac}$-submodule  generated by a $\mathbb Q_\ell^{ac}$-basis of $V$.

 Let $\pi \in \Rep_{\mathbb Q_\ell^{ac}}^{\infty,f}(G)$.
 One says that $\pi$ is  integral when the space of $\pi$  contains  a $G$-stable   lattice $\mathfrak L_\pi$. Then,  the reduction modulo $\ell$   of  $\mathfrak L_\pi$ equal to $\mathbb F_\ell^{ac} \otimes_{ \mathbb Z_\ell^{ac}} \mathfrak L_\pi$  belongs  to $\Rep_{\mathbb F_\ell^{ac}}^{\infty,f}(G)$ and  its image in the Grothendieck 
group  $\Gr_{\mathbb F_\ell^{ac}}^{\infty } (G)$ does not depend on the choice of $\mathfrak L_\pi$. It is called the {\bf reduction modulo $\ell$} of $\pi$, and  denoted  by  $ r_\ell (\pi)$.     The subcategory of  integral representations $\Rep_{\mathbb Q_\ell^{ac}}^{\infty,f,int}(G)$ in $\Rep_{\mathbb Q_\ell^{ac}}^{\infty,f}(G) $ is abelian \cite{V96}; let  $\Gr_{\mathbb Q_\ell^{ac}}^{\infty,int} (G)$ be its Grothendieck group. The reduction modulo $\ell$  passes to a surjective (not injective)  map between   the Grothendieck groups:     $$r_\ell: \Gr_{\mathbb Q_\ell^{ac}}^{\infty,int} (G) \to   \Gr_{\mathbb F_\ell^{ac}}^{\infty} (G), $$
and 
 there  is an explicit subset  $E(G)$ of
 $ \Rep_{\mathbb Q_\ell^{ac}}^{\infty,f, int}(G)$  such that the set  $\{ r_\ell (\pi) \ | \ \pi \in E(G)\}$ is a basis of the Grothendieck 
group  $\Gr_{\mathbb F_\ell^{ac}}^{\infty } (G)$   (\cite{MS14} Th\'eor\`eme  9.35). 

 For a parabolic subgroup $P$ of $G$ with Levi $M$, 
the  parabolic induction  $\ind_P^G: \Rep_{\mathbb Q_\ell^{ac}}^{\infty}(M)\to \Rep_{\mathbb Q_\ell^{ac}}^{\infty}(G)$  is exact, respects finite length and integrality  hence passes to the Grothendieck groups  and $r_\ell \circ \ind_P^G = \ind_P^G \circ r_\ell$ on  $\Rep_{\mathbb Q_\ell^{ac}}^{\infty,f,int}(M)$.

The representation $\pi_P$ over $\mathbb Q_\ell^{ac}$ are integral, with a canonical  integral structure (the functions with values in $\mathbb Z_\ell^{ac}$: $\pi_P$ over  $\mathbb Z_\ell^{ac}$) of reduction modulo $\ell$ the representation $\pi_P$ over $\mathbb F_\ell^{ac}$.

If $\pi \in \Rep_{\mathbb Q_\ell^{ac}}^{\infty, f, int}(G)$ has a  germ expansion of map  $c_\pi$ on  $K_\pi$, then  $r_\ell(\pi) \in \Gr_{\mathbb F_\ell^{ac}}^{\infty}(G)$ has a  germ expansion of map  $c_\pi$ on  $K_\pi$.

\begin{lemma}\label{cred} Let  $\pi, \pi' \in  \Rep_{\mathbb Q_\ell^{ac}}^{\infty,f, int} (G ) $ with  $r_\ell (\pi)= r_\ell (\pi')$. Then  
$c_\pi=c_{\pi'}.$   
\end{lemma}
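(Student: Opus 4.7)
The plan is to pass from the two germ expansions over $\mathbb Q_\ell^{ac}$ to a single relation in $\Gr_{\mathbb F_\ell^{ac}}^{\infty}(K)$ via the reduction-modulo-$\ell$ functor, and then kill the coefficients using the linear independence result of Proposition \ref{LIpiR} over $\mathbb F_\ell^{ac}$.

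First I would fix an open pro-$p$ subgroup $K$ of $G$ contained in $K_\pi \cap K_{\pi'}$, where $K_\pi$ and $K_{\pi'}$ are the groups on which the germ expansions of $\pi$ and $\pi'$ hold (such expansions exist over $\mathbb Q_\ell^{ac}$ by \S\ref{redell}, since $\mathbb Q_\ell^{ac}\simeq \mathbb C$ and Theorem \ref{1.1} has already been transferred to isomorphic fields). On $K$ we then have
$$[\pi]|_K = \sum_{\lambda\in \mathfrak P(n)} c_\pi(\lambda)\,[\pi_{P_\lambda}]|_K, \qquad [\pi']|_K = \sum_{\lambda\in \mathfrak P(n)} c_{\pi'}(\lambda)\,[\pi_{P_\lambda}]|_K$$
in $\Gr_{\mathbb Q_\ell^{ac}}^{\infty}(K)$. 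All terms involved are integral: $\pi$ and $\pi'$ by hypothesis, and each $\pi_{P_\lambda}=\ind_{P_\lambda}^G 1$ via the canonical $\mathbb Z_\ell^{ac}$-structure (functions with values in $\mathbb Z_\ell^{ac}$) recalled in \S\ref{redell}.

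Next I would apply $r_\ell$ to both identities. Since $K$ is pro-$p$ and $\ell\neq p$, the restriction to $K$ of a $G$-stable $\mathbb Z_\ell^{ac}$-lattice is a $K$-stable $\mathbb Z_\ell^{ac}$-lattice, so $r_\ell$ commutes with restriction to $K$; moreover the canonical integral structure on $\pi_{P_\lambda}$ over $\mathbb Q_\ell^{ac}$ reduces modulo $\ell$ to the $\mathbb F_\ell^{ac}$-representation $\pi_{P_\lambda}$. Using the hypothesis $r_\ell(\pi)=r_\ell(\pi')$ and subtracting the two reduced identities gives
$$\sum_{\lambda\in \mathfrak P(n)} \bigl(c_\pi(\lambda)-c_{\pi'}(\lambda)\bigr)\,[\pi_{P_\lambda}]|_K = 0$$
in $\Gr_{\mathbb F_\ell^{ac}}^{\infty}(K)$. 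Applying Proposition \ref{LIpiR} over the field $\mathbb F_\ell^{ac}$ to the $\mathbb Z$-valued map $c_\pi-c_{\pi'}$ then forces $c_\pi-c_{\pi'}\equiv 0$, as desired.

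There is no substantial difficulty here; the only point requiring care is the bookkeeping around the reduction functor, namely verifying that $r_\ell$ is compatible with restriction to the pro-$p$ subgroup $K$ and that the canonical integral structures on the $\pi_{P_\lambda}$ over $\mathbb Q_\ell^{ac}$ reduce to the obvious representations $\pi_{P_\lambda}$ over $\mathbb F_\ell^{ac}$. Both assertions are immediate from the defining formula $\pi_{P_\lambda}=\ind_{P_\lambda}^G 1$ and the fact that induction of the trivial representation is defined already over $\mathbb Z$.
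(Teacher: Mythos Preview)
Your argument is correct but follows a different route from the paper. The paper proves the lemma via the triangular multiplicity formula \eqref{cm}: since $K_j$ is pro-$p$ and $\ell\neq p$, one has $m(\xi_\lambda,\pi)=m(r_\ell(\xi_\lambda),r_\ell(\pi))$, so $r_\ell(\pi)=r_\ell(\pi')$ forces $m(\xi_\lambda,\pi)=m(\xi_\lambda,\pi')$ for every $\lambda$, and then upward induction on $\lambda$ using \eqref{cm} gives $c_\pi=c_{\pi'}$. You instead reduce the entire germ expansion identity modulo $\ell$ in one step and invoke the linear independence of the $[\pi_{P_\lambda}]|_K$ over $\mathbb F_\ell^{ac}$ from Proposition~\ref{LIpiR}. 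Your approach is shorter and more conceptual, bypassing the explicit characters $\xi_\lambda$ and the induction; the paper's approach has the virtue of reusing the concrete multiplicity machinery already in place and making the preservation of multiplicities under reduction modulo $\ell$ the sole analytic input. Both arguments ultimately rest on the same fact, namely that for a pro-$p$ group $K$ with $\ell\neq p$ the reduction map induces an isomorphism $\Gr_{\mathbb Q_\ell^{ac}}^\infty(K)\simeq \Gr_{\mathbb F_\ell^{ac}}^\infty(K)$.
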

\begin{proof} When $j$ is large, we have  \eqref{cm} for $\pi$ and $\pi'$.
  As $K_j$ is a pro-$p$ group,  $m(\xi_\lambda, \pi ) =m(r_\ell(\xi_\lambda), r_\ell(\pi )).$
Therefore $r_\ell(\pi )=r_\ell(\pi ')$ implies   $m(\xi_\lambda, \pi )=m(\xi_\lambda, \pi ')$. By induction  on $\lambda$ we deduce $c_\pi =c_{\pi '}$.
\end{proof}

As the $r_\ell (\pi)$ for $\pi\in E(G)$ generate  $\Gr_{\mathbb F_\ell^{ac}}^{\infty} (G)$, Lemma \ref{cred} gives the existence of   a  linear map  
 $$c:\Gr_{\mathbb F_\ell^{ac}}^{\infty} (G) \to \{  \mathfrak P(n)\to \mathbb Z\} \ \text{
defined by } c_{r_\ell (\pi)}= c_\pi \ \text{ for  $\pi \in \Rep_{\mathbb Q_\ell^{ac}}^{\infty,f, int} (G )$} . $$
For  $\pi \in  \Rep_{\mathbb F_\ell^{ac}}^{\infty,f } (G)$, the  restrictions  of    $\pi $ and of $\sum_{\lambda\in \mathfrak P(n)} c_{\pi}(\lambda) \, r_\ell (\pi_{P_\lambda})$ to  some open  pro-$p$ group $K_\pi$  of $G$ are  isomorphic. 
Theorem \ref{1.1}  when $R=\mathbb F_\ell^{ac} $ is proved.

\subsection{} \label{RR'}{\bf    $R'/R $ algebraically closed fields}\label{3}  Given  an extension $R'/R$ of  algebraically closed  fields of characteristic different from $ p$,  we prove that   the germ expansion   over $R$ for all $n\geq 1$  is equivalent to the germ expansion over $R'$ for all $n\geq 1$.  
Therefore  we get Theorem \ref{1.1} over any   algebraically closed field $R$,  because  we already proved  for $R=\mathbb C$  and   $R=\mathbb F_\ell^{ac}$  when $\ell\neq p$.

The proof   relies on  properties, that we now recall, of  the scalar extension $\pi\mapsto R'\otimes_R \pi: \Rep_R^\infty(G)\to \Rep_{R'}^\infty(G)$ from $R$ to $R'$ and of the representations  of $G$ parabolically induced from Speh representations of the Levi subgroups of $G$.  Fix the same square root of $q=p^f$ in $R$ and in $R'$.

The scalar extension from $R$ to $R'$  respects  irreducible smooth representations and cuspidality, is exact and  passes to    an  injective linear map $\nu \mapsto R'\otimes_R \nu: \Gr_{R }^{\infty}(G)\to \Gr_{R'}^{\infty}(G)$ between the Grothendieck groups, commutes with the parabolic induction  and    for any open pro-$p$ subgroup $K$ of $G$ is an isomorphism of categories  $\delta \mapsto R'\otimes_R \delta: \Rep_{R }^{\infty}(K)\to \Rep_{R'}^{\infty}(K)$  \cite{HV19}. When $\pi\in  \Rep_R^{\infty, f}(G)$ the multiplicity $m(\delta,\pi)$  in $\pi$ of $\delta\in \Rep_{R }^{\infty}(K)$ irreducible   is equal to 
 $m(R'\otimes_R \delta,R'\otimes_R \pi) $. 
  Any irreducible cuspidal $R'$-representation $\rho'$ of $G$ is the twist by an unramified smooth $R'$-character $\chi$ of $G$ of an  irreducible cuspidal $R$-representation $\rho$ of $G$, $\rho' = \chi \otimes (R'\otimes_R \rho)=\chi \otimes_R\rho$ \cite{V96}. By  Lemma \ref{twistS} below, this is also true  for Speh representations.

  Let $m$ be a divisor  of $n=mr,r\geq 1,$ and $\rho$ an irreducible cuspidal $R$-representation  of $GL_m(D))$. To $(\rho,n) $ are attached in  \cite{MS14}:
 
 \begin{itemize}
 \item an unramified smooth $R$-character $\nu_\rho$ of $GL_m(D)$ depending only on the inertia class of $\rho$ (loc.cit. $\S 5.2$).
\item  a cuspidal $R$-segment $\Delta_{\rho,n}= (\rho, \nu_{\rho} \otimes \rho, \ldots ,\nu_\rho ^{-1+r}\otimes \rho)$ of length $r$, denoted $[0, -1+r]_\rho$ in  (loc.cit. $\S 7.2$). 
 \item  an irreducible  subrepresentation $Z(\Delta_{\rho,n})\in  \Rep_R^\infty(GL_n(D))$ (a Speh representation) of the normalized parabolic induction $\rho \times \ldots \times (\nu_\rho ^{-1+r}\otimes \rho)$ of $\rho\otimes   \ldots \otimes (\nu_\rho ^{-1+r}\otimes \rho) \in \Rep_R^\infty M_{\lambda}$ for $\lambda=(m,\ldots, m)\in \mathfrak P(n)$ (loc.cit. $\S 7.2$).
 \end{itemize}
 
\begin{lemma} \label{twistS}  For each unramified  smooth $R'$-character $\chi$ of $F^*$, 
 $$(\chi\circ \nrd) \otimes   _{R} Z(\Delta_{\rho,n}) \simeq Z(\Delta_{(\chi  \circ \nrd) \otimes _{R} \rho, n}) .$$  \end{lemma}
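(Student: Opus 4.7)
The plan is to reduce both sides of the isomorphism to normalized parabolic inductions and invoke the uniqueness (up to isomorphism) of the irreducible subrepresentation in the definition of a Speh representation.

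First I would observe that, since $\chi$ is unramified, $(\chi\circ\nrd)\otimes_R\rho$ lies in the same inertia class as $R'\otimes_R\rho$, and that this inertia class is the scalar extension of the inertia class of $\rho$. Because the character $\nu_\rho$ constructed in \cite{MS14} depends only on the inertia class of $\rho$, this yields the identification
\[
\nu_{(\chi\circ\nrd)\otimes_R\rho} \;=\; \nu_{R'\otimes_R\rho} \;=\; R'\otimes_R \nu_\rho .
\]
Consequently the cuspidal segment $\Delta_{(\chi\circ\nrd)\otimes_R\rho,n}$ is obtained from $\Delta_{\rho,n}$ by twisting each entry by $\chi\circ\nrd_{GL_m(D)}$ and then extending scalars.

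Next I would use the standard compatibility of normalized parabolic induction with a twist by a character of the ambient group. The character $\chi\circ\nrd_G$ of $G$ restricts to $P_\lambda$ through the projection $P_\lambda\to M_\lambda$ as $(\chi\circ\nrd_{GL_m(D)})\otimes\cdots\otimes(\chi\circ\nrd_{GL_m(D)})$, and is trivial on the unipotent radical. Writing $\sigma_{\rho,n}$ for the tensor product along $M_\lambda$ of the entries of $\Delta_{\rho,n}$, this implies (by the projection formula applied to \(\ind_{P_\lambda}^G\)):
\[
(\chi\circ\nrd_G)\otimes_{R'}\ind_{P_\lambda}^G(R'\otimes_R\sigma_{\rho,n})
\;\simeq\; \ind_{P_\lambda}^G\!\left(\sigma_{(\chi\circ\nrd)\otimes_R\rho,n}\right) .
\]

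Now I would combine this with the two remaining ingredients recalled in the excerpt: scalar extension from $R$ to $R'$ is exact, preserves irreducibility and commutes with $\ind_{P_\lambda}^G$. Applying scalar extension to the embedding $Z(\Delta_{\rho,n})\hookrightarrow \ind_{P_\lambda}^G \sigma_{\rho,n}$ provides an irreducible $R'$-subrepresentation $R'\otimes_R Z(\Delta_{\rho,n})$ of $\ind_{P_\lambda}^G(R'\otimes_R\sigma_{\rho,n})$. Twisting this subrepresentation by $\chi\circ\nrd_G$ and using the display above, we see that $(\chi\circ\nrd)\otimes_R Z(\Delta_{\rho,n})$ is an irreducible subrepresentation of $\ind_{P_\lambda}^G \sigma_{(\chi\circ\nrd)\otimes_R\rho,n}$, which by definition is $Z(\Delta_{(\chi\circ\nrd)\otimes_R\rho,n})$.

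The main obstacle is the final identification step: I need to know that $Z(\Delta_{\rho',n})$ is actually characterized (up to isomorphism) as \emph{the} irreducible subrepresentation of the corresponding segment-induced representation, so that the irreducible subrepresentation produced by the twist can be identified with $Z(\Delta_{(\chi\circ\nrd)\otimes_R\rho,n})$ rather than some other irreducible constituent. This is where I rely on the analysis of \cite{MS14}, which establishes exactly this uniqueness (the socle of the ladder-segment parabolic induction is irreducible and equals the Speh representation) both in the $\mathbb{Q}_\ell^{ac}$-setting and in the modular setting, covering all fields $R$ of residual characteristic $\neq p$ relevant here. A secondary, purely bookkeeping point is to make sure that the chosen square root of $q$ in $R$ and $R'$ (used to normalize parabolic induction) is compatible, which is guaranteed by the paper's convention.
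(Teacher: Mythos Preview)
The paper does not actually prove this lemma: immediately after the statement it simply records that the property is stated in \cite{MS17} (8.1.2), with a cross-reference to \cite[Lemme 5.9]{DS23}. Your proposal, by contrast, sketches a direct argument, and it is essentially the standard one underlying those citations. The steps are sound in the context of \S\ref{RR'} (where both $R$ and $R'$ are algebraically closed): the unramified twist preserves the inertia class so $\nu_{(\chi\circ\nrd)\otimes_R\rho}=\nu_{R'\otimes_R\rho}$; the projection formula pushes the twist into the normalized parabolic induction; scalar extension preserves irreducibility and commutes with $\ind_{P_\lambda}^G$ by \cite{HV19}; and the identification of the resulting irreducible subrepresentation with the Speh representation uses the uniqueness of the socle established in \cite{MS14} \S7.2. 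You have correctly isolated this last point as the only nontrivial input. What your approach buys over the paper's bare citation is transparency about exactly which property of Speh representations is being used; what the paper's citation buys is brevity and a pointer to where the uniqueness is carefully verified in the modular setting.
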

This important property   is stated in \cite{MS17}[(8.1.2)] (c.f.\cite[Lemme 5.9]{DS23}). 
  
\bigskip  To a composition  $ (n_1,\ldots, n_r)$ of $n$,  a divisor $m_i$ of $n_i$ and  an irreducible cuspidal $R$-representation $\rho_i$  of $GL_{m_i}(D))$  for $1\leq i \leq r$,  are associated 
 \begin{itemize}
 \item  a cuspidal $R$-multisegment
$\mathfrak M=(\Delta_{\rho_1, n_1},\ldots,\Delta_{\rho_r,n_r})$,
 \item  a Speh  $R$-representation $Z(\mathfrak M)= Z(\Delta_{\rho_1,n_1})\otimes \ldots Z(\Delta_{\rho_r,n_r})$ of $M= M_{(n_1, \ldots, n_r)}$,
 \item   the normalized parabolic induction $n.I(\mathfrak M) =\ind_P ^G ( Z (\mathfrak M) \delta_{P }^{1/2})$  of  $Z(\mathfrak M)$ where $P=P_{(n_1, \ldots, n_r)}$ and 
 $\delta_P $ is the module of $P$.
  \end{itemize}
 The Grothendieck group $\Gr_R^{\infty}(G)$ is generated by the  $[n.I(\mathfrak M)]$ for the cuspidal $R$-multisegments $\mathfrak M$ of $GL_n(D)$  (\cite{MS14} proof of Lemma 9.36 with Proposition 9.29).  
 
 But  $Z (\mathfrak M) \delta_{P }^{1/2}$ is also a Speh representation $Z (\mathfrak M')=Z(\Delta_{\rho'_1,n_1})\otimes \ldots Z(\Delta_{\rho'_r,n_r})$ where $\rho'_i$ is the twist of $\rho_i$ by an unramified character. Therefore $\Gr_R^{\infty}(G)$ is also generated by the images of the parabolic induction $I(\mathfrak M) = \ind_P ^G ( Z (\mathfrak M))$  for the cuspidal  $R$-multisegments $\mathfrak M$.
  If the Speh $R$-representations $Z(\mathfrak M)$  of $G$ have a germ expansion then the $I(\mathfrak M)$ have a germ expansion (Theorem \ref{th:71}) and any 
$\pi\in \Rep_R^{\infty,f}(G)$ has a germ expansion. 

\bigskip We are now ready to prove that the existence of a germ expansion over $R$  is equivalent to  the existence of a germ expansion over $R'$.   Let $\mathfrak M'=(\Delta_{\rho'_1, n_1},\ldots,\Delta_{\rho'_r,n_r})$ be a cuspidal $R'$-multisegment of $GL_n(D)$. For $i=1 \ldots, r$, $\rho'_i$ is an irreducible smooth cuspidal $R'$-representation of $GL_{m_i}(D)$ for a divisor $m_i$ of $n_i$; there exists an unramified  smooth $R'$-character $\chi'_i$ and  an irreducible smooth cuspidal $R'$-representation of $GL_{m_i}(D)$  such that $\rho'_i=\rho_i \chi_i$ and $Z(\Delta_{\rho'_i, n_i})=\chi'_i  Z(\Delta_{\rho_i, n_i})$.  Let $\mathfrak M=(\Delta_{\rho_1, n_1},\ldots,\Delta_{\rho_r,n_r})$ and $\chi'$ the unramified $R'$-character of $M_{n_1, \ldots, n_r}$ corresponding to the $\chi'_i$. Then $Z(\mathfrak M')=\chi' Z(\mathfrak M)$. The Speh $R'$-representation
$Z(\mathfrak M')$ has a germ expansion if and only if the Speh $R$-representation
$Z(\mathfrak M)$ has a germ expansion.
    
  \subsection{} {\bf    $R $ not necessarily algebraically closed} Let $R$ be a field of characteristic different from $p$. 
 We prove that  there is a germ expansion  over  $R$ when there   is a germ expansion over an  algebraic closure $R^{ac}$ of $R$, using the following properties  of the scalar extension  from $R$ to $R^{ac}$ \cite{HV19}:

 For $\pi\in \Rep_R^\infty(G)$ irreducible, the $R^{ac}$-representaton $R^{ac}\otimes_R \pi$ has finite length because $\pi$ is admissible as the characteristic of $R$ is different from $p$. Assume that  there is a map $c: \mathfrak P(n)\to \mathbb Z$ such that  $R^{ac}\otimes_R \pi=  R^{ac}\otimes_R(\sum_\lambda c(\lambda) \pi_{P_\lambda}$ on an open compact subgroup $K$ of $G$. 
The scalar extension  
 $\Gr_R^{\infty} (K)\to \Gr_{R^{ac}}^{\infty} (K)$ from $R$ to $R^{ac}$ is injective.

 Therefore $\pi =\sum_\lambda c(\lambda) \pi_{P_\lambda}$ on $K$. The representation  $\pi$ has a germ expansion with the same map $c_\pi = c_{R^{ac}\otimes_R \pi}=c$.  
The above properties of the scalar extension from $R$ to $R^{ac}$ imply:

 For any irreducible subquotient $\pi'$ of $R^{ac}\otimes_R \pi$, we have 
 \begin{equation}c_\pi = \ell_\pi c_{\pi'} \ \ \text{where $\ell_\pi$ is the length of }  R^{ac}\otimes_R \pi.  
 \end{equation}
Therefore  $c_\pi$ and $c_{\pi'}$ have the same support. As   $c_{\pi'}(\lambda)>0$  when $\lambda$ is minimal in  the support of  $c_{\pi'}$ (Theorem \ref{WhR}), $c_{\pi}(\lambda)>0$.
 This ends the proof of   Theorem \ref{1.1}.

 \subsection{} {\bf The  Jacquet-Langlands correspondence}

The  classical Jacquet-Langlands correspondence  $JL$ between  essentially square integrable representations on both sides, is compatible with character twists and equivariant
under the action of $\Aut(\mathbb C)$.  
Transported 
to $\mathbb Q_\ell^{ac} $ \footnote{ (for the root of $q$ in $\mathbb Q_\ell^{ac} $ image of $\sqrt q\in \mathbb C$ via the isomorphism)}, 
  $$  JL :\Irr^2_{\mathbb Q_\ell^{ac} }(G)\to \Irr^2_{\mathbb Q_\ell^{ac} } (GL_{dn}(F))$$
 preserves the
property of being integral, and two  integrals representations of $G$ are
congruent modulo $\ell$  if and only if their images under $JL$  are
congruent modulo $\ell$ (\cite{MS17} Theorem 1.1). Once a square root of $q=p^f$ in 
$\mathbb Q_\ell^{ac}$ has been chosen  when $f$ is odd to normalize parabolic induction, the Jacquet-Langlands  correspondence $LJ$  transported to the Grothendieck groups
of $\mathbb Q_\ell^{ac}$-representations
 does reduce modulo $\ell$ thus yielding a  map  
for $\mathbb F_\ell^{ac}$-representations (\cite{MS17} Theorem 1.16)
$$LJ: \Gr_{\mathbb F_\ell^{ac}}^{\infty}(GL_{dn}(F))\to \Gr_{\mathbb F_\ell^{ac}}^{\infty}(G).$$
By our argument of reduction modulo $\ell $ in \S \ref{redell} we see that Theorem \ref{tJL} is
valid   for $\mathbb F_\ell^{ac}$-representations.  When $R$ is an algebraically closed field of characteristic different from $p$, 
the reasoning of \S \ref{RR'}  then gives a map 
$$LJ: \Gr_{R}^{\infty}(GL_{dn}(F))\to \Gr_{R}^{\infty}(G)$$
satisfying
Theorem \ref{tJL}  for $R$-representations.
    \begin{theorem} \label{tJLR} (Theorem \ref{tJL}). When $R$ is an algebraically closed field of characteristic different from $p$, for $\nu \in \Gr_{R}^{\infty}(GL_{dn}(F))$  and $ \lambda \in \mathfrak P (n)$, 
  we have
   $(-1)^nc_{LJ (\nu)}( \lambda)=  (-1)^{dn}c_{\nu}(d\lambda)$.
   \end{theorem}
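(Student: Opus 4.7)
The plan is to descend from the known case $R=\mathbb{C}$ (Theorem \ref{tJL}) along the chain $\mathbb{C} \rightsquigarrow \mathbb{Q}_\ell^{ac} \rightsquigarrow \mathbb{F}_\ell^{ac} \rightsquigarrow R$, using at each step the same transfer machinery that established Theorem \ref{1.1} in \S \ref{s:10}. First I would pick, for any prime $\ell\neq p$, a field isomorphism $j:\mathbb{C}\to \mathbb{Q}_\ell^{ac}$ sending the chosen $\sqrt{q}$ on one side to the chosen $\sqrt{q}$ on the other. Since $LJ_\mathbb{C}$ is built from the classical Jacquet--Langlands correspondence (which is $\Aut(\mathbb{C})$-equivariant), character twists, and normalized parabolic induction, and since the representations $\pi_{P_\lambda}$ are defined over $\mathbb{Z}$ and the germ expansion coefficients are invariant under field isomorphisms (see \S \ref{sce}), Theorem \ref{tJL} transfers verbatim to $R=\mathbb{Q}_\ell^{ac}$.

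The second step is reduction modulo $\ell$. By Minguez--S\'echerre (\cite{MS17} Theorem 1.16) the morphism $LJ_{\mathbb{Q}_\ell^{ac}}$ preserves integrality and descends through the reduction map $r_\ell$ to a morphism $LJ_{\mathbb{F}_\ell^{ac}}$ of Grothendieck groups. The basis representations $\pi_{P_\mu}$ carry canonical integral structures whose reductions are the corresponding $\mathbb{F}_\ell^{ac}$-representations, and by Lemma \ref{cred} the germ expansion coefficients depend only on $r_\ell$. Since $\Gr_{\mathbb{F}_\ell^{ac}}^\infty(GL_{dn}(F))$ is spanned by reductions of integral $\mathbb{Q}_\ell^{ac}$-representations (\S \ref{redell}), writing $\nu=r_\ell(\tilde\nu)$ for an integral lift yields $c_\nu(d\lambda)=c_{\tilde\nu}(d\lambda)$ and similarly $c_{LJ(\nu)}(\lambda)=c_{LJ(\tilde\nu)}(\lambda)$, so the identity over $\mathbb{Q}_\ell^{ac}$ forces the identity over $\mathbb{F}_\ell^{ac}$.

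For the last step, let $R$ be an arbitrary algebraically closed field of characteristic different from $p$. Following \S \ref{RR'}, the group $\Gr_R^\infty(GL_{dn}(F))$ is generated by classes $[I(\mathfrak{M})]$ of parabolic inductions of Speh $R$-representations, and by Lemma \ref{twistS} each Speh representation $Z(\mathfrak{M})$ is the twist by an unramified character of one scalar-extended from a subfield $R_0\subset R$ isomorphic to $\mathbb{Q}_\ell^{ac}$ (if $\charf R=0$) or to $\mathbb{F}_\ell^{ac}$ (if $\charf R=\ell\neq p$). Since $LJ$ commutes with character twists (formula \eqref{eqX}), with parabolic induction (formula \eqref{LJpi}), and with scalar extension of cuspidal data, and since scalar extension preserves the germ expansion coefficients and is injective on Grothendieck groups (\S \ref{RR'}), the identity extends from $R_0$ to $R$. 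This simultaneously defines $LJ_R$ on generators and verifies the identity $(-1)^nc_{LJ(\nu)}(\lambda)=(-1)^{dn}c_\nu(d\lambda)$.

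The main obstacle I anticipate is the coherence of $LJ_R$: a given Speh $R$-representation may be realized as a twist of several base-field Speh representations (with different $R_0$, different cuspidal supports, different unramified twists), and one must check that the prescriptions for $LJ_R$ agree. This is handled by the injectivity of scalar extension on $\Gr^\infty$ combined with the compatibility of $LJ$ with character twists and parabolic induction already secured at the $\mathbb{Q}_\ell^{ac}$ and $\mathbb{F}_\ell^{ac}$ stages: any two prescriptions agree after scalar extension to some algebraically closed field where the identity is known, hence agree before extension. Once well-definedness is settled, verifying the numerical identity on a generating family of classes suffices by linearity.
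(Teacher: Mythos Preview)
Your proposal follows the same three-step descent as the paper (\,$\mathbb{C}\rightsquigarrow\mathbb{Q}_\ell^{ac}\rightsquigarrow\mathbb{F}_\ell^{ac}\rightsquigarrow R$\,), invoking the same ingredients: $\Aut(\mathbb{C})$-equivariance of the classical correspondence for the first step, Minguez--S\'echerre's reduction theorem \cite{MS17} together with Lemma~\ref{cred} for the second, and the Speh-representation/unramified-twist argument of \S\ref{RR'} for the third.

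One small slip in the last step: in characteristic~$0$ you cannot in general find $R_0\subset R$ isomorphic to $\mathbb{Q}_\ell^{ac}$ (take $R=\overline{\mathbb{Q}}$, which is countable). The correct base field is the algebraic closure of the prime subfield, namely $\overline{\mathbb{Q}}$ in characteristic~$0$ and $\mathbb{F}_\ell^{ac}$ in characteristic~$\ell$; the result over $\overline{\mathbb{Q}}$ then follows from the result over $\mathbb{C}$ by applying the two-way equivalence of \S\ref{RR'} to the extension $\overline{\mathbb{Q}}\subset\mathbb{C}$. With this correction your argument is complete and matches the paper's.
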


\begin{remark} When $D\neq F$, there are cuspidal complex representations of $GL_n(D)$  that are   isomorphic to their  complex conjugate, and  not   the scalar extension of a real representation. So the Jacquet-Langlands correspondence does not descend to an arbitrary fied $R$.

A  counter-example occurs already  for $D^*$ and $D$ is a quaternion field over $F$ with $q\equiv 3  \mod  4$. Take a regular complex character $\chi$ of $k_D^*$  of order $4$, seen as a character of $O_D^*$ and extended by $-1$ on a uniformizer $p_F$ of $F$. The induced representation $\ind _{F^* O_D^*}^{D^* } \chi$  has dimension $2$ and its image is the quaternion group of order $8$ which is not defined over $\mathbb R$.  The  irreducible representation $\pi^0=JL(\ind _{F^* O_D^*}^{D^* } \chi)$ of  $GL_2(F)$ is cuspidal of level $0$ and can be explicited. For example for $F=\mathbb Q_3$, the irreducible cuspidal representation $\sigma^0$ of $ GL_2(\mathbb F_3)$ corresponding to $\pi^0$   has dimension $2$ and is defined over $\mathbb R$. As the central character of $\pi^0$ is trivial on $O_F^*$,
$\sigma^0$    factorizes by $ PGL_2(\mathbb F_3)=S_4 $ which has all its irreducible representations defined over $\mathbb R$ and even over $\mathbb Q$. \end{remark}

 \section{Invariant vectors  by Moy-Prasad subgroups}\label{11} 
 We prove in this section Theorem \ref{2.2}.
  Let $R$ be a field, $P$   a parabolic subgroup of $G$ of Levi $M$ and $K$ an  open compact subgroup of $G$.
The positive integer 
$$\dim _R (\pi_P)^K= |P\backslash G/K|$$
  depends only on  $[\pi_P]$, hence only on the conjugacy class of $M$ and of $K$.  We can suppose that $P=P_\lambda$ for $\lambda\in \mathfrak P(n)$ and $K\subset K_0$. 
   We have $G=P_\lambda K_0$ and  $P_\lambda \backslash G/K \simeq (P_\lambda \cap K_0) \backslash K_0/K $. 
   
   \begin{example}
We have $(P_\lambda \cap K_0)\backslash K_0/1+ M_n(P_D) \simeq  P_\lambda (k_D)\backslash GL_n(k_D)$ where $k_D=O_D/P_D$ is the residue field of $D$, $q_D$ its cardinality. We deduce
$$|P_\lambda \backslash G/1+ M_n(P_D)|=[n!]_{q_D}/\prod_i [\lambda_i!]_{q_D},$$
where $[n!]_q=\prod_{m=1}^n [m]_q, \ [m]_q= (q^m-1)/(q-1)$ 
 (\cite{Su22} Lemma 1.13).
\end{example}
      
\begin{proposition}Let $G_{x,r}$ denote the  a Moy-Prasad subgroup of $G$ fixing an element $x$ of the building of the adjoint group $\mathcal {BT}$ of $G$,  and $r$ is a positive real number, and $j\in\mathbb N$. We have
\begin{equation}\label{index}     |P \backslash G/ G_{x,r+j/d}| =   |P \backslash G/G_{x,r} | \, q^{d \, d_\lambda j }.
\end{equation}\end{proposition}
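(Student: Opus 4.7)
The plan is to compare the two double-coset sets via the natural surjection
$$\varphi : P\backslash G/G_{x,r+j/d} \,\twoheadrightarrow\, P\backslash G/G_{x,r},$$
and to show that every fibre of $\varphi$ has exactly $q^{d\, d_\lambda\, j}$ elements; summing over the $|P\backslash G/G_{x,r}|$ fibres then gives the stated equality. Since the Moy--Prasad subgroup $G_{x,r+j/d}$ is normal in $G_{x,r}$ (a characteristic piece of the filtration of the stabiliser $G_x$), an orbit--stabiliser computation for the action of $G_{x,r}$ on $P\backslash G$ identifies the fibre of $\varphi$ over $PgG_{x,r}$ with the coset space $G_{x,r}/(g^{-1}Pg\cap G_{x,r})\cdot G_{x,r+j/d}$. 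Everything thus reduces to
$$[G_{x,r} : (g^{-1}Pg\cap G_{x,r})\cdot G_{x,r+j/d}] = q^{d\, d_\lambda\, j} \qquad (\ast)$$
for every $g\in G$.

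To prove $(\ast)$ I would invoke Iwahori factorisation. Conjugation by any $h\in G$ sends $G_{x,s}$ to $G_{h\cdot x,s}$ and preserves subgroup indices, so after replacing $x$ by a $G$-translate I may assume $g=1$ and that $x$ lies in the apartment of a maximal $F$-split torus contained in the block-diagonal Levi $M_\lambda$. Under this hypothesis, standard Moy--Prasad theory for $GL_n(D)$ gives the Iwahori factorisation $G_{x,s}=(G_{x,s}\cap N_\lambda^-)(G_{x,s}\cap M_\lambda)(G_{x,s}\cap N_\lambda)$ for every $s>0$; combined with the identity $N_\lambda^-\cap P_\lambda=\{1\}$ inside $G$, a direct argument with the unique factorisation $G_{x,r}=(G_{x,r}\cap N_\lambda^-)\cdot(G_{x,r}\cap P_\lambda)$ produces a bijection
$$(G_{x,r}\cap N_\lambda^-)/(G_{x,r+j/d}\cap N_\lambda^-) \;\;\xrightarrow{\,\sim\,}\;\; G_{x,r}/(G_{x,r}\cap P_\lambda)\cdot G_{x,r+j/d}.$$

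Finally I would count the left-hand side. The unipotent radical $N_\lambda^-$ has $F$-dimension $d^2 d_\lambda$ by \eqref{dind}--\eqref{dlambda}. Because $p_F O_D = P_D^d$, the $F$-valuation on $D$ takes values in $\tfrac1d\mathbb{Z}$, so the Moy--Prasad filtration on $N_\lambda^-$ jumps exactly at arguments in $\tfrac1d\mathbb{Z}$, and each graded piece between $s$ and $s+1/d$ is an $\mathbb{F}_q$-vector space modelled on $\mathfrak n_\lambda^-(k_D)$, hence of $\mathbb{F}_q$-dimension $\dim_F N_\lambda^-/d = d\, d_\lambda$. Iterating over the $j$ successive $1/d$-jumps from $r$ to $r+j/d$ gives cardinality $q^{d\, d_\lambda\, j}$, proving $(\ast)$ and therefore the proposition.

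The main obstacle is setting up the Iwahori factorisation uniformly for an arbitrary conjugate $g^{-1}P_\lambda g$: one has to verify that the conjugation trick $x\mapsto h\cdot x$ really reduces the index on the left of $(\ast)$ to the case $g=1$ with the translated base-point in a compatible apartment, and that the Moy--Prasad graded pieces along $N_\lambda^-$ are exactly $\mathbb{F}_q^{d\, d_\lambda}$ rather than some other power. Both are standard inside the Moy--Prasad formalism for $GL_n(D)$, but it is here that the normalisation $p_F O_D = P_D^d$ intervenes to produce the specific exponent $d\, d_\lambda j$ in the final count.
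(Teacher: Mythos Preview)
Your approach is correct and shares the same opening move as the paper: both identify the fibre of $P\backslash G/G_{x,r+j/d}\to P\backslash G/G_{x,r}$ over $PgG_{x,r}$ with a coset space whose cardinality is $\dfrac{[G_{x,r}:G_{x,r+j/d}]}{[G_{x,r}\cap g^{-1}Pg:G_{x,r+j/d}\cap g^{-1}Pg]}$. The difference lies in how this ratio is evaluated.

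The paper bypasses Iwahori factorisation entirely. Using the Broussous--Lemaire description of the Moy--Prasad lattices via lattice functions $\Phi$, it notes that $\mathfrak g_{\Phi,r+1/d}=P_D\,\mathfrak g_{\Phi,r}$, and hence $\mathfrak g_{\Phi,r+1/d}\cap W=P_D\,(\mathfrak g_{\Phi,r}\cap W)$ for \emph{any} sub-$D$-vector space $W\subset M_n(D)$. Taking $W$ to be the Lie algebra of $g^{-1}Pg$ (a $D$-subspace of $D$-dimension $n^2-d_\lambda$, for every $g$) gives at once $[G_{x,r}\cap g^{-1}Pg:G_{x,r+1/d}\cap g^{-1}Pg]=q^{d(n^2-d_\lambda)}$; dividing into $[G_{x,r}:G_{x,r+1/d}]=q^{dn^2}$ yields $q^{dd_\lambda}$. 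No apartment hypothesis on $x$, no reduction to $g=1$.

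Your route through Iwahori factorisation also reaches the goal, but the reduction step needs one more ingredient you glossed over. Conjugating by $g$ turns the parabolic into $P_\lambda$ and the base-point into $gx$, which has no reason to lie in an apartment attached to a torus inside $M_\lambda$. To achieve that you must further conjugate by an element of $P_\lambda$, using the building-theoretic fact $P_\lambda\cdot\mathcal A_0=\mathcal{BT}$ (a consequence of $U\cdot\mathcal A_0=\mathcal{BT}$ for the unipotent radical $U$ of $B\subset P_\lambda$); this preserves $P_\lambda$ while moving the point into the standard apartment. With that addition your argument is complete. What your approach buys is that it stays within the general Moy--Prasad/Bruhat--Tits formalism and would adapt to other groups; the paper's argument is shorter but exploits the concrete lattice-function model specific to $GL_n(D)$.
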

 
When $K'$ is a normal  open subgroup of $K$,  
    \begin{equation*}\label{lambdaK}  |P \backslash G/K'| = \sum   _{g\in P \backslash G/K} |P \backslash P g K/K'| , \ \   |P \backslash P g K/K'|=\frac{ [K:K']}{ [(K\cap g^{-1} P  g): (K'\cap g^{-1} P  g)]} .    \end{equation*} 
The group  $G_{x,r+j/d}$  is normal in $G_{x,r}$, and   \eqref{index} follows from  :  
 
\begin{proposition}\label{xrP}  We have $[G_{x,r}\cap  P : G_{x,r+1/d}\cap  P ]=  q^{d\,(n^2-d_\lambda)} $.\end{proposition}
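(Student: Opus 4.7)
The plan rests on identifying the Moy-Prasad $1/d$-step with multiplication by a uniformizer $p_D$ of $D$, combined with the fact that $\mathfrak p_\lambda$ is an $O_D$-stable $F$-subspace of $\mathfrak g$. First I would invoke the hereditary-order description of the Moy-Prasad filtration for $GL_n(D)$: to each $x\in\mathcal{BT}$ is attached a hereditary $O_D$-order $\mathfrak A$ in $M_n(D)$, with Jacobson radical $\mathfrak J$ of $D$-period $e$ satisfying $\mathfrak J^e=p_D\mathfrak A$, so that $\mathfrak J^{ed}=p_F\mathfrak A$. For $r>0$ one has $\mathfrak g_{x,r}=\mathfrak J^{\lceil red\rceil}$ and $G_{x,r}=1+\mathfrak g_{x,r}$; in particular
\begin{equation*}
\mathfrak g_{x,r+1/d}=\mathfrak J^{\lceil red\rceil+e}=p_D\,\mathfrak g_{x,r},
\end{equation*}
so the minimal Moy-Prasad step of length $1/d$ is literally multiplication by $p_D$.

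Next, I would observe that $\mathfrak p_\lambda$ is stable under the scalar action of $D$ on $M_n(D)$, hence both $p_D$- and $p_D^{-1}$-stable inside $\mathfrak g$, and that as a right $D$-module $\mathfrak p_\lambda$ is free of rank $n+\sum_i\lambda_i(\lambda_i-1)+d_\lambda=n^2-d_\lambda$. The stability gives $\mathfrak p_\lambda\cap(p_D\,\mathfrak g_{x,r})=p_D(\mathfrak p_\lambda\cap\mathfrak g_{x,r})$, so setting $L=\mathfrak p_\lambda\cap\mathfrak g_{x,r}$ one obtains $\mathfrak p_\lambda\cap\mathfrak g_{x,r+1/d}=p_DL$. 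Being an $O_F$-lattice in $\mathfrak p_\lambda$ which is $O_D$-stable, $L$ is an $O_D$-lattice of maximal rank $n^2-d_\lambda$, hence free over $O_D$, so $L/p_DL$ is a $k_D$-vector space of dimension $n^2-d_\lambda$, of $\mathbb F_q$-dimension $d(n^2-d_\lambda)$.

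Finally, the bijection $X\mapsto 1+X$ maps $L$ bijectively onto $P\cap G_{x,r}=1+L$ and $p_DL$ bijectively onto $P\cap G_{x,r+1/d}=1+p_DL$, so
\begin{equation*}
[G_{x,r}\cap P:G_{x,r+1/d}\cap P]=\frac{|1+L|}{|1+p_DL|}=|L/p_DL|=q^{d(n^2-d_\lambda)}.
\end{equation*}
The main obstacle is the first step: the invocation of the hereditary-order description of the Moy-Prasad filtration on $GL_n(D)$ at an arbitrary point of $\mathcal{BT}$, together with the identity $\mathfrak g_{x,r+1/d}=p_D\mathfrak g_{x,r}$. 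This is classical in the lattice-chain theory of central simple algebras, but the precise normalization should be extracted carefully so that the step $1/d$ corresponds exactly to a shift by $e$ in the power of $\mathfrak J$ at every point $x$ of the building.
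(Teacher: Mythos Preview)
Your overall strategy coincides with the paper's: establish the identity $\mathfrak g_{x,r+1/d}=p_D\,\mathfrak g_{x,r}$, intersect with the $D$-subspace $\mathfrak p$, and count. From that point on your argument and the paper's are identical.

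The gap is exactly where you yourself suspect it. The formula $\mathfrak g_{x,r}=\mathfrak J^{\lceil red\rceil}$ is correct only when $x$ is the barycentre of a facet of $\mathcal{BT}$. For a general point $x$ the hereditary order $\mathfrak A$ attached to the facet containing $x$ is the same as at the barycentre, but the Moy--Prasad jumps occur at values of $r$ dictated by the barycentric coordinates of $x$, not at the multiples of $1/(ed)$. The paper's $GL_2(D)$ section makes this explicit: for $x_\alpha=\alpha x_0+(1-\alpha)x_1$ with $0<\alpha<1/2$ one has $\mathfrak g_{x_\alpha,r}=\mathfrak j_{1/2}$ for $0<dr\le\alpha$, then $p_D M_2(O_D)$ for $\alpha<dr\le 1-\alpha$, then $p_D\mathfrak j$ for $1-\alpha<dr\le 1$; your formula with $e=2$ would instead give jumps at $dr=1/2$. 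So your intermediate description of $\mathfrak g_{x,r}$ is false at generic points, even though the conclusion $\mathfrak g_{x,r+1/d}=p_D\,\mathfrak g_{x,r}$ you draw from it happens to be true.

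The paper fixes this by replacing hereditary orders with lattice functions $\Phi:\mathbb R\to\{O_D\text{-lattices in }D^n\}$ in the sense of Broussous--Lemaire, which parametrise \emph{all} points of $\mathcal{BT}$. The built-in periodicity $\Phi(s+1/d)=P_D\,\Phi(s)$ immediately yields $\mathfrak g_{\Phi,r+1/d}=P_D\,\mathfrak g_{\Phi,r}$ at every point, and then $\mathfrak g_{\Phi,r+1/d}\cap W=P_D\,(\mathfrak g_{\Phi,r}\cap W)$ for any $D$-subspace $W\subset M_n(D)$, in particular $W=\mathfrak p$. That is the ``precise normalization'' you were looking for; once you substitute it for your hereditary-order step, your proof becomes the paper's.
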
 
Note that the index is the same for all  $(x,r)$.  The $D$-dimension of the Lie algebra $\mathfrak p$ of $P$ is  $n^2-d_\lambda$ where $\lambda \in \mathfrak P(n)$ is the partition such that 
 $P$  is  associated to   $P_\lambda$.

\begin{example} 
When $P=G$, then  $\lambda= (n) , d_{(n)}=0$,  $ [G_{x,r}:G_{x,r+ 1/d}]=  q^{d\,n^2}$.

When $P=B$, then $\lambda=(1,\ldots, 1), d_{(1,\ldots, 1)}=n(n-1)/2$, $ [G_{x,r}\cap B:G_{x,r+ 1/d}\cap B]= q^{d\, (n (n+1)/2)} $.
\end{example}

 \begin{proof}  It is more convenient  to use lattice functions rather than points in the Bruhat-Tits building  $\mathcal{BT}$. For that we follow \cite{Broussous-Lemaire02}  denoted here by  [BL]. Recall that a lattice function is a map $\Phi$  from
$\mathbb R$  to $O_D$-lattices in $D^n$ satisfying the conditions of ([BL] Definition 2.1); in 
particular 
\begin{equation}\label{Lambda*} \Phi (s+1/d)=P_D\, \Phi(s) \ \ \text{for any} \  s \in \mathbb R.
\end{equation}
The group $\mathbb R$  acts on lattice functions by translations, and to a lattice function is associated a
point in $\mathcal{BT}$. That point is the same for a translate, and one gets in that way
a $G$-equivariant bijection from the set of lattice functions up to translation onto $\mathcal{BT}$.
For any lattice function  $\Phi$  and  any $r \in \mathbb R$, one defines a lattice  in  $M_n(D)$
$$ \mathfrak g_{\Phi,r}=\{ A\in M_n(D) \ | \  A( \Phi(s))=\Phi(r+s)  \ \ \text{for any} \  s \in \mathbb R\}.
$$
 In their introduction [BL] indicate that
$\mathfrak g_{\Phi,r}=\mathfrak g_{x,r}$ where $x\in \mathcal{BT}$ corresponds to $\Phi$ and $\mathfrak g_{x,r}$ is the lattice in $M_n(D)$ defined
by Moy and  Prasad. They also say that the subgroup 
$G_{x,r}$ for $r\geq 0$, of $G$ defined by Moy and  Prasad satisfies:
$$G_{x,0}=(\mathfrak g_{\Phi,0})^*, \ \ \ G_{x,r}=1+\mathfrak g_{\Phi,r}   \ \ \text{if} \   r>0.$$
They refer to their Appendix A, written by B.Lemaire; the relevant comments are in the lines before their
Proposition A.3.6.   

An immediate consequence of condition \eqref{Lambda*} is that $ \mathfrak g_{\Phi,r+1/d}=P_D\, \mathfrak g_{\Phi,r}$. That implies in particular
that 
$$[\mathfrak g_{\Phi,r}:  \mathfrak g_{\Phi,r+1/d}]= q^{d\, n^2}   \ \ \text{for any} \   r>0.$$ More generally, if $W$ is a sub-$D$-vector space of $M_n(D)$,
$ \mathfrak g_{\Phi,r+1/d}\cap W=P_D\, ( \mathfrak g_{\Phi,r} \cap W)$. Applying that to $\mathfrak p $,
we get  
$$[  \mathfrak g_{\Phi,r }\cap \mathfrak p :   \mathfrak g_{\Phi,r+1/d } \cap \mathfrak p ]= q^{d\, dim_D(\mathfrak p)}   \ \ \text{for any} \   r>0.$$
This  proves the proposition because  $[G_{x,r}\cap  P : G_{x,r+1/d}\cap  P ]=  [  \mathfrak g_{\Phi,r }\cap \mathfrak p :   \mathfrak g_{\Phi,r+1/d } \cap \mathfrak p ] $ {\color{blue} for   $r>0$} and $\dim_D(\mathfrak p)= n^2-d_\lambda$.
 \end{proof}
 
 We deduce:
\begin{corollary} Let $P$ be  a parabolic subgroup of $G$ associated to $P_\lambda$ for $\lambda\in \mathfrak P(n)$, and  $G_{x,r+j/d}$ a  Moy-Prasad subgroup for $x\in \mathcal{BT}, r\in \mathbb R, r>0$ and $j\in \mathbb N$. We have for $g\in G$, 
\begin{align}\label{cxrP} |P \backslash P g G_{x,r}/G_{x,r+1/d}| &= \frac{  [G_{x,r}:G_{x,r+ 1/d}]}{ [(G_{x,r}\cap   P  ): (G_{x,r+1/d}\cap  P  )]} = q^{d\, d_\lambda}.
\end{align}
\end{corollary}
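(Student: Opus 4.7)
The plan is to reduce the statement directly to two applications of Proposition~\ref{xrP}. The corollary contains two equalities, and the right-hand equality is the substantive content; the middle expression is a book-keeping identity that was already used in the proof of Proposition~\ref{xrP}.

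First I would establish the middle equality. Since $r>0$, the Moy--Prasad subgroup $G_{x,r+1/d}$ is an open normal subgroup of $G_{x,r}$ (both are pro-$p$ groups of the form $1+\mathfrak g_{\Phi,\cdot}$ with the inclusion of lattices giving normality). For such a pair $K'\trianglelefteq K$ and any $g\in G$, the standard double-coset counting formula recorded just before Proposition~\ref{xrP},
\begin{equation*}
|P\backslash PgK/K'|=\frac{[K:K']}{[(K\cap g^{-1}Pg):(K'\cap g^{-1}Pg)]},
\end{equation*}
applied with $K=G_{x,r}$ and $K'=G_{x,r+1/d}$, gives the first equality of (10.1) immediately.

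Next I would evaluate the numerator by applying Proposition~\ref{xrP} to the trivial parabolic $P=G$, which is associated to the partition $(n)$ with $d_{(n)}=0$: this yields $[G_{x,r}:G_{x,r+1/d}]=q^{dn^2}$. For the denominator, observe that $g^{-1}Pg$ is a $G$-conjugate of $P$ and thus lies in the same association class; since that class is determined by the partition $\lambda$, Proposition~\ref{xrP} applies verbatim to $g^{-1}Pg$ in place of $P$ (the proof of that proposition was intrinsic to the parabolic, depending only on $\dim_D \mathfrak p = n^2-d_\lambda$ via $\mathfrak g_{\Phi,r+1/d}\cap W = P_D\,(\mathfrak g_{\Phi,r}\cap W)$ for $W$ the Lie algebra). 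Hence the denominator equals $q^{d(n^2-d_\lambda)}$.

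Taking the ratio, the powers of $q$ combine as $q^{dn^2-d(n^2-d_\lambda)}=q^{d\,d_\lambda}$, which is the desired right-hand equality. There is no real obstacle here; the only point worth flagging is the invariance of association classes under $G$-conjugation, which ensures that Proposition~\ref{xrP} can be applied to $g^{-1}Pg$ with the same partition $\lambda$ and hence the same exponent $n^2-d_\lambda$.
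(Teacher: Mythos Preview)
Your proof is correct and matches the paper's intended deduction: the paper states the corollary as an immediate consequence of Proposition~\ref{xrP} and the double-coset formula recorded just above it, without spelling out the details. Your observation that the formula produces $g^{-1}Pg$ in the denominator (rather than $P$ as written in the statement) and that Proposition~\ref{xrP} applies equally to any parabolic in the association class of $P_\lambda$ is the one step worth making explicit, and you handle it correctly.
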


Clearly,  \eqref{index} follows from \eqref{cxrP}.

\begin{example} 1) For a vertex  $x $ of $\mathcal{ BT}$, the Moy-Prasad group $G_{x,0}$ is conjugate to $K_0=GL_n(O_D)$ and  $G_{x, r}$ is conjugate to $K_1=1+p_D M_n(O_D)$ for  $0<r\leq 1/d$.    
Hence $$|P_\lambda \backslash G/G_{x,r} | = \begin{cases} |P_\lambda \backslash G/ K_0|=1 \ \text{ if } \ r=0,\\
 |P_\lambda \backslash G/  K_1|= \frac{[n]_{q^d} ! }{ \prod_k [\lambda_k]_{q^d}!}\ \text{ if } \ 0<r\leq 1/d.
 \end{cases}
 $$
 where $[n]_q !=\frac{q-1}{q-1}\ldots\frac{q^n-1}{q-1}$. Indeed    $ |P_\lambda \backslash G/ K_0|=1$ because $G=P_\lambda K_0$, and   
  $ |P_\lambda \backslash G/ K_1|=[GL_n(\mathbb F_{q^d}):P_\lambda (\mathbb F_{q^d})] $.
  
 2) For  the barycenter $x$ of an   alcove, $G_{x,0}$ is  conjugate to the Iwahori group $I$, inverse image  in $K_0$ of the upper triangular group of $GL_n(k_D)$,   and $G_{x,r}$ is conjugate to  the pro-Iwahori group  $I_{1/d}$, inverse image of the strictly upper triangular group of $GL_n(k_D)$,  for $0<r\leq 1/d$. Write  $\mathfrak J$ for the lattice of $(x_{i,j}) \in M_n(O_D)$ with $x_{i,j}\in P_D$ when $i>j$, and $\mathfrak J_{1/d} $ for   the lattice  of $(x_{i,j}) \in M_n(O_D)$ with $x_{i,j}\in P_D$ when $i\geq j$.   Then,
 $$  I = \mathfrak I^*, \ \   I_{1/d}=1+ \mathfrak J_{1/d}   \ \ \text{for} \ 0<r\leq 1/d.$$
 We have
     $ P_\lambda \backslash G/ I \simeq P_\lambda \backslash G/ I_{1/d} \simeq (S_{\lambda_1}\times \ldots \times S_{\lambda_r})\backslash S_n$  hence
$$|P_\lambda \backslash G/G_{x,r} | = |P_\lambda \backslash G/ I|=  |P_\lambda \backslash G/  I_{1/d}|= \frac{n!}{ \prod_k \lambda_k !}. 
 $$
 \end{example}
   \begin{remark} \label{rxrP}  Proposition \ref{xrP} reduces the computation of
$|P_\lambda \backslash G/G_{x,r} |$ for $r>0$ to the  case   $0<r <1/d$. For $g\in G,x\in \mathcal {BT}, r\geq 0$, we have  $g G_{x,r} g^{-1} = G_{g(x), r}$; this reduces 
the computation of
$|P_\lambda \backslash G/G_{x,r} |$ for $x\in \mathcal {BT}$ to the case where $x$ belongs to the
 the closed alcove  $\mathcal A$ of $\mathcal {BT}$ determined by $B$.  
 \end{remark}
      
Theorem \ref{1.1}  implies for $\pi\in \Rep_R ^{\infty, f}(G)$, 
   \begin{equation}\label{piK} \dim_R\pi^{G_{x,r+j/d}} = \sum_{\lambda\in \mathfrak P(n)} c_\pi(\lambda) \, |P_\lambda \backslash G/G_{x,r+j/d}|. 
  \end{equation}  
and the integer $c_{\pi}(\lambda) $ is positive  if  $d_\lambda =  d(\pi)$ then $\lambda $ is minimal in the support of $ c_\pi$.  Applying    \eqref{index}, we deduce Theorem \ref{2.2}.

\begin{remark}\begin{enumerate}
 
\item The polynomial $P_{\pi, G_{x,r}}(X)$   is  determined by those  where $x$ is in a closed alcove of $\mathcal BT$ 
and  $0<r <1/d$  because $$ P_{\pi, G_{x,r+j/d}}(X)=P_{\pi, G_{x,r}}(q^{dj}X) \ \ \text{for } \ 0<r <1/d, \ j\in \mathbb N.$$
  $$ P_{\pi, G_{x,r}}(X)=P_{\pi, G_{g(x),r}}(X)  \ \ \text{for } \ 0\leq r,  \ g\in G.$$
  
\item For   $\pi\in \Rep_R ^{\infty, f}(G)$, and any Moy-Prasad pro-$p$ group $G_{x,r}$ of $G $
  $$ \dim_{R}\pi^{G_{x,r+j/d}}\  \sim \ a_{\pi, G_{x,r}} \,  q^ {d(\pi) d j}   \ \ \text{when $ j\in \mathbb N$ goes to infinity}.$$
The  integer  $d(\pi)$   can be called the  {\bf Gelfand-Kirillov dimension} of $\pi$.

\end{enumerate}  
\end{remark}    
  \section {$G=GL_2(D)$}\label{n=2}
In this section we assume that $G=GL_2(D)$, $R$ is a field of characteristic different from $p$ except in \S \ref{=p}  where its characteristic is $p$, and we give more details on the polynomial $P_{\pi,K}(X)$ attached to $\pi\in \Rep_R^{\infty, f}(G)$ and   a Moy-Prasad  subgroup $K$.

\subsection{}

The Moy-Prasad open compact subgroups of $G $ are conjugate to the open compact subgroups
$$K_0  \supset I_0 \supset I_{1/2} \supset K_1  \supset I_1 \supset I_{3/2} \supset K_2 \supset I_2 \supset \ldots ,  $$
where $K_0=GL_2(O_D), \  I_0= \mathfrak j^*=\red^{-1} B(k_D) $ an Iwahori subgroup,  $I_{1/2}=1+ \mathfrak j_{1/2}=\red^{-1} U(k_D)$ a pro-$p$ Iwahori subgroup, for $j\in \mathbb N$,  
 $$I_{j+1/2}= 1 + p_D^{j} \mathfrak j_{1/2},  \ \ K_{j+1}=1+ p_D^{j+1} M_2(O_D), \ \   I_{j+1}= 1 + p_D^{j+1}\mathfrak j,$$ 
  where $ \mathfrak j$ is the lattice of $(x_{i,j})\in M_2(O_D) $ with $x_{2,1}\in P_D$,   
 $ \mathfrak j_{1/2}$ is the lattice of $(x_{i,j})\in  \mathfrak j$ with $x_{1,1}, x_{2,2}\in P_D$, and  $\red: GL_2(O_D)\to GL_2(k_D)$ is the reduction modulo $p_D$.  

  The  parahoric subgroups of $G$ are conjugate to $K_0$ and $I_0$. The Moy-Prasad subgroups of $G$ which are pro-$p$ groups are conjugate of $I_{j+1/2}, K_{j+1},  I_{j+1}$ for $j\in \mathbb N$ \footnote{The indices of the preceding section have been multiplied by $d$}.
  
  \bigskip  To justify the preceding assertions, it is convenient to use  of lattice functions $\Phi$ from $\mathbb R$ to in $D\oplus D$, as  in the proof of Proposition \ref{xrP}.
The lattice function $\Phi_0$ with value $L_0=O_D\oplus O_D$ at $0$ and $P_DL_0 $ at $s$ for $0<s<1/d$ gives a vertex $x_0$ in the Bruhat-Tits tree ${\mathcal BT}$ of $G$, and  
 $G_{x_0,0} =\mathfrak g_{\Phi_0, 0}^*$ is the stabilizer $K_0$ of $L_0$, whereas $G_{x_0,r}=1+\mathfrak g_{\Phi_0, r}$ for $r>0$ so that
$G_{x_0,r}=K_{j+1}$  if $dr=j+s$ with $0<s\leq 1$. This gives the groups
$K_j$ in the list and accounts for all Moy-Prasad subgroups associated to the vertices of ${\mathcal BT}$. 

Any point in ${\mathcal BT}$ is sent by $G$ to a point in the segment with ends $x_0$ and the vertex $x_1$  corresponding to $L_1=O_D\oplus P_D$  so it suffices to look at the Moy-Prasad
subgroups $G_{x_\alpha,r}$ when  $x_\alpha$ is a barycenter $\alpha x_0+(1-\alpha)x_1 $ with  $0<\alpha <1$. Since there is an element of $G$ exchanging
$x_0$ and $x_1$, we need only look at $0<\alpha <1/2$  which we now assume.
A lattice function $\Phi_\alpha$ giving $x_\alpha$ takes value $L_0$ at $0$, $L_1$ at $s$ if $0<s\leq \alpha/d$ and $P_DL_0$ if $\alpha/d<s \leq 1/d$.
Then $G_{x_\alpha,0 }$ is the intersection of the stabilizers of $L_0 $ and $L_1$, that is $I_0$.

\noindent For $0<dr\leq \alpha$,   $G_{x_\alpha,r+j/d}=I_{j+1/2}$  for  any $j\in \mathbb N$, as
 $\mathfrak g_{\Phi_\alpha, r} $ is the set of $X\in M_2(D) $ sending $L_0 $ in $L_1$, and $L_1 $ in $P_DL_0$.

\noindent For $\alpha<dr\leq 1-\alpha$ (which cannot happen if $\alpha=1/2$),  $G_{x_?,r+j/d}=K_{j+1}$ for  any $j\in \mathbb N$, as  $\mathfrak g_{\Phi_\alpha, r}  $ is the set of $X\in M_2(D) $ sending $L_0  $ and $L_1$ in $P_DL_0$.

\noindent When $1-\alpha<dr<1 $ we find similarly that $G_{x_\alpha,r+j/d}=I_{j+1}$   for  any $j\in \mathbb N$.

\bigskip

  The indices between two consecutive groups  are 
$$[K:I]=q+1, \ [I : I_{1/2}]=  (q-1)^2, \ [I_{1/2} :K_1]=q, \ [K_1:I_1]=q, \  [I _1: I_{3/2}]=q^2,\ [I_{3/2} :K_2]=q,$$
 and so on.  
 Proposition \ref{xrP}, Corollary \ref{cxrP} and Remark \ref{rxrP} give the integers 
   
 \begin{itemize}
 
\item  $|B\backslash G/K_0|=1$ as $G=BK_0$,

\item $|B\backslash G/I_0|=|B\backslash G/I_{1/2}| =2$ as $G=BI \sqcup BsI=BI_{1/2} \sqcup BsI_{1/2}$, where $s$ is the antidiagonal matrix with coefficients $1$. 

\item $|B\backslash G/K_1|=(q^{2d}-1)(q^{2d}-q^d)/q^d (q^d-1)^2=q^d+1$.

\item $|B\backslash G/I_1|=2q^d$ because $B\backslash G/I_1=B\backslash BI/I_{1} \sqcup  B\backslash BsI/I_{1}$  and 

$B\backslash BI/I_{1} \simeq (B\cap I)\backslash I/I_1 \simeq ((B\cap I)/ (B\cap I)_1) \backslash(I/I_1)$,  

$B\backslash BsI/I_{1} \simeq B^-\backslash G/I_1 \simeq ((B^-\cap I)/ (B^-\cap I)_1) \backslash(I/I_1)$,

$|(I_1\cap B)\backslash (I\cap B)|=|(I_1\cap B ^-)\backslash (I\cap B^-)|=  (q^d-1)^2 q^d$ and $[I:I_1]=(q^d-1)^2 q^{2d}$.
\item $|B\backslash G/I_{j+1/2}|= 2 q^{dj}$.
\item $|B\backslash G/K_{j+1}|= (q^d+1) q^{dj}$.
\item $|B\backslash G/I_{j+1}|= 2  q^{d(j+1)}$.
\end{itemize}  

\subsection{}There are  only two nilpotent orbits $\{0\}$ and $\mathfrak O\neq \{0\}$ corresponding to the partitions $(2)$ and $(1,1)$ of $2$. By the germ expansion 
for $\pi \in \Rep_{R}^{\infty, f}(G)$ (Theorem \ref{1.1}), there exists $a_\pi, b_\pi \in \mathbb Z$ and an integer $j_\pi\geq 0$ such that for any integer $j\geq j_\pi$
 \begin{itemize}
 \item $ \dim _{\mathbb C}\pi^{I_{1/2+j}}=a_\pi + \, 2\, b_\pi  \,q^{dj}   $,
  \item $ \dim _{\mathbb C}\pi^{K_{1+j}}=a_\pi  \, + \, (q^d+1) \, b_\pi  \,  q^{dj} $,
 \item $ \dim _{\mathbb C}\pi^{I_{1+j}}=a_\pi  \, + \, 2  q^d\, b_\pi \, q^{dj}$.
 
 \end{itemize}
\subsection{}  The  maps $\pi\mapsto a_\pi $  and  $\pi \mapsto b_\pi $  are additive  hence  determined by their values on irreducible representations.    For  $\pi\in \Rep_R^\infty(G)$ irreducible,
  \begin{itemize}
   \item   $ a_\pi   =\dim_R \pi ,  \  b_\pi = 0 $ \   if  the  dimension of $\pi$ is finite ( $\dim_R \pi =1$ if $R$ is algebraically closed),
  \item  $ b_\pi  > 0 $ \  if  the  dimension of $\pi$ is infinite.
  \end{itemize}
  The dimension of  $\sigma \in  \Rep_{\mathbb C}^{\infty,f}(T)$  is finite and by Theorem \ref{th:71} for  $\pi=\ind_B^G \sigma$,
\begin{itemize}
   \item  
  $a_\pi=0, \ b_\pi=\dim_{R} \sigma$.
  \end{itemize} 
The $R$-representation  $\ind_B^G 1$ contains the trivial representation $1$ of $G$ and the quotient $\St$ is called the Steinberg representation. By additivity,  $a_1+a_{\St}=a_{\ind_B^G1}, \ b_1+b_{\St}=b_{\ind_B^G1}$ hence 
 \begin{itemize}
   \item  
  $a_{\St}=-1 , \ b_{\St}  =1$.
  \end{itemize} 
  For $g\in G$,  let  $v_D(g)$  be the integer such that $|\nrd(g)|=q^{v_D(g)}$.
  \begin{proposition}  The Steinberg $R$-representation $\St$ of $G$ is reducible if and only if   $\St$  is indecomposable of  length
$2$, with a cuspidal  subrepresentation $c\St$ and the character $(-1)^{v_D(g)}$  as a quotient,  if and only if   $\charf _R=\ell>0$ divides $q^d+1$. 

The representation $\ind_B^G 1$ is indecomposable except when $\charf _R=\ell $ is odd and divides $q^d-1$. \end{proposition}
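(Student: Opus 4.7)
The proof hinges on the endomorphism algebra $A := \End_G(\ind_B^G 1)$. By Frobenius reciprocity for smooth parabolic induction (valid here since $B\backslash G \cong \mathbb P^1(D)$ is compact, so $c\text{-}\ind_B^G = \ind_B^G$) combined with the Bruhat decomposition $G = B \sqcup BwB$ and Mackey's theorem, $A \cong (\ind_B^G 1)^B$ is 2-dimensional over $R$, spanned by the identity and a Hecke operator $T_w$ attached to the non-trivial Weyl element. Reducing to the $K_1$-invariants identifies $(\ind_B^G 1)^{K_1} \cong \ind_{B(k_D)}^{GL_2(k_D)} 1$ with the finite principal series of $GL_2(k_D)$, whose Iwahori--Hecke algebra satisfies the standard quadratic relation $(T_w - q^d)(T_w + 1) = 0$. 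Hence $A \cong R[x]/((x - q^d)(x + 1))$.

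From this I read off the first assertion: the algebra $A$ is semisimple (isomorphic to $R \times R$) iff $q^d + 1 \neq 0$ in $R$, i.e.\ iff $\charf R \nmid q^d + 1$. In the semisimple case, the primitive idempotents yield $\ind_B^G 1 = 1 \oplus \St$ and force $\dim\End_G(\St) = 1$, giving absolute irreducibility of $\St$. In the local case $\charf R = \ell$ divides $q^d + 1$, the algebra is $R[\varepsilon]/\varepsilon^2$, so $\ind_B^G 1$ is indecomposable; moreover the constraint $\dim A = 2$ (together with the fact that only one composition factor of $\ind_B^G 1$ carries $B$-fixed vectors in this local setting) forces $\St$ to be reducible of length $2$. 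To identify the character quotient, I would compute the Jacquet module $(\ind_B^G 1)_U$ via the geometric lemma, obtaining an extension of the trivial character by $\delta_B$ as $T$-representations. A direct valuation check shows $\delta_B(\diag(p_D,1)) = |\nrd(p_D)|_F^d = q^{-d}$, which equals $-1$ in $R$ precisely when $\ell \mid q^d + 1$; comparing with $\epsilon(\diag(p_D, 1)) = -1$ yields $\delta_B = (-1)^{v_D}|_T$ as $R$-valued characters of $T$. This identifies the character quotient of $\St$ as $\epsilon = (-1)^{v_D(g)}$, whose kernel $c\St$ is cuspidal and irreducible (it has no $B$-fixed vectors, by the Hecke eigenspace analysis).

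For the second assertion, the non-splitting of the canonical sequence $0 \to 1 \to \ind_B^G 1 \to \St \to 0$ is controlled by the unique (up to scalar) $G$-equivariant integration pairing $\ind_B^G 1 \to 1$; its restriction to constants is multiplication by $|B(k_D) \backslash GL_2(k_D)| = q^d + 1$. I would combine this with the observation that in the exceptional case $\ell$ odd with $\ell \mid q^d - 1$ the modulus $\delta_B$ becomes trivial in $R$ (so that $\ind_B^G 1$ is self-dual via $(\ind_B^G 1)^\vee = \ind_B^G \delta_B = \ind_B^G 1$), which together with the semisimple Hecke algebra in that case (note $\ell$ odd and $\ell \mid q^d - 1$ implies $\ell \nmid q^d + 1$) produces the claimed decomposition; outside the exception the sequence stays non-split.

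The principal obstacle is the numerical identification $\delta_B = \epsilon|_T$ in positive characteristic and the subsequent verification that $c\St$ is irreducible and cuspidal; both require careful book-keeping of Jacquet modules and Hecke eigenspaces, but reduce to standard computations once the Iwahori--Hecke relation and the valuation identity are established.
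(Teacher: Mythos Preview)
Your Frobenius reciprocity step is wrong, and this breaks the whole argument. You write $A = \End_G(\ind_B^G 1) \cong (\ind_B^G 1)^B$, arguing that since $B\backslash G$ is compact one has $c\text{-}\ind_B^G = \ind_B^G$ and may use the adjunction $\Hom_G(c\text{-}\ind_H^G W, V) = \Hom_H(W, V)$. But that adjunction is valid only when $H$ is \emph{open} in $G$; here $B$ is closed but not open. For parabolic induction the correct adjunction is $\Hom_G(\pi, \ind_B^G \sigma) = \Hom_T(\pi_N, \sigma)$, giving
\[
A \;=\; \Hom_T\bigl((\ind_B^G 1)_N,\, 1\bigr).
\]
The Jacquet module $(\ind_B^G 1)_N$ has composition factors $1$ and $\delta_B$ as $T$-representations, so $A$ is $1$-dimensional whenever $\delta_B \neq 1$ in $R$, i.e.\ whenever $q^{2d} \neq 1$ in $R$. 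In particular $A = R$ in characteristic $0$, and your $2$-dimensional semisimple Hecke algebra simply does not exist there. Your reduction to $K_1$-invariants correctly computes $\End_{GL_2(k_D)}\bigl((\ind_B^G 1)^{K_1}\bigr)$, but the restriction map from $A$ into that $2$-dimensional algebra is not surjective.

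You can see the contradiction directly against the statement you are proving: from your ``semisimple Hecke algebra'' you deduce $\ind_B^G 1 = 1 \oplus \St$ whenever $q^d + 1 \neq 0$ in $R$, hence in characteristic $0$; but the proposition asserts that $\ind_B^G 1$ is indecomposable in characteristic $0$ (indeed, there is no $G$-invariant measure on $B\backslash G$ since $\delta_B \neq 1$, so no $G$-equivariant retraction onto the constant functions). The paper argues quite differently: it restricts $\ind_B^G 1$ to $B$, decomposes it as $1 \oplus \tau$ with $\tau$ supported on the open cell $BsN$, shows directly that $\tau$ is indecomposable of length $2$ with quotient $\delta_B$, so that $\ind_B^G 1$ has length at most $3$; the case analysis then turns on whether $\delta_B$ extends to a character of $G$ (equivalently $q^{2d} = 1$ in $R$) and, when $\delta_B = 1$, on whether the total volume $q^d + 1$ of $B\backslash G$ vanishes in $R$.
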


\begin{proof} This is  proved in \cite{V96} if $D=F$, and follows from \cite{MS14} in general. 
 We indicate how to get the result using  techniques of  \cite{V96}. The restriction of $\ind_B^G(1)$ to $B$  is the direct sum $\ind_B^G1=1\oplus \tau$ of the trivial representation $1$ on the
line of constant functions and of  the representation $\tau$ on the space of functions vanishing at $1$, i.e.
with support in $BsN$, 
 isomorphic to the representation of $B$ by  conjugation on $C_c^\infty(N;R)$.
 Integrating such functions on $N$ against a Haar measure
(that is taking coinvariants) gives that the modulus $\delta_B$ of $B$ is a quotient of $\tau$. Moreover  $\delta_B$ does not split as a subrepresentation of $\tau$  since $\delta_B$ is trivial on $N$ and obviously the restriction of
$\tau$ to $N$ has no trivial subrepresentation. One proves as in (\cite{BH06} 8.2) that the corresponding subrepresentation $\tau^0$ of $B$ is irreducible, 
 so  $\tau$  is indecomposable of length $2$ with quotient $\delta_B$.
 
Thus $\ind_B^G(1)$ has length $\leq 3$, and it can have length $3 $ only if  $ \delta_B$ extends to an  $R$-character $G$. This latter property is equivalent to
$q^{2d}=1$ in $R$ because  $\delta_B$  is the inflation of the character $\nu^d\otimes \nu^{-d}$ of $T$ where $\nu$ is the character $\nu(x)= |\nrd (x)|$ of $D^*$.  If  $\charf _R=0$ or $\charf _R =\ell>0$ not dividing   $q^{2d}-1$, then $\St$ is irreducible. Otherwise, $\delta_B$ extends to the 
  the character $\nu^d$ of $G$ where $\nu(g)=|\nrd (g)|$ for $g\in G$,  the contragredient $ \ind_B^G(\delta_B)=  \nu^d \otimes \ind_B^G 1$ of $\ind_B^G(1)$   has a unique 
one-dimensional subrepresentation   $\nu^d$, which is consequently a quotient of 
$\ind_B^G(1)$.  
If $\ell$ divides $q^d+1$ but not  $q^d-1$,  the character $\nu^d=(-1)^{\val_D}$ is
not trivial,  then $\ind_B^G(1)$ is indecomposable of length $3$ and  $\St_G$ is indecomposable of  length $2$ with quotient $(-1)^{\val_D}$.

 If $\ell$ divides $q^d-1$, $\delta_B$ is trivial and $B\backslash G$ admits a $G$-invariant measure giving  volume $ 0$ to $B\backslash G$ if  $\ell$ divides also $q^d+1$ (which means $\ell=2$) and $1$ otherwise. Integration on $B\backslash G$  implements the duality between $\ind_B^G(1) $ and itself.   The 
integration on $B\backslash G$ is  $0$ on the constant functions if $\ell$  divides $q^d+1$ and the identity otherwise. 
Therefore if  $\ell$  divides $q^d+1$,
the space of constant functions is isotropic,
so its orthogonal has codimension $1$, and again $\ind_B^G(1)$  is indecomposable of length $3$ and $\St$ is indecomposable of length $2$ with quotient the trivial representation. But if $\ell$ does not   divides $q^d+1$,  $\ind_B^G(1)=1 \oplus \St$  and $\St$ is irreducible otherwise it would have a cuspidal
subquotient which would be contained in $\ind_B^G1$  (autodual)  which is impossible by Frobenius.   \end{proof}
 By additivity,
 \begin{itemize}
   \item  
  $a_{c\St}=-2 , \ b_{c\St}=1$.
  \end{itemize}

  When $\mu_{p^\infty} \subset R$, there are  two kinds of Whittaker spaces for $\pi$: the trivial one, dual of the $U$-coinvariants $\pi_U$ of $\pi$, 
and the non-degenerate one, dual of the coinvariants $\pi_{U,\theta}$ of $\pi$ by a non trivial character $\theta$ of $U$. By Theorem \ref{Wh} we have for $\pi$ irreducible 
 \begin{itemize}
   \item  
   $ b_\pi  =  \dim_{R}(\pi_{U,\theta})$,
  \end{itemize}
 This equality is valid   when $\pi$ has finite length because the $\theta$-coinvariant functor is exact.  In particular for $\sigma \in  \Rep_{\mathbb C}^{\infty,f}(T)$ 
    \begin{itemize}
   \item  
   $  \dim_{R}(\ind_B^G \sigma)_{U,\theta} =  \dim_{R}\sigma$
  \end{itemize}
  
 \subsection{}Assume   $R=\mathbb C$ and $\sigma \in  \Rep_{\mathbb C}^{\infty}(T)$  irreducible. The normalized parabolic induction $ \ind_B^G (\delta_B^{1/2}\otimes \sigma) $  of $\sigma$ is reducible if and only if $\sigma = \rho \otimes ( \chi_\rho \otimes \rho) $  where $\rho$ is
 an irreducible representation of $D^*$, and   $\chi_\rho$ the unramified character of $D^*$ giving the cuspidal segment $\Delta_{\rho,2}= \{\rho,   \chi_\rho \otimes \rho \}$ (\cite{LMT16} for a proof which does not use the Jacquet-Langlands correspondence). In this case,  $ \ind_B^G (\delta_B^{1/2}\otimes \sigma) $ is  indecomposable of length $2$, one irreducible subquotient is the Speh representation $Z(\Delta_{\rho,2} )$ and the other subquotient is an
 essentially square integrable representation $L(\Delta_{\rho,2} )$.  

The Speh subrepresentation $Z(\Delta_{\rho,2} )$   is a character if and only if $\dim_{\mathbb C} \rho= 1$. In that case,  $L(\Delta_{\rho,2} )$  is the twist of the Steinberg representation $\St$ by this character.  The twist of $\pi$ by a character does not change the value of the  $a_\pi, b_\pi$. Hence 
 \begin{itemize}
   \item  
  $a_{L(\Delta_{\rho,2} )}=-1 , \ b_{L(\Delta_{\rho,2} )}  =1$ \ if   $\dim_{\mathbb C}\rho =1$,
  \end{itemize}
 by unicity of the Whittaker model as $ b_{L(\Delta_{\rho,2} )}  =\dim _{\mathbb C} (L(\Delta_{\rho,2})_{U,\theta} > 0$.
 
 When $D\neq F$, there are irreducible complex representations $\rho$ of $D^*$ of dimension $>1$. In that case,  the Speh representation $Z(\Delta_{\rho,2} )$ is infinite  dimensional hence is generic.     The  essentially square integrable  representation  $L(\Delta_{\rho,2} )$ is also infinite dimensional hence generic; it
 corresponds by Jacquet-Langlands to an irreducible representation $\pi_{\rho,2}$ of the multiplicative group $D_{2d}^*$ of a central division $F$-algebra  of reduced dimension $2d$.  Recalling  Corollary \ref{cJL},  we have  when  $\dim_{\mathbb C}\rho>1$:
 \begin{itemize}
   \item  $a_{Z(\rho,2)}  = - a_{ L(\rho,2)} =  \dim _{\mathbb C}   \pi_{\rho,2} $, 
   \item    $ b_{Z(\rho,2)} + b_{L(\rho,2)}=\dim _{\mathbb C} (\ind_B^G\sigma)_{U,\theta}= \dim_{\mathbb C} \sigma, $ \ 
   
 \noindent      $b_{Z(\rho,2)}  =\dim _{\mathbb C} Z(\Delta_{\rho,2})_{U,\theta} >0$, \  $b_{L(\rho,2)}  =\dim _{\mathbb C} L(\Delta_{\rho,2})_{U,\theta} >0$. 
    \end{itemize} 
The $T$-stabilizer  of the  non-trivial character  $\theta(u)=\psi \circ \trd (v)$ for $u=1+v$ in $U$, 
$$T_\theta =\{\diag(d,d) \ | \  d\in D^*\},$$
 acts  naturally   on the $\theta$-coinvariants of a representation of $G$.    How does one identify 
  the two  factors  of $(\ind_B^G\sigma)_{U,\theta} =Z(\rho,2)_{U,\theta}  \oplus L(\rho,2)_{U,\theta} $ ? We shall come back to that question in future work.\footnote{After this paper was written, we received a paper of S. Nadimpalli and M. Sheth \cite{NS23} calculating the dimensions of the two factors for certain $\rho$}
 
 \bigskip  When $\pi\in \Rep_{\mathbb C}^\infty (G)$ irreducible  is not isomorphic to a subquotient of $\ind_B^G\sigma$ for $\sigma \in  \Rep_{\mathbb C}^{\infty}(T)$ irreducible,  it is called supercuspidal. Its dimension is infinite, it is essentially square integrable and corresponds  by Jacquet-Langlands to an irreducible representation $\pi_{2}$ of  $D_{2d}^*$. We have for $\pi\in \Rep_{\mathbb C}^\infty (G)$ irreducible supercuspidal  (Corollary \ref{cJL}): 
\begin{itemize}
\item  $a_{\pi }  =  - \dim _{\mathbb C}    \pi_2,  \ \ \ b_\pi    = \dim_{\mathbb C} (\pi)_{U,\theta} >0$.
  \end{itemize} 
  For some supercuspidal representation $\pi$, D. Prasad and A. Raghuram  computed $\dim_{\mathbb C} (\pi)_{U,\theta}$ \cite{PR00}.
When $D=F$, $b_\pi =1$ by the unicity of the non-degenerate Whittaker model. The explicit classification  of the irreducible cuspidal  representations of $GL_2(F)$ or the explicit Jacquet-Langlands correspondence alllows to compute explicitely $a_\pi $.  The normalized level $\ell(\pi)$  of $\pi\in\Rep_{\mathbb C}^\infty (GL_2(F))$ irreducible defined in  (\cite{BH06}  12.6)   is
the minimum of two half-integers: the smallest integer $j $ such that $\pi^{K_{j+1}}\neq 0$  and the smallest
element $j\in 1/2\mathbb Z$  such that  $\pi^{ I_{j+1/2}}\neq 0$.  It is equal to the depth of  $\pi$ defined in  \cite{MP96}. Since $a_\pi$ stays the same if we twist  $\pi$ by a character, we may assume that $\pi$ is minimal in the sense that $\ell(\pi)\leq \ell(\pi\otimes \chi)$
for any character $\chi$ of $GL_2(F)$.

\begin{proposition}   For  $\pi\in\Rep_{\mathbb C}^\infty (GL_2(F))$ irreducible cuspidal and minimal, we have 

$a_\pi=-2q^{\ell(\pi)}$ if  $\ell(\pi)$  is an integer, and 
$a_\pi=-(q+1)q^{\ell(\pi) -1/2}$ otherwise.
\end{proposition}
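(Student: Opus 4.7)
The plan is to combine Corollary \ref{cJL} with the explicit dimension of the Jacquet-Langlands transfer of $\pi$ to the multiplicative group of the quaternion algebra. Let $D_2$ denote the quaternion division algebra over $F$ and write $\pi_2 \in \Irr_{\mathbb{C}}(D_2^*)$ for the Jacquet-Langlands transfer of $\pi$; this transfer is irreducible because $\pi$ is cuspidal and hence essentially square integrable. Applying Corollary \ref{cJL} with $n = 1$ and $d = 2$, so that $GL_n(D) = D_2^*$ and $GL_{dn}(F) = GL_2(F)$, and using the unique partition $\lambda = (1)$ of $1$ with $d\lambda = (2)$, we obtain
\begin{equation*}
a_\pi \;=\; c_\pi((2)) \;=\; -\,c_{\pi_2}((1)).
\end{equation*}
Since $D_2^*/F^*$ is compact, $\pi_2$ is finite-dimensional, so equation \eqref{cfd} gives $c_{\pi_2}((1)) = \dim_{\mathbb{C}} \pi_2$. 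Hence $a_\pi = -\dim_{\mathbb{C}} \pi_2$, and the proposition reduces to the dimension formula
\begin{equation*}
\dim_{\mathbb{C}} \pi_2 \;=\; \begin{cases} 2q^{\ell(\pi)} & \text{if } \ell(\pi) \in \mathbb{Z}, \\ (q+1)\,q^{\ell(\pi) - 1/2} & \text{otherwise.} \end{cases}
\end{equation*}

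Next, I would invoke the explicit description of minimal cuspidal representations of $GL_2(F)$ and of their Jacquet-Langlands transfers to $D_2^*$, as developed via the theory of admissible pairs in \cite{BH06}. Every minimal cuspidal $\pi$ is compactly induced from an open compact-mod-centre subgroup $J = E^\times J_0 \subset GL_2(F)$, where $(E, \theta)$ is an admissible pair: $E/F$ is a quadratic extension and $\theta$ a suitable character of $E^\times$. The parity of $\ell(\pi)$ reflects the ramification of $E/F$: $\ell(\pi) \in \mathbb{Z}$ exactly when $E/F$ is unramified, and $\ell(\pi) \in \tfrac{1}{2} + \mathbb{Z}_{\geq 0}$ exactly when $E/F$ is ramified. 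Under Jacquet-Langlands, $\pi_2 = \Ind_{J'}^{D_2^*} \tau'$ for the analogous subgroup $J' = E^\times J_0' \subset D_2^*$ (with $E$ embedded in $D_2$) and character $\tau'$ built from $\theta$, giving $\dim_{\mathbb{C}} \pi_2 = [D_2^* : J']\,\dim \tau'$.

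The three cases are disposed of by an elementary index calculation in $D_2^*$, using the Moy-Prasad filtration $O_{D_2}^* \supset U_{D_2}^1 \supset U_{D_2}^2 \supset \cdots$ and the intersections of $E^\times$ with this filtration. For $\ell(\pi) = 0$ one has $J' = F^\times O_{D_2}^*$ and $\dim \tau' = 1$, giving $\dim \pi_2 = 2 = 2q^0$. For $\ell(\pi) = m \in \mathbb{Z}_{>0}$ with $E/F$ unramified, an explicit $J'$ of the form $E^\times U_{D_2}^{\,\bullet}$ yields $[D_2^* : J'] = 2q^m$. For $\ell(\pi) = m + \tfrac{1}{2}$ with $m \in \mathbb{Z}_{\geq 0}$ and $E/F$ ramified, the corresponding index evaluates to $(q+1)\,q^m$. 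The principal technical obstacle is the precise identification of $(J', \tau')$ under the Jacquet-Langlands correspondence, which requires the explicit type-theoretic construction of $LJ$ for $GL_2$; once this is in hand, the remaining counts reduce to elementary bookkeeping with the filtration of $O_{D_2}^*$ and the embedding $E \hookrightarrow D_2$.
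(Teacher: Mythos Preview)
Your approach is essentially the same as the paper's: both reduce via Corollary~\ref{cJL} to $a_\pi = -\dim_{\mathbb C}\pi_2$ and then compute $\dim_{\mathbb C}\pi_2$ from the explicit inducing data on $D_2^*$ given in \cite{BH06}, with the case split governed by the ramification of the quadratic extension $E/F$ attached to $\pi$.

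One imprecision worth flagging: you describe $\tau'$ as a character throughout, but in the unramified case with $\ell(\pi)=m$ a positive \emph{odd} integer (equivalently $\ell(\pi_2)=2m$ not divisible by $4$) the inducing representation on $D_2^*$ is a $q$-dimensional Heisenberg-type representation, not a character, and the subgroup $J'$ is correspondingly larger than $E^\times U_{D_2}^{m+1}$ by index $q^2$; the two effects combine to still give $\dim\pi_2=2q^m$. The paper handles this by working directly with the $D_2^*$ construction in \cite[56.5]{BH06} and separating the three cases $\ell(\pi_2)$ odd, $\ell(\pi_2)\equiv 0\pmod 4$, $\ell(\pi_2)\equiv 2\pmod 4$. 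Your outline would go through once you incorporate this refinement; the remaining index computations are then exactly the bookkeeping you describe.
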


\begin{proof}  It is easier to use the Jacquet-Langlands correspondence. We compute $\dim _{\mathbb C}    \pi_2 $, where $\pi_2$ is the irreducible smooth representation of  $D^*_2$ corresponding to $\pi$. The  level  $\ell(\pi_2)$ of $\pi_2$   is the smallest integer $j$  such that $\pi_2$ is trivial on $1+P_{D_2}^{j+1}$, and 
shows that $\ell(\pi_2)=2\ell(\pi)$ (\cite{BH06} 56.1). Since the Jacquet-Langlands correspondence is compatible with character twists, $\pi_2$ is minimal.
By  (\cite{BH06} 56.4 Proposition) $\pi_2$  is induced from a representation $\Lambda$  of a subgroup $J$ of $D_2^*$  described in (\cite{BH06} 56.5 Lemmas 1 and 2). If $\ell(\pi_2)=2j+1 $ is odd, then $J=E^*(1+P_{D_2}^{j+1})$ where $E/F$  is a ramified
quadratic extension in the quaternion division algebra $D_2$, and $\Lambda$ is a character, so that  $\dim_{\mathbb C} \pi_2= (q+1)q^j$, confirming the second case in the proposition.
If  $\ell(\pi_2)=2j$  is a multiple of $4$, then $J=E^*(1+P_{D_2}^{j+1})$ where $E/F$ is now   unramified    and $\Lambda$  is again
a character, so that $\dim_{\mathbb C} \pi_2=2q^j$. Finally if $\ell(\pi_2)=2j $ is not a multiple of $4$, then $J $ contains $E^*(1+P_{D_2}^{j+1}) $ with index $q^2$, where
again $E/F$ is   unramified, but $\Lambda$ has dimension $q$, so that $\dim_{\mathbb C} \pi_2=2q.q^{2j}/q^{j-1}q^2=2q^j$ as expected.
\end{proof}
 
\begin{remark}1)  If $\pi$  is cuspidal and minimal, and  $\pi^{I_j}=0$ for an integer $j>0$ then  $\pi^{K_j}=0$, so that the exponent of $q$ in the proposition is the smallest integer such that $\pi ^{K_{j+1}}\neq 0$.
 
 2) As pointed out in (\cite{BH06} Chapter 13, 56.9: Comments), the Jacquet-Langlands correspondence
there is characterized by its compatibility with character twists and preservation of the $\epsilon$-factors.
But since the Jacquet-Langlands characterized by equality of characters possesses those properties,
both correspondences are the same.
 
 3) Instead of using the Jacquet-Langlands correspondence in the proof we could have used the known fact
that the character of $\pi$ is constant, equal to $-\delta(\pi)/\delta(\St_G)$, on elliptic regular elements close to identity,  where
$\delta$ denotes the formal degree (\cite{Howe74} when $\charf_F=0$, 
\cite{BHL10} when $\charf_F=p$). The quotient $\delta(\pi)/\delta(\St_G)$ has been computed
 for $GL_n(F)$ when $n$ is prime in  (\cite{Ca84} Section 5).
\end{remark}
\subsection{}\label{=p}{\bf Coefficient field of characteristic $p$} Up to now the characteristic of the coefficient field $R$ was $p$.  But some results may remain true for a field $R$ of characteristic $p$, for example the  dimension of the invariants  of  an irreducible admissible non-supersingular $R$-representation of  $G=GL_2(D)$, by congruence subgroups of Moy-Prasad subgroups of $G$ (Theorem \ref{2.2}).

Let  $R$ be a field of characteristic $p$ and   $\sigma =\rho \otimes \rho' \in \Rep_R^\infty (T)$ irreducible,  $\rho, \rho' \in \Rep_R^\infty D^*$. 
If the inflation $\tilde \sigma $ of $\sigma$ to $B$ does not extend to $G$, the  parabolically induced representation   $\ind_B^G \sigma$ is irreducible. Otherwise,  $\rho\simeq \rho'$, $\ind_B^G \sigma$ is indecomposable of length $2$, contains the (unique)    finite dimensional representation $\sigma_G$  extending $\tilde \sigma$, of quotient   $\sigma_G\otimes  \St$ where $\St=\ind_B^G1/1$ is the Steimberg representation. 
Those  are the not supersingular irreducible representations  (\cite{AHHV17} when $R$ is algebraically closed and \cite{HV19} in general). 

\begin{lemma} When $\tilde \sigma $  extends to a representation $\sigma_G$ of $G$, we have  $\sigma_G=\tau \otimes \nrd_{G/F^*} $,  and $\sigma\simeq \rho \otimes \rho $ with $\rho \simeq \tau \otimes \nrd_{D^*/F^*}$ for $\tau\in\Rep_{R}^\infty F^* $ irreducible. 
\end{lemma}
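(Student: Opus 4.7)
The plan is to argue in two stages: first, establish that $\sigma_G$ factors through the reduced norm $\nrd\colon G \to F^*$; second, restrict back to $T$ to extract the structure of $\rho$ and $\rho'$.

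First I would use that $\tilde\sigma$ is inflated from $T = B/U$, so the unipotent radical $U$ lies in $\Ker\tilde\sigma = \Ker\sigma_G \cap B$. Because $\Ker\sigma_G$ is normal in $G$, it also contains every $G$-conjugate of $U$, in particular $U^- = sUs^{-1}$ where $s$ is the nontrivial Weyl element. By a classical Whitehead--Dieudonn\'e argument, $U$ and $U^-$ together generate $\SL_2(D) = \Ker\nrd$ (over a central division $F$-algebra of finite dimension, elementary transvections generate $\SL_2$, and the commutator subgroup of $\GL_2(D)$ coincides with $\Ker\nrd$). Consequently $\sigma_G$ factors through $\nrd$, yielding $\sigma_G = \tau \otimes \nrd_{G/F^*}$ for some smooth representation $\tau$ of $F^*$. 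Since $\sigma$ is irreducible, so is $\tilde\sigma = \sigma_G|_B$, and hence so is $\sigma_G$; and since $\nrd$ is surjective, pulling back subrepresentations yields the irreducibility of $\tau$.

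Next I would restrict $\sigma_G = \tau \circ \nrd$ back to $T \simeq D^* \times D^*$: for $(d_1, d_2)$ we have $\sigma(d_1, d_2) = \tau\bigl(\nrd(d_1)\nrd(d_2)\bigr)$ acting on $V_\tau$. Specialising to $d_1 = 1$ gives the $D^*$-representation $\tau \circ \nrd_{D^*/F^*}$ on $V_\tau$; the restriction of $\sigma = \rho \otimes \rho'$ to $\{1\} \times D^*$ equals $\Id_{V_\rho} \otimes \rho'$, isomorphic to $\rho'^{\oplus \dim_R V_\rho}$ as a $D^*$-representation. Since $\nrd_{D^*/F^*}$ is surjective, $\tau \circ \nrd_{D^*/F^*}$ is irreducible, and the resulting isomorphism $\rho'^{\oplus \dim_R V_\rho} \simeq \tau \circ \nrd_{D^*/F^*}$ forces $\dim_R V_\rho = 1$ and $\rho' \simeq \tau \otimes \nrd_{D^*/F^*}$. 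The symmetric argument on $D^* \times \{1\}$ gives $\dim_R V_{\rho'} = 1$ and $\rho \simeq \tau \otimes \nrd_{D^*/F^*}$. Hence $\rho \simeq \rho'$, so $\sigma \simeq \rho \otimes \rho$; and the dimension identity $\dim_R V_\tau = \dim_R V_\rho \cdot \dim_R V_{\rho'} = 1$ forces $\tau$ itself to be a character.

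The only substantive input beyond bookkeeping is that $U$ and its $G$-conjugates generate $\Ker\nrd$, which I expect to cite as a standard consequence of Whitehead--Dieudonn\'e for $\GL_2$ of a division algebra over a local field. The rest mirrors the argument used in Lemma \ref{fd}, adapted to yield the sharper structural conclusion (including $\rho \simeq \rho'$ and the explicit description of $\rho$) without requiring $R$ to be algebraically closed.
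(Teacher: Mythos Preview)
Your proof is correct and takes a genuinely different route from the paper's. The paper first invokes Lemma~\ref{fd} to settle the case of an algebraically closed $R$ (where Schur's lemma is available and one knows irreducible smooth representations of $F^*$ are characters), and then for general $R$ passes to an algebraic closure $R^{ac}$, picks a constituent $\chi$ of $R^{ac}\otimes_R\sigma_G$, and descends back to $R$ by recovering the irreducible $\tau$ whose scalar extension contains $\chi$. By contrast, you run the core argument of Lemma~\ref{fd} (the normal closure of $U$ in $G$ equals $\Ker\nrd$) directly over $R$, obtaining $\sigma_G=\tau\circ\nrd$ with $\tau$ merely irreducible; the restriction to $\{1\}\times D^*$ and $D^*\times\{1\}$ then forces $\dim_R V_\rho=\dim_R V_{\rho'}=1$ and identifies $\rho\simeq\rho'\simeq\tau\circ\nrd_{D^*/F^*}$, and as a by-product shows $\tau$ is one-dimensional. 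Your approach is more elementary (no scalar extension, no descent), works uniformly over any $R$, and in fact yields the slightly sharper conclusion that $\tau$ is a character. The paper's approach has the virtue of reusing Lemma~\ref{fd} verbatim, but the actual content is the same Whitehead--Dieudonn\'e input, which you isolate cleanly.
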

\begin{proof}
 When  $R$ is algebraically closed,  this follows from  Lemma \ref{fd}. In general,  let $R^{ac}/R$ be an an algebraic closure.
There exists  a character $\chi\in \Rep_{R^{ac}}^\infty F^*$ such that 
$\chi \otimes \nrd_{G/F^*} , \chi \otimes \nrd_{D^*/F^*}$ is a subquotient to 
$R^{ac}\otimes_R\sigma_G, R^{ac}\otimes_R\rho \simeq R^{ac}\otimes_R\rho' $. Let $\tau\in\Rep_{R}^\infty F^* $ irreducible such that  $\chi $ is a subquotient to $R^{ac}\otimes_R\tau $.  Then $\sigma_G=\tau \otimes \nrd_{G/F^*} $,  $\rho\simeq \rho'\simeq \tau \otimes \nrd_{D^*/F^*}$. 
\end{proof}
\begin{proposition} Let $\pi\in \Rep_{R}^\infty(G)$ irreducible not supersingular. For $j\geq 0$, we have
 \begin{itemize}
 \item $ \dim _{R}\pi^{I_{1/2+j}}=a_\pi + \, 2\, b_\pi  \,q^{dj}   $,
  \item $ \dim _{R}\pi^{K_{1+j}}=a_\pi  \, + \, (q^d+1) \, b_\pi  \,  q^{dj} $,
 \item $ \dim _{R}\pi^{I_{1+j}}=a_\pi  \, + \, 2  q^d\, b_\pi \, q^{dj}$,
  \end{itemize} 
where
$$(a_\pi , b_\pi) = \begin{cases}  

(0, \dim_{R}\sigma) &\ \text{ if } \pi =\ind_B^G\sigma\\
 
(\dim_{R}\sigma, 0) &\ \text{ if } \pi = \sigma_G\\ 
(-\dim_{R}\sigma, 1)&\ \text{ if } \pi =  \sigma_G \otimes \St
\end{cases} ,  
$$  and  $\sigma\in \Rep_R^\infty(T^*)$. \end{proposition}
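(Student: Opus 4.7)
The plan is to handle the three cases of the proposition in turn, using two common ingredients: the index counts $|B \backslash G / K|$ computed above for $K = I_{1/2+j}, K_{1+j}, I_{1+j}$, and the standard Mackey-type decomposition
\[
(\ind_B^G \sigma)^K \;\cong\; \bigoplus_{g \in B \backslash G / K} V_\sigma^{\,B \cap gKg^{-1}}
\]
for smoothly induced representations restricted to an open compact subgroup.

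The case $\pi = \sigma_G$ is immediate: smoothness and finite-dimensionality force the kernel of $\sigma_G$ to be open in $G$, so for $j$ large each of the three listed subgroups lies in that kernel, hence $\pi^K = V_{\sigma_G}$ of dimension $\dim_R \sigma$, giving $(a_\pi, b_\pi) = (\dim_R\sigma, 0)$. For $\pi = \ind_B^G \sigma$, smoothness of $\sigma$ produces an open subgroup $L$ of $T$ on which $\sigma$ is trivial; for $j$ large the projection of each $B \cap gKg^{-1}$ to $T$ lies in $L$ (with $g$ running over the finitely many double cosets), so every Mackey summand equals $V_\sigma$ and $\dim \pi^K = \dim_R \sigma \cdot |B \backslash G / K|$, giving $(a_\pi, b_\pi) = (0, \dim_R \sigma)$ after substituting the three index counts.

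For $\pi = \sigma_G \otimes \St$ the starting point is the length-two exact sequence $0 \to \sigma_G \to \ind_B^G \sigma \to \pi \to 0$ supplied by the preamble. Left-exactness of $K$-invariants combined with the first two cases determines the dimensions of $\sigma_G^K$ and $(\ind_B^G \sigma)^K$; it remains to prove that the induced map $(\ind_B^G \sigma)^K \to \pi^K$ is surjective for $K$ in the three families and $j$ large enough. Granted this, the proposition's formula follows, at least in the essential case where $\sigma_G$ is one-dimensional (automatic over an algebraically closed $R$ by Lemma \ref{fd}, and then $\dim_R\sigma = 1$). The passage to general $R$ is by scalar extension to an algebraic closure, which is exact on smooth representations and commutes both with the formation of pro-$p$ invariants and with $\ind_B^G$.

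The main obstacle is the surjectivity claim in case 3. In characteristic $p$, pro-$p$ invariants are not exact, so no averaging is available, and the connecting homomorphism $\pi^K \to H^1(K, \sigma_G)$ is a priori an obstruction that need not vanish. The plan for bypassing this is to produce $K$-fixed lifts explicitly: via the identification $\ind_B^G \sigma \cong \sigma_G \otimes \ind_B^G 1$ the subrepresentation $\sigma_G$ corresponds to the line of constant functions in $\ind_B^G 1$, and for each of the three subgroup families the list of double-coset representatives for $B\backslash G/K$ is short and completely explicit, so for any $K$-fixed class in $\pi$ one should be able to adjust a lift on each double coset by a suitable constant in $\sigma_G$ to recover $K$-invariance, in the spirit of the classical determination of Iwahori-fixed vectors of the Steinberg representation.
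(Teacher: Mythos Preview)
Your overall strategy matches the paper's exactly: treat $\sigma_G$ and $\ind_B^G\sigma$ directly, then for $\sigma_G\otimes\St$ use the short exact sequence $0\to\sigma_G\to\ind_B^G\sigma\to\sigma_G\otimes\St\to 0$, reduce (via twisting/scalar extension) to $\sigma=1$, and prove that $(\ind_B^G 1)^K\to\St^K$ is surjective for every Moy--Prasad pro-$p$ subgroup $K$.

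The gap is in your surjectivity argument. Your plan to ``adjust a lift on each double coset by a suitable constant'' does not work as stated, for two reasons. First, any two lifts of a given class in $\St$ differ by a \emph{global} constant function, not by separate constants on each $B$--$K$ double coset; so there is nothing to adjust coset-by-coset --- either every lift is $K$-invariant or none is. Second, your remark that the list of double cosets is ``short'' is false: $|B\backslash G/K_{1+j}|=(q^d+1)q^{dj}$ grows with $j$, so an explicit coset-by-coset construction does not scale.

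The paper's argument (stated as a separate proposition immediately after this one) supplies the missing idea, and it is uniform in $K$. Take any $f\in\ind_B^G 1$ whose image in $\St$ is $K$-fixed. Then for each $k\in K$ the function $g\mapsto f(gk)-f(g)$ is constant, say equal to $c_f(k)\in R$; one checks at once that $c_f:K\to (R,+)$ is a group homomorphism. Evaluating at $g=1$ shows $c_f$ vanishes on $K\cap B$, and evaluating at $g=s$ (the antidiagonal Weyl element) shows $c_f$ vanishes on $K\cap sBs=K\cap B^-$. Since a Moy--Prasad pro-$p$ subgroup has an Iwahori decomposition $K=(K\cap U^-)(K\cap T)(K\cap U)$, the subgroups $K\cap B$ and $K\cap B^-$ generate $K$, whence $c_f\equiv 0$ and $f$ itself is $K$-invariant. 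This is precisely the vanishing of the connecting map into $H^1(K,R)$ that you identified as the obstruction, proved not by cohomological machinery but by this two-line computation.
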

 \begin{proof} The  formulas  for  a finite dimensional representation  and  for 
$\ind_B^G\sigma$ are clear. They imply the 
  formula for the  twisted Steinberg representations   by the next proposition.
  \end{proof}
  
  \begin{proposition} Let $R$ be a field,   and $K$  a Moy-Prasad pro-$p$ subgroup of $G$. The natural map $(\ind_B^G 1)^K\to \St^K$ is surjective.
  
  \end{proposition}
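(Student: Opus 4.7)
The plan is to prove the stronger assertion that every $f\in\ind_B^G 1$ whose image in $\St$ is $K$-invariant is already $K$-invariant; surjectivity of $(\ind_B^G 1)^K\to\St^K$ is then immediate. Given such an $f$, the cocycle $k\mapsto k\cdot f-f$ lands in the line $R\cdot 1_G$ of constant functions, so there is a continuous group homomorphism $c\colon K\to(R,+)$ with $k\cdot f-f=c(k)\cdot 1_G$. Evaluating at $1$ gives $c(k)=f(k)-f(1)$, and evaluating the cocycle identity at an arbitrary $g\in G$ yields the functional equation $f(gk)=f(g)+f(k)-f(1)$ for all $g\in G$ and $k\in K$.

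The crux is that $c$ vanishes on $K\cap g^{-1}Bg$ for every $g\in G$. Indeed, if $h\in K\cap g^{-1}Bg$ then $gh=(ghg^{-1})g\in Bg$, so the left $B$-invariance of $f$ gives $f(gh)=f(g)$; the functional equation then forces $c(h)=0$. Taking $g=1$ shows $c|_{K\cap B}=0$, and taking $g$ to be a representative of the nontrivial Weyl element shows $c|_{K\cap B^-}=0$.

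After conjugating $K$ by a suitable element of $G$ (which permutes the Moy--Prasad pro-$p$ subgroups and is compatible with the $G$-equivariant map $\ind_B^G 1\to\St$), we may assume $K$ corresponds to a point of the standard apartment. Then $K$ admits an Iwahori decomposition $K=(K\cap U^-)(K\cap T)(K\cap U)$; this can be verified directly for each of the explicit Moy--Prasad pro-$p$ subgroups $I_{j+1/2}$, $K_{j+1}$, $I_{j+1}$ listed at the start of the section, using that elements of $1+p_D^a O_D$ are units to triangularise any element of $K$. Since $B=TU$ and $B^-=TU^-$, the decomposition exhibits $K$ as generated as an abstract group by $K\cap B$ and $K\cap B^-$, so the homomorphism $c$, vanishing on both, is identically zero. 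Thus $f(k)=f(1)$ for every $k\in K$, and feeding this back into the functional equation gives $f(gk)=f(g)$ for all $g\in G$, $k\in K$, i.e.\ $f$ is $K$-invariant.

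The argument is uniform in $\charf R$: no averaging over $K$ is used, so no denominators appear. When $\charf R\ne p$ the result is in any case formal, since $K$-invariants is then an exact functor on smooth $K$-representations; but this exactness fails in characteristic $p$, where the above structural reasoning is essential. The only point requiring verification is the Iwahori decomposition for the listed subgroups, which is an elementary case-by-case check.
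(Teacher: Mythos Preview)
Your proof is correct and follows essentially the same argument as the paper's: both set up the homomorphism $c\colon K\to R$ from the cocycle $k\mapsto k\cdot f-f$, show it vanishes on $K\cap B$ and on $K\cap B^-=K\cap sBs$, and conclude since these two subgroups generate $K$. You are slightly more explicit than the paper in justifying this last generation statement via an Iwahori decomposition and in reducing (by $G$-conjugation) to the case where $K$ is one of the standard groups in the apartment.
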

  \begin{proof} When  $\charf _R\neq p$, the $K$-invariant functor is exact and the surjectivity is clear. When  $\charf_R=p$,  one can argue as follows. 

The image of  $f\in \ind_B^G1$    in $\St$   is $K$-invariant if and only if  there exists a map $c_f:K\to R$
 such that 
$f(gk) = f(g)+ c_f(k)$ for any $g\in G, k\in K$.
As $f(gkk') = f(g)+ c_f(kk')= f(gk)+c_f(k')= f(g)+c_f(k)+ c_f(k')$ for  $k,k'\in K$, the map  $c_f $ is an homomorphism.
For $k\in K\cap B$ we have $f(k)=f(1)$ hence $c_f(k)=0$. For  $k\in K\cap sBs$ 
we have   
$f(sk)=f(skss)=f(s)$  because $sks\in B$, hence  $c_f(k)=0$. As $K\cap B$ and $K\cap s B s$ generate $K$, we deduce that $c_f=0$. 
   \end{proof}

  Let $G=GL(2,\mathbb Q_p)$ and $\pi\in \Rep_{\mathbb F_p^{ac}}(G)$   irreducible  supersingular.     By (\cite{Morra13} Proposition 4.9, Corollary 4.15), for $p$ odd and $j\geq 0$, we have:
 \begin{itemize}
 \item $ \dim _{\mathbb C}\pi^{I_{1/2+j}}=a_\pi+ 2 b_\pi \, p^j$,  where $ (a_\pi, b_\pi)=(-2 , 2)$, 
  \item $ \dim _{\mathbb C}\pi^{K_{1+j}}= a'_\pi \, + \, (p+1) \, b_\pi  \,  p^{j} $, where
 $$a'_\pi = \begin{cases}-3  \ \text{  if \ }\pi=  \pi_0\otimes (\chi \circ \det) \ \text{ for a character } \chi \in \Rep_{\mathbb F_p^{ac}}
 ^\infty F^* \\
 -4 \ \text{ otherwise} 
 \end{cases}.$$
 \end{itemize} 
Here  $\pi_0$ denote  the supersingular irreducible  quotient of $\mathbb F_p^{ac}[GL(2,\mathbb Z_p) Z\backslash G ]$, $Z$ the center of $G$.

  \end{document}